\newcommand{\xyR}[1]{\xydef@\xymatrixrowsep@{#1}}
\newcommand{\xyC}[1]{\xydef@\xymatrixcolsep@{#1}}
	\tikzset{commutative diagrams/.cd, 
		mysymbol/.style = {start anchor=center, end anchor = center, draw = none}}
\newtheorem{theorem}{Theorem}[section]
\newtheorem{lemma}[theorem]{Lemma}
\newtheorem{corollary}[theorem]{Corollary}
\newtheorem{proposition}[theorem]{Proposition}
\theoremstyle{definition}
\newtheorem{definition}[theorem]{Definition}
\newtheorem{example}[theorem]{Example}
\newtheorem{notation}[theorem]{Notation}
\newtheorem{assumption}[theorem]{Assumption}
\theoremstyle{remark}
\newtheorem{remark}[theorem]{Remark}
\numberwithin{equation}{section}
	\newenvironment{acknowledgements}{
		\begin{abstract}} {\end{abstract}}
\begin{document}
\title[Classification of modules for complete gentle algebras]{Classification of modules for complete gentle algebras}
    \author{Raphael Bennett-Tennenhaus}
\address{  R. Bennett-Tennenhaus\\
Faculty of Mathematics\\
Bielefeld University\\
Universit{\"{a}}t sstra{\ss}e 25\\
33615 Bielefeld\\
 Germany}
\email{raphaelbennetttennenhaus@gmail.com}

\subjclass[2020]{16G30 (primary), 16D70, 16E05 (secondary)}

\begin{abstract} We classify finitely generated modules over a class of algebras introduced in the authors' Ph.D thesis, called complete gentle algebras. These rings generalise the finite-dimensional gentle algebras introduced by Assem and Skowro\'{n}ski, in such a way so that the ground field is replaced by any complete local noetherian ring. Our classification is written in terms of string and band modules. For the proof we apply the main result from the authors' thesis, which classifies complexes of projective modules with finitely generated homogeneous components up to homotopy. In doing so we construct the resolution of a string or band module, generalising some calculations due to \c{C}anak\c{c}{\i}, Pauksztello and Schroll.
\end{abstract}

\maketitle

\section{Introduction.}
Classifying the representations of a given finite-dimensional algebra has been a central goal in representation theory for the last 50 years. With this perspective Drozd's tame-wild dichotomy is fundamental. This says that if the representation-type of a finite-dimensional algebra is infinite, then it is either tame or wild, and not both. Since Drozd's dichotomy, a key pursuit in the representation theory of finite-dimensional algebras has been finding algebras with finite or tame representation type, and classifying their representations. It is well known that this pursuit reduces to considering finite-dimensional modules over quotients of (basic, connected) path algebras by admissible ideals. 

Skowro\'{n}ski and Waschb\"{u}sch \cite{SkoWas1983} showed that if an algebra is representation-finite and biserial in the sense of Fuller \cite{Ful1978}, then it is special biserial; meaning that the quiver and relations satisfy certain combinatorial constraints. Wald and Waschb\"{u}sch \cite{WalWas1985} then showed that: any special biserial algebra has finite or tame representation-type; any indecomposable finite-dimensional module over such an algebra is either a non-uniserial projective-injective, or (what we call) a \emph{string module} or a \emph{band module}; and the Auslander-Reiten (AR) sequences may be described explicitly. Butler and Ringel \cite{ButRin1987} considered string algebras, which are monomial special biserial algebras, and hence any projective-injective indecomposable module over a string algebra is uniserial. Assem and Skowro\'{n}ski \cite{AssSko1986} also introduced \emph{gentle} algebras, defined as special biserial algebras satisfying certain combinatorial constraints on their AR quiver. In particular, gentle algebras are quadratic string algebras (but not every quadratic string algebra is gentle).

The use of triangulated categories in the representation theory of finite-dimensional algebras has been beautifully documented in the well-known book of Happel \cite{Happ1988}. Here a triangulated fully-faithful functor is defined from the bounded derived category of a finite-dimensional algebra to the stable module category of its repetitive algebra -- which is dense provided the algebra has finite global dimension. Bekkert and Drozd \cite{BekDro2003} since gave a tame-wild dichotomy for the derived representation-type of a finite dimensional algebra. Studying homological properties of gentle algebras has recently been a topic of popular interest. 

Ringel \cite{Rin1995} showed that the repetitive algebra of a gentle algebra is special biserial. Bobi\'{n}ski \cite{Bob2011} then combined this with the aforementioned description of AR sequences in \cite{WalWas1985} to study perfect complexes and AR triangles in the derived category of a gentle algebra (by means of Happel's functor). Bekkert and Merklen \cite{BekMer2003} then described all indecomposable objects in the bounded derived category in terms of (what we call) \emph{string complexes} and \emph{band complexes}; and Arnesen, Laking and Pauksztello \cite{ArnLakPau2016} gave a basis for the space of morphisms between these complexes. \c{C}anak\c{c}{\i}, Pauksztello and Schroll described a basis for the extension group between string and band modules over a gentle algebra by calculating the homology of any string or band complex, and hence combinatorially realising string and band complexes as projective resolutions of these modules. Considering string and band modules by means of projective presentations or resolutions was an idea used by Huisgen-Zimmerman and Smal{\o} \cite{HuiSma2005}, who used this perspective to study the global dimensional of a string algebra.

In the author's Ph.D thesis \cite{Ben2018} (see also \cite{Ben2016}) we introduced a class of rings, which we call \emph{complete gentle algebras}, and which generalise the gentle algebras introduced in \cite{AssSko1986}. The definition of these rings involves substituting the ground field of a (finite-dimensional) gentle algebra with a complete, local noetherian ring $R$. The main classification achieved in \cite{Ben2018} gave a description of the objects in the homotopy category of complexes of finitely generated projective modules over any complete gentle $R$-algebra; see Theorems \ref{theorem:from-thesis-I}, \ref{theorem:from-thesis-II} and \ref{theorem:from-thesis-III}. Our main results in this article are Theorems \ref{theorem:main-fin-gen-modules-are-sums-of-strings-and-bands}, \ref{theorem:main-isomorphisms-between-string-and-band-modules}, \ref{theorem:main-krull-remak-schmidt-property} and \ref{theorem:resolutions}. The first three of these give a classification of the finitely generated modules for any complete gentle $R$-algebra. The proof uses a direct application of the above classification from \cite{Ben2018}. Before stating our main theorems we loosely explain and cross reference the notation and terminology we use.
\begin{itemize}
    \item Fix a commutative, noetherian and local ring $(R,\mathfrak{m},k)$ where $R$ is $\mathfrak{m}$-adically complete.
    \item Let $R[T,T^{-1}]$ be the ring of Laurent polynomials with coefficients in the ring $R$.
    \item Let $R[T,T^{-1}]\text{-}\boldsymbol{\mathrm{Mod}}_{R\text{-}\boldsymbol{\mathrm{Proj}}}$ be the category of $R[T,T^{-1}]$-modules which are free over $R$.
    \item Let $\mathrm{res}$ be the involution of $R[T,T^{-1}]\text{-}\boldsymbol{\mathrm{Mod}}_{R\text{-}\boldsymbol{\mathrm{Proj}}}$ which swaps the action of $T^{-1}$ and $T$. 
    \item Complete gentle $R$-algebras $\Lambda$ are defined in Definition \ref{definition:complete-gentle-algebra} by a quiver $Q$ (with relations).
    \item Words $C$, together with their inverses $C^{-1}$ and their shifts $C[n]$ ($n\in\mathbb{Z}$) are written in an alphabet given by the arrows in $Q$ (avoiding the above mentioned relations), and are defined in Definition \ref{definition:words-for-modules}.
    \item String words are words of a certain form, defined in Definition \ref{definition:string-words}. These words index the string modules $M(C)$ defined in Definition \ref{definition:string-modules-by-string-words}.
    \item Band words are words of a different particular form, defined in Definition \ref{definition:band-words}. These words index the band modules $M(C,V)$ defined in Definition \ref{definition:band-module} for any object $V$ of $R[T,T^{-1}]\text{-}\boldsymbol{\mathrm{Mod}}_{R\text{-}\boldsymbol{\mathrm{Proj}}}$.
\end{itemize}
To explain this terminology we provide an example of a string module and an example of a band module. Throughout the article we refer back to these examples in order to explain our notation.
\begin{example}\label{example:intro-string-module} Let $\Lambda=k[[x,y]]/(xy,yx)$, where $k[[x,y]]$ is the power series ring in two variables. This is a particular instance of a large class of complete gentle algebras over $R=k[[t]]$ discussed in Example \ref{example:path-complete-gentles}. Note that $t$ acts on $\Lambda$ by the sum $x+y$. Here $Q$ is the quiver consisting of two loops $x$ and $y$ at a single vertex $v$, and  $\mathscr{I}$ is the ideal in $RQ$ generated by $xy$ and $yx$. As in Example \ref{example:path-complete-gentles} we have that $\Lambda\simeq\overline{kQ}/\overline{\langle xy,yx\rangle}$ where $\overline{kQ}$ is the completion of the path algebra $kQ$ with respect to the arrow ideal.

A word $C$ here is a sequence of letters $C_{i}$ in $\{x,y,x^{-1},y^{-1}\}$ such that $C$ does not contain $xy$, $yx$, $x^{-1}y^{-1}$ or $y^{-1}x^{-1}$ as a sub-sequence of consecutive letters. For example,
\[
\begin{array}{c}
     C=x^{3}y^{-3}x^{2}y^{-1}x^{2}y^{-3}x^{4}y^{-1}y^{-1}y^{-1}y^{-1}\dots
\end{array}
\]
is an example of an $\mathbb{N}$-word where $C_{i}=x$ for $i=1,2,3,7,8,10,11,15,16,17$, and $C_{i}=y^{-1}$ otherwise. In fact, $C$ is a string word, since we have $C=B^{-1}(AD)$ where $B=x^{-3}$ and $D=y^{-1}y^{-1}\dots$ are inverse and the word $A=y^{-3}x^{2}y^{-1}x^{2}y^{-3}x^{4}$ is alternating in the sense of Definition \ref{definition:finitely-generated-words-II}. Using this decomposition $(B,A,D)$ of $C$, the string module $M(C)$ is defined as the quotient of a free (more generally, projective) module $N(C)=\oplus_{i=0}^{3}\Lambda g_{C,i}$ subject to the relations $x^{4}g_{C,0}=0$, $y^{3}g_{C,0}=x^{2}g_{C,1}$, $yg_{C,1}=x^{2}g_{C,2}$ and $y^{3}g_{C,2}=x^{4}g_{C,3}$. Hence we may depict the generators and relations of $M(C)$ as follows.
\[
\begin{tikzcd}[column sep=0.2cm, row sep=0.2cm]
 & & & & & & & & & &  & & & & & & & & & g_{C,3}\arrow[swap]{ddddllll}{x^{4}}\arrow[swap]{dddddrrrrr}{y^{\infty}} & & & & & &\\&
 & & & g_{C,0}\arrow[swap]{dddrrr}{y^{3}}\arrow[swap]{dddlll}{x^{3}} & & & & & & & & g_{C,2}\arrow[swap]{ddll}{x^{2}}\arrow[swap]{dddrrr}{y^{3}} & & & & & & & & & & & &\\&
 & & & &  & &  & & g_{C,1}\arrow[swap]{dr}{y}\arrow[swap]{ddll}{x^{2}} & & & &  & & & & & & & & & & & &\\&
  & & & &  & & & & & \bullet & & &  & & & & & & & & & & & &\\&\bullet
 & & & &  & & \bullet & & & & & &  & & \bullet &  & & & & & & & & &\\&
 & & & &  & & & & & & & &  & & & & & & & & & & & \quad\cdots  &
\end{tikzcd}
\]
Note that one indexes the generators $g_{C,i}$ using the $C$-peaks in $\mathbb{N}$, in the sense of Definition \ref{definition:finitely-generated-words-II}. The inverse $E=C^{-1}$ of $C$ is the $-\mathbb{N}=\{\dots,-2,-1,0\}$-word $E=\dots y y yx^{-4}y^{3}x^{-2}yx^{-2}y^{3}x^{-3}$. Repeating the procedure above to define $M(E)$ gives a diagram symmetric to the diagram above. Consequently there is a canonical isomorphism $M(C)\to M(E)$ sending the coset of $g_{C,i}$ to that of $g_{E,-i}$.
\end{example}

\begin{example}\label{example:intro-band-module}
Let $p$ be prime and $R=\widehat{\mathbb{Z}}_{p}$, the ring of $p$-adic integers. Consider the $R$-algebra
\[
\Lambda = \left\{\begin{pmatrix}r_{11} & r_{12}\\
r_{21} & r_{22}
\end{pmatrix}\in\mathbb{M}_{2}(\widehat{\mathbb{Z}}_{p})\,\colon\,r_{11}-r_{22},\,r_{12}\in p\widehat{\mathbb{Z}}_{p}\,\right\}.
\]
As in Example \ref{example:comp-gentle-2-by-2-matrices}, $\Lambda$ is a complete gentle $R$-algebra where the quiver $Q$ consists of two loops $ a$ and $ b$ at a vertex $v$, and where one considers the ideal $\mathscr{I}=\langle a^{2}, b^{2}\rangle$ of $RQ$. In fact, we shall see that $\Lambda \simeq RQ/\langle a^{2}, b^{2}, a b+ b a-p\rangle$ as $R$-algebras. In this case, a word $C$ is written in the alphabet $\{a,b,a^{-1},b^{-1}\}$ subject to the constraint that neither $aa$, $bb$, $a^{-1}a^{-1}$ nor $b^{-1}b^{-1}$ arises as a sub-sequence of consecutive letters. For an example of a band word, take the $\mathbb{Z}$-word $C={}^{\infty\hspace{-0.5ex}}E^{\infty}=\dots E\mid E E \dots$ where
\[
E= a^{-1}b^{-1} a^{-1} b a b^{-1} a^{-1} b^{-1} a b^{-1} a b a.
\]
Here $E$ is an alternating $\{0,\dots,12\}$-word that is cyclic in the sense of Definition \ref{definition:cyclic-words}. As in Definition \ref{definition:periodicstring-modules}, and similarly to Example \ref{example:intro-string-module}, one can construct a module $M(C)$ defined using generators $g_{C,i}$ and relations determined by $C$. In contrast to Example \ref{example:intro-string-module}, there are infinitely many $C$-peaks (in $\mathbb{Z}$) here, since $C$ is periodic and non-primitive, in the sense of Definition \ref{definition:inverses-shifts-words}. Since $C$ is $12$-periodic, by Lemma \ref{lemma:periodic-string-module-has-T-automorphism} we have that $M(C)$ is a right $R[T,T^{-1}]$-module where the action of $T$ is depicted as follows.
\[
\begin{tikzcd}[column sep=0.1
cm, row sep=0.2cm]
 & & & & & & & & & &  & & & & & & & & & & & & & & &\\&
 & & & g_{C,0}\arrow[swap]{dddrrr}{ a b a}\arrow[swap]{ddll}{ a b} & & & & & & & &  & & & & & & & & & & & &\\\cdots  & g_{C,-1}\arrow[swap]{dr}{ b}
 & & & &  & &  & & g_{C,1}\arrow[swap]{dddrrr}{ b a b}\arrow[swap]{ddll}{ b a} & & & &  & & & & & & & & & & & &\\&
 &\bullet & & &  & &  & & & & & &  & & & g_{C,3}\arrow[dotted,bend right=50,swap]{uullllllllllll}{T}\arrow[swap]{ddll}{ a b}\arrow[swap]{dddrrr}{ a b a} &  &  & & & & & & &\\&
 & & & &  & & \bullet & & & & & & g_{C,2}\arrow[dotted,bend right=70,swap]{uullllllllllll}{T}\arrow[swap]{dl}{ a}\arrow[swap]{dr}{ b}  & &  &  & & & & & g_{C,4}\arrow[dotted,bend right=50,swap]{uullllllllllll}{T}\arrow[swap]{ddll}{ b a} & & & &\cdots\\&
  & & & &  & & & & &  & & \bullet &  & \bullet & & & & & & &  & & & & \\&
  & & & &  & & & & &  & &  &  &  & & & & & \bullet &  & & & & &
\end{tikzcd}
\]
The band module associated to the word $C$ and some object $V$ of $R[T,T^{-1}]\text{-}\boldsymbol{\mathrm{Mod}}_{R\text{-}\boldsymbol{\mathrm{Proj}}}$ is defined and denoted $M(C,V)=M(C)\otimes _{R[T,T^{-1}]}V$. The shifts $C[n]$ of $C$ are defined for each $n\in\mathbb{Z}$ so that, for example,
\[
C[1]=\dots EE a^{-1}\mid b^{-1} a^{-1} b a b^{-1} a^{-1} b^{-1} a b^{-1} a b a EE\dots 
\]
Hence $C[5]$ and $C[9]$ are band words, and as in Example \ref{example:intro-string-module}, there are canonical isomorphisms $M(C,V)\simeq M(C[5],V)\simeq M(C[9],V)$. Here $k$ is the finite field $\mathbb{F}_{p}$ with $p$ elements. We shall also see, by taking projective resolutions, that if $k\otimes _{R}V\simeq k\otimes_{R}W$ as $k[T,T^{-1}]$-modules then $M(C,V)\simeq M(C,W)$. Here, inverting the word has the effect of swapping $T$ and $T^{-1}$ via an isomorphism $M(C,V)\simeq M(C^{-1},\mathrm{res}\, V)$; see Lemma \ref{lemma:equivalence-relation-on-words-bands}.
\end{example}
We now state the main theorems in this article.
\begin{theorem}
\label{theorem:main-fin-gen-modules-are-sums-of-strings-and-bands}  Let $\Lambda$ be a complete gentle algebra.
\begin{enumerate}
\item Every finitely generated $\Lambda$-module is isomorphic to a direct sum of string and band modules.
\item Every finitely generated string or band module is indecomposable.
\end{enumerate}
\end{theorem}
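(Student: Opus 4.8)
The plan is to deduce both statements from the thesis classification of complexes of finitely generated projectives (Theorems \ref{theorem:from-thesis-I}, \ref{theorem:from-thesis-II} and \ref{theorem:from-thesis-III}) by passing from a module to its projective resolution, and to translate back using the explicit resolutions of string and band modules supplied by Theorem \ref{theorem:resolutions}. Throughout I use that $\Lambda$ is module-finite over the complete local noetherian ring $R$, so $\Lambda$ is semiperfect and noetherian; hence every finitely generated $\Lambda$-module $M$ admits a minimal projective resolution $P_\bullet\to M$ whose terms $P_i$ are finitely generated (bounded above, possibly of infinite length), and $P_\bullet$ therefore lies in the homotopy category to which the thesis classification applies.

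For part~(1) I would start with such a resolution $P_\bullet$ and apply the classification to obtain a homotopy equivalence $P_\bullet\simeq\bigoplus_j X_j$ in which each $X_j$ is a string complex or a band complex. Since homology is a homotopy invariant commuting with direct sums, and $P_\bullet$ is a resolution of $M$ (so $H_n(P_\bullet)=0$ for $n\neq 0$ and $H_0(P_\bullet)=M$), one gets $\bigoplus_j H_n(X_j)=0$ for $n\neq 0$; as homology of a direct sum cannot cancel across summands, each $X_j$ has homology concentrated in degree $0$. The essential input is then the converse reading of Theorem \ref{theorem:resolutions}: a string (resp.\ band) complex with homology concentrated in a single degree is, that degree being forced to be $0$ here, the minimal projective resolution of the string module $M(C)$ (resp.\ band module $M(C,V)$) attached to the corresponding word. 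Granting this, $H_0(X_j)$ is a string or band module, and taking $H_0$ of the equivalence yields $M\cong\bigoplus_j H_0(X_j)$, a direct sum of string and band modules.

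For part~(2) I would compare endomorphism rings across the resolution, writing $K$ and $D$ for the homotopy and derived categories of complexes over $\Lambda$. If $M$ is a string module $M(C)$, or a band module $M(C,V)$ with $V$ indecomposable, then by Theorem \ref{theorem:resolutions} its minimal resolution $P_\bullet$ is the corresponding string or band complex, which is one of the indecomposable objects in the thesis classification. Because $P_\bullet$ is a bounded-above complex of projectives it is $K$-projective, so $\mathrm{Hom}_K(P_\bullet,P_\bullet)\xrightarrow{\ \sim\ }\mathrm{Hom}_D(P_\bullet,P_\bullet)$, and in $D$ one has $P_\bullet\cong M$, whence $\mathrm{End}_D(P_\bullet)\cong\mathrm{End}_\Lambda(M)$; combining these gives $\mathrm{End}_\Lambda(M)\cong\mathrm{End}_K(P_\bullet)$. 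The relevant category is Krull--Remak--Schmidt over the complete base (part of the thesis classification), so the indecomposable object $P_\bullet$ has local endomorphism ring; therefore $\mathrm{End}_\Lambda(M)$ is local and $M$ is indecomposable.

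The step I expect to be the main obstacle is the converse identification invoked in part~(1): controlling the homology of an \emph{arbitrary} string or band complex well enough to conclude that single-degree homology forces it to be a genuine resolution of the expected string or band module, rather than a syzygy-shifted or otherwise degenerate complex. This requires pinning down exactly which words index complexes that are resolutions and verifying the homology computation in the completed, $R$-relative setting, generalising the finite-dimensional calculations of \c{C}anak\c{c}{\i}, Pauksztello and Schroll. Minimality of the resolution -- the entries of the differentials lying in the radical, which holds precisely because $R$ is local and complete -- is what rules out spurious contractible summands and makes the matching between the atoms $X_j$ and the string and band modules bijective.
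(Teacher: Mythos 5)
Your part~(1) follows the paper's own route almost exactly: take a minimal projective resolution $P_{M}$, decompose it in $\mathcal{K}(\Lambda\text{-}\boldsymbol{\mathrm{proj}})$ via Theorem \ref{theorem:from-thesis-I}, observe that each summand inherits the property of being a point-wise-finite resolution concentrated in degree $0$, and identify the degree-zero homology of each summand as a string or band module. You have also located the real work correctly: the ``converse reading'' you invoke is not literally contained in Theorem \ref{theorem:resolutions} but is exactly what \S\S\ref{section:Resolutions-and-string-and-band-complexes}--\ref{section:words-and-corresponding-generalised-words} establish, namely that an aperiodic (resp.\ periodic) generalised word whose complex is a point-wise-finite resolution must be a shift of a string resolution (Lemma \ref{lemma:string-complex-resolution-given-by-a-certain-form-II}) or a band resolution (Lemma \ref{lemma:band-complex-resolution-given-by-a-certain-form}), together with the explicit homology computations and the bijections of Corollaries \ref{corollary:bijection-for-strings} and \ref{corollary:bijection-for-bands}. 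Two small points you gloss over: the summands produced by Theorem \ref{theorem:from-thesis-I} are \emph{shifts} of string and band complexes, so the degree bookkeeping via $\mathscr{H}_{\mathcal{C}}$ and the interval is needed to land in degree $0$; and ``homology concentrated in degree $0$'' must be supplemented by the vanishing of the summand in positive degrees (automatic here, but part of the definition of ``resolution in degree $0$'' that the lemmas actually use).

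For part~(2) your route differs from the paper's. The paper argues directly: a decomposition $M\simeq L\oplus N$ lifts to $P_{M}\simeq P_{L}\oplus P_{N}$ in $\mathcal{K}(\Lambda\text{-}\boldsymbol{\mathrm{proj}})$, and since $P_{M}$ is a (shift of a) string or band complex it is indecomposable by Theorem \ref{theorem:from-thesis-I}(2), so one of $P_{L},P_{N}$ is null-homotopic and the corresponding module vanishes because homotopy equivalences are quasi-isomorphisms. Your version via $\mathrm{End}_{\Lambda}(M)\simeq\mathrm{End}_{\mathcal{K}}(P_{\bullet})$ also works, but the step ``indecomposable in a Krull--Remak--Schmidt category, hence local endomorphism ring'' is not what Theorems \ref{theorem:from-thesis-I}(2) or \ref{theorem:from-thesis-III} assert (the latter is a uniqueness-of-decomposition statement, not a locality statement). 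What you actually need, and what does follow, is only the absence of nontrivial idempotents in $\mathrm{End}_{\mathcal{K}}(P_{\bullet})$, which indecomposability gives since idempotents split in $\mathcal{K}(\Lambda\text{-}\boldsymbol{\mathrm{Proj}})$; alternatively, locality of $\mathrm{End}_{\Lambda}(M)$ can be recovered from its semiperfectness as a module-finite $R$-algebra. The paper's argument is the more elementary of the two and avoids the derived category entirely.
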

\begin{theorem}
\label{theorem:main-isomorphisms-between-string-and-band-modules}Let $\Lambda$ be a complete gentle algebra, $C,E$ be words and $V,W$ lie in $R[T,T^{-1}]\text{-}\boldsymbol{\mathrm{Mod}}_{R\text{-}\boldsymbol{\mathrm{Proj}}}$.
\begin{enumerate}
\item If $C,E$ are string words then $M(C)\simeq M(E)$ if and only if $E=C[n]$ or $E=C^{-1}[n]$ for some $n$.
\item If $C,E$ are band words then $M(C,V)\simeq M(E,W)$ if and only if \emph{(}$E=C[n]$, $k\otimes_{R}V\simeq k\otimes_{R}W$\emph{)} or \emph{(}$E=C^{-1}[n]$, $k\otimes_{R}V\simeq k\otimes_{R}\mathrm{res} \,W$\emph{)} for some $n$.
\item If $C$ is a string word and $E$ is a band word then $M(C)\not\simeq M(E,V)$.
\end{enumerate}
\end{theorem}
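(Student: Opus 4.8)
The plan is to reduce every statement to the classification of complexes up to homotopy established in Theorems \ref{theorem:from-thesis-I}, \ref{theorem:from-thesis-II} and \ref{theorem:from-thesis-III}, by passing to projective resolutions via Theorem \ref{theorem:resolutions}. Concretely, Theorem \ref{theorem:resolutions} identifies a projective resolution of a string module $M(C)$ with a string complex $P(\widehat{C})$, and a projective resolution of a band module $M(C,V)$ with a band complex $P(\widehat{C},V)$, where $\widehat{C}$ denotes the word produced from $C$ by the (generalised \c{C}anak\c{c}{\i}--Pauksztello--Schroll) resolution recipe. Since an isomorphism of modules induces a homotopy equivalence of any chosen projective resolutions, and conversely homotopy-equivalent complexes have isomorphic homology in each degree and hence isomorphic zeroth homology, an isomorphism of the modules is \emph{equivalent} to a homotopy equivalence of their resolution complexes. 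The three parts then follow by reading off the combinatorial classification of string and band complexes and translating the resulting relations on $\widehat{C},\widehat{E}$ back to relations on $C,E$.

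For the ``if'' directions I would first record the canonical isomorphisms already visible at the level of modules, so that no appeal to resolutions is needed there. A shift $C[n]$ only reindexes the $C$-peaks, giving an isomorphism $M(C)\to M(C[n])$ sending $g_{C,i}$ to $g_{C[n],i+n}$ up to the appropriate offset, and the inverse $C^{-1}$ produces the mirror-image presentation, yielding the canonical isomorphism $M(C)\to M(C^{-1})$ illustrated in Example \ref{example:intro-string-module}; composing these gives one direction of (1). For bands the same shift and inversion isomorphisms apply, with inversion additionally swapping the roles of $T$ and $T^{-1}$, which is precisely the isomorphism $M(C,V)\simeq M(C^{-1},\mathrm{res}\,V)$ of Lemma \ref{lemma:equivalence-relation-on-words-bands}; combined with the fact (to be proved via resolutions) that $M(C,V)$ depends on $V$ only through $k\otimes_{R}V$, this gives the ``if'' direction of (2).

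For the ``only if'' directions, suppose $M(C)\simeq M(E)$, respectively $M(C,V)\simeq M(E,W)$. Lifting to resolutions and applying Theorem \ref{theorem:from-thesis-II} forces the complexes $P(\widehat{C})$ and $P(\widehat{E})$, respectively the two band complexes, to be homotopy equivalent, which by the classification means $\widehat{E}=\widehat{C}[m]$ or $\widehat{E}=\widehat{C}^{-1}[m]$ for some $m$, and, in the band case, that the Laurent-module data of the two complexes agree after base change to $k$. The key remaining step is to show that the recipe $C\mapsto\widehat{C}$ is injective modulo the equivalence generated by shift and inversion: that is, a shift or inverse relation between the resolution words $\widehat{C},\widehat{E}$ descends to the corresponding relation $E=C[n]$ or $E=C^{-1}[n]$ between the original words. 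I expect to establish this by proving the recipe equivariant for shifts and inverses and that it admits a well-defined inverse on its image, recovering $C$ from the peaks and valleys of $\widehat{C}$, so that the two equivalence relations correspond under $C\mapsto\widehat{C}$.

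Part (3) is the cleanest: the resolution of a string module is a string complex while that of a band module is a band complex, and by the thesis classification no string complex is homotopy equivalent to a band complex, equivalently band complexes are distinguished by their periodicity and the nontrivial $R[T,T^{-1}]$-action of Lemma \ref{lemma:periodic-string-module-has-T-automorphism}, which string complexes never carry; hence $M(C)\not\simeq M(E,V)$. The main obstacle throughout is the band case of the ``only if'' direction in (2): I must track exactly how the object $V$ enters the band complex and verify that the homotopy classification records $V$ only through $k\otimes_{R}V$ as a $k[T,T^{-1}]$-module, so that the correct invariant on the module side is $k\otimes_{R}V$, and $k\otimes_{R}\mathrm{res}\,W$ under inversion, rather than $V$ itself. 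Reconciling this reduction modulo $\mathfrak{m}$ with the $R$-linear structure of the resolution is the delicate point.
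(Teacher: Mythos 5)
Your proposal is correct and follows essentially the same route as the paper: pass to projective resolutions (unique up to homotopy since $\Lambda$ is semiperfect), invoke the classification of string and band complexes from Theorems \ref{theorem:from-thesis-I}--\ref{theorem:from-thesis-III}, and descend the resulting shift/inverse relations on the resolution words back to the original words via the equivariance and bijectivity of the recipe $C\mapsto\mathcal{R}_{C}$ (the paper's Lemmas \ref{lemma:homology-word-of-a-shift-or-inverse-strings}, \ref{lemma:homology-of-resolution-is-the-identity}, \ref{lemma:R-C-under-shits-and-inverses}, \ref{lemma:homology-word-of-a-shift-or-inverse-bands} and Corollaries \ref{corollary:bijection-for-strings}, \ref{corollary:bijection-for-bands}). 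The ``key remaining step'' and the ``delicate point'' about $V$ entering only through $k\otimes_{R}V$ that you flag are exactly the ingredients the paper supplies.
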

\begin{theorem}
\label{theorem:main-krull-remak-schmidt-property} Let $\Lambda$ be a complete gentle algebra. If a finitely generated $\Lambda$-module may be written as a direct sum of finitely generated string and band modules in two different ways, then there is an isoclass
preserving bijection between the summands.
\end{theorem}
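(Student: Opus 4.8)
The plan is to deduce the statement from the abstract Krull--Remak--Schmidt--Azumaya theorem by showing that the category $\Lambda\text{-}\mathrm{mod}$ of finitely generated $\Lambda$-modules is a Krull--Schmidt category; the essential input is that a complete gentle $R$-algebra is module-finite over $R$. Granting this, every finitely generated $\Lambda$-module is finitely generated over the noetherian ring $R$, so $\mathrm{Hom}_{\Lambda}(M,N)$ is a finitely generated $R$-module for all finitely generated $M,N$, being an $R$-submodule of the finitely generated module $\mathrm{Hom}_{R}(M,N)$. In particular $A=\mathrm{End}_{\Lambda}(M)$ is a module-finite $R$-algebra for every such $M$. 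Since $R$ is local with maximal ideal $\mathfrak{m}$ we have $\mathfrak{m}A\subseteq\mathrm{rad}(A)$, so $A/\mathrm{rad}(A)$ is a quotient of the finite-dimensional $k$-algebra $A/\mathfrak{m}A$ and is therefore semisimple artinian; and because $A$ is a finitely generated module over the $\mathfrak{m}$-adically complete ring $R$ it is itself $\mathfrak{m}$-adically complete, which lets us lift idempotents first modulo $\mathfrak{m}A$ and then, using nilpotence of the radical of $A/\mathfrak{m}A$, modulo $\mathrm{rad}(A)$. Thus $A$ is semiperfect.

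First I would record that idempotents split in $\Lambda\text{-}\mathrm{mod}$: as $\Lambda$ is noetherian its finitely generated modules form an abelian, hence idempotent-complete, category. Combined with the semiperfectness just established, this makes $\Lambda\text{-}\mathrm{mod}$ a Krull--Schmidt category. Concretely, if $X$ is an indecomposable finitely generated module then $\mathrm{End}_{\Lambda}(X)$ is semiperfect and has no nontrivial idempotents, so it is local. By Theorem~\ref{theorem:main-fin-gen-modules-are-sums-of-strings-and-bands}(2) every finitely generated string or band module is indecomposable, and therefore has local endomorphism ring.

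It then follows that any expression of a finitely generated $\Lambda$-module as a direct sum of finitely generated string and band modules is a decomposition into indecomposables with local endomorphism rings. Given two such decompositions, Azumaya's theorem supplies a bijection between their index sets carrying each summand of one decomposition to an isomorphic summand of the other, which is exactly the asserted isoclass-preserving bijection. Note that Theorem~\ref{theorem:main-isomorphisms-between-string-and-band-modules} is not needed for this bare statement, though it may be invoked afterwards to describe the bijection explicitly in terms of the combinatorial data $C,E,V,W$.

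The main obstacle is the verification that $\Lambda\text{-}\mathrm{mod}$ is Krull--Schmidt, and within that the only genuinely non-formal point is the lifting of idempotents, which is where $\mathfrak{m}$-adic completeness of $R$ is essential; without completeness the conclusion can fail (as it does already for gentle orders over a non-complete base). Everything downstream of the Krull--Schmidt property---indecomposability of the summands via Theorem~\ref{theorem:main-fin-gen-modules-are-sums-of-strings-and-bands}(2) and the passage to Azumaya's theorem---is then immediate.
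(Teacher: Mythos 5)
Your proof is correct, but it takes a genuinely different route from the paper. The paper proves this theorem by passing to projective resolutions: it replaces each string or band summand by the string or band complex resolving it (Corollaries \ref{corollary:bijection-for-strings} and \ref{corollary:bijection-for-bands}), observes that an isomorphism of finitely generated modules induces a homotopy equivalence of resolutions (Remark \ref{remark:complete-gentle-semiperfect}), and then invokes the Krull--Remak--Schmidt-type statement for complexes, Theorem \ref{theorem:from-thesis-III}, together with Theorem \ref{theorem:from-thesis-II}(3), before translating back via quasi-isomorphism. You instead establish directly that $\Lambda\text{-}\boldsymbol{\mathrm{mod}}$ is a Krull--Schmidt category --- using module-finiteness of $\Lambda$ over the complete local noetherian ring $R$ to get semiperfect endomorphism rings, hence local endomorphism rings for indecomposables --- and then feed Theorem \ref{theorem:main-fin-gen-modules-are-sums-of-strings-and-bands}(2) into Azumaya's theorem. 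Your use of Theorem \ref{theorem:main-fin-gen-modules-are-sums-of-strings-and-bands}(2) is not circular, since that statement is proved earlier in \S\ref{section:proofs-of-main}. What your argument buys is a more elementary and self-contained deduction that isolates exactly where completeness of $R$ is used (lifting idempotents), and it bypasses Theorem \ref{theorem:from-thesis-III} entirely; what the paper's route buys is uniformity with the rest of \S\ref{section:proofs-of-main} (everything is read off from the homotopy-category classification) and, as a byproduct, the finer fact that the bijection matches string summands with string summands and band summands with band summands --- which in your setup would require an extra appeal to Theorem \ref{theorem:main-isomorphisms-between-string-and-band-modules}(3), though the bare statement does not demand it.
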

As a consequence of the methods we use, we also obtain the following result. 
\begin{theorem}\label{theorem:resolutions}\emph{(see \cite[\S2, Corollary 3.13]{CanPauSch2021})} Let $\Lambda$ be a complete gentle algebra. Let $C$ be a string \emph{(}respectively, band\emph{)} word. Then there is algorithm for calculating a projective resolution of the string module $M(C)$ \emph{(}respectively, band module $M(C,V)$\emph{)}. In particular, any band module has projective dimension $1$.
\end{theorem}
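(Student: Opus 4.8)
The plan is to attach to each string (respectively, band) word $C$ an explicit complex $P^{\bullet}(C)$ (respectively, $P^{\bullet}(C,V)$) of finitely generated projective $\Lambda$-modules, built directly from the combinatorics of $C$, and then to show that this complex is a projective resolution of $M(C)$ (respectively, $M(C,V)$). The construction itself \emph{is} the promised algorithm, since every term and differential is read off from the letters of $C$ and the relations defining $\Lambda$; and the sought generalisation of the \c{C}anak\c{c}{\i}--Pauksztello--Schroll calculation \cite{CanPauSch2021} is precisely the verification that this combinatorially defined complex resolves the module.

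First I would set up the complex. Recall from Definition \ref{definition:string-modules-by-string-words} (and the model in Example \ref{example:intro-string-module}) that $M(C)=N(C)/{\sim}$ is a quotient of the projective $N(C)=\bigoplus_{i}\Lambda g_{C,i}$ indexed by the $C$-peaks, subject to relations of the form $p\,g_{C,i}=q\,g_{C,j}$ coming from paths $p,q$ in $Q$ (together with the infinite tails, such as the $y^{\infty}$ behaviour of Example \ref{example:intro-string-module}). Taking $P^{0}=N(C)$ with its canonical surjection onto $M(C)$, the first syzygy is generated by the elements $p\,g_{C,i}-q\,g_{C,j}$; the gentle hypotheses (Definition \ref{definition:complete-gentle-algebra}) guarantee that at each such relation point there is a unique way to continue the walk downward, so these syzygy generators are again indexed by combinatorial data of $C$ and assemble into a projective cover $P^{1}\to P^{0}$. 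Iterating produces $P^{\bullet}(C)$, and the band case is identical except that the cyclic word of Definition \ref{definition:cyclic-words} forces the construction to wrap around.

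To identify the homology I would invoke the main classification from the thesis, Theorems \ref{theorem:from-thesis-I}, \ref{theorem:from-thesis-II} and \ref{theorem:from-thesis-III}: the complex $P^{\bullet}(C)$ is a complex of finitely generated projectives, hence is classified up to homotopy by string/band data, and this lets one read off that its homology is concentrated in degree $0$ and equal to $M(C)$. For the band statement I would first treat the universal case $V=R[T,T^{-1}]$, arguing that because $C$ is cyclic and the walk closes up, the syzygy of $P^{0}=N(C)$ is again projective: the relation generators form a single cycle with no further relations among them, so $M(C)$ admits a two-term resolution by $(\Lambda,R[T,T^{-1}])$-bimodules whose terms are free as right $R[T,T^{-1}]$-modules. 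As $M(C)$ is then flat over $R[T,T^{-1}]$ (indeed free, via the $T$-action of Lemma \ref{lemma:periodic-string-module-has-T-automorphism}), applying $-\otimes_{R[T,T^{-1}]}V$ preserves exactness and produces a length-one projective resolution of $M(C,V)=M(C)\otimes_{R[T,T^{-1}]}V$; hence every band module has projective dimension $1$ (and it is nonzero, as band modules are not projective).

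The hard part will be establishing exactness of $P^{\bullet}(C)$ in the present generality. In the field case of \cite{CanPauSch2021} exactness is a finite homology computation, but here two new features intervene. The admissible words may be genuinely infinite --- for instance the $y^{\infty}$-tails of Example \ref{example:intro-string-module}, which arise from the completion --- so checking exactness at such a spot is not finite linear algebra but requires an $\mathfrak{m}$-adic completeness argument to control the infinite descent. Moreover the coefficients lie in an arbitrary complete local noetherian ring $R$ rather than a field, so the differentials must be analysed over $R$; I expect the classification of Theorems \ref{theorem:from-thesis-I}--\ref{theorem:from-thesis-III} to be exactly the tool that converts these potentially delicate exactness verifications into a matching of combinatorial data, thereby sidestepping a direct term-by-term homology calculation.
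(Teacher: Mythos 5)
Your overall architecture matches the paper's: attach to $C$ a combinatorially defined complex, identify its degree-zero homology with $M(C)$, and for bands obtain a two-term complex so that projective dimension $\leq 1$ falls out. The paper does this by defining a generalised word $\mathcal{R}_{C}$ outright (Definitions \ref{definition:string-res-of-a-word} and \ref{definition:band-res-of-a-word}, using the maximal extensions $(\nwarrow)$, $(\nearrow)$) rather than by iterating projective covers, and then proves separately that $P(\mathcal{R}_C)$ is exact away from one degree and that its homology there is $M(C)$ (Lemmas \ref{lemma:string-complex-given-by-string-resolution-is-a-resolution}, \ref{lemma:homology-of-resolution-gives-iso-from-string}, \ref{lemma:band-complex-given-by-band-resolution-is-a-resolution}, \ref{lemma:homology-of-resolution-gives-iso-from-band}, packaged as Corollaries \ref{corollary:bijection-for-strings} and \ref{corollary:bijection-for-bands}).

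The genuine gap is in your treatment of exactness, which you yourself flag as the hard part and then propose to outsource to Theorems \ref{theorem:from-thesis-I}--\ref{theorem:from-thesis-III}. Those theorems classify objects of $\mathcal{K}(\Lambda\text{-}\boldsymbol{\mathrm{proj}})$ up to homotopy equivalence as sums of string and band complexes; they say nothing about the homology of any individual string or band complex, so knowing that your complex ``is classified by string/band data'' cannot tell you that its homology is concentrated in degree $0$, let alone that it equals $M(C)$. The paper never uses the classification for Theorem \ref{theorem:resolutions}: exactness of string complexes is proved by an explicit formula for $\ker(d^{n}_{P(\mathcal{C})})$ imported from the thesis (Lemma \ref{lemma:kernel-of-string-complexes}), fed into Lemma \ref{lemma:string-complex-given-by-string-resolution-is-a-resolution}, and exactness of band complexes is a direct injectivity computation for $d^{-1}_{P(\mathcal{C},V)}$ in Lemma \ref{lemma:band-complex-given-by-band-resolution-is-a-resolution}, resting on the gentle conditions via Lemma \ref{lemma:technical-comp-gen-props}. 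Relatedly, your band argument asserts that $M(C)$ is flat (indeed free) over $R[T,T^{-1}]$ and deduces that $-\otimes_{R[T,T^{-1}]}V$ preserves exactness; having a two-term resolution by modules free over $R[T,T^{-1}]$ gives projective dimension $\leq 1$ over that ring, not flatness, and freeness of the quotient $N(C)/L(C)$ over $R[T,T^{-1}]$ is not justified. The paper avoids this by proving injectivity of the differential after tensoring, for every $V$ that is free over $R$. Without the kernel computation (or an equivalent direct argument) your proposal does not close.
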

The article is organised as follows. In \S\ref{section:complete-gentle-algebras} we recall the definition of a complete gentle $R$-algebra from \cite{Ben2016}, and give some examples and properties. In \S\ref{section:string-and-band-modules} we define string and band modules over these algebras. In \S\ref{section:string-and-band-complexes} we recall the definition of \emph{string complexes} and \emph{band complexes} from \cite{Ben2016}. In \S\ref{section:Resolutions-and-string-and-band-complexes} we characterise the string and band complexes which arise as projective resolutions. In \S\ref{section:words-and-corresponding-generalised-words} we give the algorithm referred to in the statement of Theorem \ref{theorem:resolutions}. In \S\ref{section:proofs-of-main} we provide proofs for Theorems \ref{theorem:main-fin-gen-modules-are-sums-of-strings-and-bands}, \ref{theorem:main-isomorphisms-between-string-and-band-modules}, \ref{theorem:main-krull-remak-schmidt-property} and \ref{theorem:resolutions}
\section{Complete gentle algebras}\label{section:complete-gentle-algebras}
\begin{assumption}\label{assumption:first-assumptions}In all that remains we assume we have fixed: 
\begin{enumerate}
\item a commutative noetherian complete local ring $(R,\mathfrak{m},k)$;
\item a finite quiver $Q$ with path algebra $RQ$ and an ideal $\mathscr{I}\triangleleft RQ$ generated by a set of length $2$ paths; 
\item and a surjective $R$-algebra homomorphism $\vartheta\colon RQ\rightarrow\Lambda$ satisfying $\mathscr{I}\subseteq\ker(\vartheta)$ and $\vartheta(\gamma)\neq0$ for any path $\gamma\notin\mathscr{I}$. Notation is abused by writing $\gamma$ for $\vartheta(\gamma)$.
\end{enumerate}
\end{assumption}
Conditions (2) and (3) from Assumption \ref{assumption:first-assumptions} imply that $\vartheta(e_{v})\neq0\neq\vartheta(x)$ for any vertex $v$ and arrow $x$.
\begin{notation}\label{notation:paths-P-arrows-A} Let $\mathbf{P}$ be the set of non-trivial paths $\gamma\notin \mathscr{I}$ with head $h(\gamma)$ and tail $t(\gamma)$. For each $l>0$ and each vertex $v$, let $\mathbf{P}(l,v\rightarrow)$
(respectively $\mathbf{P}(l,\rightarrow v)$) be the set of paths $\gamma\in\mathbf{P}$ of length $l$ with $t(\gamma)=v$ (respectively $h(\gamma)=v$). Let $\mathbf{A}$ be the set of arrows in $Q$, $\mathbf{A}(v\rightarrow)=\mathbf{P}(1,v\rightarrow)$ and  $\mathbf{A}(\rightarrow v)=\mathbf{P}(1,\rightarrow v)$. The composition of $y\in\mathbf{A}(\rightarrow v)$ and $x\in\mathbf{A}(u\rightarrow)$ is $xy$ if $u=v$, and $0$ otherwise. 
\end{notation}
For the most part, elements of $\mathbf{A}$ are denoted by lower case letters of the English alphabet, and elements of $\mathbf{P}$ will be denoted by lower case letters of the Greek alphabet.
\begin{definition}\label{definition:complete-gentle-algebra}
\cite[Definitions 2.3, 2.5, 2.12]{Ben2016} We say  $\Lambda$ is a \emph{complete gentle $R$-algebra} if the following holds.
\begin{enumerate}
    \item For any vertex $v$ (of $Q$) we have $\vert\mathbf{A}(v\rightarrow)\vert\leq2\geq\vert\mathbf{A}(\rightarrow v)\vert$.
\item For any arrow $y$ we have 
\[
\vert\{ x\in \mathbf{A}(h(y)\rightarrow)\colon xy\in\mathbf{P}\}\vert\leq 1\geq\vert\{ z\in \mathbf{A}(\rightarrow t(y))\colon yz\in\mathbf{P}\}\vert.
\]
\item For any arrow $y$ we have 
\[
\vert\{ x\in \mathbf{A}(h(y)\rightarrow)\colon xy\notin\mathbf{P}\}\vert\leq 1\geq \vert\{ z\in \mathbf{A}(\rightarrow t(y))\colon yz\notin\mathbf{P}\}\vert.
\]
\item For each vertex $v$ the $\Lambda$-module $\Lambda e_{v}$ (respectively  $e_{v}\Lambda$) has a unique maximal submodule given by
\[\begin{array}{cc}
\mathrm{rad}(\Lambda e_{v})=\sum_{x\in\mathbf{A}(v\rightarrow)}\Lambda x & (\text{respectively, }\mathrm{rad}(e_{v}\Lambda )=\sum_{x\in\mathbf{A}(\rightarrow v)}x\Lambda)
\end{array}
\]
\item For any $x\in\mathbf{A}$ the $R$-modules $\Lambda x$ and $x\Lambda$ are finitely generated.
\item For any distinct $x,y\in\mathbf{A}$ we have $\Lambda x\cap \Lambda y=0$ and $x\Lambda \cap y\Lambda=0$.
\end{enumerate}
\end{definition}
By \cite[Corollary 1.2.11]{Ben2018} a complete gentle algebra over a field is the same thing as a finite-dimensional gentle algebra as introduced by Assem and Skowro\'{n}ski \cite{AssSko1986}. For Remark \ref{remark:props-of-complete-gen-algs} we recall some notation.
\begin{definition}\label{definition:firstlastarrows}\cite[Notation 2.10]{Ben2016} Let $\Lambda$ be a complete gentle algebra. By condition (6) of Definition \ref{definition:complete-gentle-algebra} any $\gamma\in\mathbf{P}$ has a unique \emph{first arrow} $\mathrm{f}(\gamma)\in\mathbf{A}$ satisfying $\eta\mathrm{f}(\gamma)=\gamma$ for some (possibly trivial) path $\eta\notin\mathscr{I}$. Likewise $\gamma\in\mathbf{P}$ has a unique \emph{last arrow} $\mathrm{l}(\gamma)\in\mathbf{A}$ satisfying $\mathrm{l}(\gamma)\mu=\gamma$
for a path $\mu\notin\mathscr{I}$.
\end{definition}
\begin{remark}\label{remark:props-of-complete-gen-algs} Here we detail consequences of each condition from Definition \ref{definition:complete-gentle-algebra}. Item (1) below is the statement of \cite[Lemma 1.1.14]{Ben2018}. Item (2) is essentially the statement and proof of \cite[Corollary 1.1.25]{Ben2018}.
\begin{enumerate}
\item Suppose conditions (1) and (2) of Definition \ref{definition:complete-gentle-algebra} hold. For any  $\gamma,\mu\in\mathbf{P}$ where $\gamma$ is not shorter than $\mu$, if $\mathrm{f}(\gamma)=\mathrm{f}(\mu)$ (respectively  $\mathrm{l}(\gamma)=\mathrm{l}(\mu)$) then $\gamma=\eta\mu$ (respectively $\gamma=\mu\eta$) for some path $\eta\notin\mathscr{I}$.
\item Suppose condition (4) of Definition \ref{definition:complete-gentle-algebra} holds. Note that this means the set of $e_{v}$ (as $v$ runs through the vertices in $Q$) is a complete set of local idempotents. By the surjectivity of $\vartheta$ in condition (3) of Assumption \ref{assumption:first-assumptions}, any element of $\Lambda e_{v}/\mathrm{rad}(\Lambda e_{v})$ is a coset of the form $re_{v}+\sum_{x\in\mathbf{A}(v\rightarrow)}\Lambda x$ for some $r\in R$. So, in this case there is a surjective $R$-module homomorphism $R\to\Lambda e_{v}/\mathrm{rad}(\Lambda e_{v})$ given by sending $r$ to this coset. This shows that the quotient $\Lambda/\mathrm{rad}(\Lambda)$ is module finite over $R$.
\item Suppose conditions (4) and (5) from Definition \ref{definition:complete-gentle-algebra} hold. By combining these conditions with (2) above, the first and last terms of the exact sequence $0\to\mathrm{rad}(\Lambda)\to\Lambda\to \Lambda/\mathrm{rad}(\Lambda)\to 0$ are finitely generated $R$-modules. Hence $\Lambda$ is module finite over $R$. In particular, we have the following.
\begin{enumerate}
    \item By \cite[Proposition 20.6]{Lam1991}, the ring $\Lambda$ is (both noetherian, and) semilocal; and $(\mathrm{rad}(\Lambda))^{n}\subseteq\Lambda \mathfrak{m}\subseteq \mathrm{rad}(\Lambda)$ for some integer $n\geq1$. Hence the $R$-module epimorphism from (2) above give isomorphisms of $R$-modules
    \[
    \Lambda e_{v}/\mathrm{rad}(\Lambda)e_{v}\simeq k\simeq e_{v}\Lambda/e_{v}\mathrm{rad}(\Lambda).
    \]
    \item Consequently, and by Krull's intersection theorem \cite[Ex. 4.23]{Lam1991},  
\[
\begin{array}{c}
\bigcap_{n>0}(\mathrm{rad}(\Lambda))^{n}M\subseteq \bigcap_{n>0}\mathfrak{m}^{n}M=0
\end{array}
\]
for any $\Lambda$-module $M$ which is module-finite over $R$. In particular, the above holds for any $M$ which is module finite over $\Lambda$.
\end{enumerate}
\end{enumerate}
\end{remark}
\begin{remark}\label{remark:propsofqbspecialbiserials2}
We now compare Definition \ref{definition:complete-gentle-algebra} with \cite[Definition 2.12]{Ben2016}.
\begin{enumerate}
    \item Assumption \ref{assumption:first-assumptions} is exactly \cite[Assumption 2.1]{Ben2016} together with the additional assumptions that $R$ is $\mathfrak{m}$-adically complete and that $\mathscr{I}$ is quadratic (that is, generated by paths of length equal to $2$). Assuming \cite[Assumption 2.1]{Ben2016} holds, that $\mathscr{I}$ is quadratic is equivalent to condition GI of \cite[Definition 2.3]{Ben2016}. Conditions SPI, SPII and GII of \cite[Definition 2.3]{Ben2016} are exactly conditions (1), (2) and (3) (of Definition \ref{definition:complete-gentle-algebra}). Likewise conditions (iii), (iv) and (v) from \cite[Definition 2.5]{Ben2016} are exactly conditions (4), (5) and (6).
    \item Note that the condition (ii) from \cite[Definition 2.5]{Ben2016} is the statement that $(\mathrm{rad}(\Lambda))^{n}\subseteq\Lambda \mathfrak{m}\subseteq \mathrm{rad}(\Lambda)$ for some $n\geq 1$. Thus, by Remark \ref{remark:props-of-complete-gen-algs}(3a), condition (ii) from \cite[Definition 2.5]{Ben2016} holds if both conditions (3) and (4) from Definition \ref{definition:complete-gentle-algebra} hold.
\item By combining the above we can show that Definition \ref{definition:complete-gentle-algebra} and \cite[Definition 2.12]{Ben2016} are equivalent.
\end{enumerate}
In Definition \ref{definition:with-no-or-full-relations} we recall terminology used by Bessenrodt and Holm \cite{BesHol2008}.
\begin{definition}\cite[\S 1.1]{BesHol2008}\label{definition:with-no-or-full-relations} Let $\Lambda$ be a complete gentle algebra. Let $\gamma=x_{n}\dots x_{1}$ be an (oriented) \emph{cycle in} $Q$, meaning that $h(x_{n})=t(x_{1})$ and $h(x_{i})=t(x_{i+1})$ when $i<n$. We call $\gamma$ a \emph{cycle with no relations} if $x_{1}x_{n}\notin\mathscr{I}$ and $x_{i+1}x_{i}\notin\mathscr{I}$ when $i<n$. We call a cycle $\gamma$ with no relations \emph{primitive} if $\gamma$ is not the power of another cycle with no relations.
\end{definition}
By Assumption \ref{assumption:first-assumptions} the orientated cycles in $\mathbf{P}$ are the cycles with no relations. 
\begin{example}\label{example:path-complete-gentles}
Let $k[[t]]$ be the ring of power series in one variable $t$ over the field $k$. Here we recall \cite[Remark 2.6]{Ben2016}, in which examples of complete gentle $k[[t]]$-algebras are given, which arise as examples of \emph{completed string algebras} in the sense of the Ph.D thesis of Ricke \cite{Ric2017}. Let $R=k[[t]]$ and let $Q$ be a quiver and $\mathscr{I}\triangleleft RQ$ be an ideal generated by length $2$ paths where: any vertex of $Q$ is the head of at most two arrows, and the tail of at most two arrows; any for any arrow $y$ there is at most one arrow $x$ with $t(x)=h(y)$ and $xy\in\mathscr{I}$ (respectively $z$ with $h(z)=t(y)$ and $yz\in\mathscr{I}$) and at most one arrow $x'$ with $t(x')=h(y)$ and $x'y\notin\mathscr{I}$ (respectively $z'$ with $h(z')=t(y)$ and $yz'\notin\mathscr{I}$). Let $\Lambda$ be the quotient of the completion $\overline{kQ}$ of the path algebra $kQ$ modulo the completion $\overline{\mathscr{I}}$ of $\mathscr{I}$. Then by construction $\Lambda$ is a completed string algebra in the sense of \cite{Ric2017}. By letting $t$ act by the sum of the primitive cycles in $Q$, as in \cite[Remark 2.6]{Ben2016}, $\Lambda$ is a complete gentle $k[[t]]$-algebra; see \cite[Corollary 1.2.29]{Ben2018}. In Remark \ref{remark:comparing-terminology-used-by-Ricke} we explain how our main results are consistent with \cite[Theorem 3.3.48]{Ric2017}, since the modules in each of these classifications are indexed by the same words.
\end{example}
\end{remark}
Lemma \ref{lemma:qbsb-uniserial-cyclics-given-by-arrows} recalls a useful consequence of Definition \ref{definition:complete-gentle-algebra}.  We sketch the proof, as details are given in \cite{Ben2018}. 
\begin{lemma}\label{lemma:qbsb-uniserial-cyclics-given-by-arrows} \emph{(see \cite[Remark 2.9]{Ben2016}, \cite[Corollary 1.1.17]{Ben2018}).} Let $\Lambda$ be a complete gentle $R$-algebra and let $\gamma\in\mathbf{P}$.
\begin{enumerate}
\item Any non-trivial submodule of $\Lambda \gamma$ \emph{(}respectively, $\gamma\Lambda$\emph{)}
has the form $\Lambda \mu\gamma$ \emph{(}respectively, $\gamma\mu\Lambda$\emph{)} with $\mu\in\mathbf{P}$. 
\item The $\Lambda$-module $\Lambda \gamma$ \emph{(}respectively, $\gamma\Lambda$\emph{)} is local with radical $\mathrm{rad}(\Lambda)\gamma$ \emph{(}respectively, $\gamma\mathrm{rad}(\Lambda)$\emph{)} which is either $0$ or has the form $\Lambda x\gamma$ with $x\gamma\in\mathbf{P}$ \emph{(}respectively, $\gamma x\Lambda $ with $\gamma x\in\mathbf{P}$\emph{)} for some $x\in\mathbf{A}$.
\item For all $\mu\in\mathbf{P}$ with $\Lambda \gamma=\Lambda \mu$ and $\mathrm{f}(\gamma)=\mathrm{f}(\mu)$ \emph{(}respectively, $\gamma\Lambda=\mu\Lambda$ and $\mathrm{l}(\gamma)=\mathrm{l}(\mu)$\emph{)}
we have $\gamma=\mu$. 
\end{enumerate}
\end{lemma}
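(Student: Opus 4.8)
The three assertions are interdependent, so the plan is to prove them in the order (2), (1), (3), establishing only the left-module statements since the right-module versions follow by the symmetric argument. For (2), I would first note that $\gamma=e_{h(\gamma)}\gamma$, so $\lambda e_{h(\gamma)}\mapsto\lambda\gamma$ defines a surjection $\Lambda e_{h(\gamma)}\to\Lambda\gamma$; as $\gamma\in\mathbf{P}$ forces $\Lambda\gamma\neq 0$, this exhibits $\Lambda\gamma$ as a nonzero quotient of the local module $\Lambda e_{h(\gamma)}$ (condition (4) of Definition \ref{definition:complete-gentle-algebra}), hence local. Using semilocality of $\Lambda$ (Remark \ref{remark:props-of-complete-gen-algs}(3a)) together with condition (4), its radical is $\mathrm{rad}(\Lambda)\gamma=\mathrm{rad}(\Lambda)e_{h(\gamma)}\gamma=\sum_{x\in\mathbf{A}(h(\gamma)\to)}\Lambda x\gamma$. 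The key point is then to identify the surviving summands: since $\mathscr{I}$ is generated by paths of length $2$, a path lies in $\mathscr{I}$ exactly when it contains such a generator, so for $\gamma\notin\mathscr{I}$ one has $x\gamma\in\mathscr{I}$ if and only if the single new length-$2$ subpath $x\mathrm{l}(\gamma)$ lies in $\mathscr{I}$. Thus $\Lambda x\gamma\neq 0$ precisely when $x\mathrm{l}(\gamma)\in\mathbf{P}$, and condition (2) applied to $y=\mathrm{l}(\gamma)$ (noting $h(\mathrm{l}(\gamma))=h(\gamma)$) leaves at most one such $x$. This yields $\mathrm{rad}(\Lambda)\gamma=\Lambda x\gamma$ with $x\gamma\in\mathbf{P}$, or else $\mathrm{rad}(\Lambda)\gamma=0$.

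For (1), I would iterate (2): writing $\gamma_0=\gamma$, each nonzero $\mathrm{rad}(\Lambda)^i\gamma$ equals $\Lambda\gamma_i$ for a path $\gamma_i=x_i\cdots x_1\gamma\in\mathbf{P}$ of length $\mathrm{length}(\gamma)+i$, giving the radical filtration $\Lambda\gamma\supsetneq\Lambda\gamma_1\supsetneq\cdots$. Since $\Lambda\gamma$ is module-finite over $R$, Krull's intersection theorem in the form of Remark \ref{remark:props-of-complete-gen-algs}(3b) gives $\bigcap_i\mathrm{rad}(\Lambda)^i\gamma=0$. Now let $S$ be a proper nonzero submodule; as $\Lambda\gamma$ is local, $S\subseteq\mathrm{rad}(\Lambda)\gamma$, and because $S\neq 0$ the index set $\{i:S\subseteq\mathrm{rad}(\Lambda)^i\gamma\}$ is a finite initial segment $\{0,\dots,j\}$ with $j\geq 1$. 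Then $S\subseteq\Lambda\gamma_j$ but $S\not\subseteq\mathrm{rad}(\Lambda)\gamma_j$; since $\Lambda\gamma_j$ is again local by (2), its only submodule not contained in its radical is itself, so $S=\Lambda\gamma_j=\Lambda\mu\gamma$ with $\mu=x_j\cdots x_1\in\mathbf{P}$.

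For (3), suppose $\Lambda\gamma=\Lambda\mu$ and $\mathrm{f}(\gamma)=\mathrm{f}(\mu)$; after possibly interchanging the names I may assume $\gamma$ is not shorter than $\mu$. Remark \ref{remark:props-of-complete-gen-algs}(1) then writes $\gamma=\eta\mu$ with $\eta\notin\mathscr{I}$. If $\eta$ were non-trivial, its first arrow $\mathrm{f}(\eta)$ gives a subpath $\mathrm{f}(\eta)\mu$ of $\gamma$, so $\mathrm{f}(\eta)\mu\in\mathbf{P}$; as $\gamma=\eta\mu$ is a left multiple of $\mathrm{f}(\eta)\mu$ it lies in $\Lambda\,\mathrm{f}(\eta)\mu=\mathrm{rad}(\Lambda)\mu$ by (2), whence $\Lambda\gamma\subseteq\mathrm{rad}(\Lambda)\mu\subsetneq\Lambda\mu$, contradicting $\Lambda\gamma=\Lambda\mu$. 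Hence $\eta$ is trivial and $\gamma=\mu$.

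I expect the main obstacle to be part (1). The modules $\Lambda\gamma$ need not have finite length over $R$ (already $\Lambda x\cong R$ for a loop $x$ when $R$ is a discrete valuation ring), so a naive induction on composition length is unavailable; the argument must instead exploit the radical filtration together with Krull's intersection theorem to pin an arbitrary submodule at a finite stage. The remaining delicate point, used throughout, is the verification that membership $x\gamma\in\mathscr{I}$ is controlled entirely by the one new length-$2$ subpath $x\mathrm{l}(\gamma)$, which is precisely where quadraticity of $\mathscr{I}$ and the gentle conditions (1)--(3) enter.
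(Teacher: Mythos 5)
Your argument is correct, and it rests on the same pillars as the paper's proof --- Krull's intersection theorem (Remark \ref{remark:props-of-complete-gen-algs}(3b)) to bound how deep a nonzero submodule can sit in the radical filtration, conditions (2) and (3) of Definition \ref{definition:complete-gentle-algebra} together with quadraticity of $\mathscr{I}$ to show that at most one arrow $x$ extends $\gamma$ to a path $x\gamma\in\mathbf{P}$, and a Nakayama-type contradiction for (3) --- but you organise it differently. The paper proves (1) first by a direct maximality argument: it takes the longest path $\mu$ with $M\subseteq\Lambda\mu\gamma$, and, assuming $M\neq\Lambda\mu\gamma$, decomposes an arbitrary element $m=\lambda\mu\gamma$ as $\lambda=\lambda'x+\lambda''y$ with $y\mu\gamma=0$ to push $M$ one step deeper into the filtration, contradicting maximality; part (2) is then only sketched. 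You instead establish (2) first and independently (locality of $\Lambda\gamma$ as a nonzero quotient of the local module $\Lambda e_{h(\gamma)}$, plus the identification of $\mathrm{rad}(\Lambda)\gamma$ as $\Lambda x\gamma$ or $0$), and deduce (1) formally: the radical filtration of $\Lambda\gamma$ is a chain of local cyclic modules with zero intersection, so a nonzero submodule $S$ is trapped at a finite stage $j$ and, not lying in the unique maximal submodule of $\Lambda\gamma_{j}$, must equal $\Lambda\gamma_{j}$. This is cleaner and avoids the element-wise manipulation; the paper's proof of (1) in effect re-proves the relevant instance of (2) inside its contradiction. For (3) you apply Nakayama to the inclusion $\Lambda\mu=\Lambda\gamma\subseteq\mathrm{rad}(\Lambda)\mu$, where the paper instead shows $e_{h(\gamma)}$ would have to lie in $\mathrm{rad}(\Lambda e_{h(\gamma)})$ --- the same contradiction in different clothing. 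The only step I would ask you to make explicit is the equality $\mathrm{rad}(\Lambda\gamma)=\mathrm{rad}(\Lambda)\gamma$, which needs both that $\Lambda/\mathrm{rad}(\Lambda)$ is semisimple and that $\Lambda\gamma$ is finitely generated; both are available from Remark \ref{remark:props-of-complete-gen-algs}, so this is a presentational rather than a mathematical gap.
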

\begin{proof}By Remark \ref{remark:props-of-complete-gen-algs}(3a), for some integer $n\geq 1$ we have $(\mathrm{rad}(\Lambda))^{n}\subseteq\Lambda \mathfrak{m}\subseteq \mathrm{rad}(\Lambda)$. By Remark \ref{remark:props-of-complete-gen-algs}(3b) we have that $\bigcap_{n>0}(\mathrm{rad}(\Lambda))^{n}x=\bigcap_{n>0}x(\mathrm{rad}(\Lambda))^{n}=0$ for any arrow $x$. The proofs of the respective statements about right modules are similar, and omitted. 

(1) Suppose $M\neq 0$ is a submodule of $\Lambda \gamma$. Since the intersections over $n$ above are trivial the set of all paths $\sigma$ such that ($\sigma\gamma\in\mathbf{P}$
and $M\subseteq\Lambda \sigma\gamma$) is finite, as $M\neq0$. Let $\mu$ be the
longest such $\sigma$. For a contradiction assume that $M\neq\Lambda \mu\gamma$. Consider the submodule $L$ of $\Lambda e_{h(\mu)}$ consisting of
all $\lambda\in\Lambda e_{h(\mu)}$ with $\lambda \mu\gamma\in M$. Since $M\neq\Lambda \mu\gamma$
we have $L\neq\Lambda e_{h(\mu)}$. By condition (3) of Definition \ref{definition:complete-gentle-algebra} we have $L\subseteq\sum\Lambda x$ where the sum runs over $x\in\mathbf{A}(h(\mu)\rightarrow)$. 

Let $m\in M\subseteq\Lambda \mu\gamma$, and suppose $m\neq0$. Write $m=\lambda \mu\gamma$ for some $\lambda\in\Lambda$. By construction $0\neq\lambda\in L$, and by conditions (1) and (2) from Definition \ref{definition:complete-gentle-algebra}, without loss of generality we can choose $x,y\in\mathbf{A}(h(\mu)\rightarrow)$ such that $\lambda=\lambda' x+\lambda'' y$ and $y\mu\gamma=0$ in $\Lambda$. Altogether this shows any $m\in M$ lies in $\Lambda x\mu\gamma$, and so $M\subseteq \Lambda x\mu\gamma$. Since $M\neq 0$ this contradicts the maximality of $\mu$. 

(2) It suffices to note that $\Lambda$ is semilocal by Remark \ref{remark:props-of-complete-gen-algs}(3a) (see for example \cite[Proposition 24.4]{Lam1991}), and combine this with conditions (1), (2) and (3) from Definition \ref{definition:complete-gentle-algebra}.

(3) Suppose $\Lambda \gamma=\Lambda \mu\neq0$ and $\mathrm{f}(\gamma)=\mathrm{f}(\mu)$. By Remark \ref{remark:props-of-complete-gen-algs}(1) we have $\mu=\sigma \gamma$ for some path
$\sigma$. For a contradiction we assume $\sigma$ is non-trivial, and so $\sigma\in\mathbf{P}$. Consider the map $\tau\colon\Lambda e_{h(\gamma)}\rightarrow\Lambda \gamma$ sending
$\lambda$ to $\lambda \gamma$. Since $\Lambda \gamma=\Lambda \sigma\gamma$ we have $\gamma\in \Lambda \sigma\gamma$, and so $e_{h(\gamma)}-\lambda \sigma\in\mathrm{ker}(\tau)$ for some $\lambda\in\Lambda e_{h(\gamma)}$. Since $\Lambda \gamma\neq0$, by condition (3) of Definition \ref{definition:complete-gentle-algebra} we have that $\mathrm{ker}(\tau)\subseteq\sum\Lambda x$ where $x$ runs through $\mathbf{A}(h(\gamma)\rightarrow)$. Since $\lambda \sigma$ lies in this sum, so must $e_{h(\gamma)}$, a contradiction.
\end{proof}
The following is a direct application of Lemma \ref{lemma:qbsb-uniserial-cyclics-given-by-arrows}.
\begin{corollary}\label{corollary:span-power-of-radical}
Let $\Lambda$ be a complete gentle $R$-algebra. For any integer $n>0$ and any vertex $v$ we have 
\[
\begin{array}{cc}
  (\mathrm{rad}(\Lambda))^{n}e_{v}=\sum_{\gamma\in\mathbf{P}(n,v\rightarrow)}\Lambda \gamma, &   e_{v}(\mathrm{rad}(\Lambda))^{n}=\sum_{\gamma\in\mathbf{P}(n,\rightarrow v)}\gamma\Lambda.
\end{array}
\]
Furthermore, for any $\gamma\in\mathbf{P}$ of length $n>1$ with $\mathrm{f}(p)=x$ and $\mathrm{l}(p)=y$ we have
\[
\begin{array}{cc}
 (\mathrm{rad}(\Lambda))^{n-1}x=\Lambda \gamma, &  y(\mathrm{rad}(\Lambda))^{n-1}=\gamma\Lambda.
\end{array}
\]
\end{corollary}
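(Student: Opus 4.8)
The plan is to deduce both displayed identities from Lemma \ref{lemma:qbsb-uniserial-cyclics-given-by-arrows}, treating the second (for a single path) as the engine and bootstrapping the first from it; I would prove only the left-module statements, the right-module versions being symmetric. Write $J=\mathrm{rad}(\Lambda)$. The whole argument rests on reading Lemma \ref{lemma:qbsb-uniserial-cyclics-given-by-arrows}(2) as a rule for how one power of $J$ acts on a cyclic module: for $\delta\in\mathbf{P}$ we have $J\cdot\Lambda\delta=J\delta$ (as $J$ is a two-sided ideal), and this is either $0$ or $\Lambda z\delta$ for the unique arrow $z$ with $z\delta\in\mathbf{P}$. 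The base case $n=1$ of the first identity is then immediate: $\mathbf{A}(v\rightarrow)=\mathbf{P}(1,v\rightarrow)$, condition (4) of Definition \ref{definition:complete-gentle-algebra} gives $\mathrm{rad}(\Lambda e_v)=\sum_{x\in\mathbf{A}(v\rightarrow)}\Lambda x$, and $\mathrm{rad}(\Lambda e_v)=Je_v$ since $\Lambda$ is semilocal by Remark \ref{remark:props-of-complete-gen-algs}(3a).

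Next I would establish the second identity. Fix $\gamma\in\mathbf{P}$ of length $n>1$ with $\mathrm{f}(\gamma)=x$, and let $\gamma^{(j)}$ be the subpath of $\gamma$ formed by its $j$ rightmost arrows, so that $\gamma^{(1)}=x$, $\gamma^{(n)}=\gamma$, and each $\gamma^{(j)}\in\mathbf{P}$. I would show $J^{j}x=\Lambda\gamma^{(j+1)}$ for $0\le j\le n-1$ by induction on $j$, the base $j=0$ being $\Lambda x=\Lambda\gamma^{(1)}$. For the step (valid while $j+2\le n$), write $\gamma^{(j+2)}=z\gamma^{(j+1)}$ with $z$ the next arrow; since $z\gamma^{(j+1)}\in\mathbf{P}$, Lemma \ref{lemma:qbsb-uniserial-cyclics-given-by-arrows}(2) returns a nonzero radical $J\gamma^{(j+1)}=\Lambda\gamma^{(j+2)}$ (uniqueness of the extending arrow pins down the module), whence $J^{j+1}x=J(J^{j}x)=J\gamma^{(j+1)}=\Lambda\gamma^{(j+2)}$. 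Taking $j=n-1$ gives $J^{n-1}x=\Lambda\gamma$, as required.

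Finally I would deduce the first identity for $n\ge2$. From $J^{n}e_v=J^{n-1}(Je_v)=\sum_{x\in\mathbf{A}(v\rightarrow)}J^{n-1}x$ it remains to evaluate each $J^{n-1}x$. Running the extension chain above starting from $x$, either $x$ extends to a path $\gamma_x\in\mathbf{P}(n,v\rightarrow)$ with $\mathrm{f}(\gamma_x)=x$, in which case the second identity gives $J^{n-1}x=\Lambda\gamma_x$, or the chain terminates at some length $m<n$ where Lemma \ref{lemma:qbsb-uniserial-cyclics-given-by-arrows}(2) returns $0$, forcing $J^{n-1}x=0$. By Remark \ref{remark:props-of-complete-gen-algs}(1) a path in $\mathbf{P}$ of length $n$ is determined by its first arrow, so $x\mapsto\gamma_x$ and $\gamma\mapsto\mathrm{f}(\gamma)$ are mutually inverse bijections between the extending arrows in $\mathbf{A}(v\rightarrow)$ and the set $\mathbf{P}(n,v\rightarrow)$; summing yields $J^{n}e_v=\sum_{\gamma\in\mathbf{P}(n,v\rightarrow)}\Lambda\gamma$.

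The routine but genuinely load-bearing points are bookkeeping ones: that $J\cdot\Lambda\delta=J\delta$ and $J^{n}=J^{n-1}J$ reduce every powers-of-radical computation to a single application of the lemma; that $\mathrm{rad}(\Lambda e_v)=Je_v$ (needing semilocality) aligns condition (4) with the case $n=1$; and that the composition convention makes $\mathrm{f}(\gamma)$ the arrow at the tail, so that left-multiplying $\Lambda\gamma^{(j)}$ by $J$ corresponds exactly to extending $\gamma^{(j)}$ by one arrow. The only conceptual step, and the one I expect to require the most care, is the uniqueness of left-extensions — that there is at most one arrow $z$ with $z\delta\in\mathbf{P}$ — which I would supply either from the uniqueness clause built into Lemma \ref{lemma:qbsb-uniserial-cyclics-given-by-arrows}(2) (ultimately condition (2) of Definition \ref{definition:complete-gentle-algebra}) or directly from Remark \ref{remark:props-of-complete-gen-algs}(1). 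This uniqueness is precisely what guarantees each $J^{n-1}x$ is a single summand rather than a sum, and hence that the indexing set in the first identity is exactly $\mathbf{P}(n,v\rightarrow)$.
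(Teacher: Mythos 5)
Your argument is correct, and it follows exactly the route the paper intends: the paper offers no written proof, stating only that the corollary is ``a direct application of Lemma \ref{lemma:qbsb-uniserial-cyclics-given-by-arrows}'', and your induction via Lemma \ref{lemma:qbsb-uniserial-cyclics-given-by-arrows}(2), with uniqueness of the extending arrow supplied by condition (2) of Definition \ref{definition:complete-gentle-algebra} and non-vanishing of $z\gamma^{(j+1)}$ supplied by Assumption \ref{assumption:first-assumptions}(3), is a faithful elaboration of that application. The bookkeeping points you flag (that $J\Lambda\delta=J\delta$, that $\mathrm{rad}(\Lambda e_{v})=Je_{v}$, and that paths in $\mathbf{P}(n,v\rightarrow)$ are determined by their first arrow via Remark \ref{remark:props-of-complete-gen-algs}(1)) are exactly the right ones.
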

Recall Example \ref{example:intro-string-module}. By Corollary \ref{corollary:span-power-of-radical}, or by a direct calculation, one can show that for any integer $n>0$ the ideal $(\mathrm{rad}(k[[x,y]]/(xy)))^{n}$ is generated by the paths $x^{n},y^{n}$. Likewise the corresponding power of the radical for the algebra from Example \ref{example:intro-band-module} is generated by the alternating sequences in $a$ and $b$ of length $n$.

Parts (1) and (2) of Lemma \ref{lemma:technical-comp-gen-props} are precisely \cite[Corollary 2.11(i,ii)]{Ben2016} and \cite[Corollary 2.11(iv,v)]{Ben2016} respectively. We omit the proofs, as they are given in \cite[Corollary 1.2.14(iii)]{Ben2018} and \cite[Corollary 1.2.18]{Ben2018} respectively, and use similar arguments to those used in the proof of Lemma \ref{lemma:qbsb-uniserial-cyclics-given-by-arrows}.
\begin{lemma}\label{lemma:technical-comp-gen-props}\emph{\cite[Corollaries 1.2.14 and 1.2.18]{Ben2018}} Let $\Lambda$ be a complete gentle $R$-algebra, and $\gamma\in\mathbf{P}$, $\lambda\in\Lambda e_{v}$ \emph{(}respectively, $\lambda\in e_{v}\Lambda$\emph{)} where $v=h(\gamma)$. 
\begin{enumerate}
    \item If $\lambda \gamma=0$ \emph{(}respectively, $\gamma\lambda=0$\emph{)} then \emph{$\lambda e_{v}\in\mathrm{rad}(\Lambda e_{v})$}.
    \item Provided $\lambda=\lambda'x$ where $x\in \mathbf{A}(v\rightarrow)$, $x\gamma\in\mathbf{P}$ \emph{(}respectively, $\lambda=x\lambda'$ where $x\in\mathbf{A}(\rightarrow v)$, $\gamma x\in\mathbf{P}$\emph{)} and $\lambda'\in\Lambda$\emph{;} if $\lambda \gamma=0$ \emph{(}respectively, $\gamma\lambda=0$\emph{)} then $\lambda=0$. \end{enumerate}
\end{lemma}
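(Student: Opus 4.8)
The plan is to establish both parts by the same contradiction technique already used for Lemma \ref{lemma:qbsb-uniserial-cyclics-given-by-arrows}(3), and then to deduce part (2) from part (1) by an iteration that is forced to terminate via Krull's intersection theorem (Remark \ref{remark:props-of-complete-gen-algs}(3b)). Throughout I treat the left-module statements, the right-module versions being entirely symmetric and omitted.

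For part (1), since $\lambda\in\Lambda e_{v}$ we have $\lambda e_{v}=\lambda$, so the assertion is exactly that $\lambda\in\mathrm{rad}(\Lambda e_{v})$, and I argue its contrapositive. Assume $\lambda\notin\mathrm{rad}(\Lambda e_{v})$. Because $\Lambda e_{v}$ is local with $\Lambda e_{v}/\mathrm{rad}(\Lambda e_{v})\simeq k$ (Remark \ref{remark:props-of-complete-gen-algs}(3a)), Remark \ref{remark:props-of-complete-gen-algs}(2) lets me write $\lambda=re_{v}+\rho$ with $\rho\in\mathrm{rad}(\Lambda e_{v})$ and $r\in R$ a unit, its image in $k$ being nonzero. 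Then $\lambda\gamma=r\gamma+\rho\gamma$, using $e_{v}\gamma=\gamma$ since $h(\gamma)=v$. Identifying $\mathrm{rad}(\Lambda e_{v})=\sum_{x\in\mathbf{A}(v\rightarrow)}\Lambda x$ by condition (4) of Definition \ref{definition:complete-gentle-algebra}, I get $\rho\gamma\in\mathrm{rad}(\Lambda)\gamma$, which by Lemma \ref{lemma:qbsb-uniserial-cyclics-given-by-arrows}(2) is the radical of the nonzero local module $\Lambda\gamma$, hence a proper submodule. If $\lambda\gamma=0$ then $r\gamma=-\rho\gamma\in\mathrm{rad}(\Lambda)\gamma$, and since $r$ is a central unit this forces $\gamma\in\mathrm{rad}(\Lambda)\gamma$, whence $\Lambda\gamma\subseteq\mathrm{rad}(\Lambda)\gamma$, contradicting properness. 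So $\lambda\gamma\neq0$.

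For part (2), I reduce to part (1) through an induction showing $\lambda\in(\mathrm{rad}\,\Lambda)^{n}x$ for every $n$. Write $\lambda=\lambda'x$ with $x\gamma\in\mathbf{P}$ and $\lambda\gamma=0$. I first record, by induction from Lemma \ref{lemma:qbsb-uniserial-cyclics-given-by-arrows}(2), that the radical filtration of the uniserial module $\Lambda x$ has the form $(\mathrm{rad}\,\Lambda)^{n}x=\Lambda\pi_{n}$ for a path $\pi_{n}\in\mathbf{P}$ of length $n+1$ with last arrow $x$, until it reaches $0$. The inductive claim is that $\lambda=\mu_{n}\pi_{n}$ with $\pi_{n}\gamma\in\mathbf{P}$ and $\mu_{n}\pi_{n}\gamma=0$. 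Granting this at stage $n$, applying part (1) to the path $\pi_{n}\gamma$ gives $\mu_{n}\in\mathrm{rad}(\Lambda e_{h(\pi_{n})})=\sum_{y}\Lambda y$; expanding $\lambda=\mu_{n}\pi_{n}=\sum_{y}\nu_{y}(y\pi_{n})$ and invoking gentle condition (2) (at most one prolongation $y\pi_{n}$ lies in $\mathbf{P}$) collapses the sum to $\lambda=\nu_{y}\pi_{n+1}$ with $\pi_{n+1}=y\pi_{n}$. The key point is that $\pi_{n+1}\gamma\in\mathbf{P}$: because $\mathscr{I}$ is quadratic, membership in $\mathbf{P}$ is detected pairwise on consecutive arrows, and every length-$2$ subpath of $y\pi_{n}\gamma$ lies either within $y\pi_{n}\in\mathbf{P}$ or within $\pi_{n}\gamma\in\mathbf{P}$, so none lies in $\mathscr{I}$. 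This advances the induction and places $\lambda\in\Lambda\pi_{n+1}=(\mathrm{rad}\,\Lambda)^{n+1}x$. Since $\bigcap_{n}(\mathrm{rad}\,\Lambda)^{n}x=0$ by Remark \ref{remark:props-of-complete-gen-algs}(3b), I conclude $\lambda=0$.

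The main obstacle I anticipate is the bookkeeping in part (2): tracking that the prolonging arrow at each stage is unique and that the prolonged path $\pi_{n+1}\gamma$ remains in $\mathbf{P}$, which is precisely where the quadratic hypothesis on $\mathscr{I}$ enters. Part (1) is a direct transcription of the method already used in the proof of Lemma \ref{lemma:qbsb-uniserial-cyclics-given-by-arrows}(3) and should present no real difficulty.
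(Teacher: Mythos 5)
Your proof is correct, and it follows the route the paper itself indicates for the omitted argument: the proofs are deferred to \cite{Ben2018} with the remark that they use arguments similar to those in the proof of Lemma~\ref{lemma:qbsb-uniserial-cyclics-given-by-arrows}, which is exactly what your Nakayama argument for part (1) and your radical-filtration plus Krull-intersection iteration for part (2) carry out. One cosmetic point: in the paper's conventions (Definition~\ref{definition:firstlastarrows}) your paths $\pi_{n}=y_{n}\cdots y_{1}x$ have \emph{first} arrow $x$ rather than last arrow $x$, but this slip of terminology does not affect the argument.
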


We now introduce a class of complete gentle $R$-algebras for any discrete valuation ring $R$. Hence this furnishes one with examples where $R$ does not contain a canonical ground field.
\begin{example}\label{example:comp-gentle-2-by-2-matrices} Let $R$ be a discrete valuation ring where $\mathfrak{m}=\langle m \rangle$ and $m\in R$ is not nilpotent. Let $Q$ consist of two loops $a$ and $ b$ at a single vertex.  Consider the $R$-subalgebra $\Lambda$ of $M_{2}(R)$ given by the matrices $(r_{ij})$ (where $1\leq i,j\leq 2$) such that $m\mid r_{11}-r_{22},r_{12}$. Consider the $R$-algebra map $\vartheta\colon RQ\to \Lambda$ satisfying
\[
\begin{array}{cc}
\vartheta(a)=\begin{pmatrix}0 & 0\\
1 & 0
\end{pmatrix}, & \vartheta( b)=\begin{pmatrix}0 & m\\
0 & 0
\end{pmatrix}.
\end{array}
\]
Any element $(r_{ij})$ of $\Lambda$ satisfies $r_{11}-r_{22}=sm$ and $r_{12}=tm$ for some $s,t\in R$; and hence has the form
\[
\begin{array}{c}
\begin{pmatrix}r_{11} & r_{12}\\
r_{21} & r_{22}
\end{pmatrix}
%=\begin{pmatrix}r_{11}-r_{22} & r_{12}\\
%r_{21} & 0
%\end{pmatrix}+\begin{pmatrix}r_{22} & 0\\
%0 & r_{22}
%\end{pmatrix}=\begin{pmatrix}sm & tm\\
%r_{21} & 0
%\end{pmatrix}+r_{22}\begin{pmatrix}1 & 0\\
%0 & 1
%\end{pmatrix}\\
%=s\begin{pmatrix}m & 0\\
%0 & 0
%\end{pmatrix}+t\begin{pmatrix}0 & m\\
%0 & 0
%\end{pmatrix}+r_{12}\begin{pmatrix}0 & 0\\
%1& 0
%\end{pmatrix}+r_{22}\begin{pmatrix}1 & 0\\
%0 & 1
%\end{pmatrix}\\
%=s\vartheta( b)\vartheta(a)+t\vartheta( b)+r_{12}\vartheta(a)+r_{22}\vartheta(e_{v})
=\vartheta(s ba+t b+r_{12}a+r_{22}e_{v}).
\end{array}
\]
This shows $\vartheta$ is onto. Furthermore, if $s_{11},s_{12},s_{21},s_{22}\in R$ and $w, x, y, z\geq 0$ are intgers with $w,z>0$, then
\begin{equation}\label{equation:2loopexample}
\vartheta(s_{11}(a b)^{w}+s_{12} b(a b)^{x}+s_{21}a( ba)^{y}+s_{22}( ba)^{z})=\begin{pmatrix}s_{11}m^{w} & s_{12}m^{x+1}\\
s_{21}m^{y} & s_{22}m^{z}
\end{pmatrix}.
\end{equation}
We now let $\mathscr{I}=\langle a^{2}, b^{2}\rangle$. By our calculations above, in order to check Assumption \ref{assumption:first-assumptions} holds, it suffices to check the claim that $\vartheta(p)\neq 0$ for any path $p\notin\mathscr{I}$. Here $\mathbf{P}$ consists of the alternating sequences in $a$ and $ b$.

By 
%From here it suffices to consider 
Equation \ref{equation:2loopexample}
% above, where: $s_{11}=1$ and $s_{12}=s_{21}=s_{22}=0$; $s_{12}=1$ and $s_{11}=s_{21}=s_{22}=0$; $s_{21}=1$ and $s_{11}=s_{12}=s_{22}=0$; or $s_{22}=1$ and $s_{11}=s_{12}=s_{21}=0$. Altogether 
we have that Assumption \ref{assumption:first-assumptions} holds. Note also that conditions (1), (2) and (3) of Definition \ref{definition:complete-gentle-algebra} hold: (1) holds by looking at the quiver $Q$; and (2) and (3) both hold because $ a^{2}$ and $ b^{2}$ are the generators of $\mathscr{I}$. Moreover, we have that
\[
\begin{array}{cccc}
\Lambda a=\begin{pmatrix}\mathfrak{m} & 0\\ R & 0
\end{pmatrix}, & \Lambda b=\begin{pmatrix}0 & \mathfrak{m}\\
0 & \mathfrak{m}
\end{pmatrix},&
 a\Lambda=\begin{pmatrix}0 & 0\\
R & \mathfrak{m}
\end{pmatrix}, & b\Lambda=\begin{pmatrix}\mathfrak{m} &\mathfrak{m}\\
0 & 0
\end{pmatrix}.
\end{array}
\]
These calculations verify conditions (5) and (6) of Definition \ref{definition:complete-gentle-algebra} hold. Hence to show that $\Lambda$ is a complete gentle  $R$-algebra, it suffices to check condition (4) holds.
% Now consider the diagonal embedding $R\to M_{2}(R)$ as an $R$-module map into $\Lambda e_{v}$, and consider the quotient of $\Lambda e_{v}$ by the sum of $\Lambda a$ and $\Lambda b$. The composition of the former map and the induced map on the quotient is 
Using Equation \ref{equation:2loopexample} we can show that the map
\[
R\to \Lambda e_{v}/ (\Lambda a + \Lambda b), r\mapsto \begin{pmatrix}r &0\\
0 & r
\end{pmatrix}+(\Lambda a + \Lambda b).
\]
% Our verification that $\vartheta$ is onto above shows, in particular, that any coset $(r_{ij})+(\Lambda a + \Lambda b)$ may be rewritten as $r_{22}e_{v}+(\Lambda a + \Lambda b)$. This shows the map above 
is onto. Using our calculations above the kernel of this map is $\mathfrak{m}$. Thus we have $k\simeq \Lambda e_{v}/ (\Lambda a + \Lambda b)$ as $R$-modules, and so $\Lambda a + \Lambda b$ must be a maximal $R$-submodule of $\Lambda e_{v}$. Hence $\Lambda a + \Lambda b$ is a maximal $\Lambda$-submodule of $\Lambda e_{v}$, and so $\mathrm{rad}(\Lambda)e_{v}\subseteq \Lambda a + \Lambda b$. It is straightforward to check that
% We now show the reverse inculsion holds. Suppose $\Lambda a+\mathcal{J}=\Lambda e_{v}$
% for a left ideal $\mathcal{J}$, so that the identity matrix may be written as  $(t_{ij})+(r_{ij})$ for some $(t_{ij})\in\Lambda a$ and some $(r_{ij})\in \mathcal{J}$. This means $t_{11}\in\mathfrak{m}$, $t_{12}=t_{22}=0$
% and so $r_{12}=0$, $t_{11}+r_{11}=r_{22}=1$
% and $t_{21}+r_{21}=0$. Consequently $\mathrm{det}((r_{ij}))=r_{11}r_{22}-r_{12}r_{21}=1-t_{11}\notin\mathfrak{m}$, and so
% \[
% \begin{array}{c}
% \begin{pmatrix}(1-t_{11})^{-1} & 0\\
% 0& (1-t_{11})^{-1}
% \end{pmatrix}\begin{pmatrix}1 & 0\\
% t_{21} & 1-t_{11}
% \end{pmatrix}\begin{pmatrix}r_{11} & r_{12}\\
% r_{21} & r_{22}
% \end{pmatrix}=\begin{pmatrix}1 &0\\
% 0 & 1
% \end{pmatrix}.
% \end{array}
% \] Hence
$\Lambda a$ is a superfluous left ideal
of $\Lambda$. Similarly $\Lambda b$ is superfluous, and so $\mathrm{rad}(\Lambda e_{v})\supseteq\Lambda a\oplus\Lambda b$.
Together with a similar yet dual argument, this verifies condition (3) of Definition \ref{definition:complete-gentle-algebra} holds. Altogether this shows $\Lambda$ is a complete gentle algebra over $R$. It is straightforward to check that $\vartheta$ gives an isomorphism
\[\Lambda\simeq RQ/\langle a^{2},\, b^{2},\,-m+ a b+ b a\rangle
\]
of $R$-algebras. The primitive cycles for this algebra are $ a b$ and $ b a$.
\end{example}
In what follows, for any noetherian ring $A$ we will write: $A\text{-}\boldsymbol{\mathrm{Mod}}$ for the category of all $A$-modules;  $A\text{-}\boldsymbol{\mathrm{mod}}$ for the full subcategory of $A\text{-}\boldsymbol{\mathrm{Mod}}$ consisting of finitely generated $A$-modules;  $\mathcal{K}(A\text{-}\boldsymbol{\mathrm{Proj}})$ for the homotopy category of complexes of projective $A$-module; and $\mathcal{K}(A\text{-}\boldsymbol{\mathrm{proj}})$ for the full subcategory of $\mathcal{K}(A\text{-}\boldsymbol{\mathrm{Proj}})$ consisting of complexes $P$ where each homogeneous component $P^{n}$ is a (projective) object of $A\text{-}\boldsymbol{\mathrm{mod}}$.
\begin{remark}\label{remark:complete-gentle-semiperfect}
Let $\Lambda$ be a complete gentle algebra. By Remark \ref{remark:props-of-complete-gen-algs}(2) $\Lambda $ is module finite over $R$, and recall $R$ is noetherian, local and $\mathfrak{m}$-adically complete by Assumption \ref{assumption:first-assumptions}(1). Altogether this shows $\Lambda$ is noetherian and semiperfect (by, for example, \cite[Example 23.3]{Lam1991}); see \cite[Corollary 2.11(vi)]{Ben2016}. Hence every object $M$ of $\Lambda\text{-}\boldsymbol{\mathrm{mod}}$ has a (projective cover, and hence) a projective resolution $P_{M}$, such that if $P'_{M}$ is another projective resolution of $M$ then $P_{M}\simeq P'_{M}$ in $\mathcal{K}(\Lambda\text{-}\boldsymbol{\mathrm{proj}})$. 
\end{remark}
\begin{assumption}
For the remainder of the article we assume $\Lambda$ is a complete gentle algebra, fixing the notation from Assumption \ref{assumption:first-assumptions}, Notation \ref{notation:paths-P-arrows-A} and Definitions \ref{definition:complete-gentle-algebra} and \ref{definition:firstlastarrows}.
\end{assumption}
\section{String and band modules}\label{section:string-and-band-modules}
\subsection{Words.} Definitions \ref{definition:words-for-modules}, \ref{definition:sign-and-composing-words}, \ref{definition:cyclic-words}, \ref{definition:finitely-generated-words}, \ref{definition:equivalence-relation-on-words} and  \ref{definition:inverses-shifts-words} follow the deinitions and notation used by Crawley-Boevey in \cite[\S 2]{Cra2018}. The same terminology was used by Ricke \cite{Ric2017}.
\begin{definition}\label{definition:words-for-modules}
By a \emph{letter} we mean a symbol of the form $x$ (called \emph{direct}) or $x^{-1}$ (called \emph{inverse}) where $x\in\mathbf{A}$. The head and tail of a letter are defined by $h(x^{-1})=t(x)$ and $t(x^{-1})=h(x)$. 

Let $I$ be one of the
sets $\{0,\dots,m\}$ (for some $m\geq0$), $\mathbb{N}$, $-\mathbb{N}=\{-n\colon n\in\mathbb{N}\}$
or $\mathbb{Z}$. For $I=\{0\}$ there are \emph{trivial} words
$1_{v,1}$ and $1_{v,-1}$ for each vertex
$v$.  For $I\neq\{0\}$, an $I$-\emph{word} refers to a sequence of the form 
\[
 C =\begin{cases}
 C_{1} \dots  C_{m}  & (\mbox{if }I=\{0,\dots,m\})\\
 C_{1}  C_{2} \dots & (\mbox{if }I=\mathbb{N})\\
\dots C_{-2}  C_{-1}  C_{0}  & (\mbox{if }I=-\mathbb{N})\\
\dots\dots C_{-2}  C_{-1}  C_{0} \mid  C_{1}  C_{2} \dots & (\mbox{if }I=\mathbb{Z})
\end{cases}
\] 
such that the following conditions hold.
\begin{enumerate}
    \item Each $C_{i}$ is a letter.
    \item If $i-1,i,i+1\in I$ then $t(C_{i})=h(C_{i+1})$ and $C_{i}^{-1}\neq C_{i+1}$.
    \item If a subsequence of consecutive letters $C_{i}\dots C_{i+n}$ has the form $\gamma$ or $\gamma^{-1}$ for a path $\gamma$, then $\gamma\in\mathbf{P}$.
\end{enumerate}
By condition (2), for each $i\in I$ there is an \emph{associated vertex} $v_{C}(i)$
defined by $v_{C}(i)=t(C_{i})$ for $i\leq0$ and $v_{C}(i)=h(C_{i})$
for $i>0$ provided $I\neq\{0\}$, and $v_{ 1 _{v,\pm1}}(0)=v$ otherwise.

The \emph{inverse} $l^{-1}$ of a letter is given by $(x)^{-1}=x^{-1}$ and $(x^{-1})^{-1}=x$. The \emph{inverse} of an $I$-word $C$ is defined by $(1_{v,\delta})^{-1}=1_{v,-\delta}$ when $I=\{0\}$, and otherwise inverting the letters and reversing their order:
\[
 C^{-1} =\begin{cases}
 C_{m}^{-1}\dots C_{1}^{-1}  & (\mbox{if }I=\{0,\dots,m\})\\
 \dots C_{2}^{-1}  C_{1}^{-1}  & (\mbox{if }I=\mathbb{N})\\
C_{0}^{-1}  C_{-1}^{-1} C_{-2}^{-1}\dots   & (\mbox{if }I=-\mathbb{N})\\
\dots\dots C_{3}^{-1}  C_{2}^{-1}  C_{1}^{-1} \mid  C_{0}^{-1}  C_{-1}^{-1} \dots & (\mbox{if }I=\mathbb{Z})
\end{cases}
\]
Let $n\in\mathbb{Z}$. If $I=\mathbb{Z}$ we define the word $C[n]$ by $C[n]_{i}=C_{i+n}$ for all $i\in\mathbb{Z}$. If $I\neq \mathbb{Z}$ we let $C[n]=C$. 
\end{definition}
Remarks \ref{remark:quadraticness-of-gentles-on-words} and \ref{remark:sign-and-composability} below do not hold in general for the words studied by Crawley-Boevey \cite{Cra2018} or Ricke \cite{Ric2017}. This is because these authors consider string algebras, which need not be quadratic, such as $k[x]/(x^{3})$.
\begin{remark}\label{remark:quadraticness-of-gentles-on-words}
Let $C$ be an $I$-word. The third item in Definition \ref{definition:words-for-modules} says, in other words, that the walk in $Q$ given by $C$ does not pass over any zero relations in $\mathscr{I}$. Recall that, by Assumption \ref{assumption:first-assumptions}, the ideal $\mathscr{I}$ of $RQ$ is generated by a set of length $2$ paths, and the set $\mathbf{P}$ of paths $\gamma$ in $Q$ with $\gamma\notin\mathscr{I}$ is precisely the set of paths $\gamma$ with $\vartheta(\gamma)\neq 0\in\Lambda$. Consequently the third item in Definition \ref{definition:words-for-modules} is equivalent to requiring that when both $C_{i},C_{i+1}$ are direct (respectively inverse) the consecutive pair $C_{i}C_{i+1}$ (respectively $C^{-1}_{i+1}C^{-1}_{i}$) lies in $\mathbf{P}$.  
\end{remark}
\begin{definition}\label{definition:sign-and-composing-words}
For any letter $l$ we choose a \emph{sign} $s(l)\in\{1,-1\}$, such that if distinct letters $l$ and $l'$ satisfy  ($h(l)=h(l')$ and $s(l)=s(l')$) then $\{l,l'\}=\{x^{-1},y\}$ where $xy\in\mathscr{I}$. Let $C$ be an $I$-word with $I\subseteq\mathbb{N}$. We define the \emph{sign} of $C$ by $s(1_{v,\delta})=\delta$ when $I=\{0\}$ and $s(C)=s(C_{1})$ otherwise. 

If $B$ is a (finite or $-\mathbb{N}$)-word and $D$ is a (finite or $\mathbb{N}$)-word then the \emph{composition} $BD$ \emph{of} $B$ \emph{and} $D$ is given by concatenating the letters, and $B$ and $D$ are \emph{composable} if $h(B^{-1})=h(D)$ and $s(B^{-1})=-s(D)$. 
\end{definition}
\begin{remark}\label{remark:sign-and-composability}
Let $C$ be a (finite or $-\mathbb{N}$)-word and $D$ be a (finite or $\mathbb{N}$)-word, and in case they are non-trivial write $D=D_{1}\dots$ and $C=\dots C_{0}$. Assuming $h(C^{-1})=h(D)$ and $s(C^{-1})=-s(D)$ gives $s(C_{0}^{-1})=-s(D_{1})$, and so $C_{0}^{-1}\neq D_{1}$ and ($C_{0}D_{1}\notin\mathbf{P}$ if and only if $s(C_{0}^{-1})=s(D_{1})$) since  $t(C_{0})=h(D_{1})$. Thus $C$ and $D$ are composable if and only if $CD$ is a word.
\end{remark}
\begin{example}\label{example:signs-for-k[[x,y]]/(xy)}We continue with Example \ref{example:intro-string-module} where $\Lambda=k[[x,y]]/(xy)$. Here the letters are $x$, $y$, $x^{-1}$ and $y^{-1}$. A sequence of letters is a word if and only if it does not contain any of $xy$, $yx$, $x^{-1}y^{-1}$ or $y^{-1}x^{-1}$ as a consecutive pair of letters. Hence the sequences $B$, $A$ and $D$ from Example \ref{example:intro-string-module} are all words. To define a sign here, start by choosing $s(x)=1$. To satisfy Definition \ref{definition:sign-and-composing-words} we must choose $s(y^{-1})=1$ and $s(x^{-1})=-1=s(y)$. If $E=\dots E_{0}$ is a non-trivial (finite or $-\mathbb{N}$)-word and $F=F_{1}\dots$ is a non-trivial (finite or $\mathbb{N}$)-word, then $E$ and $F$ are composable if and only if 
\[
(E_{0},F_{1})\in\{(x,x),(x,y^{-1}),(x^{-1},x^{-1}),(x^{-1},y),(y,y),(y,x^{-1}),(y^{-1},y^{-1}),(y^{-1},x)\}.
\]
For example, the words $B^{-1}$ and $A$ from Example \ref{example:intro-string-module} are composable. Similarly, $A$ and $D$ are composable. 
\end{example}
\begin{definition}\label{definition:cyclic-words}
A non-trivial finite word $A=A_{1}\dots A_{n}$ is called \emph{cyclic} if $A^{2}$ is a word and $A$ is not a non-trivial power of another word. By Remark \ref{remark:sign-and-composability}, $A^{2}$ is a word if and only if $t(A_{n})=h(A_{1})$ and $s(A_{n})=-s(A_{1})$. For a cyclic word $A$ we define the $-\mathbb{N}$-word ${}^{\infty\hspace{-0.5ex}}A$, the $\mathbb{N}$-word $A^{\infty}$ and the $\mathbb{Z}$-word ${}^{\infty\hspace{-0.5ex}}A^{\infty}$ by
\[
{}^{\infty\hspace{-0.5ex}}A=\dots AA=\dots A_{1}\dots A_{n} A_{1} \dots A_{n},
\]
\[
A^{\infty}= AA\dots= A_{1}\dots A_{n} A_{1} \dots A_{n}\dots,
\]
and 
\[
{}^{\infty\hspace{-0.5ex}}A^{\infty}=\dots A\mid AA\dots=\dots A_{1}\dots A_{n}\mid A_{1} \dots A_{n}\dots
\]
\end{definition}
\begin{example}\label{2-by-2-matrices-cyclic-words}
We continue with Example \ref{example:intro-band-module} where $\Lambda \simeq \widehat{\mathbb{Z}}_{p}Q/\langle a^{2}, b^{2}, a b+ b a-p\rangle$. Here we let $s( a)=1$, which must mean $s( a^{-1})=1$ and $s( b)=-1=s( b^{-1})$. Recall that, in Example \ref{example:intro-band-module}, we considered the $\mathbb{Z}$-word $C={}^{\infty\hspace{-0.5ex}}E^{\infty}$ where $E$ is the word $a^{-1}b^{-1} a^{-1} b a b^{-1} a^{-1} b^{-1} a b^{-1} a b $, and since $ba^{-1}$ is a word, $E^{2}$ is a word. It is straightforward to check that $E$ cannot be a power of a shorter word; and hence $E$ is cyclic.
\end{example}
\begin{definition}\label{definition:finitely-generated-words} Let $C$ be an $I$-word. For each $i\in I$ define the words
\[\begin{array}{cc}
    C_{>i}=
    \begin{cases}
    1_{h(C^{-1}),-s(C^{-1})} & (\text{if }i+1\notin I)\\
    C_{i+1}C_{i+2}\dots & (\text{otherwise})
    \end{cases} &  
    C_{\leq i}=
    \begin{cases}
    1_{h(C),s(C)} & (\text{if }i-1\notin I)\\
   \dots C_{i-1}C_{i} & (\text{otherwise})
    \end{cases}
\end{array}
\]
We say $C$ is \emph{direct} (respectively \emph{inverse}) if $C_{i}$ is direct (respectively inverse) for all $i\in I$ with $i-1\in I$. We say $C$ is \emph{eventually left downward} 
 (respectively \emph{upward})
provided $-\mathbb{N}\subseteq I$ and there exists $i\in I$ such that $C_{\leq i}$ is direct (respectively inverse). Dually, we say $C$ is \emph{eventually right downward}
 (respectively \emph{upward})
provided $\mathbb{N}\subseteq I$ and there exists $i\in I$ such that $C_{> i}$ is inverse (respectively direct). We say $C$ is \emph{eventually downward} (respectively \emph{upward}) provided $C$ is eventually left downward (respectively upward), eventually right downward (respectively upward), or both. 
\end{definition}
\begin{example}\label{example:eventually-downward-words-for-k[[x,y]]/(xy)}
We continue with Example \ref{example:intro-string-module} and \ref{example:signs-for-k[[x,y]]/(xy)}, where $\Lambda=k[[x,y]]/(xy)$ and $A=y^{-3}x^{2}y^{-1}x^{2}y^{-3}x^{4}$. It is straightforward to check $A$ is a cyclic $\{0,\dots,15\}$-word. Note that $y^{-1}$ is a cyclic $\{0,1\}$-word, and recall the word $D$ satisfied $D=(y^{-1})^{\infty}$. Since the $\mathbb{N}$-word $C$ satisfied $C_{>17}=D$, $C$ is an example of an $\mathbb{N}$-word which is eventually right downward. 
\end{example}
\begin{remark}\label{remark:eventually-downward-words}
Suppose $C$ is a direct $-\mathbb{N}$-word, and so there are arrows $x_{1},x_{2},\dots\in \mathbf{A}$ such that $C_{i}=x_{1-i}$ for all $i$. That is, $C=\dots x_{3}x_{2}x_{1}$. Since $Q$ is finite there exists some $i<0$ for which $x_{1}=x_{1-i}$. Hence we can choose $n>0$ minimal such that $x_{1}=x_{n+1}$. By condition (2) from Definition \ref{definition:complete-gentle-algebra}, and by induction, we have $x_{t}=x_{t+n}$ for all $t>0$. This gives $C={}^{\infty\hspace{-0.5ex}}P$ where $P=x_{n}\dots x_{1}$. Since $C$ is a word $P$ is a cycle with no relations. Furthermore, by the minimality of $n>0$ we have that $P$ cannot be written as a power of a shorter direct word. Thus $P$ is a primitive cycle with no relations. The above argument shows $C$ is eventually left-downward if and only if $(C_{\leq i})^{-1}=(P^{-1})^{\infty}$ for some $i\in I$ and a primitive cycle $P$ with no relations. Dually, $C$ is eventually right downward if and only if there exists $i$ and $P$ with $C_{> i}=(P^{-1})^{\infty}$. 
\end{remark}
\subsection{String modules}
\begin{definition}\label{definition:finitely-generated-words-II}
Let $C$ be an $I$-word. We call $i\in I$ a $C$-\emph{peak} if ($C_{i}$ is direct or $i-1\notin I$) and ($C_{i+1}$ is inverse or $i+1\notin I$). We say $C$ is \emph{peak}-\emph{finite} if there are finitely many $C$-peaks in $I$.

By an \emph{alternating} word we mean a non-trivial word $A$ of the form 
\[
A=\gamma_{1}^{-1}\sigma_{1}\dots\gamma_{i}^{-1}\sigma_{i}\dots\gamma_{n}^{-1}\sigma_{n},
\]
where each $\gamma_{i},\sigma_{i}\in\mathbf{P}$. Thus if each $\gamma_{i}$ (respectively  $\sigma_{i}$) has length $l_{i}$ (respectively  $m_{i}$) then $A$ is a $\{0,\dots,d\}$-word where $d=\sum_{i=1}^{n}(l_{i}+m_{i})$. 
\end{definition}
\begin{lemma}\label{lemma:checking-alternating-word-technical}
Let $n\geq 1$, $C$ be an $I$-word and $\pi(0),\dots,\pi(n)$ be $C$-peaks in $I$ such that, for all $i=0,\dots, n-1$, $\pi(i)<\pi(i+1)$ and there are no $C$-peaks $p$ with $\pi(i)<p<\pi(i+1)$. Then $C_{\pi(0)+1}\dots C_{\pi(n)}$ is alternating.
\end{lemma}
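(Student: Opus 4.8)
The plan is to reduce the statement to a local analysis of each \emph{block} $C_{\pi(i)+1}\cdots C_{\pi(i+1)}$ lying between two consecutive peaks, show that each such block has the shape $\gamma_{i+1}^{-1}\sigma_{i+1}$ with $\gamma_{i+1},\sigma_{i+1}\in\mathbf{P}$, and then concatenate the blocks over $i=0,\dots,n-1$ to read off the alternating form directly from Definition \ref{definition:finitely-generated-words-II}. Since $I$ is an integer interval and $\pi(i)<\pi(i+1)$ both lie in $I$, every index between them (inclusive) lies in $I$, so all letters occurring in a block genuinely exist; I would use this throughout.

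First I would pin down the two endpoints of a block using the peak conditions. At the peak $\pi(i)$, because $\pi(i)+1\in I$, the clause ``$C_{\pi(i)+1}$ is inverse or $\pi(i)+1\notin I$'' forces $C_{\pi(i)+1}$ to be inverse; symmetrically, at $\pi(i+1)$, because $\pi(i+1)-1\in I$, the clause ``$C_{\pi(i+1)}$ is direct or $\pi(i+1)-1\notin I$'' forces $C_{\pi(i+1)}$ to be direct. In particular $\pi(i+1)\geq\pi(i)+2$, so each block contains at least one inverse letter and at least one direct letter.

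The crux is to show that within a block the letters change type exactly once, from inverse to direct. I would argue by contradiction: if some index $j$ with $\pi(i)<j<\pi(i+1)$ had $C_j$ direct and $C_{j+1}$ inverse, then $j$ would satisfy both clauses of the peak definition and hence be a $C$-peak strictly between $\pi(i)$ and $\pi(i+1)$, contradicting the hypothesis that there are no intervening peaks. Thus no direct-then-inverse adjacency occurs among $C_{\pi(i)+1},\dots,C_{\pi(i+1)}$; combined with the block starting inverse and ending direct, the type sequence must be a nonempty run of inverse letters followed by a nonempty run of direct letters. By condition (3) of Definition \ref{definition:words-for-modules} (equivalently Remark \ref{remark:quadraticness-of-gentles-on-words}), the run of inverse letters is of the form $\gamma_{i+1}^{-1}$ and the run of direct letters is of the form $\sigma_{i+1}$ with $\gamma_{i+1},\sigma_{i+1}\in\mathbf{P}$, so $C_{\pi(i)+1}\cdots C_{\pi(i+1)}=\gamma_{i+1}^{-1}\sigma_{i+1}$.

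Finally I would note that the blocks for $i=0,\dots,n-1$ partition $\{\pi(0)+1,\dots,\pi(n)\}$ into consecutive segments, whence $C_{\pi(0)+1}\cdots C_{\pi(n)}=\gamma_1^{-1}\sigma_1\cdots\gamma_n^{-1}\sigma_n$, which is precisely the alternating form of Definition \ref{definition:finitely-generated-words-II} (non-trivial since $n\geq1$). The only genuine obstacle is the middle step, ruling out a second change of direction inside a block, and this is exactly where the ``no intervening peak'' hypothesis is used; the remainder is bookkeeping with the boundary clauses of the peak definition together with a direct appeal to the no-relations condition on words.
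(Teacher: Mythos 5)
Your proposal is correct and follows essentially the same route as the paper: both arguments use the peak conditions to force $C_{\pi(i)+1}$ inverse and $C_{\pi(i+1)}$ direct, use the no-intervening-peaks hypothesis to rule out any direct-then-inverse adjacency inside a block (which would itself be a peak), and invoke condition (3) of Definition \ref{definition:words-for-modules} to identify the two runs with paths in $\mathbf{P}$. Your phrasing of the middle step is a somewhat cleaner packaging of the paper's maximal-run bookkeeping, but the underlying idea is identical.
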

\begin{proof}
Since $n>0$ we have $\pi(0)+1\in I$, and since $\pi(0)$ is a peak we have that $C_{\pi(0)+1}$ is inverse. Likewise $C_{\pi(1)}$ is direct. Choose $d>0$ maximal such that $C_{\pi(0)+1}\dots C_{\pi(0)+d}$ is inverse. Choose $m\geq0$ maximal such that $C_{\pi(1)-m}\dots C_{\pi(1)}$ is inverse. This means $\pi(1)-m>\pi(0)+d$. For a contradiction suppose $\pi(1)-m>\pi(0)+d+1$. Then $C_{\pi(0)+d+1}$ is direct and $C_{\pi(1)-m-1}$ is inverse. Hence there is some $l\geq d$ maximal such that $C_{\pi(0)+d}\dots C_{\pi(0)+l}$ is direct. By the maximality of $l$, and since  $C_{\pi(1)-m-1}$ is inverse this means $\pi(0)+l+1\leq \pi(1)-m-1$ which is a $C$-peak in $I$. This contradicts the hypothesis of the lemma, and so $\pi(1)-m=\pi(0)+d+1$. This shows that $C_{\pi(0)+1}\dots C_{\pi(1)}=\gamma_{1}^{-1}\sigma_{1}$ for some $\gamma_{1},\sigma_{1}\in\mathbf{P}$ (since $C$ is a word). By repeating this argument one can show $C_{\pi(0)+1}\dots C_{\pi(n)}$ is alternating. 
\end{proof}
\begin{lemma}\label{lemma:no-peaks-implies-eventually-upward}
If $C$ is an $I$-word with no $C$-peak in $I$, then $C$ is eventually upward.
\end{lemma}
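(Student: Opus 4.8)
The plan is to reduce the statement to one elementary combinatorial fact about when a $C$-peak is forced, and then to run a short case analysis on the shape of the index set $I$. First I would record that every \emph{finite} word (including every trivial word) has a $C$-peak, so that the hypothesis forces $I\in\{\mathbb{N},-\mathbb{N},\mathbb{Z}\}$; this step is genuinely needed, because by Definition \ref{definition:finitely-generated-words} being eventually upward presupposes $-\mathbb{N}\subseteq I$ or $\mathbb{N}\subseteq I$, so a finite word with no peak would contradict the lemma. For a finite word $C=C_1\cdots C_m$ with $m\geq 1$, inspecting the endpoint clauses of Definition \ref{definition:finitely-generated-words-II} shows that the absence of a peak forces $C_1$ to be direct and $C_m$ to be inverse (already impossible when $m=1$); taking the least $k$ with $C_k$ inverse then yields a position $k-1$ at which $C_{k-1}$ is direct and $C_k$ is inverse, hence a $C$-peak, a contradiction.

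The key observation I would isolate is this: if $C$ has no peak, then for every index $i$ with $C_i$ and $C_{i+1}$ both defined (equivalently $i-1,i,i+1\in I$), $C_i$ direct implies $C_{i+1}$ direct. Indeed, were $C_i$ direct and $C_{i+1}$ inverse, both clauses in the definition of a $C$-peak would hold at $i$. Read contrapositively, $C_{i+1}$ inverse implies $C_i$ inverse, so the letters of $C$, scanned left to right, never switch from direct to inverse; equivalently, the set of indices carrying an inverse letter is closed under subtracting $1$.

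It then remains to combine this with the behaviour at the (at most one) finite endpoint. If $I=\mathbb{N}$, the non-existence of a peak at position $0$ forces $C_1$ to be direct, and the key observation propagates this rightward to show that every $C_i$ is direct; hence $C_{>0}=C$ is direct and $C$ is eventually right upward. The case $I=-\mathbb{N}$ is dual: the endpoint $0$ forces $C_0$ inverse, and the observation propagates inverseness leftward, so $C_{\leq 0}=C$ is inverse and $C$ is eventually left upward. For $I=\mathbb{Z}$ there is no endpoint, and the inverse-index set, being closed under subtracting $1$, is either empty, all of $\mathbb{Z}$, or a half-line $(-\infty,j]$; these give respectively $C$ direct (eventually right upward), $C$ inverse (eventually left upward), or $C_{>j}$ direct (eventually right upward). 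In every case $C$ is eventually upward.

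I anticipate that the only delicate point is the bookkeeping at the endpoints of $\mathbb{N}$ and $-\mathbb{N}$: one must check that the boundary clauses ``$i-1\notin I$'' and ``$i+1\notin I$'' in Definition \ref{definition:finitely-generated-words-II} are precisely what collapse the endpoint peak condition to a constraint on a single letter ($C_1$, respectively $C_0$), and that the direction in which the key observation propagates matches the endpoint that is actually present. Everything else is routine induction.
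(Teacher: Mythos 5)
Your proposal is correct and follows essentially the same route as the paper: the core mechanism in both is that the absence of a peak at $i$ forbids a direct letter $C_i$ being followed by an inverse letter $C_{i+1}$, and this is then propagated from an anchor (the paper branches on whether $C_1$ is direct or inverse and propagates rightward or leftward; you organise the same propagation by the shape of $I$ and the downward-closure of the set of inverse indices). Your explicit elimination of the finite case is done only implicitly in the paper's induction, but the content is the same.
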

\begin{proof}
Note that if $I=\{0\}$ then $0$ is a $C$-peak by definition. Hence $I\neq \{0\}$.  Note that if $1\notin I$ then $I=-\mathbb{N}$ in which case $C$ is eventually left upward if and only if the $\mathbb{N}$-word $C^{-1}$ is eventually right upward. Thus from here we assume $1\in I$. Consider the case when $C_{1}$ is direct. We now claim, by induction, that for all $i>0$ we have $i\in I$ and $C_{i}$ is direct. We are assuming that the claim holds for $i=1$. Now suppose  $i\in I$ and $C_{i}$ is direct. Since $i$ is not a $C$-peak, we must have that $i+1\in I$ and that $C_{i+1}$ is direct. Hence our inductive claim holds, and $C$ is eventually right upward in this case. Now instead suppose $C_{1}$ is inverse. As above, for all $i\geq0$ we have that $1-i\in I$ and $C_{1-i}$ is inverse, and so $C$ is eventually left upward. 
\end{proof}
\begin{definition}\label{definition:string-words}
By a \emph{string word} we mean a word of the form $C=B^{-1}(AD)$ where $A$ is either trivial or alternating, and each of $B$ and $D$ are either trivial or inverse. Note that when $C$ is a $\mathbb{Z}$-word the order of composition means that $C=
B^{-1}\mid AD$.  We call $(B,A,D)$ a \emph{decomposition} of $C$.
\end{definition}
Lemma \ref{lemma:decompositions-under-shifts-and-inverses} says that there are no non-trivial shifts of a string word, and only one non-trivial shift of its inverse, which remains a string word. Hence considering string words, rather than all words, is fairly restrictive. In Proposition \ref{proposition:peak-finite-iff-shift-of-string} and Remark \ref{remark:comparing-terminology-used-by-Ricke} we show that this consideration is not too restrictive.
\begin{lemma}\label{lemma:decompositions-under-shifts-and-inverses}
Let $C$ and $C'$ be string words with decompositions $(B,A,D)$ and $(B',A',D')$ respectively. 
\begin{enumerate}
    \item If $C'=C[n]$ then $C'=C$ and $(B',A',D')=(B,A,D)$.
    \item If $C'=C^{-1}[n]$ then $C'=C^{-1}[-d]$ where $A$ is a $\{0,\dots,d\}$-word and $(B',A',D')=(D,A^{-1},B)$.
\end{enumerate}
\end{lemma}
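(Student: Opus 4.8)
The plan is to reduce both parts to the bi-infinite case together with a uniqueness statement for decompositions. First I note that, by Definition~\ref{definition:words-for-modules}, the shift $C[n]$ can differ from $C$ only when $I=\mathbb{Z}$; so when $I\neq\mathbb{Z}$ the hypothesis $C'=C[n]$ already reads $C'=C$, and only the agreement of the decompositions remains. For the latter I would prove that the decomposition of a string word is unique by describing its three blocks intrinsically. In $C=B^{-1}(AD)$ the block $B^{-1}$ is direct, the block $A=\gamma_{1}^{-1}\sigma_{1}\dots\gamma_{n}^{-1}\sigma_{n}$ begins with the inverse letters of $\gamma_{1}^{-1}$ and ends with the direct letters of $\sigma_{n}$, and $D$ is inverse. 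Since $\gamma_{1},\sigma_{n}\in\mathbf{P}$ are non-trivial, $B^{-1}$ is exactly the collection of letters $C_{i}$ with $C_{j}$ direct for every $j\leq i$, and $D$ is exactly the collection of letters $C_{i}$ with $C_{j}$ inverse for every $j\geq i$, while $A$ is whatever lies between (the inner direct letters $\sigma_{i}$ are excluded from $B^{-1}$ because the inverse letters of $\gamma_{1}^{-1}$ precede them). These descriptions refer only to $C$, so $B$, $A$ and $D$ are determined by $C$; in particular $(B',A',D')=(B,A,D)$ whenever $C'=C$.

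The heart of part (1) is shift rigidity in the case $I=\mathbb{Z}$. Here I claim that the initial direct block of a $\mathbb{Z}$-string word ends exactly at index $0$; equivalently, by Definition~\ref{definition:finitely-generated-words-II}, index $0$ is the smallest $C$-peak. Indeed, since $C$ is bi-infinite and $A$ is finite, both $B$ and $D$ are non-trivial, so $B^{-1}$ occupies the indices $i\leq 0$ and is direct, whence $C_{i}$ is direct for $i\leq 0$; and $C_{1}$, being the first letter of $A$, or of $D$ when $A$ is trivial, is inverse, so $0$ is a peak and no $i<0$ is. If now $C'=C[n]$ is again a $\mathbb{Z}$-string word, then $C'_{i}=C_{i+n}$ shows that the initial direct block of $C'$ ends at index $-n$, while the same statement applied to $C'$ says it ends at $0$; hence $n=0$. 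Thus $C'=C$, and the first paragraph gives $(B',A',D')=(B,A,D)$.

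For part (2) I would compute the inverse directly. From $C=B^{-1}(AD)$ and $(XY)^{-1}=Y^{-1}X^{-1}$ one gets $C^{-1}=D^{-1}A^{-1}B$, in which $D^{-1}$ is direct, $A^{-1}$ is alternating (it begins with $\sigma_{n}^{-1}$ and ends with $\gamma_{1}$), and $B$ is inverse; so $C^{-1}$ has the three-block shape of a string word with blocks $(D,A^{-1},B)$. The only subtlety is the placement of the separator $\mid$ when $I=\mathbb{Z}$: inversion sends the letter at position $1-i$ of $C$ to position $i$ of $C^{-1}$, so if $A$ is a $\{0,\dots,d\}$-word then the block $D^{-1}$ occupies the positions $\leq -d$ of $C^{-1}$, and the shift moving $D^{-1}$ onto the non-positive positions is by exactly $-d$. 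Hence $C^{-1}[-d]=D^{-1}(A^{-1}B)$ is a string word with decomposition $(D,A^{-1},B)$; when $I\neq\mathbb{Z}$, or when $A$ is trivial so that $d=0$, this shift is vacuous and $C^{-1}$ is itself this string word. Finally I would write $C^{-1}[n]=(C^{-1}[-d])[\,n+d\,]$ and apply part (1) to the string word $C^{-1}[-d]$; this forces $n+d=0$ and yields the decomposition $(D,A^{-1},B)$, as claimed.

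The step I expect to be the main obstacle is the index bookkeeping in part (2): pinning down that the separator must be moved by $-d$, rather than $d$ or $-d\pm 1$. I would guard against an off-by-one error by checking the computation on the trivial-$A$ case, where $d=0$ and inversion already aligns the separator, and on one short explicit alternating $A$, before trusting the general claim.
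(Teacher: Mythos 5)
Your proof is correct and follows essentially the same route as the paper's: you pin down the decomposition intrinsically via the maximal direct initial block and maximal inverse terminal block, use the convention that the separator of a $\mathbb{Z}$-string word sits at the end of $B^{-1}$ (index $0$) to force $n=0$, and reduce part (2) to part (1) by exhibiting $C^{-1}[-d]=D^{-1}(A^{-1}B)$ as a string word. The only cosmetic imprecision is the claim that part (1) ``forces $n+d=0$'' --- for $I\neq\mathbb{Z}$ shifts are defined to be the identity so this need not hold literally, but the conclusion $C'=C^{-1}[-d]$ with decomposition $(D,A^{-1},B)$ follows all the same.
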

\begin{proof}By assumption we have $C'=B'^{-1}(A'D')$ and $C=B^{-1}(AD)$. Let $C$ be an $I$-word and $C'$ be an $J$-word. Let $A$ be a $\{0,\dots,d\}$-word and $A'$ be a $\{0,\dots,d'\}$-word. Choose $i(-),i(+)\in I$ such that $B=(C_{\leq i(-)})^{-1}$ and $D=C_{>i(+)}$, and choose $j(-),j(+)\in J$ with $B'=(C_{\leq j(-)})^{-1}$ and $D=C_{>j(+)}$. Note that the inverse or shift of $C$ (respectively, $C'$) is a $\mathbb{Z}$-word if and only if $I=\mathbb{Z}$  (respectively, $J=\mathbb{Z}$).

(1) Suppose $C'=C[n]$ for some $n\in\mathbb{Z}$. Then for all $j\in J$ with $j-1\in J$ we have $C'_{j}=C_{j+n}$ which is (direct if $j\leq j(-)$) and  (inverse if $j>j(+)$). Since $C_{i(-)+1}$ is inverse if $i(-)+1\in I$, we have $j(-)\leq i(-)-n$. Since $C_{i(+)}$ is direct if $i(+)-1\in I$, we have $j(+)\geq i(+)-n$. Dually one can show $i(-)\leq j(-)+n$ and $i(+)\geq j(+)+n$, which altogether means $i(\pm )=j(\pm) +n$. Note that if $I\neq \mathbb{Z}$ then $C[n]=C$, which means we can take $n=0$ and hence $i(\pm )=j(\pm)$. Otherwise $I=\mathbb{Z}$, in which case since $C=B^{-1}(AD)$ we must have $i(-)=0$, and similarly $j(-)=0$; and then we have $n=0$ and thus $i(+)=j(+)$. Thus, in case $C'=C[n]$, we must have $C'=C$ and $i(\pm )=j(\pm)$.

(2) Suppose $C'=C^{-1}[n]$ for some $n\in\mathbb{Z}$. Note firstly that $C^{-1}[-d]$ is a string word with decomposition $(D,A^{-1},B)$, since a word $A$ is (trivial or alternating) if and only if the inverse $A^{-1}$ is (trivial or alternating) respectively. Furthermore, if we let $C''=C^{-1}[-d]$ and $(B'',A'',D'')=(D,A^{-1},B)$ then $C'=C''[n+d]$, and by the proof of (1) above this means $C'=C''$ and that $(B',A',D')=(B'',A'',D'')$, as required.
\end{proof}
\begin{remark}
Suppose $C$ is a string word. Taking $n=0$ in Lemma \ref{lemma:decompositions-under-shifts-and-inverses}(1) shows that if $(B,A,D)$ and $(B',A',D')$ are decompositions of $C$ then $B'=B$, $A'=A$ and $D'=D$. Henceforth we refer to $(B,A,D)$ as \emph{the} decomposition of $C$, since it is unique.
\end{remark}
\begin{example}\label{example:alternating-words-for-k[[x,y]]/(xy)}
We continue Example \ref{example:eventually-downward-words-for-k[[x,y]]/(xy)} where $\Lambda=k[[x,y]]/(xy)$. Here $C=B^{-1}(AD)$ where $B=x^{-3}$, $A=y^{-3}x^{2}y^{-1}x^{2}y^{-3}x^{4}$ and $D=(y^{-1})^{\infty}$. The $C$-peaks in $\mathbb{N}$ are $3$, $8$, $11$ and $18$. Note that $A$ is a alternating word, and that $B$ and $D$ are both inverse. Hence $C$ is a string word with decomposition $(B,A,D)$.
\end{example}
\begin{proposition}\label{proposition:peak-finite-iff-shift-of-string}
Let $C$ be an $I$-word which is not eventually upward. Then $C$ is peak-finite if and only if $C[n]$ is a string word for some $n\in\mathbb{Z}$. Hence when $I\neq \mathbb{Z}$, $C$ is peak-finite if and only if $C$ is a string word.
\end{proposition}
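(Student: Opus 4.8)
The plan is to prove both implications, the forward one (peak-finite $\Rightarrow$ some shift is a string word) carrying the real content. Throughout I exploit that a $C$-peak records the right end of a maximal direct run: $i$ is a peak exactly when $C_i$ is direct (or $i$ is the left boundary) and $C_{i+1}$ is inverse (or $i$ is the right boundary).

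First the easy direction. Suppose $C[n]$ is a string word $B^{-1}(AD)$, say with $A=\gamma_1^{-1}\sigma_1\cdots\gamma_m^{-1}\sigma_m$. Then $C[n]$ is the concatenation of at most $2m+2$ maximal monotone runs, namely $B^{-1}$, the runs $\gamma_i^{-1},\sigma_i$ of $A$, and $D$. A $C[n]$-peak can occur only at the right end of such a run or at a boundary of $I$, so there are finitely many and $C[n]$ is peak-finite. Since $j$ is a $C[n]$-peak iff $j+n$ is a $C$-peak, peak-finiteness is shift-invariant, so $C$ is peak-finite. (This direction needs neither the standing hypothesis nor the Lemmas.)

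Now assume $C$ is peak-finite and not eventually upward. By Lemma \ref{lemma:no-peaks-implies-eventually-upward} a word with no peak is eventually upward, so $C$ has at least one peak; list all of them as $\pi(0)<\pi(1)<\cdots<\pi(N)$ with $N\ge 0$. Set $B=(C_{\le\pi(0)})^{-1}$, $A=C_{\pi(0)+1}\cdots C_{\pi(N)}$ (trivial if $N=0$), and $D=C_{>\pi(N)}$, so that $C=B^{-1}(AD)$ as a decomposition into consecutive subwords. Since the $\pi(i)$ are consecutive peaks with none strictly between them, Lemma \ref{lemma:checking-alternating-word-technical} shows $A$ is alternating whenever $N\ge 1$. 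When $I\ne\mathbb{Z}$ this already exhibits $C$ as a string word once $B,D$ are dealt with; when $I=\mathbb{Z}$ I shift by $n=\pi(0)$, so that the first peak of $C[n]$ sits at $0$ and the bar falls correctly as $C[n]=B^{-1}\mid AD$. It thus remains only to check that $B$ and $D$ are each trivial or inverse, i.e. that $C_{\le\pi(0)}$ is direct or trivial and $C_{>\pi(N)}$ is inverse or trivial.

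This verification is the main obstacle, and is exactly where the hypothesis is used; I treat $B$, the case of $D$ being the mirror image. If $\pi(0)-1\in I$ then $C_{\pi(0)}$ is direct, since $\pi(0)$ is a peak. Suppose for contradiction that some $C_j$ with $j\le\pi(0)$ is inverse, and take $j$ maximal; then $j<\pi(0)$ and $C_{j+1},\dots,C_{\pi(0)}$ are direct. Using that no $p<\pi(0)$ is a peak, I argue by downward induction: if $C_p$ is inverse for some $p\le j$, a peak at $p-1$ would force $C_{p-1}$ direct and $C_p$ inverse, which is impossible by minimality of $\pi(0)$, so $C_{p-1}$ is inverse. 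Hence every $C_p$ with $p\le j$ is inverse. If $-\mathbb{N}\subseteq I$ this says $C_{\le j}$ is inverse, so $C$ is eventually left upward, contradicting the hypothesis; if instead $I$ has a left boundary $i_{\min}$, the induction reaches the first letter $C_{i_{\min}+1}$ and forces it inverse, making $i_{\min}<\pi(0)$ a peak, contradicting minimality of $\pi(0)$. Either way we have a contradiction, so $C_{\le\pi(0)}$ is direct or trivial and $B$ is inverse or trivial. The dual argument shows $C_{>\pi(N)}$ is inverse or trivial, invoking ``not eventually right upward'' when $\mathbb{N}\subseteq I$ and maximality of $\pi(N)$ (otherwise the right boundary would be a peak) when $I$ has a right boundary. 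This completes the forward direction, and the last sentence of the proposition is the special case $I\ne\mathbb{Z}$, where $C[n]=C$ for every $n$.
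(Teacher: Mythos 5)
Your proof is correct and follows essentially the same route as the paper's: both obtain at least one peak from Lemma \ref{lemma:no-peaks-implies-eventually-upward}, cut $C$ at the first and last peaks, invoke Lemma \ref{lemma:checking-alternating-word-technical} for the middle alternating piece, and rule out a wrong-direction letter in $B$ or $D$ by producing either an extra peak or an eventually-upward tail. Your explicit handling of the bounded-$I$ boundary case (where the endpoint of $I$ would itself become a peak) is slightly more careful than the paper's, but the argument is the same in substance.
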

\begin{proof}
The proof that any string word is peak-finite is straightforward, and it is clear that $C$ is peak-finite if and only if any $C[n]$ is. Conversely, suppose $C$ is peak-finite. It suffices to find $i,j\in I$ such that $A=(C_{>i})_{\leq j-i}$ is (trivial or alternating) and each of $B=(C_{\leq i})^{-1}$ and and $D=C_{>j}$ are (trivial or inverse).

By Lemma \ref{lemma:no-peaks-implies-eventually-upward} there exists at least one $C$-peak in $I$. Write $\pi(0),\dots,\pi(d)\in I$ for the $d+1$ distinct $C$-peaks in $I$ (with $d\in\mathbb{N}$), where $\pi(i)<\pi(j)$ if and only if $i<j$. Note that if $\pi(d)+1\in I$ we must have that  $C_{\pi(d)+1}$ is inverse. We now claim $C_{>\pi(d)}$ is trivial or inverse. For a contradiction assume that $C_{\pi(d)+n}$ is direct for some $n>0$.  The statement of the Lemma includes the assumption the $C$ is not right upward, and so $C_{\pi(d)+m}$ is inverse for some $m>n$. Taking $m$ to be minimal shows that $\pi(d)+m-1$ is a peak, contradicting the maximality of the $C$-peak $\pi(d)$, noting that $m>1$. Thus $C_{>\pi(d)}$ is trivial or inverse. Similarly $C_{\leq \pi(1)}$ is trivial or direct. Let $B=(C_{\leq \pi(1)})^{-1}$ and $D=C_{>\pi(d)}$. If $d=0$ then $C=BD$ and we can take $A=1_{h(D),s(D)}$. Otherwise $d>0$ and $A=C_{\pi(0)+1}\dots C_{\pi(d)}$ is alternating by Lemma \ref{lemma:checking-alternating-word-technical}.
\end{proof}
\begin{remark}\label{remark:comparing-terminology-used-by-Ricke}
We now compare the terminology from Definitions \ref{definition:finitely-generated-words} and \ref{definition:string-words} with some terminology used in the thesis of Ricke \cite{Ric2017}. Following \cite[\S 3.1.2, p.24]{Ric2017}, a word $C$ is called \emph{eventually inverse} (respectively, \emph{direct}) if there are only finitely many $i>0$ such that $C_{i}$ is direct (respectively, inverse). Hence we have that $C$ is eventually inverse if and only if $I\neq \mathbb{N},\mathbb{Z}$ or ($\mathbb{N}\subseteq I$ and $C$ is eventually right downward). Similarly $C^{-1}$ is eventually inverse if and only if $I\neq -\mathbb{N},\mathbb{Z}$ or ($-\mathbb{N}\subseteq I$ and $C$ is eventually left downward). 

Hence the condition that (both $C$ and $C^{-1}$ are eventually inverse) is equivalent to saying that $C$ is peak-finite and not eventually upward, which by Proposition \ref{proposition:peak-finite-iff-shift-of-string} is equivalent to saying that some shift of $C$ is a string word. In this way, the words defining the string modules in \cite[Theorem 3.3.28]{Ric2017} are equivalent to the words indexing the string modules in (our main) Theorems \ref{theorem:main-fin-gen-modules-are-sums-of-strings-and-bands}, \ref{theorem:main-isomorphisms-between-string-and-band-modules} and \ref{theorem:main-krull-remak-schmidt-property}. 
\end{remark}
In what follows we define a module $M(C)$ for any string word $C$.  String modules have been defined before, for example by Ringel \cite{Rin1975}, Butler and Ringlel \cite{ButRin1987} and Crawley-Boevey \cite{Cra19882,Cra1989,Cra19892,Cra2018}. These authors worked in the context of a $k$-algebra for some field $k$, and defined a string module for each word by means of: a $k$-basis indexed by the vertices in-between the letters; and relations (between the elements of the basis) defined by each letter. Recall that the canonical ground ring $R$ is not necessarily a field. Consequently it is unclear how to adjust the definitions these authors use to our purposes. 

Instead we consider an idea of Huisgen-Zimmermann and Smal\o{} \cite{HuiSma2005}. Here, in the terminology of Definition \ref{definition:string-words}, string modules are defined by introducing a generator for each $C$-peak in $I$, and introducing relations dependent upon the decomposition $(B,A,D)$ of $C$. The reader should be warned that the \emph{generalised words} considered in \cite{HuiSma2005} are not the same as those we consider in this article, in Definition \ref{definition:generalised-words-for-complexes}. 
\begin{definition}\label{definition:string-modules-by-string-words}
Let $C$ be a string word with decomposition $(B,A,D)$. By Lemma \ref{lemma:no-peaks-implies-eventually-upward} and Proposition \ref{proposition:peak-finite-iff-shift-of-string} there are $d+1>0$ distinct $C$-peaks $\pi(0),\dots,\pi(d)\in I$ such that $\pi(i)<\pi(j)$ when $i<j$. Let $g_{C,i}=e_{v_{C}(\pi(i))}$ for each $i$. If $I$ is bounded below then $B$ is finite and there is at most one $w\in\mathbf{A}$ such that $wB^{-1}$ is a word. When such an $w$ exists we let $L_{-}(C)=\Lambda wB^{-1}$, considered as a submodule of $\Lambda g_{C,0}\subseteq N(C)$. If $I\supseteq -\mathbb{N}$ or no such $w$ exists, let $L_{-}(C)=0$. Dually, if $I$ is bounded above there is at most one $z\in\mathbf{A}$ such that $Dz^{-1}$ is a word, in which case we let $L_{+}(C)=\Lambda zD^{-1}$, considered as a submodule of $\Lambda g_{C,n}\subseteq N(C)$. Likewise let $L_{+}(C)=0$ if $I\supseteq\mathbb{N}$ or no such $z$ exists. Let $M(C)=N(C)/L(C)$ where
\[
N(C)=\bigoplus_{i=0}^{d}\Lambda g_{C,i},\,L(C)=L_{-}(C)+L_{0}(C)+L_{+}(C)
\]
and
\[
L_{0}(C)=
\begin{cases}
\sum_{i=0}^{d-1}\Lambda (\gamma_{i+1}g_{C,i}-\sigma_{i+1}g_{C,i+1})) & (\text{if }A=\gamma_{1}^{-1}\sigma_{1}\dots\gamma_{d+1}^{-1}\sigma_{d+1})\\
0 & (\text{otherwise})
\end{cases}
\]
\end{definition}
\begin{example}\label{example:string-module-for-k[[x,y]]/(xy)} We continue with Example (\ref{example:intro-string-module} and) \ref{example:alternating-words-for-k[[x,y]]/(xy)}. Recall that $C=B^{-1}(AD)$ where  $B=x^{-3}$, $D=(y^{-1})^{\infty}$ and $A=y^{-3}x^{2}y^{-1}x^{2}y^{-3}x^{4}$, and so the $C$-peaks in $\mathbb{N}$ are $3$, $8$, $11$ and $18$. Note that $D$ is an $\mathbb{N}$-word and $xB^{-1}=x^{4}$. This means $N(C)=\bigoplus_{i=0}^{3}\Lambda g_{C,i}$, $L_{-}(C)=\Lambda x^{4}g_{C,0}$ and $L_{+}(C)=0$; and so
\[
L(C)=\Lambda x^{4}g_{C,0} + \Lambda (y^{3}g_{C,0}-x^{2}g_{C,1})+\Lambda (yg_{C,1}-x^{2}g_{C,2})+\Lambda(y^{3}g_{C,2}-x^{4}g_{C,3}).
\]
Recall the diagram depicting $M(C)$ from Example \ref{example:intro-string-module}. The left-most bullet has no arrow labelled $x$ leaving it, indicating that the associated element $x^{3}g_{C,1}+L(C)\in M(C)$ is annihilated by $x$. The remaining bullets note the other relations, for example, the right-most bullet corresponds to the relation $y^{3}g_{C,2}+L(C)=x^{4}g_{C,3}+L(C)$. The arrow labelled $y^{\infty}$ indicates that $L_{+}(C)=0$, and hence $y^{n}g_{C,4}\notin L(C)$ for all $n>0$.  
\end{example}
\begin{definition}\label{definition:equivalence-relation-on-words}
We say that two words $C$ and $C'$ are \emph{equivalent}, and write $C'\sim C$, if there is some $n\in\mathbb{Z}$ such that $C'=C[n]$ or $C'=C^{-1}[n]$. 
\end{definition}

\begin{lemma}\label{lemma:equivalence-relation-on-words-strings}
If $C$ and $C'$ are string words with $C\sim C'$ then $M(C)\simeq M(C')$ as $\Lambda$-modules. 
\end{lemma}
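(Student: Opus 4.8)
The plan is to reduce, via Lemma~\ref{lemma:decompositions-under-shifts-and-inverses}, to the single case of inversion, and then to exhibit a canonical isomorphism $M(C)\to M(C^{-1})$ that identifies the two generating systems in reverse order. Since $C\sim C'$ means $C'=C[n]$ or $C'=C^{-1}[n]$ for some $n\in\mathbb{Z}$, I split into these two cases. If $C'=C[n]$, then Lemma~\ref{lemma:decompositions-under-shifts-and-inverses}(1) gives $C'=C$ together with equal decompositions; as $M(-)$ depends only on the word and its (unique) decomposition, this yields $M(C')=M(C)$ on the nose. If instead $C'=C^{-1}[n]$, then Lemma~\ref{lemma:decompositions-under-shifts-and-inverses}(2) shows $C'=C^{-1}[-d]$ with decomposition $(D,A^{-1},B)$, where $A$ is a $\{0,\dots,d\}$-word; applying Lemma~\ref{lemma:decompositions-under-shifts-and-inverses}(1) to the string word $C^{-1}$ gives $C^{-1}[-d]=C^{-1}$, so $M(C')=M(C^{-1})$, and it remains to prove $M(C)\simeq M(C^{-1})$.

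For this core step I construct the isomorphism explicitly. Write $\pi(0)<\dots<\pi(d)$ for the $C$-peaks in $I$. First I record the key combinatorial fact: inversion induces an order-reversing bijection between the index set of $C$ and that of $C^{-1}$ under which the two clauses defining a $C$-peak (Definition~\ref{definition:finitely-generated-words-II}) become precisely the two clauses defining a $C^{-1}$-peak, and under which associated vertices are preserved. This follows by unwinding Definition~\ref{definition:words-for-modules}, using that an inverted letter is direct exactly when the original letter is inverse (and dually). Consequently $C^{-1}$ also has $d+1$ peaks $\pi'(0)<\dots<\pi'(d)$, paired with those of $C$ in reverse order, and $g_{C,i}=e_{v_C(\pi(i))}=e_{v_{C^{-1}}(\pi'(d-i))}=g_{C^{-1},d-i}$ lie at the same vertex. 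Hence sending $g_{C,i}\mapsto g_{C^{-1},d-i}$ defines an isomorphism of free modules $\varphi\colon N(C)\to N(C^{-1})$.

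I then check that $\varphi$ carries $L(C)$ onto $L(C^{-1})$, so that it descends to an isomorphism $M(C)\to M(C^{-1})$. Writing the alternating part as $A=\gamma_{1}^{-1}\sigma_{1}\cdots\gamma_{d}^{-1}\sigma_{d}$, the word $C^{-1}$ has alternating part $A^{-1}=\sigma_{d}^{-1}\gamma_{d}\cdots\sigma_{1}^{-1}\gamma_{1}$; under the reindexing $i\mapsto d-i$ the defining relation $\gamma_{i+1}g_{C,i}=\sigma_{i+1}g_{C,i+1}$ of $L_{0}(C)$ matches exactly a relation of $L_{0}(C^{-1})$, so $\varphi(L_{0}(C))=L_{0}(C^{-1})$. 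For the boundary terms, inversion swaps the two ends: $L_{-}(C)=\Lambda wB^{-1}$ sits in $\Lambda g_{C,0}$ and $L_{+}(C^{-1})=\Lambda z'B^{-1}$ sits in $\Lambda g_{C^{-1},d}$, and since a sequence is a word precisely when its inverse is, the arrow $w$ with $wB^{-1}$ a word coincides with the arrow $z'$ with $B(z')^{-1}$ a word; thus $\varphi(L_{-}(C))=L_{+}(C^{-1})$, and dually $\varphi(L_{+}(C))=L_{-}(C^{-1})$. Summing gives $\varphi(L(C))=L(C^{-1})$, and applying the same construction to $C^{-1}$ provides a two-sided inverse, so $\varphi$ induces the desired isomorphism.

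I expect the main obstacle to be the index bookkeeping: correctly establishing the peak-reversing bijection together with the vertex identifications $v_{C^{-1}}(\pi'(d-i))=v_{C}(\pi(i))$, and then verifying that the two boundary submodules $L_{\pm}$ match under this reversal, in particular that the optional boundary arrows $w$ and $z'$ coincide. The relation-matching for $L_{0}$ is a direct but careful reindexing.
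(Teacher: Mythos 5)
Your argument is correct and follows essentially the same route as the paper's proof: reduce via Lemma \ref{lemma:decompositions-under-shifts-and-inverses} to the inversion case, define the order-reversing isomorphism $N(C)\to N(C^{-1})$ on the peak-indexed generators, and check that $L_{0}$ is preserved while $L_{-}$ and $L_{+}$ are interchanged. The only imprecision is the claim that $C^{-1}[-d]=C^{-1}$ ``by Lemma \ref{lemma:decompositions-under-shifts-and-inverses}(1) applied to the string word $C^{-1}$'': when $I=\mathbb{Z}$ the word $C^{-1}$ itself is generally not a string word (only the shift $C^{-1}[-d]$ is, since the bar sits in the wrong place), so one should carry out your construction against $C^{-1}[-d]$ throughout --- which changes nothing of substance, as the peaks, associated vertices and generators $g_{C^{-1}[-d],l}$ are unaffected by the relabelling.
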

\begin{proof}
Let $C$ be an $I$-word and $C'$ be a $J$-word. Let $(B,A,D)$ and $(B',A',D')$ be the decompositions of $C$ and $C'$ respectively. By Lemma \ref{lemma:decompositions-under-shifts-and-inverses} it suffices to assume that $C'=C^{-1}[-d]$ where $A$ is a $\{0,\dots,d\}$-word, and that $B'=D$, $A'=A^{-1}$ and $D'=B$. Note that $J=\mp\mathbb{N}$ if $I=\pm\mathbb{N}$ and $I=J$ otherwise. If $I$ is finite let $I=\{0,\dots,t\}$, if $I=\mathbb{Z}$ let $t=d$, and otherwise if $I=-\mathbb{N},\mathbb{N}$ let $t=0$. In this notation we have $C'_{j}=C_{1+t-j}^{-1}$ and $v_{C'}(j)=v_{C}(t-j)$ for each $j\in J$. Let $\pi(0),\dots,\pi(d)$ be the $C$-peaks in $I$. Hence $\pi'(0),\dots,\pi'(d)$ are the $C'$-peaks in $J$ where $\pi'(l)=t-\pi(l)$ for each $l=0,\dots,d$. Consider the $\Lambda$-module isomorphism $\varphi\colon N(C')\to N(C)$ given by $g_{C',l}\mapsto g_{C,t-l}$ for each $l$. Since $B'=D$ we have that the image of $L_{-}(C')$ under $\varphi$ is $L_{+}(C)$. Similarly: the image of $L_{+}(C')$ under $\varphi$ is $L_{-}(C)$ since $D'=B$; and the image of $L_{0}(C')$ under $\varphi$ is $L_{0}(C)$. Hence $\varphi$ induces an isomorphism $M(C')\simeq M(C)$.
\end{proof}
\subsection{Band modules}
\begin{definition}\cite[pp. 2-3, \S 2]{Cra2018}\label{definition:inverses-shifts-words}
We say a $\mathbb{Z}$-word $C$ is $p$-\emph{periodic} if $C=C[p]$ for some minimal $p>0$. We say $C$ is \emph{primitive} if either $C$ or $C^{-1}$ has the form ${}^{\infty\hspace{-0.5ex}}P^{\infty}$ for some primitive cycle $P$.
\end{definition}
\begin{remark}\label{remark:primitive-periodic-words} By arguments such as those from Remark \ref{remark:eventually-downward-words}, since $Q$ is finite a $\mathbb{Z}$-word $C$ is primitive if and only if $C$ is periodic and (direct or inverse).
\end{remark}
\begin{example}\label{2-by-2-matrices-periodic-words}
We continue with Example \ref{2-by-2-matrices-cyclic-words}, where $\Lambda \simeq \widehat{\mathbb{Z}}_{p}Q/\langle a^{2}, b^{2}, a b+ b a-p\rangle$. Since the primitive cycles are $ a b$ and $ b a$, the primitive words are the $2$-periodic $\mathbb{Z}$-words ${}^{\infty\hspace{-0.1ex}}( a b)^{\infty}=\dots a b\mid a b\dots$, ${}^{\infty\hspace{-0.1ex}}( b a)^{\infty}$, $ {}^{\infty\hspace{-0.1ex}}( a^{-1} b^{-1})^{\infty}$ and ${}^{\infty\hspace{-0.1ex}}( b^{-1} a^{-1})^{\infty}$. The $\mathbb{Z}$-word $C={}^{\infty\hspace{-0.1ex}}E^{\infty}$ from Example \ref{example:intro-band-module} is $12$-periodic and non-primitive.
\end{example}
\begin{definition}\label{definition:band-words}
By a \emph{band word} we mean a periodic non-primitive word of the form $C={}^{\infty\hspace{-0.5ex}}_{}A^{\infty}_{}$ where the cyclic word $A$ is alternating. Note $A$ is uniquely defined (by the period of $C$), and we call $A$ the \emph{cycle of} $C$. 
\end{definition}
\begin{lemma}\label{lemma:shifting-alternating-periods-is-alternating}
Let $t\in\mathbb{Z}$, and let $C$ and $C[t]$ be band words, say with cycles $A$ and $A'$ respectively. Then either $A=A'$ or there is some $m$ with $1<m\leq n$ such that 
\[
A'=\gamma_{m}^{-1}\sigma_{m}\dots\gamma_{n}^{-1}\sigma_{n}\gamma_{1}^{-1}\sigma_{1}\dots\gamma_{m-1}^{-1}\sigma_{m-1}\text{ where }A=\gamma_{1}^{-1}\sigma_{1}\dots\gamma_{i}^{-1}\sigma_{i}\dots\gamma_{n}^{-1}\sigma_{n}.
\]
\end{lemma}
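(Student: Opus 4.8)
The plan is to exploit two facts: the cycle of a band word is determined by its minimal period, and the peaks of a $\mathbb{Z}$-word are intrinsic data that merely shift when the word is shifted. Write $p=|A|$ for the length of $A$, so that $C$ is $p$-periodic with minimal period $p$ in the sense of Definition \ref{definition:inverses-shifts-words}. Since shifting commutes with taking the period --- one checks directly that $C[t][p]=C[t]$, and that no smaller period can occur for $C[t]$ without occurring for $C=C[t][-t]$ --- the word $C[t]$ is again $p$-periodic with minimal period $p$. Hence its cycle $A'$ also has length $p$, and by the definition of ${}^{\infty}(A')^{\infty}$ we have $A'=(C[t])_{1}\dots(C[t])_{p}=C_{t+1}\dots C_{t+p}$.

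Next I would locate the $C$-peaks within one period. As $A=\gamma_{1}^{-1}\sigma_{1}\dots\gamma_{n}^{-1}\sigma_{n}$ is alternating with each $\gamma_{i},\sigma_{i}\in\mathbf{P}$ nontrivial, reading $C={}^{\infty}A^{\infty}$ continuously shows that the direct-to-inverse transitions --- i.e.\ the $C$-peaks of Definition \ref{definition:finitely-generated-words-II} --- occur precisely at the ends of the blocks $\sigma_{i}$. Thus in $\{1,\dots,p\}$ there are exactly $n$ peaks, at positions $p_{1}<\dots<p_{n}=p$ where $p_{i}=\sum_{l=1}^{i}(|\gamma_{l}|+|\sigma_{l}|)$, and by periodicity the full $C$-peak set is $\{p_{i}+kp : 1\leq i\leq n,\ k\in\mathbb{Z}\}$. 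Since $(C[t])_{i}=C_{i+t}$, a position $i$ is a $C[t]$-peak if and only if $i+t$ is a $C$-peak.

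The key step, which I expect to be the main obstacle, is to show that the cut defining $A'$ must fall on a peak. Because $A'$ is alternating, its first letter is inverse and its last letter is direct; hence in $C[t]={}^{\infty}(A')^{\infty}$ the junction between two consecutive copies of $A'$ is a direct-to-inverse transition. Concretely, $(C[t])_{p}$ is direct while $(C[t])_{p+1}=(C[t])_{1}$ is inverse, so position $p$ is a $C[t]$-peak. By the correspondence above, $t+p$ is then a $C$-peak, which forces $t\equiv p_{j}\pmod{p}$ for some $j\in\{1,\dots,n\}$, using the convention $p_{n}=p\equiv 0$. The care needed here is precisely in tracking the sign pattern through the shift and the periodic boundary, so that the alternating shape of $A'$ genuinely pins down the residue of $t$.

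Finally I would read off $A'$ using periodicity. Since $t\equiv p_{j}\pmod{p}$ and $C$ is $p$-periodic, $A'=C_{p_{j}+1}\dots C_{p_{j}+p}$. As $p_{j}$ is the end of the block $\sigma_{j}$, this sequence starts at the beginning of $\gamma_{j+1}^{-1}$ and wraps once around $A$. If $j=n$ this recovers $A'=A$; if $j<n$ then, putting $m=j+1\in\{2,\dots,n\}$, it yields exactly $A'=\gamma_{m}^{-1}\sigma_{m}\dots\gamma_{n}^{-1}\sigma_{n}\gamma_{1}^{-1}\sigma_{1}\dots\gamma_{m-1}^{-1}\sigma_{m-1}$, as claimed. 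The only calculations left to fill in are the routine verification that the peaks sit at the $\sigma_{i}$-ends and the elementary periodicity identities.
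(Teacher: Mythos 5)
Your argument is correct and follows essentially the same route as the paper: both proofs observe that the junction between consecutive copies of $A'$ in $C[t]$ is a direct-to-inverse transition (a peak), that such transitions in $C={}^{\infty\hspace{-0.5ex}}A^{\infty}$ occur only at the ends of the $\sigma_{i}$-blocks, and then read off $A'$ as the corresponding rotation of $A$ by periodicity. The only cosmetic difference is that you enumerate all $C$-peaks and reduce $t$ modulo the period, whereas the paper argues locally on the two junction letters $A'_{d}A'_{1}=\mathrm{f}(\sigma_{m-1})\mathrm{f}(\gamma_{m})^{-1}$.
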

\begin{proof}
Let $A=\gamma_{1}^{-1}\sigma_{1}\dots\gamma_{n}^{-1}\sigma_{n}$, which is a $\{0,\dots,d\}$-word for some $d$. Let $t=j+dl$ where $j\in\{0,\dots,d-1\}$ and $l\in\mathbb{Z}$. Assume $A\neq A'$, so that $C\neq C[t]$, and hence $j>0$ as $C=C[d]$. Since $j>0$ and $C=C[dl]$ we have $A'_{d}=A_{j}$ and $A'_{1}=A_{j+1}$. Since $A'$ is alternating $0$ is a $C'$-peak in $\mathbb{Z}$, and so $A'_{d}A'_{1}=xy^{-1}$ for some distinct $x,y\in\mathbf{A}$ with $t(x)=t(y)$. Altogether $A_{j}A_{j+1}=xy^{-1}$, and so for some $m$ with $1< m\leq n$ we have $x=\mathrm{f}(\sigma_{m-1})$ and $y=\mathrm{f}(\gamma_{m})$. We now have $A'_{1}=\mathrm{f}(\gamma_{m})^{-1}$ and $A'_{d}=\mathrm{f}(\sigma_{m-1})$. Since ($A'_{h}=A_{h+j}$ when $0< h\leq d-j$) and ($A'_{h}=A_{h-d+j}$ when $d-j<h\leq d$), this shows $A'$ has the required form.
\end{proof}
\begin{definition}\label{definition:periodicstring-modules}
Let $C$ be band word with cycle $A=\gamma_{1}^{-1}\sigma_{1}\dots\gamma_{n}^{-1}\sigma_{n}$. Write $\{\pi(i):i\in\mathbb{Z}\}$ for the $C$-peaks in $\mathbb{Z}$ such that $\pi(0)=0$ and $\pi(i)<\pi(j)$ when $i<j$. Let $g_{C,i}=e_{v_{C}(\pi(i))}$ for each $i$. Define $\gamma_{j},\sigma_{j}\in\mathbf{P}$ for all $j\in\mathbb{Z}$ by $\gamma_{i+qn}=\gamma_{i}$ and $\sigma_{i+qn}=\sigma_{i}$ for all $q\in\mathbb{Z}$, $i=1,\dots ,n$. Let $M(C)=N(C)/L(C)$ where
\[
N(C)=\bigoplus _{i\in\mathbb{Z}}\Lambda g_{C,i},\, L(C)=\sum _{i\in\mathbb{Z}}\Lambda (\gamma_{i+1}g_{C,i}-\sigma_{i+1}g_{C,i+1})
\]
\end{definition}
\begin{lemma}\label{lemma:periodic-string-module-has-T-automorphism}
Let $C={}^{\infty\hspace{-0.5ex}}_{}A^{\infty}_{}$ for some alternating word $A$ of the form $A=\gamma_{1}^{-1}\sigma_{1}\dots\gamma_{i}^{-1}\sigma_{i}\dots\gamma_{n}^{-1}\sigma_{n}$. Then $M(C)$ is a right $R[T,T^{-1}]$-module where $T(g_{C,i}+L(C))=g_{C,i-n}+L(C)$. 
\end{lemma}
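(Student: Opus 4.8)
The plan is to realise the action of $T$ as a single $\Lambda$-module automorphism of $N(C)$ that permutes the free summands by the period, then descend it to $M(C)$ and use its powers to build the $R[T,T^{-1}]$-action. First I would define a $\Lambda$-linear endomorphism $\tau\colon N(C)\to N(C)$ by $\tau(g_{C,i})=g_{C,i-n}$, extended $\Lambda$-linearly over the decomposition $N(C)=\bigoplus_{i\in\mathbb{Z}}\Lambda g_{C,i}$ from Definition \ref{definition:periodicstring-modules}. The one thing needing checking for well-definedness is that $g_{C,i}$ and $g_{C,i-n}$ are idempotents at the same vertex, i.e.\ that $v_{C}(\pi(i))=v_{C}(\pi(i-n))$. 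Since $A$ is a $\{0,\dots,d\}$-word and $C={}^{\infty\hspace{-0.5ex}}A^{\infty}$ is $d$-periodic, and since the alternating word $A=\gamma_{1}^{-1}\sigma_{1}\dots\gamma_{n}^{-1}\sigma_{n}$ contributes exactly $n$ peaks per period (one at each direct-to-inverse transition between $\sigma_{j}$ and $\gamma_{j+1}^{-1}$), the peaks satisfy $\pi(i-n)=\pi(i)-d$; together with the $d$-periodicity of $C$ this yields $v_{C}(\pi(i-n))=v_{C}(\pi(i)-d)=v_{C}(\pi(i))$. Thus $\tau$ is a well-defined $\Lambda$-module automorphism of $N(C)$, with inverse determined by $g_{C,i}\mapsto g_{C,i+n}$.

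Next I would show $\tau$ carries $L(C)$ onto itself. Applying $\tau$ to the defining generator $\gamma_{i+1}g_{C,i}-\sigma_{i+1}g_{C,i+1}$ of $L(C)$ produces $\gamma_{i+1}g_{C,i-n}-\sigma_{i+1}g_{C,i+1-n}$, and the periodic extensions $\gamma_{i+1}=\gamma_{i+1-n}$ and $\sigma_{i+1}=\sigma_{i+1-n}$ fixed in Definition \ref{definition:periodicstring-modules} rewrite this as $\gamma_{(i-n)+1}g_{C,i-n}-\sigma_{(i-n)+1}g_{C,(i-n)+1}$, which is again a defining generator of $L(C)$. Hence $\tau$ permutes the generating set of $L(C)$ bijectively, so $\tau(L(C))=L(C)$, and $\tau$ induces a $\Lambda$-module automorphism $\bar{\tau}$ of $M(C)=N(C)/L(C)$ satisfying $\bar{\tau}(g_{C,i}+L(C))=g_{C,i-n}+L(C)$.

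Finally I would define the right $R[T,T^{-1}]$-action by $m\cdot\sum_{k}r_{k}T^{k}=\sum_{k}r_{k}\,\bar{\tau}^{k}(m)$. This is well defined and satisfies the module axioms because $\bar{\tau}$ is an $R$-linear automorphism, so the powers $\bar{\tau}^{k}$ ($k\in\mathbb{Z}$) give a $\mathbb{Z}$-action commuting with the central $R$-action; in particular $T$ acts invertibly, as required for an $R[T,T^{-1}]$-module. Since $\bar{\tau}$ is $\Lambda$-linear, this right action commutes with the left $\Lambda$-action, so $M(C)$ becomes a $\Lambda$-$R[T,T^{-1}]$-bimodule with $T$ acting exactly as claimed. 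I do not anticipate a genuine obstacle: the entire content is the bookkeeping that a shift of the peak index by $n$ corresponds precisely to a shift of the underlying $\mathbb{Z}$-word by the period $d$, so that the vertices match and the periodic labels $\gamma_{j},\sigma_{j}$ align; once that correspondence is pinned down, well-definedness of $\tau$ and its preservation of $L(C)$ are immediate.
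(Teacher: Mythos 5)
Your proposal is correct and follows essentially the same route as the paper's proof: define the shift $g_{C,i}\mapsto g_{C,i-n}$ as a $\Lambda$-linear endomorphism of $N(C)$, use the periodic identifications $\gamma_{j}=\gamma_{j-n}$, $\sigma_{j}=\sigma_{j-n}$ to see that the generators of $L(C)$ are permuted, descend to $M(C)$, and exhibit the inverse to get the $R[T,T^{-1}]$-action. The only difference is that you spell out the peak bookkeeping $\pi(i-n)=\pi(i)-d$ ensuring the vertices match, which the paper leaves implicit.
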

\begin{proof}
Since $R$ lies in the centre of $\Lambda$, $
M(C)$ is a right $R$-module via restriction. Extending the assignment $g_{C,i}\mapsto g_{C,i-n}$ linearly over $\Lambda$ defines an endomorphism of the projective module $N(C)$ where
\[
\gamma_{i+1}g_{C,i}-\sigma_{i+1}g_{C,i+1}\mapsto \gamma_{i+1}g_{C,i-n}-\sigma_{i+1}g_{C,i-n+1}.
\]
By definition we have $\gamma_{j}=\gamma_{j-n}$ and $\sigma_{j}=\sigma_{j-n}$ for all $j\in\mathbb{Z}$, which shows the right-hand expression above lies in $L(C)$. This shows $T$ defines a $\Lambda$-module endomorphism of $M(C)$ where $g_{C,i}+L(C)\mapsto g_{C,i-n}+L(C)$. Similarly there is a $\Lambda$-module endomorphism $T^{-1}$ of $M(C)$ given by $g_{C,i}\mapsto g_{C,i+n}$. Clearly $T^{-1}$ is the inverse of $T$, and so $M(C)$ is a right $R[T,T^{-1}]$-module. 
\end{proof}
\begin{lemma}\label{lemma:equivalence-relation-on-band-words}
Let $\gamma_{i},\sigma_{i}\in\mathbf{P}$, $1<m\leq n$, and  $A$, $A'$ and $A''$ be cyclic and alternating words where
\[
\begin{array}{ccc}
A=\gamma_{1}^{-1}\sigma_{1}\dots\gamma_{i}^{-1}\sigma_{i}\dots\gamma_{n}^{-1}\sigma_{n}, & A'=\gamma_{m}^{-1}\sigma_{m}\dots\gamma_{n}^{-1}\sigma_{n}\gamma_{1}^{-1}\sigma_{1}\dots\gamma_{m-1}^{-1}\sigma_{m-1}, & A''=A^{-1}.
\end{array}
\]
Then for any $R[T,T^{-1}]$-module $V$ we have $\Lambda$-module isomorphisms
\[
M({}^{\infty\hspace{-0.5ex}}_{}A'^{\infty}_{})\otimes_{R[T,T^{-1}]} V\simeq M({}^{\infty\hspace{-0.5ex}}_{}A^{\infty}_{})\otimes_{R[T,T^{-1}]} V\simeq M({}^{\infty\hspace{-0.5ex}}_{}A''^{\infty}_{})\otimes_{R[T,T^{-1}]} \mathrm{res}\,V.
\]
\end{lemma}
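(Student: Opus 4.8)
The plan is to establish the two stated isomorphisms separately: the first, $M({}^{\infty}A'^{\infty})\otimes_{R[T,T^{-1}]}V\simeq M({}^{\infty}A^{\infty})\otimes_{R[T,T^{-1}]}V$, reflects a cyclic rotation of the period $A$, while the second, $M({}^{\infty}A^{\infty})\otimes_{R[T,T^{-1}]}V\simeq M({}^{\infty}(A^{-1})^{\infty})\otimes_{R[T,T^{-1}]}\mathrm{res}\,V$, reflects inverting the word. In both cases I would exhibit an explicit $\Lambda$-module isomorphism of the underlying band modules that is suitably compatible with the right $R[T,T^{-1}]$-action supplied by Lemma \ref{lemma:periodic-string-module-has-T-automorphism}, and then apply $-\otimes_{R[T,T^{-1}]}V$. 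Throughout I write $S=R[T,T^{-1}]$ and use that each $M(-)$ is a $(\Lambda,S)$-bimodule, since $R$ is central in $\Lambda$.

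For the first isomorphism, set $C={}^{\infty}A^{\infty}$ and $C'={}^{\infty}A'^{\infty}$. Since $A'$ is the rotation of $A$ beginning with $\gamma_m^{-1}$, one checks directly that $C'=C[t]$, where $t=\pi(m-1)$ is the position of the $(m-1)$-st $C$-peak (compare Lemma \ref{lemma:shifting-alternating-periods-is-alternating}). The $C'$-peaks are then the $C$-peaks translated by $-t$, so that $\pi'(j)=\pi(m-1+j)-t$ and the associated vertices satisfy $v_{C'}(\pi'(j))=v_C(\pi(m-1+j))$. I would therefore define $\varphi\colon N(C')\to N(C)$ by $g_{C',j}\mapsto g_{C,m-1+j}$; this is a $\Lambda$-isomorphism of free modules, as the idempotents match. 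Using the periodic conventions $\gamma_{k+n}=\gamma_k$ and $\sigma_{k+n}=\sigma_k$ of Definition \ref{definition:periodicstring-modules}, together with the fact that the rotation shifts the indexing of the $\gamma_i,\sigma_i$ by $m-1$, the defining relation $\gamma'_{j+1}g_{C',j}-\sigma'_{j+1}g_{C',j+1}$ of $L(C')$ is carried to the relation $\gamma_{m+j}g_{C,m-1+j}-\sigma_{m+j}g_{C,m+j}$ of $L(C)$; hence $\varphi(L(C'))=L(C)$ and $\varphi$ descends to $M(C')\xrightarrow{\sim}M(C)$. Finally $\varphi$ intertwines the two $T$-actions, each of which lowers the peak index by $n$, so it is a $(\Lambda,S)$-bimodule isomorphism, and tensoring with $V$ gives the first claim.

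For the second isomorphism I would first record the identity $({}^{\infty}A^{\infty})^{-1}={}^{\infty}(A^{-1})^{\infty}$, which follows from the definitions of inverse and of ${}^{\infty}(-)^{\infty}$ by the letter-by-letter check $(C^{-1})_i=C_{1-i}^{-1}$; thus $C'':={}^{\infty}(A^{-1})^{\infty}=C^{-1}$. The $C^{-1}$-peaks are $\{-p: p \text{ a } C\text{-peak}\}$, giving $\pi''(-i)=-\pi(i)$ with matching vertices, exactly as in the proof of Lemma \ref{lemma:equivalence-relation-on-words-strings}. Writing $A^{-1}=\tilde\gamma_1^{-1}\tilde\sigma_1\cdots\tilde\gamma_n^{-1}\tilde\sigma_n$ with $\tilde\gamma_i=\sigma_{n+1-i}$ and $\tilde\sigma_i=\gamma_{n+1-i}$, I would define $\psi\colon N(C)\to N(C^{-1})$ by $g_{C,i}\mapsto g_{C^{-1},-i}$ and verify, again invoking the $n$-periodicity of the $\gamma$'s and $\sigma$'s, that $\psi$ carries each generator of $L(C)$ to a generator of $L(C^{-1})$ up to sign. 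Hence $\psi(L(C))=L(C^{-1})$, and $\psi$ induces a $\Lambda$-isomorphism $\bar\psi\colon M(C)\xrightarrow{\sim}M(C^{-1})$.

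The crucial point is the behaviour under $T$. Since the action of $T$ sends the class of $g_{C,i}$ to that of $g_{C,i-n}$ on each module, applying $\bar\psi$ to the $T$-image of $g_{C,i}$ yields $g_{C^{-1},-(i-n)}=g_{C^{-1},-i+n}$, which is the $T^{-1}$-image of $g_{C^{-1},-i}=\bar\psi(g_{C,i})$; thus $\bar\psi$ is semilinear for the involution $\iota\colon T\mapsto T^{-1}$, i.e.\ $\bar\psi\colon M(C)\to\mathrm{res}\,M(C^{-1})$ is a $(\Lambda,S)$-bimodule isomorphism. Tensoring and using the natural identification $\mathrm{res}(W)\otimes_S V\simeq W\otimes_S\mathrm{res}(V)$ --- valid because $\mathrm{res}$ is induced by the ring automorphism $\iota$ of $S$, so the twist may be transported across the tensor product --- gives $M(C)\otimes_S V\simeq \mathrm{res}(M(C^{-1}))\otimes_S V\simeq M(C^{-1})\otimes_S\mathrm{res}\,V=M(C'')\otimes_S\mathrm{res}\,V$, as required. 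I expect the main obstacle to be the purely combinatorial bookkeeping: checking that peaks, associated vertices, and the periodic indices of $\gamma_i,\sigma_i$ all transform correctly under both the rotation and the inversion --- in particular the sign and index shift that produce $T^{-1}$ rather than $T$ --- whereas the final $\mathrm{res}$--tensor manipulation is routine.
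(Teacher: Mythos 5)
Your proposal is correct and follows essentially the same route as the paper: an explicit $\Lambda$-isomorphism $N(C')\to N(C)$ shifting the peak generators by $m-1$, a check that it carries the defining relations to one another and commutes with $T$, and then $-\otimes_{R[T,T^{-1}]}V$. For the inversion half the paper merely says the argument is ``similar and omitted''; your semilinearity observation ($\bar\psi$ intertwines $T$ with $T^{-1}$, hence lands in $\mathrm{res}\,M(C^{-1})$) together with the standard twist-transport $\mathrm{res}(W)\otimes_S V\simeq W\otimes_S\mathrm{res}(V)$ is exactly the intended completion, and your index/sign bookkeeping checks out.
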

\begin{proof}Let $C={}^{\infty\hspace{-0.5ex}}_{}A^{\infty}_{}$, $C'={}^{\infty\hspace{-0.5ex}}_{}A'^{\infty}_{}$ and $C''={}^{\infty\hspace{-0.5ex}}_{}A''^{\infty}_{}$. By writing $d$ for the sum of the lengths of the paths $\gamma_{i}$ and $\sigma_{i}$ over all $i=1,\dots,n$, we have that $A$ is a $\{0,\dots,d\}$-word. Clearly $C$, $C'$ and $C''$ are $d$-periodic. Clearly $C'=C[j]$ for some $j\in\{0,\dots,d-1\}$: explicitly, where $j$ sum of the lengths of the paths $\gamma_{i}$ and $\sigma_{i}$ over all $i=1,\dots,m-1$. Hence $v_{C'}(t)=v_{C}(t+j)$ for all $t\in\mathbb{Z}$, and so the assignment $g_{C,i}\mapsto g_{C',i+m-1}$ defines an isomorphism $\varphi\colon N(C)\to N(C')$. By writing $A'=\gamma_{1}^{\prime -1}\sigma'_{1}\dots\gamma_{n}^{\prime -1}\sigma'_{n}$ where ($\gamma'_{h}=\gamma_{h+m}$ and $\sigma'_{h}=\sigma_{h+m}$ when $0< h\leq n-m$) and ($\gamma'_{h}=\gamma_{h+m-n}$ and $\sigma'_{h}=\sigma_{h+m-n}$ when $n-m<h\leq n$); it is straightforward to check that the image of $\varphi$ restricted to $L(C)$ lies in $L(C')$.

This gives a $\Lambda$-linear map $\psi\colon M(C)\to M(C')$ sending $g_{C,i}+L(C)$ to $ g_{C',i+m-1}+L(C')$, and so
\[
\psi((g_{C,i}+L(C))T^{\pm 1})=\psi(g_{C,i\mp d}+L(C))=g_{C',i\mp d+m-1}+L(C')=(g_{C',i+m-1}+L(C'))T^{\pm 1}.
\]
Hence $\psi$ is a homomorphism of $\Lambda\text{-}R[T,T^{-1}]$-bimodules. Dually there is a $\Lambda\text{-}R[T,T^{-1}]$-bimodule map $\xi\colon M(C')\to M(C)$ given by $g_{C',i}+L(C')\mapsto g_{C,i-m+1}+L(C)$. Clearly $\psi=\xi^{-1}$ and so $M(C')\simeq M(C)$ as $\Lambda\text{-}R[T,T^{-1}]$-bimodules. The proof that $M(C)\otimes V\simeq M(C'')\otimes \mathrm{res}\,V$ is similar and omitted, recalling $\mathrm{res}$ is the involution of $R[T,T^{-1}]\text{-}\boldsymbol{\mathrm{Mod}}$ swapping the action of $T^{-1}$ and $T$. 
\end{proof}
\begin{remark}\label{remark:comparing-terminology-used-by-Ricke-II}
In Proposition \ref{proposition:peak-finite-iff-shift-of-string} we saw how the string words considered here are equivalent to the words indexing the string modules in the thesis of Ricke; see \cite[Theorem 3.3.28]{Ric2017} and  Remark \ref{remark:comparing-terminology-used-by-Ricke}. In Proposition \ref{proposition:periodic-non-primitive-given-by-alternating} we see how words indexing the band modules from \cite{Ric2017} are equivalent to band words; see Remark \ref{remark:primitive-periodic-words}.
\end{remark}
\begin{proposition}
\label{proposition:periodic-non-primitive-given-by-alternating}
Let $C$ be a $p$-periodic non-primitive $\mathbb{Z}$-word. Then $p>1$ and there exists a unique integer $n$ with $0\leq n <p$ such that $C[n]$ is a band word.
\end{proposition}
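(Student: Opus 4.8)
The plan is to reduce the entire statement to one combinatorial criterion: that \emph{$C[n]$ is a band word if and only if $n$ is a $C$-peak}. To establish this criterion I would first observe that shifting preserves the minimal period and, by Remark \ref{remark:primitive-periodic-words}, also preserves primitivity; hence every $C[n]$ is again $p$-periodic and non-primitive, so $C[n]={}^{\infty\hspace{-0.5ex}}A_n^{\infty}$ with cycle $A_n=C[n]_1\cdots C[n]_p=C_{n+1}\cdots C_{n+p}$. Because $p$ is minimal, $A_n$ cannot be a proper power, and $A_n^2$ is a word since it is a window of the word $C$; thus $A_n$ is always cyclic. Consequently $C[n]$ is a band word exactly when $A_n$ is alternating. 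If $n$ is a $C$-peak then so is $n+p$, and Lemma \ref{lemma:checking-alternating-word-technical} applied to the consecutive $C$-peaks lying in $[n,n+p]$ shows $A_n$ is alternating; conversely, if $A_n$ is alternating it begins with an inverse and ends with a direct letter, so (using $C_{n+p}=C_n$) $C_n$ is direct and $C_{n+1}$ is inverse, i.e.\ $n$ is a $C$-peak.

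With the criterion in hand the existence-type claims are short. For $p>1$: a $1$-periodic word has all its letters equal, hence is direct or inverse, and so is primitive by Remark \ref{remark:primitive-periodic-words}, contradicting the hypothesis. For the existence of at least one band-word shift it then suffices to exhibit a single $C$-peak. If $C$ had none, then by Lemma \ref{lemma:no-peaks-implies-eventually-upward} it would be eventually upward; but a periodic eventually-upward $\mathbb{Z}$-word is forced by the relations $C_j=C_{j+kp}$ to be direct or inverse, hence primitive, a contradiction. So a peak $n$ exists, and by the criterion $C[n]$ is a band word.

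The main obstacle is \emph{uniqueness}. By the criterion this is the assertion that $C$ has exactly one peak in $\{0,\dots,p-1\}$, that is, exactly one $C$-peak per period. This is the genuinely delicate point, since Lemma \ref{lemma:checking-alternating-word-technical} records that each maximal stretch between consecutive peaks contributes one alternating block $\gamma_i^{-1}\sigma_i$ to the cycle, so a priori a minimal period could carry one peak per block. I would therefore attack uniqueness by controlling the number of alternating blocks in the cycle of a periodic non-primitive word: given two candidate peaks $m,n\in\{0,\dots,p-1\}$ I would compare the cycles $A_m$ and $A_n$ through the rotation described in Lemma \ref{lemma:shifting-alternating-periods-is-alternating} and attempt to force $m=n$ using the minimality of $p$ and the rigidity of the $\gamma_i^{-1}\sigma_i$ decomposition. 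Pinning down this count of peaks per minimal period is exactly where I expect essentially all of the difficulty to reside, and it is the step I would scrutinise most carefully.
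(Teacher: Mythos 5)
Your treatment of $p>1$ and of existence is correct, and the peak criterion you isolate ($C[n]$ is a band word precisely when $n$ is a $C$-peak) is a clean repackaging of what the paper does by hand: the paper's proof chases minimal indices to produce a window $C_{1+m}\dots C_{p+m}$ that is alternating, and the position $m$ it lands on is exactly a $C$-peak, so your route through Lemma \ref{lemma:no-peaks-implies-eventually-upward} and Lemma \ref{lemma:checking-alternating-word-technical} reaches the same place with less bookkeeping. (A further shortcut: you do not need Lemma \ref{lemma:checking-alternating-word-technical} at all, since a non-trivial finite word is alternating if and only if its first letter is inverse and its last letter is direct --- the decomposition into maximal direct/inverse runs is automatic and condition (3) of the definition of a word places each run in $\mathbf{P}$.)

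The genuine gap is uniqueness, which you do not prove: your final paragraph announces an intention to ``force $m=n$'' but contains no argument. Worse, your own criterion shows that no such argument exists: the admissible $n\in\{0,\dots,p-1\}$ are exactly the $C$-peaks in that range, and their number equals the number of blocks $\gamma_i^{-1}\sigma_i$ in the cycle, which is $\geq 2$ as soon as the cycle has more than one block. The paper itself supplies such a word: in Example \ref{example:intro-band-module} the word $C$ is $12$-periodic and both $C[5]$ and $C[9]$ are asserted to be band words, and Lemma \ref{lemma:shifting-alternating-periods-is-alternating} is written precisely to handle two band-word shifts whose cycles are non-trivial rotations of one another --- a situation that would be vacuous if the uniqueness in Proposition \ref{proposition:periodic-non-primitive-given-by-alternating} held. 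You should also know that the paper's own justification of uniqueness is the single sentence ``that $n$ is unique follows from the minimality of $p$,'' which is no more of a proof than your sketch. So your instinct that all the difficulty is concentrated in the uniqueness claim is correct, but the resolution is that the claim is false as literally stated; what is true, and what the rest of the paper actually uses, is existence together with the fact that any two band-word shifts of $C$ have cycles that are rotations of one another, so the resulting band modules coincide by Lemma \ref{lemma:equivalence-relation-on-words-bands}. As a proof of the proposition as stated your proposal is therefore incomplete, but as a diagnosis of where the statement breaks it is essentially on target.
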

\begin{proof}
Note that for any $i\in\mathbb{Z}$, if $C_{1+i}\dots C_{p+i}$ is direct (respectively, inverse) then $C$ is direct (respectively, inverse). Since $C$ is periodic-non-primitive this would be a contradiction by Remark \ref{remark:primitive-periodic-words}. Hence for all $i$ we have that $C_{1+i}\dots C_{p+i}$ is not direct, nor inverse. Hence, in particular, $p>1$. Consider firstly the case where $C_{1}$ is direct. Begin by choosing minimal $m>0$ such that $C_{1+m}$ is inverse. By the above $m<p$. Next choose $l$ minimal such that $1<l$ and $C_{l+m}$ is direct. Since $C_{1}=C_{p+1}$ is direct it is impossible that ($m+1\leq p+1$ and $p+1\leq l-1+m$). Since $m<p$ this means $l+m\leq p+1$.

By the minimality of $l$ we have that $C_{l+m}$ is direct.  Now choose $h$ minimal such that $l< h$ and $C_{h+m}$ is inverse. As $C_{1+m}=C_{1+p+m}$ is inverse it is impossible that ($l-1+m\leq p+1+m$ and $p+1+m\leq h-1+m$), and so we must have that $h-1\leq p$. So far, by the minimality of $l$, $m$ and $h$, we have that $C_{1+m}\dots C_{l-1+m}$ is inverse and that $C_{l+m}\dots C_{h-1+m}$ is direct. 

Hence $C_{1+m}\dots C_{h-1+m}=\gamma_{1}^{-1}\sigma_{1}$ for some $\gamma_{1},\sigma_{1}\in\mathbf{P}$. Recall $h-1\leq p$. Let $J$ be the set of $j>0$ with $l\leq j \leq p$ and $C_{1+m}\dots C_{j-1+m}=\gamma_{1}^{-1}\sigma_{1}\dots \gamma_{d}^{-1}\sigma_{d}$ for some $\gamma_{d},\sigma_{d}\in\mathbf{P}$. By the above we have that $n\in J$, so $J\neq \emptyset$. Since $J$ is bounded above by $p$ (and below by $l$), there is a maximal element $t=\mathrm{max}(J)$. Assume for the moment that $t<p$. By the maximality of $t$ we must have that $C_{t+m}$ is inverse, and altogether this means $C_{i+m}$ is inverse whenever $t<i\leq p$. This contradicts that $C_{m}=C_{p+m}$ is direct, and hence we have $t=p$. Thus the statement of the lemma holds when $C_{1}$ is direct. 

Now consider instead the case where $C_{1}$ is inverse. Let $D=(C[1])^{-1}$, and so $D=\dots C_{3}^{-1}C_{2}^{-1}\mid C_{1}^{-1}C_{0}^{-1}\dots$, which means that $D_{1}=C_{1}^{-1}$ is direct. Hence by the above there is some $q\in\mathbb{Z}$ such that $D_{1+q}\dots D_{p+q}$ is cyclic and alternating. Since $D_{i}=C_{2-i}^{-1}$ for each $i\in\mathbb{Z}$ we must have that $C_{1-q}^{-1}\dots C_{2-p-q}^{-1}$ is cyclic and alternating. This means $C_{1+m}\dots C_{p+m}$ is cyclic and alternating where $m=1-p-q$.

Thus we have some $m\in\mathbb{Z}$ such that $C[m]$ is a band word. Writing $n$ for the remainder of dividing $m$ by $p$ gives $C[m]=C[n]$ since $C$ is $p$-periodic. That $n$ is unique follows from the minimality of $p$.
\end{proof}
\begin{notation}\label{notation:R[T,T^-1]-modules}
Let $R[T,T^{-1}]\text{-\textbf{Mod}}_{R\text{-\textbf{Proj}}}$ be the full subcategory of $R[T,T^{-1}]$-modules which are projective (and hence, as $R$ is local, free) as $R$-modules. Let $R[T,T^{-1}]\text{-\textbf{Mod}}_{R\text{-\textbf{proj}}}$ be the full subcategory of $R[T,T^{-1}]\text{-\textbf{Mod}}_{R\text{-\textbf{Proj}}}$ consisting of $R[T,T^{-1}]$-modules which are finitely generated (and free) as $R$-modules. 

Any object $0\neq V$ of $R[T,T^{-1}]\text{-\textbf{Mod}}_{R\text{-\textbf{proj}}}$ satisfies $V\simeq R^{n}$ (as $R$-modules) for some $n>0$, and $T$ corresponds to an $R$-module automorphism of $V$, and hence is given by an $n\times n$ matrix $(r_{ij})$ with entries $r_{ij}\in R$ such that the determinant $\mathrm{det}(r_{ij})$ is a unit in $R$ (that is, lies outside $\mathfrak{m}$).
\end{notation}
% \begin{remark}\label{remark:band-modules-defined-up-to-isomorphism}
% Suppose there exists some $m,m'\in\mathbb{Z}$ such that $C[m]={}^{\infty\hspace{-0.5ex}}A^{\infty}$ and $C[m']={}^{\infty\hspace{-0.5ex}}A'^{\infty}$ for some alternating words $A$ and $A'$. In this setting $C[m']=(C[m])[m'-m]$, and so . Now let $V$ be an object of $R[T,T^{-1}]\text{-\textbf{Mod}}_{R\text{-\textbf{Proj}}}$. The above shows that there is a $\Lambda$-module isomorphism $M(C[m])\otimes_{R[T,T^{-1}]}V\simeq M(C[m'])\otimes_{R[T,T^{-1}]}V$.
% \end{remark}
\begin{definition}\label{definition:band-module}
Let $C$ be a band word, and so $C={}^{\infty\hspace{-0.5ex}}A^{\infty}$ for some cyclic alternating word $A$. For any object $V$ of $R[T,T^{-1}]\text{-\textbf{Mod}}_{R\text{-\textbf{Proj}}}$ let $M(C,V)=M(C)\otimes_{R[T,T^{-1}]}V$.
\end{definition}
\begin{lemma}\label{lemma:equivalence-relation-on-words-bands}
Let $V$ be an object of \emph{$R[T,T^{-1}]\text{-\textbf{Mod}}_{R\text{-\textbf{Proj}}}$} and let $C$ and $C'$ be band words. 
\begin{enumerate}
    \item If $C'=C[n]$ for some $n$ then $M(C',V)\simeq M(C,V)$.
    \item If $C'=C^{-1}[n]$ for some $n$ then $M(C',V)\simeq M(C,\mathrm{res}\,V)$.
\end{enumerate}
\end{lemma}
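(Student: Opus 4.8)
The plan is to reduce both statements to the bimodule isomorphisms already established in Lemma~\ref{lemma:equivalence-relation-on-band-words}, using the observation that the shift relations $C'=C[n]$ and $C'=C^{-1}[n]$ are controlled by Lemma~\ref{lemma:shifting-alternating-periods-is-alternating}. First I would write $C={}^{\infty\hspace{-0.5ex}}A^{\infty}$ for the cycle $A=\gamma_{1}^{-1}\sigma_{1}\dots\gamma_{n}^{-1}\sigma_{n}$, and let $A'$ denote the cycle of the band word $C'$. Since $C'$ is obtained from $C$ by a shift, Lemma~\ref{lemma:shifting-alternating-periods-is-alternating} tells me that $A'$ is either equal to $A$ or to a cyclic rotation of $A$ of the precise form appearing in Lemma~\ref{lemma:equivalence-relation-on-band-words}, namely $A'=\gamma_{m}^{-1}\sigma_{m}\dots\gamma_{n}^{-1}\sigma_{n}\gamma_{1}^{-1}\sigma_{1}\dots\gamma_{m-1}^{-1}\sigma_{m-1}$ for some $1<m\leq n$.

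For part (1), where $C'=C[n]$, both $A$ and $A'$ are cycles of band words, so the above dichotomy applies. In the first case $A'=A$ forces $C'=C$ and the isomorphism is trivial. In the second case Lemma~\ref{lemma:equivalence-relation-on-band-words} directly supplies a $\Lambda\text{-}R[T,T^{-1}]$-bimodule isomorphism $M({}^{\infty\hspace{-0.5ex}}A'^{\infty})\simeq M({}^{\infty\hspace{-0.5ex}}A^{\infty})$, i.e.\ $M(C')\simeq M(C)$ as bimodules. Tensoring this bimodule isomorphism with $V$ over $R[T,T^{-1}]$ then yields $M(C',V)\simeq M(C,V)$, which is the desired conclusion by Definition~\ref{definition:band-module}.

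For part (2), where $C'=C^{-1}[n]$, I would first set $C''={}^{\infty\hspace{-0.5ex}}(A^{-1})^{\infty}=C^{-1}$ (up to a shift), so that $C'$ is a shift of $C''$. Applying part (1) to the pair $C',C''$ gives $M(C',V)\simeq M(C'',V)$ as $\Lambda$-modules. Then the second isomorphism in Lemma~\ref{lemma:equivalence-relation-on-band-words}, namely $M({}^{\infty\hspace{-0.5ex}}A^{\infty})\otimes V\simeq M({}^{\infty\hspace{-0.5ex}}(A^{-1})^{\infty})\otimes \mathrm{res}\,V$, identifies $M(C'',V)=M(C^{-1},V)$ with $M(C,\mathrm{res}\,V)$. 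Chaining these gives $M(C',V)\simeq M(C,\mathrm{res}\,V)$, as required.

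The main obstacle I anticipate is bookkeeping rather than any genuine mathematical difficulty: I must verify that the cycle $A'$ of $C'$ really does arise as a rotation with the exact indexing $1<m\leq n$ that Lemma~\ref{lemma:equivalence-relation-on-band-words} requires, and that when $C'=C^{-1}[n]$ the inverse word $C^{-1}$ genuinely has $A^{-1}$ as its cycle (using that the inverse of an alternating cyclic word is again alternating and cyclic). Because the cycle of a band word is unique by Definition~\ref{definition:band-words} and rotations are governed by Lemma~\ref{lemma:shifting-alternating-periods-is-alternating}, these checks are routine, so the proof reduces cleanly to invoking Lemma~\ref{lemma:equivalence-relation-on-band-words} and tensoring with $V$.
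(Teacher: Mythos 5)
Your proposal is correct and follows essentially the same route as the paper: reduce to the two cases $C'=C[t]$ and $C'=C^{-1}$, identify the cycle of $C'$ via Lemma \ref{lemma:shifting-alternating-periods-is-alternating} (respectively, note that the cycle of $C^{-1}$ is $A^{-1}$), and then invoke the bimodule isomorphisms of Lemma \ref{lemma:equivalence-relation-on-band-words} and tensor with $V$. The bookkeeping checks you flag are exactly the ones the paper relies on, and they go through.
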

\begin{proof}
By definition $C={}^{\infty\hspace{-0.5ex}}A^{\infty}$ and $C'={}^{\infty\hspace{-0.5ex}}A'^{\infty}$ for some cyclic alternating words $A$ and $A'$. It suffices to separately consider the cases where $C'=C[t]$ for some $t\in\mathbb{Z}$, and where $C'=C^{-1}$. Let $A=\gamma_{1}^{-1}\sigma_{1}\dots\gamma_{i}^{-1}\sigma_{i}\dots\gamma_{n}^{-1}\sigma_{n}$. When $C'=C[t]$ we have, by Lemma \ref{lemma:shifting-alternating-periods-is-alternating}, that either $A'=A$ or there is some $m$ with $1<m\leq n$ such that $A'=\gamma_{m}^{-1}\sigma_{m}\dots\gamma_{n}^{-1}\sigma_{n}\gamma_{1}^{-1}\sigma_{1}\dots\gamma_{m-1}^{-1}\sigma_{m-1}$. Likewise if $C'=C^{-1}$ then $A'=\sigma_{n}^{-1}\gamma_{n}\dots\sigma_{1}^{-1}\gamma_{1}$. The result follows by Lemma \ref{lemma:equivalence-relation-on-band-words}. 
\end{proof}
\begin{definition}\label{definition:string-and-band-modules}By a \emph{finitely generated string module} we mean a module of the form $M(C)$ where $C$ is a string word. By a \emph{finitely generated band module} we mean a module of the form $M(C,V)$ where $C$ is a band word and $V$ is indecomposable as an $R[T,T^{-1}]$-module and finitely generated as an $R$-module. 
\end{definition}
\begin{corollary}\label{corollary:fin-gen-string-band-modules-are-fin-gen} Finitely generated string and band modules are finitely generated $\Lambda$-modules.
\end{corollary}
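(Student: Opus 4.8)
The plan is to treat the string and band cases separately, in each case reducing finite generation of the module to a finite generating set that is visible from its defining presentation.

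For a \emph{string module} $M(C)=N(C)/L(C)$ the argument is essentially immediate. Since $C$ is a string word it is peak-finite by Proposition \ref{proposition:peak-finite-iff-shift-of-string}, so there are only finitely many $C$-peaks $\pi(0),\dots,\pi(d)$. By Definition \ref{definition:string-modules-by-string-words} this means $N(C)=\bigoplus_{i=0}^{d}\Lambda g_{C,i}=\bigoplus_{i=0}^{d}\Lambda e_{v_{C}(\pi(i))}$ is a \emph{finite} direct sum of cyclic projective $\Lambda$-modules, hence finitely generated over $\Lambda$. As $M(C)$ is a quotient of $N(C)$, it is finitely generated. So the only real content is the band case.

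For a \emph{band module} $M(C,V)=M(C)\otimes_{R[T,T^{-1}]}V$ the module $N(C)=\bigoplus_{i\in\mathbb{Z}}\Lambda g_{C,i}$ from Definition \ref{definition:periodicstring-modules} is \emph{not} finitely generated over $\Lambda$, so I must exploit the right $R[T,T^{-1}]$-module structure of $M(C)$ supplied by Lemma \ref{lemma:periodic-string-module-has-T-automorphism} together with the hypothesis (Definition \ref{definition:string-and-band-modules}) that $V$ is finitely generated over $R$. Writing $A=\gamma_{1}^{-1}\sigma_{1}\dots\gamma_{n}^{-1}\sigma_{n}$ for the cycle of $C$, the action $T(g_{C,i}+L(C))=g_{C,i-n}+L(C)$ gives, for each $j\in\mathbb{Z}$, the identity $g_{C,j}+L(C)=(g_{C,r}+L(C))\,T^{m}$ where $r\in\{0,\dots,n-1\}$ is the residue of $j$ modulo $n$ and $m=(r-j)/n$. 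Consequently the finite set $g_{C,0},\dots,g_{C,n-1}$ generates $M(C)$ as a $\Lambda$-$R[T,T^{-1}]$-bimodule. I then fix an $R$-module generating set $v_{1},\dots,v_{s}$ of $V$ and verify that the finitely many elements $g_{C,r}\otimes v_{i}$ ($0\leq r<n$, $1\leq i\leq s$) generate $M(C,V)$ over $\Lambda$: a typical simple tensor $g_{C,j}\otimes v$ rewrites as $g_{C,r}T^{m}\otimes v=g_{C,r}\otimes T^{m}v$ by the balancing relation of the tensor product, and expanding $T^{m}v=\sum_{i}r'_{i}v_{i}$ with $r'_{i}\in R$ yields $g_{C,r}\otimes T^{m}v=\sum_{i}r'_{i}(g_{C,r}\otimes v_{i})$, where I use that $R\subseteq R[T,T^{-1}]$ lies in the centre of $\Lambda$ so that the scalars $r'_{i}$ pass through the tensor and act as $\Lambda$-scalars.

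The main obstacle is precisely this band step: one has to reconcile the fact that $M(C)$ carries infinitely many $\Lambda$-generators with the finiteness available after tensoring. The crux is the combination of two reductions — absorbing the $\mathbb{Z}$-family $\{g_{C,j}\}$ into the single period $\{g_{C,0},\dots,g_{C,n-1}\}$ via powers of $T$, and then absorbing the $R$-module $V$ into its finite generating set $\{v_{i}\}$ after moving $T^{m}$ across the tensor. Once the two bookkeeping identities $g_{C,j}=g_{C,r}T^{m}$ and $g_{C,r}T^{m}\otimes v=g_{C,r}\otimes T^{m}v$ are in place, and centrality of $R$ is invoked, the finite generation follows; everything else is routine.
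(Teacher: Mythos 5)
Your proposal is correct and follows essentially the same route as the paper's proof: the string case is dispatched by peak-finiteness of $C$ making $N(C)$ a finite sum of cyclic projectives, and the band case reduces the $\mathbb{Z}$-indexed generators to one period via the $T$-action and then passes $T^{m}$ across the tensor onto a finite $R$-generating set of $V$ (the paper uses a finite $R$-basis, which exists since $V$ is finitely generated and $R$-free, but this is an immaterial difference).
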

\begin{proof}
Let $C$ be an $I$-word. If there are $d>0$ distinct $C$-peaks in $I$ then by construction $M(C)$ is a quotient of a direct summand of $\Lambda^{d}$, in which case $M(C)$ is clearly finitely generated. Suppose $C$ is a band word with alternating cycle $A$.  By definition we have $M(C,V)=M(C)\otimes_{R[T,T^{-1}]}V$. Let $(v_{\omega}\colon \omega\in\Omega)$ be an $R$-basis of $V$. Since $R$ lies in the centre of $\Lambda$, and by construction, the $\Lambda$-module $M(C,V)$ is generated by the elements $g_{C,i,\omega}=g_{C,i}\otimes v_{\omega}$ where $i$ runs through $\mathbb{Z}$ and $\omega$ through $\Omega$. Writing any $i\in\mathbb{Z}$ as $i=j+nd$ for $j=0,\dots,n-1$ and $d\in\mathbb{Z}$ gives $g_{C,i,\omega}=g_{C,j}\otimes T^{-d}v_{\omega}$, and hence $M(C,V)$ is generated by the elements $g_{C,0,\omega},\dots, g_{C,n-1,\omega}$ with $\omega\in\Omega$. 
\end{proof}
\section{String and band complexes}\label{section:string-and-band-complexes}
\begin{definition}\label{definition:generalised-words-for-complexes}\cite[\S 4.1]{BekMer2003} By a \emph{generalised letter} we mean one of $\langle\gamma\rangle $ (called \emph{direct}) or $\langle\gamma\rangle ^{-1}$ (called \emph{inverse}) where $\gamma\in\mathbf{P}$. The head and tail are defined by $h(\langle\gamma\rangle )=t(\gamma)=t(\langle\gamma\rangle^{-1} )$ and $t(\langle\gamma\rangle )=h(\gamma)=h(\langle\gamma\rangle^{-1} )$. 

Let $I$ be one of the
sets $\{0,\dots,m\}$ (for some $m\geq0$), $\mathbb{N}$, $-\mathbb{N}=\{-n\colon n\in\mathbb{N}\}$,
or $\mathbb{Z}$. In case $I=\{0\}$ the \emph{trivial} generalised words are 
$ \langle1 _{v,1}\rangle $ and $ \langle1 _{v,-1}\rangle $ for each vertex
$v$. For $I\neq\{0\}$, a \emph{generalised} $I$-\emph{word} refers to a sequence of the form 
\[
 \mathcal{C} =\begin{cases}
 \mathcal{C}_{1} \dots  \mathcal{C}_{m}  & (\mbox{if }I=\{0,\dots,m\})\\
 \mathcal{C}_{1}  \mathcal{C}_{2} \dots & (\mbox{if }I=\mathbb{N})\\
\dots \mathcal{C}_{-2}  \mathcal{C}_{-1}  \mathcal{C}_{0}  & (\mbox{if }I=-\mathbb{N})\\
\dots\dots \mathcal{C}_{-2}  \mathcal{C}_{-1}  \mathcal{C}_{0} \mid  \mathcal{C}_{1}  \mathcal{C}_{2} \dots & (\mbox{if }I=\mathbb{Z})
\end{cases}
\]where each $\mathcal{C}_{i}$ is a generalised letter, and any sequence of the form $\mathcal{C}_{i}\mathcal{C}_{i+1}$ has the form:
\begin{enumerate}
\item $\langle\gamma\rangle \langle\sigma\rangle ^{-1}$ where $h(\gamma)=h(\sigma)$ and $\mathrm{l}(\gamma)\neq \mathrm{l}(\sigma)$;
\item $\langle\gamma\rangle ^{-1}\langle\sigma\rangle ^{-1}$ where $t(\gamma)=h(\sigma)$ and $\mathrm{f}(\gamma)\mathrm{l}(\sigma)=0$;
\item $\langle\gamma\rangle ^{-1}\langle\sigma\rangle $ where $t(\gamma)=t(\sigma)$ and $\mathrm{f}(\gamma)\neq \mathrm{f}(\sigma)$; or
\item $\langle\gamma\rangle \langle\sigma\rangle $ where $h(\gamma)=t(\sigma)$ and $\mathrm{f}(\sigma)\mathrm{l}(\gamma)=0$.
\end{enumerate}
For each $i\in I$ there is an \emph{associated vertex} $v_{\mathcal{C}}(i)$
defined by $v_{\mathcal{C}}(i)=h(\mathcal{C}_{i})$ for $i\leq0$ and $v_{\mathcal{C}}(i)=t(\mathcal{C}_{i})$
for $i>0$ provided $I\neq\{0\}$, and $v_{\langle 1 _{v,\pm1}\rangle}(0)=v$ otherwise.

The \emph{inverse} $\mathcal{C}^{-1} $ of a generalised $I$-word $\mathcal{C}$ is defined
by $\langle1 _{v,\delta}\rangle ^{-1}= \langle1 _{v,-\delta}\rangle $ if $I=\{0\}$,
and otherwise inverting the generalised letters and reversing their order; analogously to Definition \ref{definition:words-for-modules}. Note the generalised $\mathbb{Z}$-words are indexed so
that
\[
\left(\dots \mathcal{C}_{-1}\mathcal{C}_{0}\mid  \mathcal{C}_{1}\mathcal{C}_{2}\dots\right)^{-1}=\dots \mathcal{C}_{2}^{-1}\mathcal{C}_{1}^{-1}\mid  \mathcal{C}^{-1}_{0}\mathcal{C}^{-1}_{-1}\dots
\]
Let $d\in\mathbb{Z}$. If $I=\mathbb{Z}$ we define the generalised $\mathbb{Z}$-word $\mathcal{C}[d]$ by $\mathcal{C}[d]_{i}=\mathcal{C}_{i+d}$ for all $i\in\mathbb{Z}$. If $I\neq\mathbb{Z}$ let $\mathcal{C}=\mathcal{C}[d]$.
\end{definition}
The angled brackets are meant to help the reader distinguish between (sequences of consecutive arrows which make up a path in $\mathbf{P}$) and (generalised letters which are all direct or all inverse). 
\begin{definition}\label{definition:sign-and-composing-generalised-words}
Recall, from Definition \ref{definition:sign-and-composing-words}, that for any letter $l$ we chose $s(l)\in\{1,-1\}$ (such that if $l\neq l'$, $h(l)=h(l')$ and $s(l)=s(l')$ then $\{l,l'\}=\{x^{-1},y\}$ where $xy\in\mathscr{I}$). We define the \emph{sign} of any generalised letter by setting $s(\langle \gamma \rangle)=s(\mathrm{f}(\gamma)^{-1})$ and $s(\langle \gamma \rangle^{-1})=-s(\mathrm{l}(\gamma))$ for each $\gamma\in\mathbf{P}$.

Let $\mathcal{C}$ be a generalised $I$-word with $I\subseteq\mathbb{N}$. We define the \emph{sign} of $\mathcal{C}$ by $s(\langle 1_{v,\delta}\rangle)=\delta$ when $I=\{0\}$ and $s(\mathcal{C})=s(\mathcal{C}_{1})$ otherwise. Now let $\mathcal{C}$ be a generalised (finite or $-\mathbb{N}$)-word and $\mathcal{D}$ be a generalised (finite or $\mathbb{N}$)-word. The \emph{composition of} $\mathcal{C}$ \emph{and} $\mathcal{D}$ is given by concatenating the letters together, written $\mathcal{CD}$. We say that $\mathcal{C}$ and $\mathcal{D}$ are \emph{composable} if $h(\mathcal{C}^{-1})=h(\mathcal{D})$ and $s(\mathcal{C}^{-1})=-s(\mathcal{D})$. 
\end{definition}
\begin{remark}\label{remark:generalised-sign-and-composability}
Let $\mathcal{C}$ be a (finite or $-\mathbb{N}$)-word and $\mathcal{D}$ be a (finite or $\mathbb{N}$)-word, and write $\mathcal{D}=\mathcal{D}_{1}\dots$ and $\mathcal{C}=\dots \mathcal{C}_{0}$. Analogously to Example \ref{remark:sign-and-composability},  $\mathcal{C}$ and $\mathcal{D}$ are composable if and only if $\mathcal{CD}$ is a word; see \cite[Proposition 2.1.13]{Ben2018} for details.
\end{remark}
\begin{example}\label{example:generalised-signs-for-k[[x,y]]/(xy)}
We continue with Example \ref{example:signs-for-k[[x,y]]/(xy)} where $\Lambda=k[[x,y]]/(xy)$ and we chose $s(x)=1=s(y^{-1})$ and $s(x^{-1})=-1=s(y)$.  Let $\Lambda=k[[x,y]]/(xy)$. Here $\mathbf{P}=\{x^{n},y^{m}\colon 0<n,m\in \mathbb{N}\}$. This means $s(\langle x^{n} \rangle)=s(\langle x^{n} \rangle^{-1})=-1$ and $s(\langle y^{n}\rangle)=s(\langle y^{n}\rangle^{-1})= 1$ for any $n>0$. Fix a non-trivial generalised $I$-word $\mathcal{C}=\dots \mathcal{C}_{i}\dots$ and suppose $i,i\pm 1\in I$. Then 
\[
\mathcal{C}_{i}\mathcal{C}_{i+1}\in\{ \langle x^{n}\rangle^{\delta}\langle y^{m} \rangle^{\varepsilon}, \langle y^{m}\rangle^{\varepsilon}\langle x^{n} \rangle^{\delta}\colon 0<n,m\in \mathbb{N},\delta,\varepsilon \in\{1,-1\}\}.
\]
Conversely, any generalised letter  has the form $\langle x^{n}\rangle ^{\delta}$ or $\langle y^{m}\rangle^{\varepsilon}$, and a sequence of generalised letters is a generalised word if and only if it does not contain a pair of consecutive generalised letters of the form $\langle x^{n}\rangle^{\delta}\langle x^{m}\rangle^{\epsilon}$ or $\langle y^{n}\rangle^{\delta}\langle y^{m}\rangle^{\epsilon}$. For example
\[
\begin{array}{c}
     \mathcal{C}=\dots\langle x\rangle ^{-1}\langle y\rangle ^{-1}\langle x\rangle ^{-1}\langle y\rangle ^{-1}\langle x\rangle ^{-1}\langle y\rangle ^{-1}\langle x^{4}\rangle ^{-1} \langle y^{3}\rangle \langle x^{2}\rangle ^{-1}\langle y\rangle \langle x^{2}\rangle ^{-1}\langle y^{3}\rangle \langle x^{4}\rangle ^{-1} 
\end{array}
\]
is a generalised $-\mathbb{N}$-word. If $\mathcal{D}=\dots \mathcal{D}_{0}$ is a non-trivial (finite or $-\mathbb{N}$)-word and $\mathcal{E}=\mathcal{E}_{1}\dots$ is a non-trivial (finite or $\mathbb{N}$)-word, then $\mathcal{D}$ and $\mathcal{E}$ are composable if and only if $(\mathcal{D}_{0},\mathcal{E}_{1})$ is one of the following
\[
\begin{array}{c}
(\langle x^{n}\rangle,\langle y^{m}\rangle),(\langle x^{n}\rangle,\langle y^{m}\rangle^{-1}),(\langle y^{n}\rangle,\langle x^{m}\rangle),(\langle y^{n}\rangle,\langle x^{m}\rangle^{-1})\\
(\langle x^{n}\rangle^{-1},\langle y^{m}\rangle),(\langle x^{n}\rangle^{-1},\langle y^{m}\rangle^{-1}),(\langle y^{n}\rangle^{-1},\langle x^{m}\rangle),(\langle y^{n}\rangle^{-1},\langle x^{m}\rangle^{-1}),
\end{array}
\]
for some $n,m>0$. For example, if we let $\mathcal{B}=\langle x^{4}\rangle\langle y\rangle \langle x\rangle \langle y\rangle \langle x\rangle\dots$ and $\mathcal{A}=\langle y^{3}\rangle \langle x^{2}\rangle ^{-1}\langle y\rangle \langle x^{2}\rangle ^{-1}\langle y^{3}\rangle \langle x^{4}\rangle ^{-1}$ then $\mathcal{B}^{-1}$ and $\mathcal{A}$ are composable with composition $\mathcal{C}=\mathcal{B}^{-1}\mathcal{A}$.
\end{example}
\subsection{String complexes.}
\begin{definition}\label{definition:string-complexes}\cite[Definition 2]{BekMer2003} Let $I$ be one of $\{0,\dots,m\}$, $\mathbb{N}$, $-\mathbb{N}$ or $\mathbb{Z}$ and $\mathcal{C}$ be a generalised $I$-word. To define a complex $P(\mathcal{C})$ we recall a map $\mathscr{H}_{\mathcal{C}}\colon I\to\mathbb{Z}$ which indexes the homogeneous degree of $P(\mathcal{C})$. For each $\gamma\in\mathbf{P}$ let $\mathscr{H}(\langle\gamma\rangle )=-1$
and $\mathscr{H}(\langle\gamma\rangle ^{-1})=1$.  Now let
\[
\mathscr{H}_{\mathcal{C}}(i)=
\begin{cases}
\mathscr{H}(\mathcal{C}_{1})+\dots+\mathscr{H}(\mathcal{C}_{i}) & (\text{if }i>0)\\
0 & (\text{if }i=0)\\
-(\mathscr{H}(\mathcal{C}_{0})+\dots+\mathscr{H}(\mathcal{C}_{i+1})) & (\text{if }i<0)
\end{cases}
\]
For $n\in\mathbb{Z}$ let $P^{n}(\mathcal{C})$ be the sum $\bigoplus\Lambda e_{v_{\mathcal{C}}(i)}$ over $i\in \mathscr{H}_{\mathcal{C}}^{-1}(n)$. For each $i\in I$ let $ g_{\mathcal{C},i}$ denote the coset of $e_{v_{\mathcal{C}}(i)}$
in $P(\mathcal{C})$ in degree $\mathscr{H}_{\mathcal{C}}(i)$. Define $d_{P(\mathcal{C})}$ by extending $g_{\mathcal{C},i}\mapsto g_{\mathcal{C},i}^{-}+ g_{\mathcal{C},i}^{+}$
linearly over $\Lambda$ for each $i\in I$, where:
\begin{enumerate}
\item $g_{\mathcal{C},i}^{+}= \mu bg_{\mathcal{C},i+1}$ if $i+1\in I$ and $\mathcal{C}_{i+1}=\langle \mu\rangle ^{-1}$, and $g_{\mathcal{C},i}^{+}=0$ otherwise; and
\item $g_{\mathcal{C},i}^{-}=\eta g _{\mathcal{C},i-1}$ if $i-1\in I$ and $\mathcal{C}_{i}=\langle\eta\rangle $, and $g_{\mathcal{C},i}^{-}=0$ otherwise.
\end{enumerate}
\end{definition}
\begin{example}\label{example:string-complex-for-k[[x,y]]/(xy)}
We continue with Example \ref{example:generalised-signs-for-k[[x,y]]/(xy)} where $\Lambda=k[[x,y]]/(xy)$ and we considered the generalised words $\mathcal{A}$, $\mathcal{B}$ and $\mathcal{C}=\mathcal{B}^{-1}\mathcal{A}$. By definition we have $ g_{\mathcal{C},i}\mapsto 0$ when $i=0,-2,-4,-6$, $ g_{\mathcal{C},-1}\mapsto y^{3} g_{\mathcal{C},-2}+x^{4} g_{\mathcal{C},0}$, $ g_{\mathcal{C},-3}\mapsto y g_{\mathcal{C},-4}+x^{2} g_{\mathcal{C},-2}$, $ g_{\mathcal{C},-5}\mapsto y^{3} g_{\mathcal{C},-6}+x^{2} g_{\mathcal{C},-4}$, $ g_{\mathcal{C},-7}\mapsto x^{4} g_{\mathcal{C},-6}$, $ g_{\mathcal{C},i}\mapsto y g_{\mathcal{C},i+1}$ when $i<-7$ is even, and $ g_{\mathcal{C},i}\mapsto x g_{\mathcal{C},i+1}$ when $i<-8$ is odd. The function $\mathscr{H}_{\mathcal{C}}\colon -\mathbb{N}\to \mathbb{Z}$ is given by 
\[(\mathscr{H}_{\mathcal{C}}(i)\colon i\in -\mathbb{N})=(\mathscr{H}_{\mathcal{C}}(0),\mathscr{H}_{\mathcal{C}}(-1),\dots)=(0,-1,0,-1,0,-1,0,-1,-2,-3,\dots).
\]
In this case the complex $P(\mathcal{C})$, which lies strictly in all non-positive homogeneous degrees, is given below.
\[
\begin{tikzcd}[ampersand replacement=\&]
    \cdots\arrow{r}{x}\& \Lambda\arrow{r}{y}\& \Lambda\arrow{r}{x}\& \Lambda\arrow{r}{\begin{psmallmatrix}
    y  \\
    0 \\
    0  \\
    0
    \end{psmallmatrix}}\& \Lambda^{4}\arrow{rrr}{\begin{psmallmatrix}
    x^{4} & y^{3} & 0 & 0 \\
    0 & x^{2} & y & 0 \\
    0 & 0 & x^{2} & y^{3} \\
    0 & 0 & 0 & x^{4}
    \end{psmallmatrix}}\&\&\& \Lambda^{4}\arrow{r}{d^{0}_{P(\mathcal{C})}}\& 0\arrow{r}{d^{1}_{P(\mathcal{C})}}\& \cdots       
    \end{tikzcd}
\]
We depict $P(\mathcal{C})$ as follows, where the differential $d_{P(\mathcal{C})}$ is considered to be pointing downwards.  
\[
\begin{tikzcd}[column sep=0.3cm, row sep=0.4cm]
    \vdots\arrow{d} & & \ddots\arrow[swap]{dr}{x} &  & & & & & & & & & & & \\
    P^{-4}(\mathcal{C})\arrow{d}{d^{-4}_{P(\mathcal{C})}}\arrow[dashed,-]{rrr} & & & \Lambda\arrow[swap]{dr}{y}\arrow[dashed,-]{rrrrrrrrrr} & & & & & & & & & & \, & \\
    P^{-3}(\mathcal{C})\arrow{d}{d^{-3}_{P(\mathcal{C})}}\arrow[dashed,-]{rrrr} & &  & & \Lambda\arrow[swap]{dr}{x}\arrow[dashed,-]{rrrrrrrrr} & & & & & & & & & \, & \\
    P^{-2}(\mathcal{C})\arrow{d}{d^{-2}_{P(\mathcal{C})}}\arrow[dashed,-]{rrrrr} & & & & & \Lambda\arrow[swap]{dr}{y}\arrow[dashed,-]{rrrrrrrr} & & & & & & & & \, & \\
    P^{-1}(\mathcal{C})\arrow{d}{d^{-1}_{P(\mathcal{C})}}\arrow[dashed,-]{rrrrrr} & & & & & & \Lambda\arrow[swap]{dr}{x^{4}}\arrow[dashed,-]{rr} & & \Lambda\arrow[swap]{dl}{y^{3}}\arrow[swap]{dr}{x^{2}}\arrow[dashed,-]{rr} & &  \Lambda\arrow[swap]{dl}{y}\arrow[swap]{dr}{x^{2}}\arrow[dashed,-]{rr} & & \Lambda\arrow[swap]{dl}{y^{3}}\arrow[swap]{dr}{x^{4}}\arrow[dashed,-]{r} & \, & \\
    P^{0}(\mathcal{C})\arrow[dashed,-]{rrrrrrr} & & & & & & & \Lambda\arrow[dashed,-]{rr} & & \Lambda\arrow[dashed,-]{rr} & & \Lambda\arrow[dashed,-]{rr} & & \Lambda 
    \end{tikzcd}
\]
In this picture one uses the dashed horizontal lines to determine the direct summands of each homogeneous component $P^{n}(\mathcal{C})$. To determine the differential $d_{P(\mathcal{C})}$ one (likewise) uses the diagonal downwards arrows to form a matrix whose entries are elements $\gamma\in\mathbf{P}$. In general, each of these arrows should be considered as the map $\Lambda e_{h(\gamma)}\to \Lambda e_{t(\gamma)}$ given by right multiplication by $\gamma$. 
\end{example}
Lemma \ref{lemma:isos-between-string-complexes} describes some isomorphisms between complexes of the form $P(\mathcal{C})$.
\begin{lemma}\emph{\cite[Lemma 3.4]{Ben2016}}\label{lemma:isos-between-string-complexes} Let $\mathcal{C}$ be a generalised $I$-word.
\begin{enumerate}
\item If $I=\{0,\dots,m\}$ then there is an isomorphism of complexes
\[
P(\mathcal{C}^{-1})\rightarrow P(\mathcal{C})[\mathscr{H}_{\mathcal{C}}(m)],\, g_{\mathcal{C}^{-1},i}\mapsto  g_{\mathcal{C},m-i}.
\]
\item If $I$ is infinite then there is an isomorphism of complexes
\[
P(\mathcal{C}^{-1})\rightarrow P(\mathcal{C}),\, g_{\mathcal{C}^{-1},i}\mapsto  g_{\mathcal{C},-i}.
\]
\item If $I=\mathbb{Z}$ and $t\in\mathbb{Z}$ then  there is an isomorphism of complexes
\[
P(\mathcal{C}[t])\rightarrow P(\mathcal{C})[\mathscr{H}_{\mathcal{C}}(t)],\, g_{\mathcal{C}[t],i}\mapsto  g_{\mathcal{C},i+t}.
\]
\end{enumerate}
\end{lemma}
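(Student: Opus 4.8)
The plan is to treat all three parts uniformly. Each proposed map has the form of a \emph{relabelling} $g_{\mathcal{C}',i}\mapsto g_{\mathcal{C},f(i)}$ for a bijection $f$ of index sets (namely $f(i)=m-i$, $f(i)=-i$, or $f(i)=i+t$), so the task is to check three things: that the assignment is well defined as a map of graded $\Lambda$-modules, that it intertwines the differentials, and that it is bijective. Bijectivity will be immediate, since $f$ is a bijection whose inverse relabelling furnishes the inverse map; the substance is the first two points.

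First I would set up the combinatorial dictionary relating the data of $\mathcal{C}'$ to that of $\mathcal{C}$. From Definition \ref{definition:generalised-words-for-complexes} one reads off $(\mathcal{C}^{-1})_{j}=\mathcal{C}_{m-j+1}^{-1}$ in the finite case, $(\mathcal{C}^{-1})_{j}=\mathcal{C}_{1-j}^{-1}$ in the infinite case, and $(\mathcal{C}[t])_{j}=\mathcal{C}_{j+t}$. Since inverting a generalised letter exchanges its head and tail, and since consecutive letters of a word satisfy $t(\mathcal{C}_{i})=h(\mathcal{C}_{i+1})$ in each of the four permitted forms, this gives the vertex identities $v_{\mathcal{C}^{-1}}(i)=v_{\mathcal{C}}(m-i)$ (finite), $v_{\mathcal{C}^{-1}}(i)=v_{\mathcal{C}}(-i)$ (infinite), and $v_{\mathcal{C}[t]}(i)=v_{\mathcal{C}}(i+t)$. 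Hence each generator is sent to one carrying the same idempotent $e_{v}$, and the map is $\Lambda$-linear on each summand $\Lambda e_{v}$. For the grading I would use $\mathscr{H}(\langle\gamma\rangle^{-1})=-\mathscr{H}(\langle\gamma\rangle)$ together with telescoping of the defining sums in Definition \ref{definition:string-complexes} to establish
\[
\mathscr{H}_{\mathcal{C}^{-1}}(i)=\mathscr{H}_{\mathcal{C}}(m-i)-\mathscr{H}_{\mathcal{C}}(m),\qquad \mathscr{H}_{\mathcal{C}^{-1}}(i)=\mathscr{H}_{\mathcal{C}}(-i),\qquad \mathscr{H}_{\mathcal{C}[t]}(i)=\mathscr{H}_{\mathcal{C}}(i+t)-\mathscr{H}_{\mathcal{C}}(t),
\]
which say precisely that the target generator sits in the degree prescribed by the shifts $[\mathscr{H}_{\mathcal{C}}(m)]$, by no shift, and by $[\mathscr{H}_{\mathcal{C}}(t)]$ respectively. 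The three branches of the piecewise definition of $\mathscr{H}_{\mathcal{C}}$ (at $i>0$, $i=0$, $i<0$) and the boundary indices where $i\pm1\notin I$ must be checked case by case, but this is routine.

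For the chain-map condition I would run the analysis on whether $\mathcal{C}_{f(i)}$ is direct or inverse. The crucial point is that inversion swaps the two halves $g^{+}$ and $g^{-}$ of the differential: if $(\mathcal{C}^{-1})_{i+1}=\langle\mu\rangle^{-1}$ then $\mathcal{C}_{m-i}=\langle\mu\rangle$ is direct, so the term $\mu g_{\mathcal{C}^{-1},i+1}$ of $d_{P(\mathcal{C}^{-1})}g_{\mathcal{C}^{-1},i}$ is carried by the relabelling exactly to the term $g^{-}_{\mathcal{C},m-i}=\mu g_{\mathcal{C},m-i-1}$ of $d_{P(\mathcal{C})}g_{\mathcal{C},m-i}$, with the \emph{same} path $\mu$, and symmetrically the $g^{-}$-half of $\mathcal{C}^{-1}$ matches the $g^{+}$-half of $\mathcal{C}$. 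For the shift in part (3) the letters agree literally, so the differentials correspond termwise with no swap. In all cases the vanishing conventions at boundary indices (terms killed by $i\pm1\notin I$) must be seen to match, which follows from the index identities already established.

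I expect the one genuinely delicate point to be the sign bookkeeping for the shift functor. The relabelling intertwines $d_{P(\mathcal{C}')}$ with $d_{P(\mathcal{C})}$ \emph{on the nose}, so whether it is literally a chain map into the \emph{shifted} complex $P(\mathcal{C})[\mathscr{H}_{\mathcal{C}}(m)]$ (resp. $[\mathscr{H}_{\mathcal{C}}(t)]$) depends on the sign convention chosen for $[\,\cdot\,]$. With a sign-free shift it is immediate; with the Koszul convention $d_{P[s]}=(-1)^{s}d_{P}$ one would have to insert compensating signs $(-1)^{\mathscr{H}_{\mathcal{C}}(i)}$ on the components of the map and verify they are compatible with the differential, which is where I would concentrate the care. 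Once this is settled, the map is a degree-preserving, differential-commuting bijection, hence an isomorphism of complexes, and the three parts follow.
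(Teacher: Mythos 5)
The paper offers no argument for this lemma at all: it is quoted verbatim from \cite[Lemma 3.4]{Ben2016} and the proof is deferred to that reference. Your direct verification by relabelling is the standard argument one would expect to find there, and it is correct. The index dictionary $(\mathcal{C}^{-1})_{j}=\mathcal{C}_{m-j+1}^{-1}$, $(\mathcal{C}^{-1})_{j}=\mathcal{C}_{1-j}^{-1}$, $(\mathcal{C}[t])_{j}=\mathcal{C}_{j+t}$ is right, the telescoping identities for $\mathscr{H}$ are exactly what is needed to place $g_{\mathcal{C},f(i)}$ in the degree prescribed by the shift (with the convention $(P[n])^{i}=P^{n+i}$), and your observation that inversion exchanges the two halves $g^{+}$ and $g^{-}$ of the differential while preserving the labelling path $\mu$ is the whole content of the chain-map check; for part (3) the letters agree literally and nothing needs to be swapped. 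Your closing caveat about the Koszul sign on the shift is sensible but moot here: the source states the isomorphisms without compensating signs, so the sign-free shift convention is the one in force, and under it your relabelling commutes with the differentials on the nose. The only points I would press you to actually write out are the boundary cases $i\pm1\notin I$ (where one half of the differential vanishes on each side and one must see that the vanishing conditions correspond under $f$) and the vertex identity at the endpoints $i=0$ and $i=m$, where the associated-vertex convention of Definition \ref{definition:generalised-words-for-complexes} switches between $h(\mathcal{C}_{i})$ and $t(\mathcal{C}_{i})$; you assert these are routine, and they are, but they are also where the off-by-one errors live.
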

Lemma \ref{lemma:string-complex-technical} is a technical result which we apply in \S\ref{section:Resolutions-and-string-and-band-complexes}.
\begin{lemma}\label{lemma:string-complex-technical}
Let $\mathcal{C}$ be a generalised $I$-word, fix $i\in I$ and let $\mathscr{H}_{\mathcal{C}}(i)=t$ and $L_{j}=\Lambda( g_{\mathcal{C},j}^{+}+ g_{\mathcal{C},j}^{-})$ for all $j\in \mathscr{H}_{\mathcal{C}}^{-1}(t)$. Let $M=M'\oplus M''$ where 
\[
M'=\begin{cases}
\Lambda \mathrm{f}(\mu) g_{\mathcal{C},i-1}& (\text{if }\mathcal{C}_{i}=\langle\mu\rangle)\\
0 & (\text{otherwise}) 
\end{cases}
M''=\begin{cases}
\Lambda \mathrm{f}(\eta) g_{\mathcal{C},i+1} & (\text{if }\mathcal{C}_{i+1}=\langle\eta\rangle^{-1})\\
0 & (\text{otherwise}).
\end{cases}
\]
Suppose $M\subseteq \sum_{j}L_{j}$. Then $M\subseteq L_{i}$ and the following statements hold.
\begin{enumerate}
    \item If $\mathcal{C}_{i}=\langle\mu\rangle$ then $\mu\in\mathbf{A}$ and $\mathcal{C}_{i+1}\neq\langle\eta\rangle^{-1}$ for all $\eta\in\mathbf{P}$.
    \item If $\mathcal{C}_{i+1}=\langle\eta\rangle^{-1}$ then $\eta\in\mathbf{A}$ and $\mathcal{C}_{i}\neq\langle\mu\rangle$ for all $\mu\in\mathbf{P}$.
\end{enumerate}
\end{lemma}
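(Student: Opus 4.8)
The plan is to work entirely inside the homogeneous component $P^{t+1}(\mathcal{C})=\bigoplus_{k}\Lambda g_{\mathcal{C},k}$ (the sum over $k\in\mathscr{H}_{\mathcal{C}}^{-1}(t+1)$), into which $M$, $M'$, $M''$ and $\sum_j L_j$ all land: since $\mathscr{H}_{\mathcal{C}}(i)-\mathscr{H}_{\mathcal{C}}(i-1)=\mathscr{H}(\mathcal{C}_i)$, when $\mathcal{C}_i=\langle\mu\rangle$ the generator $g_{\mathcal{C},i-1}$ sits in degree $t+1$, and dually $g_{\mathcal{C},i+1}$ when $\mathcal{C}_{i+1}=\langle\eta\rangle^{-1}$. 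First I would record the two structural facts I will lean on. From conditions (1), (4) and (6) of Definition \ref{definition:complete-gentle-algebra} one gets $\mathrm{rad}(\Lambda)e_v=\bigoplus_{x\in\mathbf{A}(v\rightarrow)}\Lambda x$, a direct sum of at most two cyclic modules; combined with $\gamma\in\Lambda\mathrm{f}(\gamma)$ (Definition \ref{definition:firstlastarrows}) this lets me separate two paths with distinct first arrows into independent summands. Secondly, inspecting the differential in Definition \ref{definition:string-complexes} shows that the summand $\Lambda g_{\mathcal{C},i-1}$ of $P^{t+1}(\mathcal{C})$ receives a contribution from $L_j$ only for $j=i$ (the term $\mu g_{\mathcal{C},i-1}$) and, when $\mathcal{C}_{i-1}=\langle\rho\rangle^{-1}$, for $j=i-2$ (a term lying in $\Lambda\mathrm{f}(\rho)$).

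The heart of conclusion (1) is then a projection argument. Assuming $\mathcal{C}_i=\langle\mu\rangle$, so that $\mathrm{f}(\mu)g_{\mathcal{C},i-1}\in M'\subseteq\sum_j L_j$, I would write this element as $\sum_j\lambda_j d(g_{\mathcal{C},j})$ and project onto $\Lambda g_{\mathcal{C},i-1}\cong\Lambda e_v$, obtaining $\mathrm{f}(\mu)=\lambda_i\mu+\lambda_{i-2}\rho$ (the last term present only if $\mathcal{C}_{i-1}$ is inverse). Crucially, form (3) of Definition \ref{definition:generalised-words-for-complexes} applied to the pair $\mathcal{C}_{i-1}\mathcal{C}_i=\langle\rho\rangle^{-1}\langle\mu\rangle$ forces $\mathrm{f}(\rho)\neq\mathrm{f}(\mu)$, so $\mu\in\Lambda\mathrm{f}(\mu)$ and $\rho\in\Lambda\mathrm{f}(\rho)$ lie in different summands of $\mathrm{rad}(\Lambda)e_v$; projecting once more onto $\Lambda\mathrm{f}(\mu)$ yields $\mathrm{f}(\mu)=\lambda_i\mu=\lambda_i\eta'\,\mathrm{f}(\mu)$, where $\mu=\eta'\mathrm{f}(\mu)$. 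If $\eta'$ were nontrivial this would give $\mathrm{f}(\mu)\in\mathrm{rad}(\Lambda)\mathrm{f}(\mu)$ and hence $\mathrm{f}(\mu)\in\bigcap_n(\mathrm{rad}\,\Lambda)^n\mathrm{f}(\mu)=0$ by Remark \ref{remark:props-of-complete-gen-algs}(3b), contradicting that arrows are nonzero; therefore $\eta'$ is trivial and $\mu=\mathrm{f}(\mu)\in\mathbf{A}$.

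To finish (1) I must exclude $\mathcal{C}_{i+1}=\langle\eta\rangle^{-1}$. Running the previous paragraph on the inverse word (or repeating it symmetrically) shows $\eta\in\mathbf{A}$; then form (1) of Definition \ref{definition:generalised-words-for-complexes} applied to $\langle\mu\rangle\langle\eta\rangle^{-1}$ makes $\mu\neq\eta$ arrows with a common head $w=h(\mu)=h(\eta)$. Now I expand $\mu g_{\mathcal{C},i-1}\in M\subseteq\sum_j L_j$ as $\sum_j\lambda_j d(g_{\mathcal{C},j})$ and project twice from this single expansion: the $\Lambda g_{\mathcal{C},i-1}$-projection gives $\mu=\lambda_i\mu$ in $\Lambda\mu$, whence (again by Krull's theorem) $\lambda_i$ is a unit in the local ring $e_w\Lambda e_w$, while the $\Lambda g_{\mathcal{C},i+1}$-projection gives $0=\lambda_i\eta$ after separating off the independent summand $\Lambda\mathrm{f}(\tau)$ coming from any $j=i+2$ term (using form (3) for $\langle\eta\rangle^{-1}\langle\tau\rangle$). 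A unit times the arrow $\eta$ being zero forces $\eta=0$, the desired contradiction. Conclusion (2) is the mirror statement, obtained identically; and once (1) and (2) are in hand at most one of $M',M''$ is nonzero, in which case $L_i$ collapses to the single cyclic module $\Lambda\mu g_{\mathcal{C},i-1}=M'$ (respectively $\Lambda\eta g_{\mathcal{C},i+1}=M''$), giving $M\subseteq L_i$, in fact $M=L_i$.

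The main obstacle I anticipate is bookkeeping rather than ring theory: correctly enumerating which $L_j$ meet each summand $\Lambda g_{\mathcal{C},k}$, and verifying that the generalised-word constraints of Definition \ref{definition:generalised-words-for-complexes} always supply the ``distinct first arrow'' hypotheses needed to invoke condition (6) of Definition \ref{definition:complete-gentle-algebra}. Once the projections are set up, each step reduces to the direct-sum decomposition of $\mathrm{rad}(\Lambda)e_v$ together with the Krull intersection theorem, both already available.
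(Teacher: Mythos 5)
Your proposal is correct and matches the paper's own argument: both isolate the contributions of $L_{i}$ and $L_{i\pm2}$ to the summands $\Lambda g_{\mathcal{C},i\mp1}$, use the generalised-word conditions plus condition (6) of Definition \ref{definition:complete-gentle-algebra} to kill the $L_{i\pm2}$ terms, deduce $\mathrm{f}(\mu)=\lambda\mu$ hence $\mu=\mathrm{f}(\mu)\in\mathbf{A}$, and derive the final contradiction from $\lambda\eta=0$ via Nakayama/Krull. The only differences are cosmetic: you inline the Krull-intersection argument where the paper cites Lemma \ref{lemma:qbsb-uniserial-cyclics-given-by-arrows}(3), and you phrase the last contradiction as ``$\lambda$ is a unit'' rather than ``$\lambda\in\mathrm{rad}(\Lambda)$''.
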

\begin{proof}
Let $m\in M$, say where $m=\lambda'\mathrm{f}(\mu) g_{\mathcal{C},i-1}+\lambda''\mathrm{f}(\eta) g_{\mathcal{C},i+1}$ for some $\lambda',\lambda''\in \Lambda$ where ($\mathcal{C}_{i}\neq \langle\mu\rangle$ implies $\lambda'=0$) and ($\mathcal{C}_{i+1}\neq\langle\eta\rangle^{-1}$ implies $\lambda''=0$). Let $L=\sum L_{j}$. Assuming $M\subseteq L$ we also have $m=\sum_{j}l_{j}$ where $l_{j}=\lambda_{j}( g_{\mathcal{C},j}^{+}+ g_{\mathcal{C},j}^{-})$ for some $\lambda_{j}\in\Lambda$. We claim $\sum_{j\neq i}l_{j}=0$. For any $j\neq i$ we have $ g_{\mathcal{C},j}^{+}\in \Lambda  g_{\mathcal{C},i-1}$ if and only if ($j=i-2$ and $\mathcal{C}_{i-1}=\langle\sigma\rangle^{-1}$) in which case $ g_{\mathcal{C},j}^{+}=\sigma  g_{\mathcal{C},i-1}$. Likewise, $ g_{\mathcal{C},j}^{-}\in \Lambda  g_{\mathcal{C},i+1}$ if and only if $j=i+2$ and $\mathcal{C}_{i+2}=\langle\gamma\rangle$), in which case $ g_{\mathcal{C},j}^{-}=\gamma  g_{\mathcal{C},i+1}$. Without loss of generality we assume $i\pm 2\in I$,  $\mathcal{C}_{i-1}=\langle\sigma\rangle^{-1}$ and $\mathcal{C}_{i+2}=\langle\gamma\rangle$, so
\[
\begin{array}{c}
\lambda'\mathrm{f}(\mu) g_{\mathcal{C},i-1}+\lambda''\mathrm{f}(\eta) g_{\mathcal{C},i+1}= m
   =\lambda_{i-2}\sigma  g_{\mathcal{C},i-1} + \lambda_{i}(\mu  g_{\mathcal{C},i-1} + \eta  g_{\mathcal{C},i+1}) + \lambda_{i+2}\gamma  g_{\mathcal{C},i+1} +m'
\end{array}
\]
where
\[
\begin{array}{c}
m'=(\lambda_{i-2}b^{-}_{i-2}+\lambda_{i+2} g_{\mathcal{C},i+2}^{+}) +\sum_{h\in \mathscr{H}_{\mathcal{C}}^{-1}(t+1),\,h<i-2,\,h>i+2}l_{h}.
\end{array}
\]
Considering that $m\in \bigoplus\Lambda  g_{\mathcal{C},h}$ over all $h\in \mathscr{H}_{\mathcal{C}}^{-1}(t+1)$, we have that $m'=0$ and
\[
\lambda'\mathrm{f}(\mu)-\lambda_{i}\mu - \lambda_{i-2}\sigma=0,\,\lambda''\mathrm{f}(\eta)-\lambda_{i}\eta-\lambda_{i+2}\gamma=0.
\]
Since $\Lambda \sigma\cap \Lambda \mathrm{f}(\mu)=0$ and $\Lambda \mathrm{f}(\eta)\cap\Lambda\gamma=0$ by condition (6) of Definition \ref{definition:complete-gentle-algebra}, we have that $\lambda_{i-2}\sigma=0=\lambda_{i+2}\gamma$. Hence $M\subseteq L_{i}$. We now show (1) holds; the proof that (2) holds is similar, and omitted. 

So we assume $\mathcal{C}_{i}=\langle\mu\rangle$. Since $M\subseteq L_{i}$ we have that $\mathrm{f}(\mu) g_{\mathcal{C},i-1}=\lambda( g_{\mathcal{C},i}^{+}+ g_{\mathcal{C},i}^{-})$ and for some $\lambda\in\Lambda e_{v_{\mathcal{C}}(i)}$, which means $\mathrm{f}(\mu) g_{\mathcal{C},i-1}=\lambda\mu  g_{\mathcal{C},i-1}$ since $\Lambda  g_{\mathcal{C},i-1}\cap \Lambda  g_{\mathcal{C},i+1}=0$. This must mean that $\mathrm{f}(\mu)=\lambda\mu $, and hence $\Lambda\mathrm{f}(\mu)\subseteq \Lambda \mu$, and hence $\Lambda\mathrm{f}(\mu)= \Lambda \mu$. By Lemma \ref{lemma:qbsb-uniserial-cyclics-given-by-arrows} this gives $\mu=\mathrm{f}(\mu)$, and so $\mu\in\mathbf{A}$. To finish the proof, we now suppose $\mathcal{C}_{i} \mathcal{C}_{i+1}=\langle \mu\rangle \langle \eta\rangle ^{-1}$ for a contradiction. In this case $\mathrm{f}(\mu) g_{\mathcal{C},i-1}=\lambda( g_{\mathcal{C},i}^{+}+ g_{\mathcal{C},i}^{-})$ also gives $\lambda\eta  g_{\mathcal{C},i-1}=0$, and so $\lambda\in\mathrm{rad}(\Lambda)$ by Lemma \ref{lemma:technical-comp-gen-props}(1). But then the equation $\mathrm{f}(\mu)=\lambda\mu$ gives $\Lambda \mu\subseteq \mathrm{rad}(\Lambda\mu)$ which gives the contradiction $\Lambda\mu=0$ (and so $\mu\notin\mathbf{P}$) by Nakayama's lemma.
\end{proof}
\subsection{Band complexes.}
\begin{definition}\label{definition:cyclic-generalised-words}
 If $n\neq 0$ a generalised $\{0,\dots,n\}$-word $\mathcal{A}=\mathcal{A}_{1}\dots \mathcal{A}_{n}$ is called \emph{weak}-\emph{cyclic} if $\mathcal{A}^{2}$ is a generalised word and $\mathcal{A}$ is not a non-trivial power of another  generalised word. By Remark \ref{remark:generalised-sign-and-composability}, $\mathcal{A}^{2}$ is a generalised word if and only if $t(\mathcal{A}_{n})=h(\mathcal{A}_{1})$ and $s(\mathcal{A}_{n}^{-1})=-s(\mathcal{A}_{1})$. If additionally $\mathscr{H}_{\mathcal{A}}(n)=0$ we say $\mathcal{A}$ is \emph{cyclic}. If $\mathcal{A}$ is weak-cyclic, let: ${}^{\infty\hspace{-0.5ex}}\mathcal{A}=\dots \mathcal{A}\mathcal{A}$, a generalised $-\mathbb{N}$-word; $\mathcal{A}^{\infty}= \mathcal{A}\mathcal{A}\dots$, a generalised $\mathbb{N}$-word; and ${}^{\infty\hspace{-0.5ex}}\mathcal{A}^{\infty}=\dots \mathcal{A}\mid \mathcal{A}\mathcal{A}\dots$, a generalised $\mathbb{Z}$-word. 

We say that a generalised $\mathbb{Z}$-word $\mathcal{C}$ is \emph{periodic}
if $\mathcal{C}=\mathcal{C}[p]$ and $\mathscr{H}_{\mathcal{C}}(p)=0$ for some $p>0$. In this case the minimal such $p$ is the \emph{period} of $\mathcal{C}$, and we say $\mathcal{C}$ is $p$-\emph{periodic}. We say $ \mathcal{C}$ is \emph{aperiodic} if $\mathcal{C}$ is not periodic. In case $\mathcal{C}$ is $p$-periodic the generalised $\{0,\dots,p\}$-word we call $\mathcal{A}=\mathcal{C}_{1}\dots\mathcal{C}_{p}$ the \emph{cycle} of $\mathcal{C}$ (which is cyclic). To denote that a generalised word $\mathcal{A}$ is the cycle of a periodic generalised word $\mathcal{C}$ we write $\mathcal{C}={}^{\infty\hspace{-0.5ex}}\mathcal{A}^{\infty}$.  
\end{definition}
\begin{remark}
By the minimality of the period, the cycle of a periodic generalised word is a cyclic generalised word. Conversely, if $\mathcal{A}$ is a cyclic generalised $\{0,\dots,p\}$-word then ${}^{\infty\hspace{-0.5ex}}\mathcal{A}^{\infty}$ is a $p$-periodic generalised $\mathbb{Z}$-word.
\end{remark}
\begin{definition}
\cite[Definition 3.5]{Ben2016} Let $\mathcal{C}$ be a $p$-periodic generalised word. By Lemma \ref{lemma:isos-between-string-complexes}(3) $P^{n}(\mathcal{C})$ is a $\Lambda\text{-}R[T,T^{-1}]$-bimodule
where $T g_{\mathcal{C},i}= g_{\mathcal{C},i-p}$. By periodicity the map $d_{P(\mathcal{C})}^{n}\colon P^{n}(\mathcal{C})\rightarrow P^{n+1}(\mathcal{C})$ is $\Lambda\otimes_{R}R[T,T^{-1}]$-linear. For any object $V$ of $R[T,T^{-1}]\text{-\textbf{Mod}}_{R\text{-\textbf{Proj}}}$ let $P^{n}(\mathcal{C},V)=P^{n}(\mathcal{C})\otimes_{R[T,T^{-1}]}V$
and $d_{P(\mathcal{C},V)}^{n}=d_{P(\mathcal{C})}^{n}\otimes1_{V}$ for each $n\in\mathbb{Z}$. By \cite[Lemma 3.6]{Ben2016} $P(\mathcal{C},V)$ is a complex of projective modules.
\end{definition}
\begin{example}\label{example:band-complexes-part-1-for-2-by-2-p-adics}
We continue with Example \ref{2-by-2-matrices-cyclic-words}, where $\Lambda \simeq \widehat{\mathbb{Z}}_{p}Q/\langle a^{2}, b^{2}, a b+ b a-p\rangle$. Let
\[
\mathcal{A}=\langle a b a\rangle\langle b a\rangle^{-1}\langle b a b\rangle\langle a\rangle^{-1}\langle b\rangle\langle a b\rangle^{-1}
\]
We now check $\mathcal{A}$ is cyclic, giving an example of a $6$-periodic word $\mathcal{C}={}^{\infty\hspace{-0.5ex}}\mathcal{A}^{\infty}$. By definition we have $(\mu_{\mathcal{A}}(0),\dots,\mu_{\mathcal{A}}(6))=(0,-1,0,-1,0,-1,0)$. That $\mathcal{A}^{2}$ is a generalised word follows from the fact that $\mathrm{f}( a b a)= a\neq b=\mathrm{f}( a b)$. By observation, clearly $\mathcal{A}\neq \mathcal{B}^{n}$ for all generalised words $\mathcal{B}$ and all $n>1$. Similary to Example \ref{example:string-complex-for-k[[x,y]]/(xy)} we depict the complex $P(\mathcal{C})$ as follows, where we indicate the action of $T$ on each of $P^{-1}(\mathcal{C})$ and $P^{0}(\mathcal{C})$. 
\[
\begin{tikzcd}[column sep=0.3cm, row sep=0.4cm]
    P^{-1}(\mathcal{C})\arrow{d}{d^{-1}_{P(\mathcal{C})}}\arrow[dashed,-]{rr} & & \cdots\arrow[swap]{dr}{ a b}\arrow[dashed,-]{rr} & & \Lambda\arrow[swap]{dl}{ a b a}\arrow[swap]{dr}{ b a}\arrow[dashed,-]{rr} & &  \Lambda\arrow[swap]{dl}{ b a b}\arrow[swap]{dr}{ a}\arrow[dashed,-]{rr} & & \Lambda\arrow[swap]{dl}{ b}\arrow[swap]{dr}{ a b}\arrow[dashed,-]{rr} & &  \Lambda\arrow[dotted,bend right=20,swap]{llllll}{T}\arrow[swap]{dl}{ a b a}\arrow[swap]{dr}{ b a}\arrow[dashed,-]{rr} & & \Lambda\arrow[dotted,bend right=20,swap]{llllll}{T}\arrow[swap]{dl}{ b a b}\arrow[swap]{dr}{ a}\arrow[dashed,-]{rr} & & \Lambda\arrow[dotted,bend right=20,swap]{llllll}{T}\arrow[swap]{dl}{ b}\arrow[swap]{dr}{ a b}\arrow[dashed,-]{rr} & &  \Lambda\arrow[dotted,bend right=20,swap]{llllll}{T}\arrow[swap]{dl}{ a b a}\arrow[dashed,-]{r} & \cdots &\\
    P^{0}(\mathcal{C})\arrow[dashed,-]{rr} & & \cdots\arrow[dashed,-]{r} & \Lambda\arrow[dashed,-]{rr} & & \Lambda\arrow[dashed,-]{rr} & & \Lambda\arrow[dashed,-]{rr} & & \Lambda\arrow[dashed,-]{rr}\arrow[dotted,bend left=20]{llllll}{T} & & \Lambda\arrow[dashed,-]{rr}\arrow[dotted,bend left=20]{llllll}{T} & & \Lambda\arrow[dashed,-]{rr}\arrow[dotted,bend left=20]{llllll}{T} & & \Lambda\arrow[dashed,-]{rr}\arrow[dotted,bend left=20]{llllll}{T} & & \cdots
    \end{tikzcd}
\]
\end{example}
\begin{definition}\label{definition:band-pres}\cite[Definition 2]{BekMer2003} Let $\mathcal{E}$ be a cyclic generalised $\{0,\dots,p\}$-word. Let $V$ be an object of $R[T,T^{-1}]\text{-\textbf{Mod}}_{R\text{-\textbf{Proj}}}$ with free $R$-basis $(v_{\omega}\colon\omega\in\Omega)$. If $n\in\mathbb{Z}$ let $\left\langle n,p\right\rangle =\mathscr{H}_{\mathcal{C}}^{-1}(n)\cap[0,p-1]$. Let $T^{\pm1}v_{\omega}=\sum_{\tau}a^{\pm}_{\tau\omega}v_{\tau}$ for some $a^{\pm}_{\tau\omega}\in R$. Fix $n\in\mathbb{Z}$. Let $P^{n}(\mathcal{E},V)$ be the $\Lambda$-module $\bigoplus_{\Omega}\bigoplus_{i}\Lambda  g_{\mathcal{E},i}$ where $i$ runs through $\left\langle n,p\right\rangle$. Let  $g_{\mathcal{E},i,\omega}$ denote the copy of $ g_{\mathcal{E},i}$ in the summand indexed by ($i$ and) $\omega\in\Omega$. 

Define  $d_{P(\mathcal{E},V)}^{n}:P^{n}(\mathcal{E},V)\to P^{n+1}(\mathcal{E},V)$ by sending $g_{\mathcal{E},i,\omega}\mapsto g_{\mathcal{E},i,\omega}^{+}+g_{\mathcal{E},i,\omega}^{-}$ where 
\[
 \begin{array}{c}
 g_{\mathcal{E},i,\omega}^{+}=\begin{Bmatrix}\mu c _{i+1,\omega} & (\mbox{if }i<p-1,\,\mathcal{E}_{i+1}=\langle\mu\rangle^{-1})\\
\mu(\sum_{\tau}a^{-}_{\tau\omega} g_{\mathcal{E},0,\tau}) &  (\mbox{if }i=p-1,\,\mathcal{E}_{p}=\langle\mu\rangle^{-1})\\
0 & (\mbox{otherwise})
\end{Bmatrix}\\
\\
  g_{\mathcal{E},i,\omega}^{-}=\begin{Bmatrix}\eta c _{i-1,\omega} & (\mbox{if }j>0,\,\mathcal{E}_{i}=\langle\eta\rangle)\\
\eta(\sum_{\tau}a^{+}_{\tau\omega} g_{\mathcal{E},p-1,\tau}) &  (\mbox{if }i=0,\,\mathcal{E}_{p}=\langle\eta\rangle\\
0 & (\mbox{otherwise})
\end{Bmatrix}
\end{array}
\]
\end{definition}
\begin{remark}\label{remark:isomorphism-for-band-complex}It is straightforward to check that $P(\mathcal{E},V)$ defines a complex of projective modules, and that there is an isomorphism of complexes $P(\mathcal{C},V)\to P(\mathcal{E},V)$; see \cite[Lemma 3.6]{Ben2016} and \cite[Remark 3.8]{Ben2016} for details. 
\end{remark}
Lemma \ref{lemma:band-complex-technical} is a technical result used in \S\ref{section:Resolutions-and-string-and-band-complexes}
\begin{lemma}\label{lemma:band-complex-technical}
Let $V$ be an object of \emph{$R[T,T^{-1}]\text{-\textbf{Mod}}_{R\text{-\textbf{Proj}}}$} with $R$-basis $(v_{\omega}\colon\omega\in\Omega)$. Let $\mathcal{E}$ be a cyclic generalised $I=\{0,\dots,p\}$-word, fix $i\in I$ with $\mathcal{E}_{i} \mathcal{E}_{i+1}=\langle \mu\rangle \langle \eta\rangle ^{-1}$ for some $\mu,\eta\in\mathbf{P}$ \emph{(}where $\mathcal{E}_{p+1}=\mathcal{E}_{1}$\emph{)}. Let $\mathscr{H}_{\mathcal{E}}(i)=t$ and $L_{j}=\sum_{\omega}\Lambda(g_{\mathcal{E},j,\omega}^{+}+g_{\mathcal{E},j,\omega}^{-})$ for all $j\in \mathscr{H}_{\mathcal{E}}^{-1}(t)$ with $0\leq j\leq p-1$. Then for all $\tau\in\Omega$ we have
\[
\sum_{j}L_{j}\nsupseteq 
\begin{cases}
\Lambda \mathrm{f}(\mu)g_{\mathcal{E},i-1,\tau}\oplus\Lambda \mathrm{f}(\eta)g_{\mathcal{E},i+1,\tau} & (\text{if }0<i<p-1) \\
\Lambda \mathrm{f}(\mu)g_{\mathcal{E},p-1,\tau}\oplus\Lambda \mathrm{f}(\eta)g_{\mathcal{E},1,\tau} & (\text{if }i=0)\\
\Lambda \mathrm{f}(\mu)g_{\mathcal{E},p-2,\tau}\oplus\Lambda \mathrm{f}(\eta)g_{\mathcal{E},0,\tau} & (\text{if }i=p-1)
\end{cases}
\]
\end{lemma}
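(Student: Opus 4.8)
The plan is to argue by contradiction and, in each of the three cases, to reduce the claim to the single generator of the first summand, mirroring the proof of Lemma \ref{lemma:string-complex-technical} but now carrying the $R$-basis index of $V$ and the cyclic identifications built into Definition \ref{definition:band-pres}. Since the asserted direct sum contains the element $\mathrm{f}(\mu)g_{\mathcal{E},i-1,\tau}$ (read cyclically, so this means $g_{\mathcal{E},p-1,\tau}$ when $i=0$ and $g_{\mathcal{E},p-2,\tau}$ when $i=p-1$), it suffices to assume $\mathrm{f}(\mu)g_{\mathcal{E},i-1,\tau}\in\sum_{j}L_{j}$ and to derive a contradiction. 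Writing $\mathscr{H}_{\mathcal{E}}(i)=t$, the hypothesis $\mathcal{E}_{i}\mathcal{E}_{i+1}=\langle\mu\rangle\langle\eta\rangle^{-1}$ places both neighbours $g_{\mathcal{E},i-1,\bullet}$ and $g_{\mathcal{E},i+1,\bullet}$ of $g_{\mathcal{E},i}$ in homogeneous degree $t+1$, with $g_{\mathcal{E},i,\omega}^{-}=\mu g_{\mathcal{E},i-1,\omega}$ and $g_{\mathcal{E},i,\omega}^{+}=\eta g_{\mathcal{E},i+1,\omega}$ up to the wrap-around terms discussed below.

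The core computation proceeds by isolating contributing summands. Expanding the assumed relation as $\mathrm{f}(\mu)g_{\mathcal{E},i-1,\tau}=\sum_{j,\omega}\lambda_{j,\omega}(g_{\mathcal{E},j,\omega}^{+}+g_{\mathcal{E},j,\omega}^{-})$ and reading off the components in the summands indexed by $g_{\mathcal{E},i-1,\bullet}$ and $g_{\mathcal{E},i+1,\bullet}$, only the indices $j\in\{i-2,i,i+2\}$ (computed modulo $p$) can contribute, since $g_{\mathcal{E},j,\bullet}^{+}$ lands in $\Lambda g_{\mathcal{E},j+1,\bullet}$ and $g_{\mathcal{E},j,\bullet}^{-}$ in $\Lambda g_{\mathcal{E},j-1,\bullet}$. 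The contributions of $L_{i-2}$ and $L_{i+2}$ are killed exactly as in Lemma \ref{lemma:string-complex-technical}: whenever such a contribution is nonzero the pairs $\mathcal{E}_{i-1}\mathcal{E}_{i}$ and $\mathcal{E}_{i+1}\mathcal{E}_{i+2}$ are of type (3) in Definition \ref{definition:generalised-words-for-complexes}, so the relevant paths have first arrows distinct from $\mathrm{f}(\mu)$ and $\mathrm{f}(\eta)$ respectively, and condition (6) of Definition \ref{definition:complete-gentle-algebra} forces those terms to vanish. What survives are the two relations from $L_{i}$, namely $\mathrm{f}(\mu)=\lambda_{i,\tau}\mu$ together with $\lambda_{i,\tau}\eta=0$ (in the interior case $0<i<p-1$). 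By Lemma \ref{lemma:technical-comp-gen-props}(1) the second gives $\lambda_{i,\tau}\in\mathrm{rad}(\Lambda)$, whence the first yields $\Lambda\mu=\Lambda\mathrm{f}(\mu)\subseteq\mathrm{rad}(\Lambda)\mu=\mathrm{rad}(\Lambda\mu)$ by Lemma \ref{lemma:qbsb-uniserial-cyclics-given-by-arrows}(2); Nakayama's lemma then forces $\Lambda\mu=0$, contradicting $\mu\in\mathbf{P}$.

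The main obstacle is the wrap-around at the boundary of $[0,p-1]$, where the differential of Definition \ref{definition:band-pres} twists by the matrices $(a^{\pm}_{\tau\omega})$ of $T^{\pm1}$ rather than acting diagonally in the basis index. When this twist occurs in a neighbouring term $L_{i\pm2}$ it is harmless, as that whole contribution is annihilated by condition (6) as above. The delicate case is when the twist sits in $L_{i}$ itself (for the chosen test element this is $i=p-1$): there the surviving relation from the opposite neighbour reads $\bigl(\sum_{\omega}\lambda_{i,\omega}a^{-}_{\tau\omega}\bigr)\eta=0$ for every $\tau$, and Lemma \ref{lemma:technical-comp-gen-props}(1) only gives $\sum_{\omega}\lambda_{i,\omega}a^{-}_{\tau\omega}\in\mathrm{rad}(\Lambda)$ for each $\tau$. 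To recover $\lambda_{i,\tau}\in\mathrm{rad}(\Lambda)$ I would invoke that $(a^{-}_{\tau\omega})$ has unit determinant in $R$ (Notation \ref{notation:R[T,T^-1]-modules}), so that its inverse again has entries in $R$; since $\mathrm{rad}(\Lambda)$ is an $R$-submodule of $\Lambda$, multiplying the vector $\bigl(\sum_{\omega}\lambda_{i,\omega}a^{-}_{\tau\omega}\bigr)_{\tau}$ by this inverse matrix shows each $\lambda_{i,\omega}\in\mathrm{rad}(\Lambda)$, in particular $\lambda_{i,\tau}\in\mathrm{rad}(\Lambda)$. With this in hand the Nakayama argument of the previous paragraph runs verbatim, and the remaining boundary case $i=0$ (where the twist lands on the $\mu$-side and the untwisted $\eta$-relation gives all $\lambda_{0,\omega}\in\mathrm{rad}(\Lambda)$ directly) is handled identically, completing the contradiction in all three cases.
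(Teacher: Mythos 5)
Your proof is correct and follows essentially the same route as the paper's: localize the assumed relation to $L_{i}$ by killing the $L_{i\pm 2}$ contributions with condition (6) of Definition \ref{definition:complete-gentle-algebra}, extract $\mathrm{f}(\mu)=\lambda\mu$ together with a vanishing $\eta$-relation, and conclude via Lemma \ref{lemma:technical-comp-gen-props}(1) and Nakayama. The only divergence is at the wrap-around: by insisting on the test element $\mathrm{f}(\mu)g_{\mathcal{E},i-1,\tau}$ in the case $i=p-1$ you are forced to undo the twist by inverting $(a^{-}_{\nu\omega})$ against the matrix of $T$ (a legitimate step, and the same trick the paper uses in the proof of Lemma \ref{lemma:band-complex-given-by-band-resolution-is-a-resolution}), whereas the paper's ``$i=p-1$ is similar'' implicitly switches to the other generator $\mathrm{f}(\eta)g_{\mathcal{E},0,\tau}$ so that the untwisted relation supplies $\lambda_{\omega}\in\mathrm{rad}(\Lambda)$ directly --- both variants work.
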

\begin{proof}
We assume $i=0$, since the case where $i=p-1$ is similar, and the case where $0<i<p-1$ is simpler. Let $M=\Lambda \mathrm{f}(\mu)g_{\mathcal{E},p-1,\tau}\oplus\Lambda \mathrm{f}(\eta)g_{\mathcal{E},1,\tau}$, and for a contradiction let us assume $M$ is contained in $\sum_{j}L_{j}$. As in the proof of Lemma \ref{lemma:string-complex-technical} we may assume, without loss of generality, that $\mathcal{E}_{p-1}=\langle\sigma\rangle^{-1}$ and $\mathcal{E}_{1}=\langle\gamma\rangle$. Now let $m\in M$. Similarly to the proof of Lemma \ref{lemma:string-complex-technical} we now have
\[
\begin{array}{c}
\lambda'\mathrm{f}(\mu)g_{\mathcal{E},p-1,\tau}+\lambda''\mathrm{f}(\eta)g_{\mathcal{E},1,\tau}=m=\sum_{j,\omega}\lambda_{j,\omega}(c^{-}_{j,\omega}+g_{\mathcal{E},j,\omega}^{+})\\
=\sum_{\omega}(\lambda_{p-2,\omega}\sigma g_{\mathcal{E},p-1,\omega}+\lambda_{0,\omega}(\mu(\sum_{\nu}a^{+}_{\nu\omega}g_{\mathcal{E},p-1,\nu})+\eta g_{\mathcal{E},1,\omega})+\lambda_{1}\gamma g_{\mathcal{E},1,\omega} +m'
\end{array}
\]
where $m'\in\bigoplus_{\omega,i\neq p-1,0,1}\Lambda g_{\mathcal{E},i,\omega}$ and $\lambda',\lambda'',\lambda _{j,\omega}\in\Lambda$. Since $m\in \Lambda g_{\mathcal{E},p-1,\tau}\oplus \Lambda g_{\mathcal{E},1,\tau}$ we have $m'=0$. Similarly, and as $R$ lies in the centre of $\Lambda$, this also gives
\[
\begin{array}{cc}
\lambda'\mathrm{f}(\mu)=\lambda_{p-2,\tau}\sigma+\lambda_{0,\tau}(\sum_{\nu}a_{\nu\tau}^{+})\mu, & \lambda''\mathrm{f}(\eta)=\lambda_{0,\tau}\eta+\lambda_{1,\tau}\gamma
\end{array}
\]
and hence $\lambda_{p-2,\tau}\sigma=0=\lambda_{1,\tau}\gamma$ since $\Lambda\mathrm{f}(\mu)\cap\Lambda\sigma=0=\Lambda\mathrm{f}(\eta)\cap\Lambda\gamma$. Now we have shown that $M\subseteq L_{0}$. In particular we have 
\[
\begin{array}{c}
\mathrm{f}(\mu)g_{\mathcal{E},p-1,\tau}=\sum_{\omega}\lambda_{\omega}((\sum_{\nu}a_{\nu\omega}^{+}\mu g_{\mathcal{E},p-1,\tau})+\eta g_{\mathcal{E},1,\omega})
\end{array}
\]
for some $\lambda_{\omega}\in\Lambda$. Since $\Lambda g_{\mathcal{E},p-1,\tau}\cap\Lambda g_{\mathcal{E},0,\omega}=0$ we have $\lambda_{\omega}\eta=0$ and so $\lambda_{\omega}\in\mathrm{rad}(\Lambda)$ for each $\omega$; see for example Lemma \ref{lemma:technical-comp-gen-props}(1). Hence $\mathrm{f}(\mu)g_{\mathcal{E},p-1,\tau}=\sum_{\omega,\nu}\lambda_{\omega}a_{\nu\omega}^{+}\mu g_{\mathcal{E},p-1,\omega}$ gives $\mathrm{f}(\mu)g_{\mathcal{E},p-1,\tau}=(\sum_{\nu}\lambda_{\tau}a_{\nu\tau}^{+})\mu g_{\mathcal{E},p-1,\tau}$, and so $\Lambda \mathrm{f}(\mu)$ lies in $ \mathrm{rad}(\Lambda\mathrm{f}(\mu))$. As in Lemma \ref{lemma:string-complex-technical}, this a contradiction by Nakayama's lemma.
\end{proof}
\section{Resolutions and string and band complexes}\label{section:Resolutions-and-string-and-band-complexes}
We now focus on complexes which arise as projective resolutions of finitely generated modules. In what follows we say that a chain complex  $P$ of projective $\Lambda$-modules is a \emph{resolution in degree }$d\in\mathbb{Z}$ if $P^{l}=0$ for all $l>d$ and $H^{m}(P)=0$ for all $m\neq d$. We say $P$ is \emph{point}-\emph{wise}-\emph{finite} provided $P^{h}$ is finitely generated for all $h\in\mathbb{Z}$. Our aim for \S \ref{section:Resolutions-and-string-and-band-complexes} is to characterise the generalised words which index those string and band complexes which are point-wise-finite resolutions.
\subsection{String complexes}
\begin{lemma}\label{lemma:string-complex-resolution-technical}
Let $\mathcal{C}$ be a generalised $I$-word such that $P(\mathcal{C})$ is a resolution in degree $d\in\mathbb{Z}$. If  $\mathcal{C}_{i} \mathcal{C}_{i+1}=\langle \mu\rangle \langle \eta\rangle ^{-1}$ where $\mu,\eta\in\mathbf{P}$ then $\mathscr{H}_{\mathcal{C}}(i)=d-1$.
\end{lemma}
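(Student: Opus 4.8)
The plan is to pin down the homological degree by a two-sided estimate on $\mathscr{H}_{\mathcal{C}}(i)$, writing $t=\mathscr{H}_{\mathcal{C}}(i)$ throughout. Since $\mathcal{C}_i=\langle\mu\rangle$ is direct and $\mathcal{C}_{i+1}=\langle\eta\rangle^{-1}$ is inverse, the weights $\mathscr{H}(\langle\mu\rangle)=-1$ and $\mathscr{H}(\langle\eta\rangle^{-1})=1$ from Definition \ref{definition:string-complexes} give $\mathscr{H}_{\mathcal{C}}(i+1)=t+1$, and likewise $\mathscr{H}_{\mathcal{C}}(i-1)=t+1$. In particular the summand $\Lambda g_{\mathcal{C},i+1}=\Lambda e_{v_{\mathcal{C}}(i+1)}$ of $P^{t+1}(\mathcal{C})$ is non-zero, so $P^{t+1}(\mathcal{C})\neq 0$; as $P(\mathcal{C})$ is a resolution in degree $d$ we have $P^{l}(\mathcal{C})=0$ for $l>d$, whence $t+1\leq d$. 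The whole content of the lemma is therefore the reverse inequality $t+1\geq d$, which I would establish by contradiction.

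So suppose $t+1<d$. Because $P(\mathcal{C})$ is a resolution in degree $d$ and $t+1\neq d$, we have $H^{t+1}(P(\mathcal{C}))=0$, that is, every cycle in degree $t+1$ is a boundary. Writing $L_j=\Lambda(g_{\mathcal{C},j}^{+}+g_{\mathcal{C},j}^{-})=\Lambda\, d_{P(\mathcal{C})}(g_{\mathcal{C},j})$, the image of $d^{t}_{P(\mathcal{C})}$ is exactly $\sum_{j\in\mathscr{H}_{\mathcal{C}}^{-1}(t)}L_j$, since $P^{t}(\mathcal{C})=\bigoplus_{j\in\mathscr{H}_{\mathcal{C}}^{-1}(t)}\Lambda g_{\mathcal{C},j}$ and $d_{P(\mathcal{C})}$ is $\Lambda$-linear. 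The first key step is then to exhibit $\mathrm{f}(\mu)g_{\mathcal{C},i-1}$ and $\mathrm{f}(\eta)g_{\mathcal{C},i+1}$ as cycles in degree $t+1$. For $\mathrm{f}(\eta)g_{\mathcal{C},i+1}$: as $\mathcal{C}_{i+1}$ is inverse we have $g_{\mathcal{C},i+1}^{-}=0$, while $g_{\mathcal{C},i+1}^{+}$ is either $0$ or of the form $\mu'g_{\mathcal{C},i+2}$ with $\mathcal{C}_{i+2}=\langle\mu'\rangle^{-1}$, in which case condition (2) of Definition \ref{definition:generalised-words-for-complexes} gives $\mathrm{f}(\eta)\mathrm{l}(\mu')=0$; since $\mu'=\mathrm{l}(\mu')\nu$ for some path $\nu$ by Definition \ref{definition:firstlastarrows}, this forces $\mathrm{f}(\eta)\mu'=0$. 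A dual computation, using that $\mathcal{C}_i$ is direct (so $g_{\mathcal{C},i-1}^{+}=0$) together with condition (4) of Definition \ref{definition:generalised-words-for-complexes}, shows $\mathrm{f}(\mu)\,d_{P(\mathcal{C})}(g_{\mathcal{C},i-1})=0$. Hence $M:=\Lambda \mathrm{f}(\mu)g_{\mathcal{C},i-1}\oplus\Lambda \mathrm{f}(\eta)g_{\mathcal{C},i+1}$ lies in $\mathrm{ker}\,d^{t+1}_{P(\mathcal{C})}$.

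With the vanishing homology in hand, $M\subseteq\mathrm{ker}\,d^{t+1}_{P(\mathcal{C})}=\mathrm{im}\,d^{t}_{P(\mathcal{C})}=\sum_j L_j$. This $M$ is precisely the module $M=M'\oplus M''$ appearing in Lemma \ref{lemma:string-complex-technical}, so that lemma applies; but its conclusion (1) asserts that $\mathcal{C}_i=\langle\mu\rangle$ forces $\mathcal{C}_{i+1}\neq\langle\eta'\rangle^{-1}$ for every $\eta'\in\mathbf{P}$, contradicting $\mathcal{C}_{i+1}=\langle\eta\rangle^{-1}$. This contradiction rules out $t+1<d$, so $t+1=d$ and $\mathscr{H}_{\mathcal{C}}(i)=d-1$, as required.

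I expect the main obstacle to be the middle step: verifying rigorously that $\mathrm{f}(\mu)g_{\mathcal{C},i-1}$ and $\mathrm{f}(\eta)g_{\mathcal{C},i+1}$ are cycles. This needs a careful case analysis on the neighbouring letters $\mathcal{C}_{i-1}$ and $\mathcal{C}_{i+2}$ (direct, inverse, or absent) against the defining formulas for $g^{\pm}$, and the decisive algebraic input is the implication $\mathrm{f}(\eta)\mathrm{l}(\mu')=0\Rightarrow\mathrm{f}(\eta)\mu'=0$ (and its dual) coming from the compatibility conditions (2) and (4) of Definition \ref{definition:generalised-words-for-complexes}. Once these cycle conditions are secured, the homological sandwich and the appeal to Lemma \ref{lemma:string-complex-technical} are routine.
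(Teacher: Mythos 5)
Your proposal is correct and follows essentially the same route as the paper: bound $t=\mathscr{H}_{\mathcal{C}}(i)$ above using $P^{t+1}(\mathcal{C})\neq 0$, then for $t\leq d-2$ show $M=\Lambda\mathrm{f}(\mu)g_{\mathcal{C},i-1}\oplus\Lambda\mathrm{f}(\eta)g_{\mathcal{C},i+1}$ consists of cycles, use $H^{t+1}=0$ to place $M$ inside $\sum_j L_j$, and contradict Lemma \ref{lemma:string-complex-technical}(1). Your case analysis verifying that $M$ lies in the kernel is in fact more detailed than the paper's sketch and is sound.
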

\begin{proof}Let $\mathscr{H}_{\mathcal{C}}(i)=t$. Suppose $t\geq d$. By construction we have $\mathscr{H}_{\mathcal{C}}(i\pm 1)\geq d+1$ which contradicts the fact that $P^{l}(\mathcal{C})=0$ when $l>d$. Hence we must have $t< d$. For a contradiction assume $t\leq  d-2$. Let
\[
\begin{array}{ccc}
L=\mathrm{im}(d^{t}_{P(\mathcal{C})}), & M=\Lambda \mathrm{f}(\mu) g_{\mathcal{C},i-1}\oplus\Lambda \mathrm{f}(\eta) g_{\mathcal{C},i+1}, & N=\mathrm{ker}(d^{t+1}_{P(\mathcal{C})}).
\end{array}
\]
After considering different cases, by construction $M\subseteq N$. For example: when $\mathcal{C}_{i-1}=\langle\gamma\rangle$ we have $\mu\gamma=0$, and so $\mathrm{f}(\mu) g_{\mathcal{C},i-1}\in N$ by Lemma \ref{lemma:technical-comp-gen-props}(2); and when $\mathcal{C}_{i-1}=\langle\gamma\rangle^{-1}$ we have $ g_{\mathcal{C},i-1}\in N$. Since $t+1\leq d-1$ we have $N/L=H^{t+1}(P(\mathcal{C}))=0$ by assumption. Since $L=\sum L_{j}$ where $L_{j}=\Lambda( g_{\mathcal{C},j}^{+}+ g_{\mathcal{C},j}^{-})$ for all $j\in \mathscr{H}_{\mathcal{C}}^{-1}(t)$, the inclusion $M\subseteq L$ contradicts the second statement(s) of part (1) (and (2)) of Lemma \ref{lemma:string-complex-technical}.
\end{proof}
\begin{lemma}\label{lemma:string-complex-resolution-technical-II}
Let $\mathcal{C}$ be a generalised $I$-word such that $P(\mathcal{C})$ is a resolution in degree $d\in\mathbb{Z}$. If  $\mathcal{C}_{i} \mathcal{C}_{i+1}=\langle \mu\rangle^{-1} \langle \eta\rangle$ where $\mu,\eta\in\mathbf{P}$ then $\mathscr{H}_{\mathcal{C}}(i)=d$.
\end{lemma}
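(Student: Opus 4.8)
The plan is to exploit the fact that, in the present configuration, the generator $g_{\mathcal{C},i}$ itself is a cocycle sitting at a local maximum of the degree function $\mathscr{H}_{\mathcal{C}}$; this makes the argument a streamlined counterpart of the proof of Lemma \ref{lemma:string-complex-resolution-technical}, and in fact slightly simpler, since we will not need the full strength of Lemma \ref{lemma:string-complex-technical}. Write $t=\mathscr{H}_{\mathcal{C}}(i)$. Since $\mathcal{C}_{i}=\langle\mu\rangle^{-1}$ contributes $\mathscr{H}(\mathcal{C}_{i})=1$ and $\mathcal{C}_{i+1}=\langle\eta\rangle$ contributes $\mathscr{H}(\mathcal{C}_{i+1})=-1$, both neighbours satisfy $\mathscr{H}_{\mathcal{C}}(i-1)=\mathscr{H}_{\mathcal{C}}(i+1)=t-1$; in particular $i$ is a local maximum of $\mathscr{H}_{\mathcal{C}}$ and both $i-1$ and $i+1$ lie in $I$. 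As $g_{\mathcal{C},i}=e_{v_{\mathcal{C}}(i)}\neq 0$ lies in $P^{t}(\mathcal{C})$, we have $P^{t}(\mathcal{C})\neq 0$, whence $t\leq d$. It therefore remains to rule out $t<d$.

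First I would check that $g_{\mathcal{C},i}$ is a cocycle. By Definition \ref{definition:string-complexes} the component $g_{\mathcal{C},i}^{+}$ is nonzero only when $\mathcal{C}_{i+1}$ is inverse, and $g_{\mathcal{C},i}^{-}$ only when $\mathcal{C}_{i}$ is direct; here $\mathcal{C}_{i+1}=\langle\eta\rangle$ is direct and $\mathcal{C}_{i}=\langle\mu\rangle^{-1}$ is inverse, so $d^{t}_{P(\mathcal{C})}(g_{\mathcal{C},i})=g_{\mathcal{C},i}^{-}+g_{\mathcal{C},i}^{+}=0$ and $g_{\mathcal{C},i}\in\mathrm{ker}(d^{t}_{P(\mathcal{C})})$.

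Now suppose for contradiction that $t<d$. Then $t\neq d$, so $H^{t}(P(\mathcal{C}))=0$ by hypothesis, giving $\mathrm{ker}(d^{t}_{P(\mathcal{C})})=\mathrm{im}(d^{t-1}_{P(\mathcal{C})})$. Hence $g_{\mathcal{C},i}=d^{t-1}_{P(\mathcal{C})}(w)$ for some $w\in P^{t-1}(\mathcal{C})$, say $w=\sum_{j}\lambda_{j}g_{\mathcal{C},j}$ with $j$ ranging over $\mathscr{H}_{\mathcal{C}}^{-1}(t-1)$. I would then project onto the summand $\Lambda g_{\mathcal{C},i}$ of $P^{t}(\mathcal{C})$: among all the terms $g_{\mathcal{C},j}^{+}\in\Lambda g_{\mathcal{C},j+1}$ and $g_{\mathcal{C},j}^{-}\in\Lambda g_{\mathcal{C},j-1}$, the only ones landing in $\Lambda g_{\mathcal{C},i}$ are $g_{\mathcal{C},i-1}^{+}=\mu g_{\mathcal{C},i}$ (since $\mathcal{C}_{i}=\langle\mu\rangle^{-1}$) and $g_{\mathcal{C},i+1}^{-}=\eta g_{\mathcal{C},i}$ (since $\mathcal{C}_{i+1}=\langle\eta\rangle$). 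Thus $g_{\mathcal{C},i}=\lambda_{i-1}\mu g_{\mathcal{C},i}+\lambda_{i+1}\eta g_{\mathcal{C},i}$. Since $\mu,\eta\in\mathbf{P}$ are non-trivial paths, Corollary \ref{corollary:span-power-of-radical} gives $\mu g_{\mathcal{C},i},\eta g_{\mathcal{C},i}\in\mathrm{rad}(\Lambda)g_{\mathcal{C},i}=\mathrm{rad}(\Lambda g_{\mathcal{C},i})$, so $g_{\mathcal{C},i}$ lies in the radical of the nonzero local module $\Lambda g_{\mathcal{C},i}=\Lambda e_{v_{\mathcal{C}}(i)}$. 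This contradicts Nakayama's lemma, and so $t=d$, as required.

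The computation is essentially routine; the one point demanding care is the bookkeeping in the projection step, namely confirming that no term of $d^{t-1}_{P(\mathcal{C})}(w)$ other than the two displayed multiples of $\mu g_{\mathcal{C},i}$ and $\eta g_{\mathcal{C},i}$ can land in the $\Lambda g_{\mathcal{C},i}$ summand, so that the entire $i$-component of $\mathrm{im}(d^{t-1}_{P(\mathcal{C})})$ is forced into $\mathrm{rad}(\Lambda)g_{\mathcal{C},i}$. In the companion Lemma \ref{lemma:string-complex-resolution-technical} this isolation required condition (6) of Definition \ref{definition:complete-gentle-algebra} together with Lemma \ref{lemma:string-complex-technical}; here, because $g_{\mathcal{C},i}$ is itself a cocycle sitting at the maximal degree, the direct projection already suffices and that heavier machinery can be avoided.
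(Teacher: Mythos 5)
Your proposal is correct and follows essentially the same route as the paper's own (much terser) proof: both observe that $g_{\mathcal{C},i}$ is a cocycle because $\mathcal{C}_{i}$ is inverse and $\mathcal{C}_{i+1}$ is direct, and that it cannot be a coboundary because $\mathrm{im}(d_{P(\mathcal{C})})\subseteq\mathrm{rad}(P(\mathcal{C}))$, so $H^{t}(P(\mathcal{C}))\neq 0$ forces $t=d$. Your explicit projection onto the summand $\Lambda g_{\mathcal{C},i}$ simply unpacks the paper's radical-containment remark, and your observation that the heavier machinery of Lemma \ref{lemma:string-complex-technical} is not needed here matches what the paper actually does.
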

\begin{proof}
Let $\mathscr{H}_{\mathcal{C}}(i)=t$ and $P=P(\mathcal{C})$. By assumption we have that $H^{t}(P)\neq 0$, since $\mathrm{ker}(d_{P})$ contains the submodule $\Lambda  g_{\mathcal{C},i}$, which cannot be contained in the image since $\mathrm{im}(d_{P})$ is contained in $ \mathrm{rad}(P)$. 
\end{proof}
\begin{lemma}\label{lemma:string-complex-resolution-technical-III}
Let $\mathcal{C}$ be a generalised $I$-word such that $P(\mathcal{C})$ is a resolution in degree $d\in\mathbb{Z}$. If  $\mathcal{C}_{i} \mathcal{C}_{i+1}=\langle \mu\rangle ^{-1}\langle \eta\rangle ^{-1}$ where $\mu,\eta\in\mathbf{P}$ then\emph{:}
\begin{enumerate}
    \item we have $\mathscr{H}_{\mathcal{C}}(i)<d$ and $\mu\in\mathbf{A}$\emph{;}
    \item for all $j<i$ in $I$ there exists $ a\in\mathbf{A}$ with $\mathcal{C}_{j}=\langle  a\rangle ^{-1}$\emph{;}
    \item and for any $ b\in\mathbf{A}$ the composition $\langle b\rangle^{-1}\mathcal{C}$ is not a generalised word.
\end{enumerate}
\end{lemma}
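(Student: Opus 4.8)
The plan is to fix the shape $\mathcal{C}_i=\langle\mu\rangle^{-1}$, $\mathcal{C}_{i+1}=\langle\eta\rangle^{-1}$, to abbreviate $t_j=\mathscr{H}_{\mathcal{C}}(j)$, and to record from Definition \ref{definition:string-complexes} that $d_{P(\mathcal{C})}(g_{\mathcal{C},j})=\mu_{j+1}g_{\mathcal{C},j+1}$ whenever both $\mathcal{C}_j=\langle\mu_j\rangle^{-1}$ and $\mathcal{C}_{j+1}=\langle\mu_{j+1}\rangle^{-1}$ are inverse (the $g^{-}$ term vanishes as $\mathcal{C}_j$ is inverse). Since $\mathscr{H}(\langle\mu\rangle^{-1})=\mathscr{H}(\langle\eta\rangle^{-1})=1$, the degrees strictly increase across such an inverse run; as $g_{\mathcal{C},i+1}\in P^{t_{i+1}}(\mathcal{C})$ while $P^{l}(\mathcal{C})=0$ for $l>d$, one gets $t_{i+1}\leq d$ and hence $\mathscr{H}_{\mathcal{C}}(i)=t_i<d$, which is the inequality in (1).

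For the remaining half of (1) and for (2) I would argue by downward induction on the letter index $j\leq i$, proving simultaneously that $\mathcal{C}_j=\langle\mu_j\rangle^{-1}$ is inverse and that $\mu_j\in\mathbf{A}$. Granting that $\mathcal{C}_j,\dots,\mathcal{C}_{i+1}$ are inverse (so $t_j<d$ as above), the key computation is that $\mathrm{f}(\mu_j)\,g_{\mathcal{C},j}\in\ker(d^{t_j}_{P(\mathcal{C})})$: indeed $d_{P(\mathcal{C})}(\mathrm{f}(\mu_j)g_{\mathcal{C},j})=\mathrm{f}(\mu_j)\mu_{j+1}g_{\mathcal{C},j+1}$, and writing $\mu_{j+1}=\mathrm{l}(\mu_{j+1})\mu'$ with $\mu'\notin\mathscr{I}$, the relation $\mathrm{f}(\mu_j)\mathrm{l}(\mu_{j+1})=0$ forced by condition (2) of Definition \ref{definition:generalised-words-for-complexes} makes this zero. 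Because $t_j<d$ the complex is exact in degree $t_j$, so $\ker(d^{t_j}_{P(\mathcal{C})})=\mathrm{im}(d^{t_j-1}_{P(\mathcal{C})})=\sum_{l}L_l$ in the notation of Lemma \ref{lemma:string-complex-technical}. To invoke that lemma at index $j-1$ (which is legitimate since $j-1\in I$ for every letter index $j$) I first need its summand $M'$ to vanish, that is, $\mathcal{C}_{j-1}$ must not be direct; this is exactly where Lemma \ref{lemma:string-complex-resolution-technical} enters, for if $\mathcal{C}_{j-1}\mathcal{C}_j=\langle\zeta\rangle\langle\mu_j\rangle^{-1}$ were of type (1) it would force $\mathscr{H}_{\mathcal{C}}(j-1)=d-1$ and hence $t_j=d$, contradicting $t_j<d$. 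With $M'=0$, the containment $\Lambda\mathrm{f}(\mu_j)g_{\mathcal{C},j}\subseteq\sum_l L_l$ lets Lemma \ref{lemma:string-complex-technical}(2) conclude $\mu_j\in\mathbf{A}$ and, simultaneously, that $\mathcal{C}_{j-1}$ (when it is a letter) is inverse, which feeds the next step of the induction. The base case $j=i$ yields $\mu\in\mathbf{A}$, finishing (1), and the induction yields (2).

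For (3) I would split on whether $I$ is bounded below. If $-\mathbb{N}\subseteq I$ then $\mathcal{C}$ is neither finite nor an $\mathbb{N}$-word, so $\langle b\rangle^{-1}\mathcal{C}$ is not a legal composition and the claim is vacuous. Otherwise $I=\{0,\dots,m\}$ or $\mathbb{N}$, the first letter is $\mathcal{C}_1=\langle\mu_1\rangle^{-1}$ with $\mu_1\in\mathbf{A}$ by (2), and $t_1=1<d$ forces $d\geq2$, whence $H^{0}(P(\mathcal{C}))=0$. Suppose for a contradiction that $\langle b\rangle^{-1}\mathcal{C}$ is a generalised word for some $b\in\mathbf{A}$; then the pair $\langle b\rangle^{-1}\langle\mu_1\rangle^{-1}$ is of type (2), so $t(b)=v_{\mathcal{C}}(0)$ and $b\mu_1=0$. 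Hence $b\,g_{\mathcal{C},0}$ is a nonzero element of $\ker(d^{0}_{P(\mathcal{C})})=\mathrm{im}(d^{-1}_{P(\mathcal{C})})$, because $d^{0}_{P(\mathcal{C})}(b\,g_{\mathcal{C},0})=b\mu_1 g_{\mathcal{C},1}=0$. But each differential maps $g_{\mathcal{C},l}$ into $\Lambda g_{\mathcal{C},l-1}\oplus\Lambda g_{\mathcal{C},l+1}$, so the summand $\Lambda g_{\mathcal{C},0}$ can only be reached from $g_{\mathcal{C},1}$ (requiring $\mathcal{C}_1$ direct, false) or from a non-existent $g_{\mathcal{C},-1}$; thus $\mathrm{im}(d^{-1}_{P(\mathcal{C})})$ has zero component in $\Lambda g_{\mathcal{C},0}$, forcing $b\,g_{\mathcal{C},0}=0$ and the contradiction $b=0$.

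The main obstacle is the inductive step of the second paragraph: making the two earlier technical lemmas cooperate, so that the direct summand $M'$ of Lemma \ref{lemma:string-complex-technical} is forced to vanish by Lemma \ref{lemma:string-complex-resolution-technical} \emph{before} Lemma \ref{lemma:string-complex-technical} can be applied. Once this bootstrapping is arranged at the base index $i$, the arrow-conditions propagate automatically all the way down the inverse run, and the boundary behaviour needed for (3) falls out of the same degree bookkeeping.
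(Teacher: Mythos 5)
Your proof is correct and follows essentially the same route as the paper's: part (1) via the cycle $\mathrm{f}(\mu)g_{\mathcal{C},i}\in\ker(d^{t})=\mathrm{im}(d^{t-1})$ combined with Lemma \ref{lemma:string-complex-technical}, part (2) by propagating this down the inverse run while using Lemma \ref{lemma:string-complex-resolution-technical} to exclude direct letters, and part (3) by the same degree-$0$ kernel/image comparison. The only differences are organizational (downward induction in place of the paper's maximal-counterexample argument for (2)) and that you make explicit the vanishing of the summand $M'$ needed to invoke Lemma \ref{lemma:string-complex-technical}, a point the paper leaves implicit.
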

\begin{proof}(1) Let $\mathscr{H}_{\mathcal{C}}(i)=t$. If $t\geq d$ then we contradict that $P^{d+1}(\mathcal{C})=0$, and so $t< d$. Now let
\[
\begin{array}{ccc}
L=\mathrm{im}(d^{t-1}_{P(\mathcal{C})}), & M=\Lambda \mathrm{f}(\mu) g_{\mathcal{C},i-1}, & N=\mathrm{ker}(d^{t}_{P(\mathcal{C})}).
\end{array}
\]
Note $M\subseteq N$ by construction, and since $t<d$ we have $N/L=H^{t}(P(\mathcal{C}))=0$ by assumption. This gives $M\subseteq L$, and hence $M\subseteq L_{i}$ and $\mu\in\mathbf{A}$ by Lemma \ref{lemma:string-complex-technical}. 

(2) For a contradiction we assume we can choose $j\in I$ maximal such that $j<i$ and $\mathcal{C}_{j}\neq \langle  a\rangle ^{-1}$ for all $ a\in\mathbf{A}$. Note that if $\mathcal{C}_{j}= \langle \sigma\rangle ^{-1}$ then we must have $\sigma \in\mathbf{A}$ by part (1), and so $\mathcal{C}_{j}= \langle \sigma\rangle$ for some $\sigma\in\mathbf{P}$. By the maximality of $j<i$ we have $\mathscr{H}_{\mathcal{C}}(j)<t\leq d-1$ and $\mathcal{C}_{j+1}=\langle\gamma\rangle^{-1}$ for some $\gamma\in\mathbf{P}$. That we have $\mathcal{C}_{j}\mathcal{C}_{j+1}=\langle \sigma\rangle\langle\gamma\rangle^{-1}$ and $\mathscr{H}_{\mathcal{C}}(j)<d-1$ contradicts Lemma \ref{lemma:string-complex-resolution-technical}.

(3) For a contradiction suppose there exists $ b\in\mathbf{A}$ such that $\langle b\rangle^{-1}\mathcal{C}$ is a generalised word. This means $I\subseteq \mathbb{N}$. By part (2) above we have that $\mathcal{C}_{1}=\langle a\rangle^{-1}$ for some $ a\in\mathbf{A}$. Since $\langle b\rangle^{-1}\langle a\rangle^{-1}$ is a generalised word we have $ b a=0$. By part (1) above we have $0<d$. Now let
\[
\begin{array}{ccc}
L=\mathrm{im}(d^{-1}_{P(\mathcal{C})}), & M=\Lambda  b  g_{\mathcal{C},0}, & N=\mathrm{ker}(d^{0}_{P(\mathcal{C})}).
\end{array}
\] 
Since $\mathcal{C}_{1}=\langle a\rangle^{-1}$ we must have $L\subseteq \bigoplus_{i>0}\Lambda  g_{\mathcal{C},i}$. Since $ b a=0$ we have $M\subseteq N$, and $L=N$ since $H^{0}(P(\mathcal{C}))=0$. Altogether $M\subseteq \bigoplus_{i>0}\Lambda  g_{\mathcal{C},i}$, and hence $M=0$, which contradicts that $ b\in\mathbf{P}$ and $t( b)=v_{\mathcal{C}}(0)$.
\end{proof}
The proof of Lemma \ref{lemma:string-complex-resolution-technical-IV} are similar to that of Lemma \ref{lemma:string-complex-resolution-technical-III}, and omitted. 
\begin{lemma}\label{lemma:string-complex-resolution-technical-IV}
Let $\mathcal{C}$ be a generalised $I$-word such that $P(\mathcal{C})$ is a resolution in degree $d\in\mathbb{Z}$. If  $\mathcal{C}_{i} \mathcal{C}_{i+1}=\langle \mu\rangle\langle \eta\rangle$ where $\mu,\eta\in\mathbf{P}$ then\emph{:}
\begin{enumerate}
    \item we have $\mathscr{H}_{\mathcal{C}}(i)<d$ and $\eta\in\mathbf{A}$\emph{;}
    \item for all $j>i+1$ in $I$ there exists $ a\in\mathbf{A}$ with $\mathcal{C}_{j}=\langle  a\rangle $\emph{;}
    \item and for any $d\in\mathbf{A}$ the composition $\mathcal{C}\langle d\rangle$ is not a generalised word.
\end{enumerate}
\end{lemma}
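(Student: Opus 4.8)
The plan is to deduce Lemma \ref{lemma:string-complex-resolution-technical-IV} from Lemma \ref{lemma:string-complex-resolution-technical-III} by passing to the inverse word $\mathcal{C}^{-1}$. The guiding observation is that inverting a generalised word exchanges direct and inverse generalised letters and reverses their order, so a configuration $\mathcal{C}_{i}\mathcal{C}_{i+1}=\langle\mu\rangle\langle\eta\rangle$ of two \emph{direct} letters in $\mathcal{C}$ corresponds to a configuration $\langle\eta\rangle^{-1}\langle\mu\rangle^{-1}$ of two \emph{inverse} letters in $\mathcal{C}^{-1}$, which is exactly the hypothesis of Lemma \ref{lemma:string-complex-resolution-technical-III} with the roles of $\mu$ and $\eta$ interchanged. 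Moreover inversion turns ``to the left of $i$'' into ``to the right of $i+1$'', and turns prepending a letter $\langle b\rangle^{-1}$ into appending a letter $\langle b\rangle$, since $(\langle b\rangle^{-1}\mathcal{C}^{-1})^{-1}=\mathcal{C}\langle b\rangle$ and a sequence is a generalised word if and only if its inverse is. Thus the three conclusions of Lemma \ref{lemma:string-complex-resolution-technical-III} for $\mathcal{C}^{-1}$ are precisely the three conclusions claimed here for $\mathcal{C}$.

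First I would transfer the resolution hypothesis to $\mathcal{C}^{-1}$ using Lemma \ref{lemma:isos-between-string-complexes}: since $P(\mathcal{C})$ is a resolution in degree $d$, parts (1) and (2) of that lemma supply an isomorphism of complexes $P(\mathcal{C}^{-1})\cong P(\mathcal{C})[s]$, where $s=\mathscr{H}_{\mathcal{C}}(m)$ when $I=\{0,\dots,m\}$ is finite and $s=0$ when $I$ is infinite. Consequently $P(\mathcal{C}^{-1})$ is a resolution in a degree $d'$ differing from $d$ by $s$. Applying Lemma \ref{lemma:string-complex-resolution-technical-III} to $\mathcal{C}^{-1}$ at the index $i'$ matching $i$ then yields $\eta\in\mathbf{A}$, the inequality $\mathscr{H}_{\mathcal{C}^{-1}}(i')<d'$, the statement about inverse letters to the left of $i'$, and the statement about prepending $\langle b\rangle^{-1}$.

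Next I would translate these statements back to $\mathcal{C}$. The conclusion $\eta\in\mathbf{A}$ is immediate. For the degree bound I would use the explicit index correspondence under inversion (reading off $\mathcal{C}^{-1}_{j}=\mathcal{C}_{m+1-j}^{-1}$ in the finite case, and the analogous reindexing in the infinite cases from Definition \ref{definition:words-for-modules}) together with the relation between $\mathscr{H}_{\mathcal{C}^{-1}}$ and $\mathscr{H}_{\mathcal{C}}$ induced by the same isomorphism; the net effect is that the common shift $s$ appears on both sides of $\mathscr{H}_{\mathcal{C}^{-1}}(i')<d'$ and cancels, leaving $\mathscr{H}_{\mathcal{C}}(i)<d$. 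The remaining two statements translate by the same bijection of indices and by inverting words as above, with $\mathcal{C}\langle b\rangle$ failing to be a word exactly when $\langle b\rangle^{-1}\mathcal{C}^{-1}$ does.

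The main obstacle is the bookkeeping in this translation: one must pin down the exact index $i'$ corresponding to $i$ in each of the four cases for $I$, track the homogeneous shift $s$ introduced by Lemma \ref{lemma:isos-between-string-complexes}, and verify that $s$ genuinely cancels so that the degree bound is preserved intact. As an alternative that sidesteps the shift bookkeeping, one can instead mirror the proof of Lemma \ref{lemma:string-complex-resolution-technical-III} verbatim, replacing each appeal to the ``direct-on-the-left'' half of Lemma \ref{lemma:string-complex-technical} and of Lemma \ref{lemma:technical-comp-gen-props} by its ``inverse-on-the-right'' counterpart; this is presumably what is meant by the two proofs being similar.
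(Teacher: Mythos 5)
Your proposal is correct. The paper itself omits this proof, stating only that it is ``similar to that of Lemma \ref{lemma:string-complex-resolution-technical-III}'' --- i.e.\ the intended argument is the direct dual one you mention at the end: rerun the proof of Lemma \ref{lemma:string-complex-resolution-technical-III} using the right-hand halves of Lemmas \ref{lemma:string-complex-technical} and \ref{lemma:technical-comp-gen-props}. Your primary route --- a formal reduction to Lemma \ref{lemma:string-complex-resolution-technical-III} applied to $\mathcal{C}^{-1}$ via Lemma \ref{lemma:isos-between-string-complexes} --- is a genuinely different and arguably cleaner way to make ``similar'' precise, since it avoids repeating the homological argument entirely. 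The bookkeeping you flag as the main obstacle does work out: writing $(\mathcal{C}^{-1})_{j}=\mathcal{C}_{m+1-j}^{-1}$ in the finite case (and $(\mathcal{C}^{-1})_{j}=\mathcal{C}_{1-j}^{-1}$ in the infinite cases), the pair $\langle\mu\rangle\langle\eta\rangle$ at positions $i,i+1$ becomes $\langle\eta\rangle^{-1}\langle\mu\rangle^{-1}$ at positions $i',i'+1$ with $i'=m-i$ (resp.\ $i'=-i$), one computes directly from the definition of $\mathscr{H}$ that $\mathscr{H}_{\mathcal{C}^{-1}}(i')=\mathscr{H}_{\mathcal{C}}(i)-\mathscr{H}_{\mathcal{C}}(m)$ while $P(\mathcal{C}^{-1})$ is a resolution in degree $d'=d-\mathscr{H}_{\mathcal{C}}(m)$ (with no shift in the infinite cases), so the common term cancels and $\mathscr{H}_{\mathcal{C}}(i)<d$ follows; the first letter of the inverted pair is $\langle\eta\rangle^{-1}$, giving $\eta\in\mathbf{A}$; and ``$j<i'$'' and ``prepending $\langle b\rangle^{-1}$'' translate to ``$j>i+1$'' and ``appending $\langle b\rangle$'' exactly as you say, using that a sequence of generalised letters is a generalised word if and only if its inverse is. Either route is acceptable; the reduction trades the homological work for index bookkeeping, while the paper's intended dual argument trades the bookkeeping for a second pass through the algebra.
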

\begin{lemma}\label{lemma:string-complex-resolution-technical-VI}
Let $\mathcal{C}$ be a generalised $I$-word such that $P(\mathcal{C})$ is a resolution in degree $d\in\mathbb{Z}$. 
\begin{enumerate}
    \item If $I\neq-\mathbb{N},\mathbb{Z}$ and $\mathcal{C}_{1}$ is direct then $d=0$.
    \item If $I\neq\mathbb{N},\mathbb{Z}$, $m=\mathrm{max}(I)$ and $\mathcal{C}_{m}$ is inverse then $d=\mathscr{H}_{\mathcal{C}}(m)$.
\end{enumerate}
\end{lemma}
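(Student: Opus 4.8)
The plan is to prove (1) directly by exhibiting a nonzero cohomology class in homological degree $0$, and then to deduce (2) from (1) by passing to the inverse word $\mathcal{C}^{-1}$ and tracking the resulting degree shift through Lemma \ref{lemma:isos-between-string-complexes}. The argument for (1) is in the same spirit as the proof of Lemma \ref{lemma:string-complex-resolution-technical-II}: one produces a generator sitting in the kernel of the differential that cannot lie in the image, because the image is contained in the radical.

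For (1), I would first note that since $I\neq-\mathbb{N},\mathbb{Z}$ and $\mathcal{C}_1$ is assumed to exist, we have $I=\{0,\dots,m\}$ with $m\geq1$ or $I=\mathbb{N}$; in particular $0,1\in I$ but $-1\notin I$. As $\mathscr{H}_{\mathcal{C}}(0)=0$, the summand $\Lambda g_{\mathcal{C},0}$ of $P^{0}(\mathcal{C})$ sits in homological degree $0$. I would then compute $d_{P(\mathcal{C})}(g_{\mathcal{C},0})=g_{\mathcal{C},0}^{-}+g_{\mathcal{C},0}^{+}$ and observe that both summands vanish: $g_{\mathcal{C},0}^{-}=0$ because $-1\notin I$, and $g_{\mathcal{C},0}^{+}=0$ because $\mathcal{C}_1$ is direct rather than of the form $\langle\mu\rangle^{-1}$. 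Hence $g_{\mathcal{C},0}$ is a cycle. Since every entry of the differential lies in $\mathbf{P}\subseteq\mathrm{rad}(\Lambda)$, the image $\mathrm{im}(d^{-1}_{P(\mathcal{C})})$ is contained in $\mathrm{rad}(P^{0}(\mathcal{C}))$, whereas $g_{\mathcal{C},0}=e_{v_{\mathcal{C}}(0)}$ generates a direct summand and so does not lie in the radical. Thus $g_{\mathcal{C},0}$ represents a nonzero class in $H^{0}(P(\mathcal{C}))$, and because $P(\mathcal{C})$ is a resolution in degree $d$ this forces $d=0$.

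For (2), I would apply (1) to $\mathcal{C}^{-1}$. Since $\mathcal{C}_m$ is inverse, the first letter $(\mathcal{C}^{-1})_1=\mathcal{C}_m^{-1}$ is direct, so (1) becomes applicable to $\mathcal{C}^{-1}$ once we know $P(\mathcal{C}^{-1})$ is a resolution. When $I=\{0,\dots,m\}$, Lemma \ref{lemma:isos-between-string-complexes}(1) gives $P(\mathcal{C}^{-1})\cong P(\mathcal{C})[\mathscr{H}_{\mathcal{C}}(m)]$; shifting a resolution in degree $d$ by $\mathscr{H}_{\mathcal{C}}(m)$ yields a resolution in degree $d-\mathscr{H}_{\mathcal{C}}(m)$, so $P(\mathcal{C}^{-1})$ is such a resolution, and part (1) forces $d-\mathscr{H}_{\mathcal{C}}(m)=0$, i.e. $d=\mathscr{H}_{\mathcal{C}}(m)$. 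When $I=-\mathbb{N}$ we have $m=\max(I)=0$ and $\mathscr{H}_{\mathcal{C}}(m)=0$, while $\mathcal{C}^{-1}$ is an $\mathbb{N}$-word; here Lemma \ref{lemma:isos-between-string-complexes}(2) gives $P(\mathcal{C}^{-1})\cong P(\mathcal{C})$ with no shift, so part (1) applied to $\mathcal{C}^{-1}$ yields $d=0=\mathscr{H}_{\mathcal{C}}(m)$.

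The routine-but-delicate point, and the step I would double-check most carefully, is the degree bookkeeping in (2): one must pin down the shift convention $P[n]^{i}=P^{i+n}$ (the one consistent with Lemma \ref{lemma:isos-between-string-complexes}(3), verifiable from the identity $\mathscr{H}_{\mathcal{C}[t]}(i)=\mathscr{H}_{\mathcal{C}}(i+t)-\mathscr{H}_{\mathcal{C}}(t)$) in order to land on $d=\mathscr{H}_{\mathcal{C}}(m)$ rather than $d=-\mathscr{H}_{\mathcal{C}}(m)$, and separately to treat the degenerate case $I=-\mathbb{N}$, where $m=0$ and the isomorphism carries no shift. Everything else is a direct reading off of the definitions of $\mathscr{H}_{\mathcal{C}}$ and of the differential $d_{P(\mathcal{C})}$.
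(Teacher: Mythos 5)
Your proof of (1) is exactly the paper's argument: $g_{\mathcal{C},0}^{-}=0$ because $-1\notin I$, $g_{\mathcal{C},0}^{+}=0$ because $\mathcal{C}_{1}$ is direct, so $\Lambda g_{\mathcal{C},0}$ sits in the kernel in degree $\mathscr{H}_{\mathcal{C}}(0)=0$ while the image lies in the radical, forcing $d=0$. For (2) the paper simply says the proof is ``similar,'' meaning the mirror-image direct argument: $g_{\mathcal{C},m}^{+}=0$ since $m+1\notin I$ and $g_{\mathcal{C},m}^{-}=0$ since $\mathcal{C}_{m}$ is inverse, so $\Lambda g_{\mathcal{C},m}$ gives a nonzero class in degree $\mathscr{H}_{\mathcal{C}}(m)$. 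You instead reduce (2) to (1) by passing to $\mathcal{C}^{-1}$ and tracking the shift through Lemma \ref{lemma:isos-between-string-complexes}; your bookkeeping is correct (the convention $P[n]^{i}=P^{i+n}$ is indeed the one compatible with that lemma, and you rightly separate the $I=-\mathbb{N}$ case where $m=0$ and no shift occurs), but this route is longer than the two-line symmetric argument the paper intends. Both are valid; the direct argument avoids any dependence on the shift convention, which is the one place your version could go wrong.
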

\begin{proof}
We only prove (1) holds; the proof that (2) holds is similar. Since $I\neq-\mathbb{N},\mathbb{Z}$ we have $-1\notin I$, and so by Definition \ref{definition:string-complexes} we have $ g_{\mathcal{C},0}^{-}=0$. Since $\mathcal{C}_{1}$ is direct we have  $ g_{\mathcal{C},0}^{+}=0$, which together means $\Lambda  g_{\mathcal{C},0}$ is contained in the kernel of the differential. Since the image lies in the radical we have $d=0$.
\end{proof}
Lemma \ref{lemma:string-complex-resolution-given-by-a-certain-form} is a technical result which simplifies the proof of Lemma \ref{lemma:string-complex-resolution-given-by-a-certain-form-II}.
\begin{lemma}\label{lemma:string-complex-resolution-given-by-a-certain-form}
Let $\mathcal{C}$ be a generalised $I$-word such that $P(\mathcal{C})$ is point-wise-finite resolution in degree $d\in\mathbb{Z}$.  \begin{enumerate}
    \item Let $\Psi$ be the set of $i\in I$ such that $\mathcal{C}_{j}$ is inverse whenever $I\ni j-1<i$. If $\Psi\neq \emptyset$ then $\Psi$ has a maximal element; and if $\Psi=\emptyset$ then $-\mathbb{N}\nsubseteq I$.
    \item Let $\Phi$ be the set of $i\in I$ such that $\mathcal{C}_{j}$ is direct whenever $I\ni j+1> i$. If $\Phi\neq \emptyset$ then $\Psi$ has a minimal element; and if $\Phi=\emptyset$ then $\mathbb{N}\nsubseteq I$.
\end{enumerate}
\end{lemma}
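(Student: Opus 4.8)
The plan is to reduce everything to the behaviour of the height function $\mathscr{H}_{\mathcal{C}}\colon I\to\mathbb{Z}$. Since $P(\mathcal{C})$ is a resolution in degree $d$ we have $P^{l}(\mathcal{C})=0$ for $l>d$, which (reading off Definition \ref{definition:string-complexes}) says exactly that $\mathscr{H}_{\mathcal{C}}(i)\leq d$ for all $i\in I$; and point-wise-finiteness says each fibre $\mathscr{H}_{\mathcal{C}}^{-1}(n)$ is finite, because $P^{n}(\mathcal{C})=\bigoplus_{i\in\mathscr{H}_{\mathcal{C}}^{-1}(n)}\Lambda e_{v_{\mathcal{C}}(i)}$ with every summand nonzero. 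I would first record the bookkeeping that $\Psi$ is downward closed and $\Phi$ upward closed in $I$ (if every $\mathcal{C}_{j}$ with $j\leq i$ is inverse then the same holds for any smaller index, and dually). Consequently $\Psi$ fails to have a maximal element exactly when it is unbounded above, and $\Phi$ fails to have a minimal element exactly when it is unbounded below. I also note that the statement of (2) contains a typo: ``$\Psi$ has a minimal element'' should read ``$\Phi$ has a minimal element''.

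The first assertion of each part then follows directly from $\mathscr{H}_{\mathcal{C}}\leq d$. If $\Psi$ were unbounded above, the downward-closed property forces every letter $\mathcal{C}_{j}$ (with $j-1\in I$) to be inverse and forces $\mathbb{N}\subseteq I$; since $\mathscr{H}(\langle\mu\rangle^{-1})=1$ this gives $\mathscr{H}_{\mathcal{C}}(i)=i$ for $i>0$, so $P^{i}(\mathcal{C})\neq0$ for arbitrarily large $i$, contradicting $\mathscr{H}_{\mathcal{C}}\leq d$. The assertion for $\Phi$ is dual, using $\mathscr{H}(\langle\eta\rangle)=-1$ and the fact that an entirely direct word with $-\mathbb{N}\subseteq I$ has $\mathscr{H}_{\mathcal{C}}(i)=-i\to+\infty$ as $i\to-\infty$.

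For the second assertion of (1) I would prove the contrapositive: assume $-\mathbb{N}\subseteq I$ and exhibit an element of $\Psi$. The key preliminary is a one-sided walk lemma: a function on $-\mathbb{N}$ with increments $\pm1$, bounded above and with finite fibres, must tend to $-\infty$; otherwise some value in a fixed interval $[M,d]$ is attained infinitely often, contradicting finiteness of a fibre. Applying this to $\mathscr{H}_{\mathcal{C}}$ yields $i_{0}\leq 0$ with $\mathscr{H}_{\mathcal{C}}(i)<d$ for all $i\leq i_{0}$; note $i_{0}\in I$. I then claim every $\mathcal{C}_{j}$ with $j\leq i_{0}$ is inverse, whence $i_{0}\in\Psi$. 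If some such $\mathcal{C}_{j}$ were direct, consider the indices $k\leq j$: if all of them were direct then $\mathscr{H}_{\mathcal{C}}(k)=\mathscr{H}_{\mathcal{C}}(j)+(j-k)\to+\infty$, a contradiction; otherwise let $p<j$ be the largest index with $\mathcal{C}_{p}$ inverse, so that $\mathcal{C}_{p+1}$ is direct and $\mathcal{C}_{p}\mathcal{C}_{p+1}=\langle\mu\rangle^{-1}\langle\eta\rangle$, whence Lemma \ref{lemma:string-complex-resolution-technical-II} forces $\mathscr{H}_{\mathcal{C}}(p)=d$, contradicting $p<j\leq i_{0}$. The second assertion of (2) is entirely dual, running the walk lemma on $\mathbb{N}$ and applying Lemma \ref{lemma:string-complex-resolution-technical-II} at the first direct letter lying to the right of an inverse one.

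The routine parts are the closure/arithmetic bookkeeping of the height function; the genuinely load-bearing step is the walk lemma, which is precisely where point-wise-finiteness is used, together with the clean reduction of the ``eventually inverse/direct'' dichotomy to a single application of Lemma \ref{lemma:string-complex-resolution-technical-II} at the nearest sign change (so that Lemmas \ref{lemma:string-complex-resolution-technical-III} and \ref{lemma:string-complex-resolution-technical-IV} are not actually needed). I expect the only delicate point to be the handling of boundary indices---ensuring that $p+1\in I$, that $\mathcal{C}_{p+1}$ really is direct by maximality of $p$, and that the chosen $i_{0}$ lies in $I$ and satisfies the defining condition of $\Psi$ (respectively $\Phi$) verbatim.
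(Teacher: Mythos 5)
Your proof is correct, and for the substantive halves of the statement (that $\Psi=\emptyset$ forces $-\mathbb{N}\nsubseteq I$, and dually for $\Phi$) it takes a genuinely different route from the paper's. The paper first invokes Lemma \ref{lemma:string-complex-resolution-technical-III}(2) to rule out consecutive inverse letters when $\Psi=\emptyset$, then combines Lemma \ref{lemma:string-complex-resolution-technical} with the bound $\mathscr{H}_{\mathcal{C}}\leq d$ to force the word to alternate direct/inverse indefinitely to the left of any inverse letter, and only at the end contradicts point-wise-finiteness via the resulting infinite fibre of $\mathscr{H}_{\mathcal{C}}$. You instead front-load point-wise-finiteness into the walk lemma ($\mathscr{H}_{\mathcal{C}}(i)\to-\infty$ as $i\to-\infty$, since a $\pm1$-increment function bounded above with finite fibres cannot return to a bounded window infinitely often), after which a single application of Lemma \ref{lemma:string-complex-resolution-technical-II} at the nearest valley $\langle\mu\rangle^{-1}\langle\eta\rangle$ to the left of your $i_{0}$ finishes the argument; you are right that Lemmas \ref{lemma:string-complex-resolution-technical-III} and \ref{lemma:string-complex-resolution-technical-IV} are then never needed. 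Your version is somewhat more economical -- Lemma \ref{lemma:string-complex-resolution-technical-II} has a one-line proof, whereas Lemma \ref{lemma:string-complex-resolution-technical-III} itself rests on the technical Lemma \ref{lemma:string-complex-technical} -- and it is constructive in that it exhibits an explicit element $i_{0}\in\Psi$ rather than deriving a contradiction from an assumed infinite alternation; the paper's version keeps the argument uniform with the surrounding lemmas, which repeatedly exploit the forced peak/valley heights $d-1$ and $d$. The first assertions of (1) and (2) are handled identically in both, your boundary checks (existence and membership in $I$ of $p$, $p+1$ and $i_{0}$) go through because $-\mathbb{N}\subseteq I$ in the relevant case, and you are also correct that ``$\Psi$ has a minimal element'' in part (2) is a typo for ``$\Phi$''.
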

\begin{proof}
(1) For a contradiction suppose $\emptyset\neq\Psi$ which does not have a maximal element. Then we must have that $I=\mathbb{N}$ or $I=\mathbb{Z}$, and that $\mathcal{C}_{i}$ is inverse for all $i\in I$. This would contradict that $P(\mathcal{C})$ is bounded above.

Now suppose $\Psi=\emptyset$. By Lemma \ref{lemma:string-complex-resolution-technical-III} this means that the set of $i\in I$ such that ($\mathcal{C}_{i}$ and $\mathcal{C}_{i+1}$ are both inverse) is empty. For a contradiction suppose $-\mathbb{N}\subseteq I$.

As above, since $P^{l}(\mathcal{C})=0$ for all $l>d$, we cannot have that $\mathcal{C}_{j}$ is direct for all $j$. Choose $n\in I$ with $\mathcal{C}_{n}$ inverse. Since no such $i$ as above exists, $\mathcal{C}_{n-1}$ is direct. By Lemma \ref{lemma:string-complex-resolution-technical} this means $\mathscr{H}_{\mathcal{C}}(n)=d-1$. Since $P^{d+1}(\mathcal{C})=0$ we must have that $\mathcal{C}_{n-2}$ is inverse. As above this means $\mathcal{C}_{n-3}$ is direct (since no such $i$ as above exists). Continuing this way, one can show $\mathcal{C}_{n-2m}$ is inverse and $\mathcal{C}_{n-2m-1}$ is direct for all $m\in\mathbb{N}$. This contradicts that $P^{d}(\mathcal{C})$ is finitely generated.

(2) The proof here is similar to the proof of (1), but where one swaps direct (respectively inverse) letters with inverse (respectively direct) letters, and where one applies Lemma \ref{lemma:string-complex-resolution-technical-IV} instead of Lemma \ref{lemma:string-complex-resolution-technical-III}.
\end{proof}
\begin{definition}\label{definition:alternating-words-string-resolutions}
By an \emph{alternating} generalised word we mean one of the form 
\[
\mathcal{A}=\langle \mu_{1}\rangle \langle \eta_{1}\rangle ^{-1}\dots \langle \mu_{n}\rangle \langle \eta_{n}\rangle ^{-1}
\]
for some $\mu_{i},\eta_{i}\in\mathbf{P}$. By a \emph{string}-\emph{resolution} we mean a generalised word of the form $\mathcal{C}=\mathcal{B}^{-1}(\mathcal{A}\mathcal{D})$ such that the following statements hold.
\begin{enumerate}
 \item The generalised word $\mathcal{A}$ is either trivial or alternating. 
 \item If the generalised (finite or $\mathbb{N}$)-word $\mathcal{B}$ is non-trivial, then:
 \begin{enumerate}
     \item $\mathcal{B}$ is direct, and for all $n>1$ we have $\mathcal{B}_{n}=\langle\sigma\rangle$ for some $\sigma\in\mathbf{A}$;
     \item and if $ b\in\mathbf{A}$ then $\mathcal{B}\langle b\rangle$ is not a generalised word.
 \end{enumerate}
 \item If the generalised (finite or $\mathbb{N}$)-word $\mathcal{D}$ is non-trivial, then:
 \begin{enumerate}
     \item $\mathcal{D}$ is direct, and for all $n>1$ we have $\mathcal{D}_{n}=\langle\gamma\rangle$ for some $\gamma\in\mathbf{A}$;
     \item and if $ d\in\mathbf{A}$ then $\mathcal{D}\langle d\rangle$ is not a generalised word.
 \end{enumerate}
\end{enumerate}
In this case we refer to the triple $(\mathcal{B},\mathcal{A},\mathcal{D})$ as \emph{a decomposition of} $\mathcal{C}$.
\end{definition}
\begin{lemma}\label{lemma:generalised-decompositions-under-shifts-and-inverses}
Let $\mathcal{C}$ and $\mathcal{C}'$ be string words with decompositions $(\mathcal{B},\mathcal{A},\mathcal{D})$ and $(\mathcal{B}',\mathcal{A}',\mathcal{D}')$ respectively, and suppose $\mathcal{A}$ is a generalised $\{0,\dots,d\}$-word. The following statements hold.
\begin{enumerate}
    \item If $\mathcal{C}'=\mathcal{C}[n]$ then $\mathcal{C}'=\mathcal{C}$ and $(\mathcal{B}',\mathcal{A}',\mathcal{D}')=(\mathcal{B},\mathcal{A},\mathcal{D})$.
    \item If $\mathcal{C}'=\mathcal{C}^{-1}[n]$ then $\mathcal{C}'=\mathcal{C}^{-1}[-d]$ and $(\mathcal{B}',\mathcal{A}',\mathcal{D}')=(\mathcal{D},\mathcal{A}^{-1},\mathcal{B})$.
\end{enumerate}
\end{lemma}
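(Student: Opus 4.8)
The plan is to follow the proof of the analogous statement for ordinary string words, namely Lemma \ref{lemma:decompositions-under-shifts-and-inverses}, making the combinatorial adjustments forced by the convention that in a string-resolution $\mathcal{C}=\mathcal{B}^{-1}(\mathcal{A}\mathcal{D})$ the parts $\mathcal{B}$ and $\mathcal{D}$ are \emph{direct} (Definition \ref{definition:alternating-words-string-resolutions}), so that $\mathcal{B}^{-1}$ contributes inverse generalised letters on the left and $\mathcal{D}$ contributes direct ones on the right. First I would fix notation: write $\mathcal{C}$ for an $I$-word and $\mathcal{C}'$ for a $J$-word, take $\mathcal{A}$ to be a generalised $\{0,\dots,d\}$-word, and choose boundary indices $i(-),i(+)\in I$ and $j(-),j(+)\in J$ with $\mathcal{B}=(\mathcal{C}_{\le i(-)})^{-1}$, $\mathcal{D}=\mathcal{C}_{>i(+)}$ and likewise for $\mathcal{C}'$. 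The key structural facts are that $\mathcal{C}_{j}$ is inverse for $j\le i(-)$ while $\mathcal{C}_{i(-)+1}=\langle\mu_{1}\rangle$ is direct, and dually $\mathcal{C}_{j}$ is direct for $j>i(+)$ while $\mathcal{C}_{i(+)}=\langle\eta_{n}\rangle^{-1}$ is inverse; these inverse-to-direct ``valley'' transitions pin down $i(\pm)$ uniquely (with the trivial-$\mathcal{A}$ case collapsing to $i(-)=i(+)$).

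For part (1), assuming $\mathcal{C}'=\mathcal{C}[n]$ so that $\mathcal{C}'_{j}=\mathcal{C}_{j+n}$, I would compare the regions of directness and inverseness. Since $\mathcal{C}'_{j}$ is inverse for $j\le j(-)$ but $\mathcal{C}_{i(-)+1}$ is direct, one obtains $j(-)+n\le i(-)$; since $\mathcal{C}'_{j}$ is direct for $j>j(+)$ but $\mathcal{C}_{i(+)}$ is inverse, one obtains $j(+)+n\ge i(+)$. Running the same argument for $\mathcal{C}=\mathcal{C}'[-n]$ gives the reverse inequalities, so $i(\pm)=j(\pm)+n$. If $I\ne\mathbb{Z}$ then $\mathcal{C}[n]=\mathcal{C}$ by Definition \ref{definition:generalised-words-for-complexes} and we may take $n=0$; if $I=\mathbb{Z}$ the $\mathbb{Z}$-word convention forces $i(-)=j(-)=0$, hence $n=0$. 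Either way $\mathcal{C}'=\mathcal{C}$ and $i(\pm)=j(\pm)$, so the three parts of the decomposition coincide.

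For part (2) I would first observe that $\mathcal{C}^{-1}=\mathcal{D}^{-1}(\mathcal{A}^{-1}\mathcal{B})$, and check this is a string-resolution with decomposition $(\mathcal{D},\mathcal{A}^{-1},\mathcal{B})$: inverting a generalised alternating word yields a generalised alternating word, and conditions (2) and (3) of Definition \ref{definition:alternating-words-string-resolutions} on the $\mathcal{B}$- and $\mathcal{D}$-parts are stated identically, so $\mathcal{D}$ is a legitimate left part and $\mathcal{B}$ a legitimate right part. A short index computation, using $(\mathcal{C}^{-1})_{i}=\mathcal{C}_{1-i}^{-1}$, shows the $\mathcal{D}^{-1}/\mathcal{A}^{-1}$ boundary of $\mathcal{C}^{-1}$ sits at position $-d$, so setting $\mathcal{C}''=\mathcal{C}^{-1}[-d]$ realises this as a string-resolution (with the $\mathbb{Z}$-word split at $0$) having decomposition $(\mathcal{D},\mathcal{A}^{-1},\mathcal{B})$. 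Then $\mathcal{C}'=\mathcal{C}^{-1}[n]=\mathcal{C}''[n+d]$, and applying part (1) to $\mathcal{C}''$ and $\mathcal{C}'$ yields $\mathcal{C}'=\mathcal{C}''=\mathcal{C}^{-1}[-d]$ together with $(\mathcal{B}',\mathcal{A}',\mathcal{D}')=(\mathcal{D},\mathcal{A}^{-1},\mathcal{B})$.

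The routine bookkeeping lies in handling the four index-set types and the degenerate cases where $\mathcal{B}$, $\mathcal{A}$ or $\mathcal{D}$ is trivial, which only affect boundary hypotheses such as ``$i(-)+1\in I$''. The one step requiring genuine care — the main obstacle — is verifying that $\mathcal{C}^{-1}[-d]$ really is a string-resolution in the precise sense of Definition \ref{definition:alternating-words-string-resolutions}: one must confirm that the non-closure conditions (2b) and (3b) (``$\mathcal{B}\langle b\rangle$ is not a generalised word'', and its $\mathcal{D}$-analogue) survive inversion. This is exactly where one uses that inversion interchanges the left and right boundaries and that these two conditions are stated symmetrically, so no new verification beyond the symmetry is needed.
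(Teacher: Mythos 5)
Your proposal is correct and takes essentially the same route as the paper, whose proof of this lemma is literally the one-line instruction to adapt the proof of Lemma \ref{lemma:decompositions-under-shifts-and-inverses}; you carry out that adaptation faithfully, including the necessary direct/inverse sign flip coming from $\mathcal{B}$ and $\mathcal{D}$ being direct in a string resolution, the forcing of $\iota(-)=0$ in the $\mathbb{Z}$-word case, and the reduction of part (2) to part (1) via $\mathcal{C}''=\mathcal{C}^{-1}[-d]$.
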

\begin{proof}
It is straightforward to adapt the proof of Lemma \ref{lemma:decompositions-under-shifts-and-inverses}.
\end{proof}
\begin{definition}\label{definition:interval-of-a-string-resolution} Let $\mathcal{C}$ be a generalised word which is a string resolution. Taking $n=0$ in Lemma \ref{lemma:generalised-decompositions-under-shifts-and-inverses} shows that if $(\mathcal{B},\mathcal{A},\mathcal{D})$ and $\mathcal{B}',\mathcal{A}',\mathcal{D}')$ are decompositions of $\mathcal{C}$ then $\mathcal{B}'=\mathcal{B}$, $\mathcal{A}'=\mathcal{A}$ and $\mathcal{D}'=\mathcal{D}$. Henceforth we refer to $(\mathcal{B},\mathcal{A},\mathcal{D})$ as \emph{the} decomposition of $\mathcal{C}$, since it is unique. In particular, this means that the elements $\iota(-),\iota(+)\in I$ such that $\mathcal{B}=(\mathcal{C}_{\leq \iota(-)})^{-1}$ $\mathcal{A}=(\mathcal{C}_{> \iota(+)})_{\leq\iota(+)-\iota(-)}$ and $\mathcal{D}=\mathcal{C}_{>\iota(+)}$ are also unique. From now on we will refer to the pair $[\iota(-),\iota(+)]$ as \emph{the interval of} $\mathcal{C}$.
\end{definition}
\begin{remark}\label{remark:uniqueness-for-string-resolutions}
Let $\mathcal{C}$ be a generalised word which is a string resolution with decomposition $(\mathcal{B},\mathcal{A},\mathcal{D})$ and interval $[\iota(-),\iota(+)]$. Since $\mathcal{A}$ is trivial or alternating, we have $\mathscr{H}_{\mathcal{C}}(\iota(-))=\mathscr{H}_{\mathcal{C}}(\iota(+))$. If additionally $\mathcal{C}$ is a generalised $\mathbb{Z}$-word then $\iota(-)=0$ and $\mathcal{A}$ is a generalised $\{0,\dots,\iota(+)\}$-word. 
\end{remark}
\begin{lemma}\label{lemma:string-complex-resolution-given-by-a-certain-form-II}
Let $\mathcal{C}$ be a generalised $I$-word such that $P(\mathcal{C})$ is a point-wise-finite resolution in degree $d\in\mathbb{Z}$. Then there exists $n\in\mathbb{Z}$ such that $\mathcal{C}[n]$ is a string resolution and 
\[
d=\begin{cases}
\mathscr{H}_{\mathcal{C}}(\iota(\pm)) & \emph{(}\text{if }I\neq\mathbb{Z}\emph{)}\\
\mathscr{H}_{\mathcal{C}}(n) & \emph{(}\text{if }I=\mathbb{Z}\emph{)}
\end{cases}
\]
where, in case $I\neq \mathbb{Z}$, $[\iota(-),\iota(+)]$ is the interval of $\mathcal{C}$.
\end{lemma}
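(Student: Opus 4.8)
The plan is to carry out, for generalised words and projective resolutions, the analogue of the peak decomposition in Proposition \ref{proposition:peak-finite-iff-shift-of-string}, with the role of peak-finiteness now played by the structural constraints that Lemmas \ref{lemma:string-complex-resolution-technical}--\ref{lemma:string-complex-resolution-given-by-a-certain-form} impose on a resolution. First I would use Lemma \ref{lemma:string-complex-resolution-given-by-a-certain-form} to locate the two boundaries of the would-be interval: the sets $\Psi$ and $\Phi$ are non-empty (when $I$ is bounded they contain its least, resp. greatest, element, and the possibility $\Psi=\emptyset$ or $\Phi=\emptyset$ forced by $-\mathbb{N}\subseteq I$ or $\mathbb{N}\subseteq I$ is excluded by that lemma), so I may set $\iota(-)=\max\Psi$ and $\iota(+)=\min\Phi-1$. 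A short argument shows these do not overlap: an index lying in both $\Psi$ and $\Phi$ would be forced to carry a letter that is simultaneously inverse and direct. Hence $\iota(-)\le\iota(+)$, and the factorisation $\mathcal{C}=\mathcal{B}^{-1}(\mathcal{A}\mathcal{D})$ with $\mathcal{B}=(\mathcal{C}_{\le\iota(-)})^{-1}$, $\mathcal{D}=\mathcal{C}_{>\iota(+)}$ and $\mathcal{A}$ the intervening factor is well defined.

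Next I would verify that $(\mathcal{B},\mathcal{A},\mathcal{D})$ is a decomposition in the sense of Definition \ref{definition:alternating-words-string-resolutions}. That $\mathcal{B}$ and $\mathcal{D}$ are direct is immediate from the maximality of $\iota(-)$ and minimality of $\iota(+)+1$. For $\mathcal{A}$: by the same extremality the first letter after $\iota(-)$ is direct and the last letter up to $\iota(+)$ is inverse; moreover a direct-direct pair strictly inside would, by Lemma \ref{lemma:string-complex-resolution-technical-IV}(2), force all later letters to be direct and so pull $\min\Phi$ below $\iota(+)+1$, while an inverse-inverse pair would, by Lemma \ref{lemma:string-complex-resolution-technical-III}(2), push $\max\Psi$ beyond $\iota(-)$. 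As both are excluded, the directions of $\mathcal{A}$ alternate strictly, so $\mathcal{A}$ is trivial or alternating. The tail conditions (2a), (3a) together with the non-extendability conditions (2b), (3b) then follow from Lemma \ref{lemma:string-complex-resolution-technical-III} (resp. \ref{lemma:string-complex-resolution-technical-IV}) applied to an inverse-inverse (resp. direct-direct) pair occurring inside a tail of length at least two.

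The subtle point, which I expect to be the main obstacle, is (2b)/(3b) when a tail is finite of length exactly one, for then there is no inverse-inverse pair to feed into Lemma \ref{lemma:string-complex-resolution-technical-III}. Here I would argue directly with the complex. Say the unique letter of $\mathcal{B}^{-1}$ is $\langle\eta\rangle^{-1}$ and let $g$ be the extremal generator $g_{\mathcal{C},\min I}$ of $P(\mathcal{C})$. Since the leftmost letter is inverse, the only differential component that could land in $\Lambda g$ vanishes, so any element of $\Lambda g$ lying in $\ker d_{P(\mathcal{C})}$ represents non-trivial homology in degree $\mathscr{H}_{\mathcal{C}}(\min I)$. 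If (2b) failed there would be an arrow $b$ with $b\,\mathrm{l}(\eta)=0$, hence $b\eta=0$, so that $bg\in\ker d_{P(\mathcal{C})}$ is such a non-zero element; as the letter following $\langle\eta\rangle^{-1}$ is direct, by Lemma \ref{lemma:string-complex-resolution-technical-II} (or Lemma \ref{lemma:string-complex-resolution-technical-VI} in the fully degenerate case) the resolution degree $d$ strictly exceeds $\mathscr{H}_{\mathcal{C}}(\min I)$, contradicting the resolution hypothesis. The right-hand tail is symmetric.

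Finally I would compute $d$ and dispose of $I=\mathbb{Z}$. When $I\neq\mathbb{Z}$ I take $n=0$ and read off $d=\mathscr{H}_{\mathcal{C}}(\iota(\pm))$: the equality $\mathscr{H}_{\mathcal{C}}(\iota(-))=\mathscr{H}_{\mathcal{C}}(\iota(+))$ is Remark \ref{remark:uniqueness-for-string-resolutions}, and the common value equals $d$ because a peak inside a non-trivial $\mathcal{A}$ sits one below it and lies in degree $d-1$ by Lemma \ref{lemma:string-complex-resolution-technical} (the cases where $\mathcal{A}$ or a tail is trivial being covered by Lemmas \ref{lemma:string-complex-resolution-technical-II} and \ref{lemma:string-complex-resolution-technical-VI}). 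When $I=\mathbb{Z}$ both $\Psi$ and $\Phi$ are non-empty, so $\iota(-)$ exists; putting $n=\iota(-)$ gives $\max\Psi(\mathcal{C}[n])=0$, so $\mathcal{C}[n]$ is a string resolution with left boundary $0$. By Lemma \ref{lemma:isos-between-string-complexes}(3) the complex $P(\mathcal{C}[n])\cong P(\mathcal{C})[\mathscr{H}_{\mathcal{C}}(n)]$ is a resolution in degree $d-\mathscr{H}_{\mathcal{C}}(n)$, and the $I\neq\mathbb{Z}$ computation applied to $\mathcal{C}[n]$ shows this degree is $\mathscr{H}_{\mathcal{C}[n]}(0)=0$; hence $d=\mathscr{H}_{\mathcal{C}}(n)$, as required.
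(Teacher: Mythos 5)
Your proposal follows the same route as the paper's proof: extract the two tails from the sets $\Psi$ and $\Phi$ of Lemma \ref{lemma:string-complex-resolution-given-by-a-certain-form}, show the middle factor alternates using Lemmas \ref{lemma:string-complex-resolution-technical-III} and \ref{lemma:string-complex-resolution-technical-IV}, and compute $d$ from Lemmas \ref{lemma:string-complex-resolution-technical}, \ref{lemma:string-complex-resolution-technical-II} and \ref{lemma:string-complex-resolution-technical-VI}. One point is a genuine improvement: you are right that conditions (2b)/(3b) of Definition \ref{definition:alternating-words-string-resolutions} are not delivered by Lemma \ref{lemma:string-complex-resolution-technical-III}(3) (resp.\ Lemma \ref{lemma:string-complex-resolution-technical-IV}(3)) when the relevant tail has length exactly one, since those lemmas require an inverse--inverse (resp.\ direct--direct) pair somewhere in $\mathcal{C}$, and for a string resolution with a length-one tail no such pair exists. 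Your direct argument with the extremal generator $g_{\mathcal{C},\min I}$, combined with Lemma \ref{lemma:string-complex-resolution-technical-II} or \ref{lemma:string-complex-resolution-technical-VI} to force $d>\mathscr{H}_{\mathcal{C}}(\min I)$, correctly closes this case, which the paper's own proof passes over by simply citing Lemma \ref{lemma:string-complex-resolution-technical-III}(3).

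There is, however, one step that fails as written: the claim that $\Phi\neq\emptyset$, and with it the formula $\iota(+)=\min\Phi-1$. Membership of $\max(I)$ in $\Phi$ is not vacuous in the way membership of $\min(I)$ in $\Psi$ is, because the letter $\mathcal{C}_{\max(I)}$ exists and must itself be direct. Whenever the string resolution has trivial $\mathcal{D}$ and non-trivial $\mathcal{A}$ --- for instance $\mathcal{C}=\langle y^{3}\rangle\langle x^{4}\rangle^{-1}$ over $\Lambda=k[[x,y]]/(xy,yx)$, whose complex $\Lambda\to\Lambda^{2}$ is readily checked to be a point-wise-finite resolution in degree $0$ --- every suffix of $\mathcal{C}$ contains an inverse letter, so $\Phi=\emptyset$ and $\min\Phi$ does not exist; Lemma \ref{lemma:string-complex-resolution-given-by-a-certain-form}(2) only tells you that $I$ is then bounded above. (Even on the most charitable reading of the boundary clause in the definition of $\Phi$, the formula $\min\Phi-1$ does not return the correct right endpoint $\max(I)$ here, and it likewise returns $\min(I)-1\notin I$ for an entirely direct word.) Relatedly, your non-overlap argument for $\iota(-)\leq\iota(+)$ does not survive these degenerate configurations. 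The repair is exactly the case split the paper makes: when $\Phi=\emptyset$ one must set $\iota(+)=\max(I)$ and take $\mathcal{D}$ trivial, and the degenerate cases where $\mathcal{A}$ or a tail is trivial must be treated separately before the alternation argument is run. With that amendment the remainder of your argument, including the degree computation and the reduction of $I=\mathbb{Z}$ to the bounded case via Lemma \ref{lemma:isos-between-string-complexes}(3), goes through.
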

\begin{proof}Let $\Psi$ be the set of $i\in I$ such that $\mathcal{C}_{j}$ is inverse whenever $I\ni j-1<i$. Let $\Phi$ be the set of $i\in I$ such that $\mathcal{C}_{j}$ is direct whenever $I\ni j+1> i$. If $\Psi\neq \emptyset$ (respectively $\Phi\neq \emptyset$) then let $n=\mathrm{max}(\Psi)$ (respectively $m=\mathrm{min}(\Phi)$), which exists by Lemma \ref{lemma:string-complex-resolution-given-by-a-certain-form}. If $\Psi=\emptyset$ let $n=\mathrm{min}(I)=0$. If $\Phi=\emptyset$ then let $m=\mathrm{max}(I)$; and so either $I=\{0,\dots,m\}$ or $m=0$ by Lemma \ref{lemma:string-complex-resolution-given-by-a-certain-form}. If $-\mathbb{N}\nsubseteq I$ let $u=h(\mathcal{C})$ and $\delta=s(\mathcal{C})$ so that $\mathcal{C}=\langle1_{u,\delta}\rangle\mathcal{C}$. If $\mathbb{N}\nsubseteq I$ let $w=t(\mathcal{C})$ and $\rho=-s(\mathcal{C}^{-1})$ so that $\mathcal{C}=\mathcal{C}\langle1_{w,\rho}\rangle$. 

If $\Psi\neq\emptyset$ let $\mathcal{B}=( \mathcal{C}_{\leq n})^{-1}=\mathcal{C}_{n}^{-1}\mathcal{C}_{n-1}^{-1}\dots$, and if $\Psi=\emptyset$ let $\mathcal{B}=\langle1_{u,-\delta}\rangle$. Dually, if $\Phi\neq\emptyset$ let $\mathcal{D}= \mathcal{C}_{> m-1}=\mathcal{C}_{m}\mathcal{C}_{m+1}\dots$, and if $\Phi=\emptyset$ let $\mathcal{D}=\langle1_{w,\rho}\rangle$. If $n=m-1$ let $\mathcal{A}=\langle1_{v,\varepsilon}\rangle$. Otherwise $n<m-1$, in which case: the maximality of $n$ and minimality of $m$ give us that $\mathcal{C}_{n+1}$ is direct and $\mathcal{C}_{m-1}$ is inverse, which must mean $n+1<m-1$; and hence we let $\mathcal{A}=\mathcal{C}_{n+1}\dots\mathcal{C}_{m-1}$. By construction we have $\mathcal{C}[n]=\mathcal{B}^{-1}\mathcal{A}\mathcal{D}$. 

Suppose $\mathcal{A}$ is non-trivial, and that there is some $i\in I$ such that $\mathcal{C}_{i-1}$ and $\mathcal{C}_{i}$ are both inverse. By  Lemma \ref{lemma:string-complex-resolution-technical-III} we have that $i\in\Psi$, which means $i\leq n$ by the maximality of $n$. Likewise if there is some $j\in I$ such that $\mathcal{C}_{j}$ and $\mathcal{C}_{j+1}$ are both direct then $\Phi\neq\emptyset$ and $j\geq m$ by Lemma \ref{lemma:string-complex-resolution-technical-IV}. As above note $\mathcal{C}_{n+1}$ is direct and $\mathcal{C}_{m-1}$ is inverse. Since any such $i$ as above must satisfy $i\leq n$, we must have that $\mathcal{C}_{m-2}$ is direct. Dually, since any such $j$ satisfies $j\geq m$, we must have that $\mathcal{C}_{n+2}$ is inverse. Continuing this way, one can show (1) holds.

That $\mathcal{B}$ and $\mathcal{D}$ are direct follows from their construction. By Lemma \ref{lemma:string-complex-resolution-technical-III}(2) we have that part (2a) holds. By Lemma \ref{lemma:string-complex-resolution-technical-III}(3) we have that part (2b) holds. The proofs of parts (3a) and (3b) follow similarly from Lemma \ref{lemma:string-complex-resolution-technical-IV}. From here it suffices to verify the claim concerning $d$. 

In case $\mathcal{C}$ is trivial there is nothing to prove, so we may assume $\mathcal{C}$ is non-trivial. Suppose $I\neq\mathbb{Z}$, which means $\mathcal{C}=\mathcal{B}^{-1}(\mathcal{A}\mathcal{D})$ and hence $\iota(-)=n$. Note that if $\mathcal{B}$ is trivial then $I\neq -\mathbb{N}$ and $n=0$ and, whether or not $\mathcal{A}$ is trivial, $\mathcal{C}_{1}$ is direct. So, by Lemma \ref{lemma:string-complex-resolution-technical-VI}(1) we have $d=0$. Hence we can assume $\mathcal{B}$ is non-trivial. Dually, by applying Lemma \ref{lemma:string-complex-resolution-technical-VI}(2) we can assume $\mathcal{D}$ is non-trivial. Again, in any case this means $\mathcal{C}_{n}$ is inverse and $\mathcal{C}_{n+1}$ is direct. By Lemma \ref{lemma:string-complex-resolution-technical-II} and Remark \ref{remark:uniqueness-for-string-resolutions} we have $d=\mathscr{H}_{\mathcal{C}}(\iota(\pm))$. Suppose instead $I=\mathbb{Z}$. Here we again have that $\mathcal{C}_{n}$ is inverse and $\mathcal{C}_{n+1}$ is direct, and by Lemma \ref{lemma:string-complex-resolution-technical-II} we have $\mathscr{H}_{\mathcal{C}}(n)=d$. 
\end{proof}
For Lemma \ref{lemma:string-complex-given-by-string-resolution-is-a-resolution} we require the following result from the author's Ph.D thesis \cite{Ben2018}.
\begin{lemma}\label{lemma:kernel-of-string-complexes}\emph{\cite[Corollary 2.7.8]{Ben2018}} Let $\mathcal{C}$ be a generalised $I$-word. For any
\emph{$n\in\mathbb{Z}$} we have \emph{$\mathrm{ker}(d_{P(\mathcal{C})}^{n})=\bigoplus_{i\in\mathscr{H}_{\mathcal{C}}^{-1}(n)}\Lambda\kappa(i) g_{\mathcal{C},i}$} where for each $i$ we let
\[
\kappa(i)=
\begin{cases}
e_{v_{\mathcal{C}}(i)} & (\text{if }(i-1\notin I\text{ or }\mathcal{C}_{i}=\langle\gamma\rangle ^{-1})\text{ and }(i+1\notin I \text{ or }\mathcal{C}_{i+1}=\langle\sigma\rangle))\\
\mathrm{f}( \gamma) & (\text{if }i\pm1\in I\text{ and }\mathcal{C}_{i}\mathcal{C}_{i+1}=\langle\gamma\rangle ^{-1}\langle\sigma\rangle ^{-1})\\
\mathrm{f}( \sigma) & (\text{if }i\pm1\in I\text{ and }\mathcal{C}_{i}\mathcal{C}_{i+1}=\langle\gamma\rangle \langle\sigma\rangle)\\
 b & (\text{if }i-1\notin I\ni i+1, \mathcal{C}_{i+1}=\langle\sigma\rangle ^{-1}, b\in\mathbf{A}\text{ and } b\mathrm{l}(\sigma)=0)\\
 a & (\text{if }i+1\notin I\ni i-1,\mathcal{C}_{i}=\langle\gamma\rangle, a\in\mathbf{A}\text{ and } a \mathrm{l}(\gamma)=0)\\
0 &  (\text{otherwise})
\end{cases}
\]
\end{lemma}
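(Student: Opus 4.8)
The plan is to compute $\mathrm{ker}(d_{P(\mathcal{C})}^n)$ directly from the formula for the differential in Definition \ref{definition:string-complexes}, by turning the condition $d\xi=0$ into a system of equations in the coefficients of $\xi$ and then solving that system one generator at a time. The two engines of the argument are the separation property in condition (6) of Definition \ref{definition:complete-gentle-algebra} and the annihilator calculations in Lemma \ref{lemma:technical-comp-gen-props}.

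First I would fix $n$ and write a general element of $P^n(\mathcal{C})$ as $\xi=\sum_i\lambda_i g_{\mathcal{C},i}$ with $i$ ranging over $\mathscr{H}_{\mathcal{C}}^{-1}(n)$ and $\lambda_i\in\Lambda e_{v_{\mathcal{C}}(i)}$. By Definition \ref{definition:string-complexes}, $d_{P(\mathcal{C})}(g_{\mathcal{C},i})$ has a component in $\Lambda g_{\mathcal{C},i-1}$, present exactly when $i-1\in I$ and $\mathcal{C}_i=\langle\eta\rangle$ is direct (with coefficient $\eta$), and a component in $\Lambda g_{\mathcal{C},i+1}$, present exactly when $i+1\in I$ and $\mathcal{C}_{i+1}=\langle\mu\rangle^{-1}$ is inverse (with coefficient $\mu$); both targets lie in degree $n+1$. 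Since $P^{n+1}(\mathcal{C})$ is the direct sum of the $\Lambda g_{\mathcal{C},j}$, the equation $d\xi=0$ is equivalent to the vanishing, for every $j\in\mathscr{H}_{\mathcal{C}}^{-1}(n+1)$, of the coefficient $\beta_j=\lambda_{j-1}\mu+\lambda_{j+1}\eta$ of $g_{\mathcal{C},j}$, where the first summand occurs iff $\mathcal{C}_j=\langle\mu\rangle^{-1}$ and the second iff $\mathcal{C}_{j+1}=\langle\eta\rangle$.

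The crucial step is to \emph{decouple} these equations. Whenever both summands of some $\beta_j$ are present, $\mathcal{C}_j\mathcal{C}_{j+1}=\langle\mu\rangle^{-1}\langle\eta\rangle$ is of type (3) of Definition \ref{definition:generalised-words-for-complexes}, so $\mathrm{f}(\mu)\neq\mathrm{f}(\eta)$; as $\lambda_{j-1}\mu\in\Lambda\mu\subseteq\Lambda\mathrm{f}(\mu)$ and $\lambda_{j+1}\eta\in\Lambda\eta\subseteq\Lambda\mathrm{f}(\eta)$, condition (6) forces $\beta_j=0$ to split into $\lambda_{j-1}\mu=0$ and $\lambda_{j+1}\eta=0$ separately. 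Reading this off for each $i$ shows that $d\xi=0$ if and only if $\lambda_i$ right-annihilates each of the (one or two) path-coefficients of the outgoing arrows at $i$; in particular $\mathrm{ker}(d_{P(\mathcal{C})}^n)=\bigoplus_i K_ig_{\mathcal{C},i}$ for the left ideals $K_i$ of such $\lambda$, and it remains to check $K_i=\Lambda\kappa(i)$ branch by branch. If $i$ has no outgoing arrow (i.e. $\mathcal{C}_i$ inverse or $i-1\notin I$, and $\mathcal{C}_{i+1}$ direct or $i+1\notin I$) then $K_i=\Lambda e_{v_{\mathcal{C}}(i)}$. If $i$ has exactly one outgoing arrow, with coefficient $c\in\mathbf{P}$, then $K_i=\{\lambda:\lambda c=0\}$: by Lemma \ref{lemma:technical-comp-gen-props}(1) such $\lambda$ lies in $\sum_{x\in\mathbf{A}(h(c)\to)}\Lambda x$, and using that there is at most one arrow $w$ with $wc\in\mathbf{P}$ (condition (2)) and at most one arrow $u$ with $uc=0$ (condition (3)), the relation $\lambda c=0$ forces the $w$-component to vanish by Lemma \ref{lemma:technical-comp-gen-props}(2); hence $K_i=\Lambda u$ if such a $u$ exists and $K_i=0$ otherwise. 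Matching $u$ with the word data via types (2) and (4) (which supply $\mathrm{f}(\gamma)\mathrm{l}(\sigma)=0$ and $\mathrm{f}(\sigma)\mathrm{l}(\gamma)=0$) identifies $u$ as $\mathrm{f}(\gamma)$, $\mathrm{f}(\sigma)$, or the boundary arrows $b$, $a$, recovering those four branches of $\kappa$.

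The remaining and most delicate branch is the \emph{valley} $\mathcal{C}_i\mathcal{C}_{i+1}=\langle\gamma\rangle\langle\sigma\rangle^{-1}$ (type (1)), where $i$ has two outgoing arrows and so $K_i=\{\lambda:\lambda\gamma=0,\ \lambda\sigma=0\}$. If either single annihilator is already $0$ there is nothing to do; otherwise the previous paragraph produces killing arrows $a_\gamma$ and $a_\sigma$ from $h(\gamma)=h(\sigma)$ with $a_\gamma\mathrm{l}(\gamma)\notin\mathbf{P}$ and $a_\sigma\mathrm{l}(\sigma)\notin\mathbf{P}$. Here I expect the main obstacle: one must prove $a_\gamma\neq a_\sigma$, so that $K_i=\Lambda a_\gamma\cap\Lambda a_\sigma=0$ by condition (6) and $\kappa(i)=0$. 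This follows because $\mathrm{l}(\gamma)\neq\mathrm{l}(\sigma)$ (type (1)) are distinct arrows into $h(\gamma)$, so if $a_\gamma=a_\sigma=q$ then $q$ would fail to compose with both of them, contradicting condition (3) applied to $q$. Carrying this distinctness-of-killing-arrows analysis carefully through the head/tail conventions of Definition \ref{definition:generalised-words-for-complexes}, and checking that the listed side-conditions partition $\mathscr{H}_{\mathcal{C}}^{-1}(n)$ into exactly the six branches of $\kappa$, completes the proof; everything else is routine bookkeeping.
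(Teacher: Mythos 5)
The paper does not actually prove this lemma: it is imported verbatim from the author's thesis as \cite[Corollary 2.7.8]{Ben2018}, so there is no in-paper argument to compare against. Judged on its own, your proof is correct and complete in all essentials. The decoupling step is right: the only $j$ for which two summands contribute to the coefficient of $g_{\mathcal{C},j}$ is a peak $\langle\mu\rangle^{-1}\langle\eta\rangle$ of type (3) in Definition \ref{definition:generalised-words-for-complexes}, where $\mathrm{f}(\mu)\neq\mathrm{f}(\eta)$ and condition (6) of Definition \ref{definition:complete-gentle-algebra} splits the equation; this reduces everything to computing the left annihilator of one or two paths in $\mathbf{P}$, which Lemma \ref{lemma:technical-comp-gen-props} together with conditions (2)--(4) identifies as $\Lambda u$ for the unique killing arrow $u$ (or $0$ if none exists). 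Your treatment of the valley case is the one place where something nontrivial had to be supplied, and your argument is sound: if the two killing arrows coincided, that single arrow $q$ would have two distinct arrows $\mathrm{l}(\gamma)\neq\mathrm{l}(\sigma)$ into $t(q)$ with $q\mathrm{l}(\gamma),q\mathrm{l}(\sigma)\notin\mathbf{P}$, violating condition (3), so condition (6) forces the intersection $\Lambda a_{\gamma}\cap\Lambda a_{\sigma}$ to vanish. This is exactly the style of argument the paper itself runs in the neighbouring technical results (Lemma \ref{lemma:string-complex-technical} and the injectivity computation in Lemma \ref{lemma:band-complex-given-by-band-resolution-is-a-resolution}), so your route is almost certainly the intended one; the only cosmetic gap is that you leave the easy inclusions $\Lambda\kappa(i)\subseteq K_i$ and the nine-way case check implicit, but these are genuinely routine.
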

\begin{example}\label{example:kernel-parts-example-k[[x,y]]/(xy)} We continue Example \ref{example:string-complex-for-k[[x,y]]/(xy)}. So $\mathcal{C}={}^{\infty\hspace{-0.1ex}}(\langle y\rangle ^{-1}\langle x\rangle ^{-1})\langle x^{4}\rangle ^{-1} \langle y^{3}\rangle \langle x^{2}\rangle ^{-1}\langle y\rangle \langle x^{2}\rangle ^{-1}\langle y^{3}\rangle \langle x^{4}\rangle ^{-1}$ and $\Lambda=k[[x,y]]/(xy)$. For $i=0,-2,-4,-6$ we have $\kappa(i)=1_{\Lambda}$. For $i=-1,-3,-5$ we have $\kappa(i)=0$. When $i<-6$ we are in the second case: when $i<-6$ is odd $\kappa(i)=y$, and when $i<-6$ is even $\kappa(i)=x$.
\end{example}

\begin{lemma}\label{lemma:string-complex-given-by-string-resolution-is-a-resolution}
Let $\mathcal{C}$ be a string resolution with interval $[\iota(-),\iota(+)]$. Then $P(\mathcal{C})$ is a point-wise-finite resolution in degree $\mathscr{H}_{\mathcal{C}}(\iota(\pm))$. 
\end{lemma}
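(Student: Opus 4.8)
The plan is to read off the shape of $P(\mathcal{C})$ from the (unique) decomposition $(\mathcal{B},\mathcal{A},\mathcal{D})$ and interval $[\iota(-),\iota(+)]$ of $\mathcal{C}$, and then verify the three defining properties of a point-wise-finite resolution in degree $d:=\mathscr{H}_{\mathcal{C}}(\iota(\pm))$. First I would record the behaviour of the height function. By Remark \ref{remark:uniqueness-for-string-resolutions} we have $\mathscr{H}_{\mathcal{C}}(\iota(-))=\mathscr{H}_{\mathcal{C}}(\iota(+))=d$; on the alternating part $\mathcal{A}$ the generalised letters alternate direct and inverse, so $\mathscr{H}_{\mathcal{C}}$ oscillates by $\pm1$ and takes only the values $d$ (at the peaks $\iota(-),\iota(-)+2,\dots,\iota(+)$) and $d-1$ (at the intervening valleys). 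On the tails $\mathcal{B}^{-1}$ and $\mathcal{D}$ every letter is inverse, respectively direct, so $\mathscr{H}_{\mathcal{C}}$ strictly decreases by $1$ at each step as one moves away from the interval. Two consequences are immediate: the maximum value of $\mathscr{H}_{\mathcal{C}}$ is $d$, whence $P^{l}(\mathcal{C})=0$ for $l>d$; and each fibre $\mathscr{H}_{\mathcal{C}}^{-1}(n)$ meets each tail in at most one index and the finite alternating part in finitely many, so every $P^{n}(\mathcal{C})$ is finitely generated.

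It remains to prove $H^{m}(P(\mathcal{C}))=0$ for $m\neq d$. For $m>d$ this holds by boundedness, so I fix $n<d$ and prove $\ker(d^{n}_{P(\mathcal{C})})=\mathrm{im}(d^{n-1}_{P(\mathcal{C})})$, the inclusion $\supseteq$ being automatic. Here I would apply Lemma \ref{lemma:kernel-of-string-complexes} to write $\ker(d^{n}_{P(\mathcal{C})})=\bigoplus_{i\in\mathscr{H}_{\mathcal{C}}^{-1}(n)}\Lambda\kappa(i) g_{\mathcal{C},i}$ and determine the non-zero $\kappa(i)$. Since $n<d$, no index of degree $n$ is a peak, so the first case $\kappa(i)=e_{v_{\mathcal{C}}(i)}$ does not occur in the interior and the valleys give $\kappa(i)=0$; the only interior contributions come from indices strictly inside a tail, where $\mathcal{C}_{i}\mathcal{C}_{i+1}$ equals $\langle\gamma\rangle^{-1}\langle\sigma\rangle^{-1}$ (left tail) or $\langle\gamma\rangle\langle\sigma\rangle$ (right tail). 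By Definition \ref{definition:alternating-words-string-resolutions}(2a),(3a) the relevant paths are arrows, so $\kappa(i)=\mathrm{f}(\gamma)=\gamma$, respectively $\kappa(i)=\mathrm{f}(\sigma)=\sigma$, is an arrow.

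For such an $i$ in, say, the left tail, write $\mathcal{C}_{i}=\langle\sigma_{i}\rangle^{-1}$; then the neighbouring generator $g_{\mathcal{C},i-1}$ of degree $n-1$ in the same tail satisfies $d_{P(\mathcal{C})}(g_{\mathcal{C},i-1})=\sigma_{i} g_{\mathcal{C},i}$, its other term vanishing because $\mathcal{C}_{i-1}$ is inverse. Hence $\Lambda\kappa(i) g_{\mathcal{C},i}=\mathrm{im}(d_{P(\mathcal{C})}|_{\Lambda g_{\mathcal{C},i-1}})$, and the symmetric computation handles the right tail. Because $n<d$ forces $n-1\leq d-2$, every generator of degree $n-1$ lies in a tail and its image is exactly one of these kernel summands; summing over $i$ therefore gives $\ker\subseteq\mathrm{im}$, as required.

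Finally I would treat the outer ends of finite tails. If the left tail is finite, its outermost index $i_{0}$ (with $i_{0}-1\notin I$) falls under case (4) of Lemma \ref{lemma:kernel-of-string-complexes}, which contributes $\kappa(i_{0})=b$ only if there is an arrow $b$ with $b\,\mathrm{l}(\sigma)=0$ extending the word; Definition \ref{definition:alternating-words-string-resolutions}(2b) says precisely that no such $b$ makes $\mathcal{B}\langle b\rangle$ a generalised word, so $\kappa(i_{0})=0$, and since nothing maps into this summand from below its contribution to homology is zero. The right tail is handled symmetrically via Definition \ref{definition:alternating-words-string-resolutions}(3b) (and for a $\mathbb{Z}$-word both tails are infinite, so no outer end arises). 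Assembling these computations over all $n<d$, and recalling $\mathscr{H}_{\mathcal{C}}(\iota(-))=\mathscr{H}_{\mathcal{C}}(\iota(+))=d$, yields a point-wise-finite resolution in degree $d$. I expect the main obstacle to be exactly this boundary bookkeeping: correctly matching the boundary cases (4) and (5) of the kernel formula with the non-extendability conditions (2b),(3b), together with the degenerate configurations (a trivial tail, or $\mathcal{A}$ trivial) — the generic interior step being a clean $\ker=\mathrm{im}$ comparison.
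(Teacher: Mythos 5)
Your proof is correct and follows essentially the same route as the paper: both establish point-wise-finiteness and vanishing above degree $d$ from the monotonicity of $\mathscr{H}_{\mathcal{C}}$ on the tails, and both compute $H^{n}$ for $n<d$ by applying Lemma \ref{lemma:kernel-of-string-complexes}, matching the non-zero kernel summands (arrows, via conditions (2a),(3a)) against the images of the adjacent tail generators and killing the boundary contributions via the non-extendability conditions (2b),(3b). If anything, your treatment of the degrees $n<d-1$ is spelled out more uniformly than the paper's, which only details $n=d-1$ and declares the rest similar.
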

\begin{proof}
Suppose $\mathcal{C}$ is a generalised $I$-word. Let $(\mathcal{B},\mathcal{A},\mathcal{D})$ be the decomposition of $\mathcal{C}$. Let $P=P(\mathcal{C})$ and $d=\mathscr{H}(\iota(\pm))$. For each $n\in\mathbb{Z}$ it is straightforward to check that the pre-image $\mathscr{H}_{\mathcal{C}}^{-1}(n)$ is a finite set. Hence $P$ is point-wise-finite. By construction, since each of $\mathcal{B}$ and $\mathcal{D}$ are (trivial or direct) and $\mathcal{A}$ is trivial or alternating, we have that $P^{l}=0$ when $l>d$. Hence it suffices to let $l<d$ and show $H^l(P)=0$. 

Consider firstly the case $l=d-1$. Suppose that $\mathcal{A}$ is a generalised $\{0,\dots,2m\}$-word. Note that $\mathscr{H}_{\mathcal{C}}^{-1}(l)$ is the set of integers of the form $\iota(-)-1+2i$ where $i$ begins with $0$ (respectively $1$) if $\mathcal{B}$ is non-trivial (respectively, trivial), and $i$ runs through to $m+1$ (respectively, $m$) if $\mathcal{D}$ is non-trivial (respectively, trivial). Furthermore, note that for any such integer $j=\iota(-)-1+2i$ with $1\leq i\leq m$ we have that $\mathcal{C}_{j}\mathcal{C}_{j+1}$ has the form $\langle\mu\rangle\langle\eta\rangle^{-1}$ where $\mu,\eta\in\mathbf{P}$. In the sense of Lemma \ref{lemma:kernel-of-string-complexes} we have $\kappa(j)=0$ for each such $j$. If both $\mathcal{B}$ and $\mathcal{D}$ are trivial then from here there is nothing to prove. Instead suppose that $\mathcal{B}$ is non-trivial. Let $t=\iota(-)-2$. Note that if $t\notin I$ and $\kappa(t)\neq 0$ then $\kappa(t)=\langle b\rangle$ but then $\mathcal{B}\langle b\rangle$ is a generalised word, which is impossible since $\mathcal{C}$ is a string resolution. Hence we can assume $t\in I$. Writing $\mathcal{B}_{2}=\langle b'\rangle$ for some $ b'\in\mathbf{A}$ gives $\kappa(t)= b'$. 

Together with a dual argument we have $H^{d-1}(P)=0$. Using similar arguments, again applying Lemma \ref{lemma:kernel-of-string-complexes}, it is straightforward to show $H^{l}(P)=0$ when $l<d-1$. 
\end{proof}
Just as we have done for string complexes, we now characterize band complexes which are resolutions.
\subsection{Band complexes}
\begin{lemma}\label{lemma:band-complex-resolution-technical}
Let $V$ be an object of \emph{$R[T,T^{-1}]\text{-\textbf{Mod}}_{R\text{-\textbf{Proj}}}$}. Let $\mathcal{E}$ be a cyclic generalised $\{0,\dots,p\}$-word such that $P(\mathcal{E},V)$ is a resolution in degree $d\in\mathbb{Z}$. If  $\mathcal{E}_{i} \mathcal{E}_{i+1}=\langle \mu\rangle \langle \eta\rangle ^{-1}$ for some $\mu,\eta\in\mathbf{P}$ \emph{(}where $\mathcal{E}_{p+1}=\mathcal{E}_{1}$\emph{)} then $\mathscr{H}_{\mathcal{C}}(i)=d-1$.
\end{lemma}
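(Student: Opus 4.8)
The plan is to follow the proof of Lemma \ref{lemma:string-complex-resolution-technical} almost verbatim, replacing the string-complex input Lemma \ref{lemma:string-complex-technical} by its band-complex counterpart Lemma \ref{lemma:band-complex-technical}, and keeping track of the wraparound forced by the $T$-action in the three cases $0<i<p-1$, $i=0$ and $i=p-1$. Writing $t=\mathscr{H}_{\mathcal{E}}(i)$ (the symbol $\mathscr{H}_{\mathcal{C}}$ in the statement should read $\mathscr{H}_{\mathcal{E}}$), it suffices to prove both $t<d$ and $t\geq d-1$, since then $t=d-1$. I may assume $V\neq 0$, as otherwise $P(\mathcal{E},V)=0$ and the statement is vacuous.

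First I would establish $t<d$. Since $\mathcal{E}_i=\langle\mu\rangle$ is direct and $\mathcal{E}_{i+1}=\langle\eta\rangle^{-1}$ is inverse, we have $\mathscr{H}(\mathcal{E}_i)=-1$ and $\mathscr{H}(\mathcal{E}_{i+1})=1$, so the two neighbouring generators $g_{\mathcal{E},i-1,\tau}$ and $g_{\mathcal{E},i+1,\tau}$ (indices read modulo $p$) both sit in homogeneous degree $t+1$. As $V\neq 0$ these generators are nonzero, so $P^{t+1}(\mathcal{E},V)\neq 0$; since $P(\mathcal{E},V)$ is a resolution in degree $d$ we have $P^{l}(\mathcal{E},V)=0$ for $l>d$, forcing $t+1\leq d$, i.e. $t<d$.

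Next I would rule out $t\leq d-2$ by contradiction. Put $t'=t+1\leq d-1$ and set $L=\mathrm{im}(d^{t}_{P(\mathcal{E},V)})=\sum_{j}L_{j}$, with $L_j$ as in Lemma \ref{lemma:band-complex-technical}, and $N=\ker(d^{t'}_{P(\mathcal{E},V)})$. Fix $\tau\in\Omega$ and let $M$ be the relevant summand from Lemma \ref{lemma:band-complex-technical}: namely $\Lambda\mathrm{f}(\mu)g_{\mathcal{E},i-1,\tau}\oplus\Lambda\mathrm{f}(\eta)g_{\mathcal{E},i+1,\tau}$ when $0<i<p-1$, and the wrapped-around versions $\Lambda\mathrm{f}(\mu)g_{\mathcal{E},p-1,\tau}\oplus\Lambda\mathrm{f}(\eta)g_{\mathcal{E},1,\tau}$ and $\Lambda\mathrm{f}(\mu)g_{\mathcal{E},p-2,\tau}\oplus\Lambda\mathrm{f}(\eta)g_{\mathcal{E},0,\tau}$ when $i=0$ or $i=p-1$ respectively. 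The key local computation is $M\subseteq N$: applying $d^{t'}$ to $\mathrm{f}(\mu)g_{\mathcal{E},i-1,\tau}$ gives either $0$ (when $\mathcal{E}_{i-1}$ is inverse or the index falls off the cycle) or $\mathrm{f}(\mu)\gamma\,g_{\mathcal{E},i-2,\tau}$ when $\mathcal{E}_{i-1}=\langle\gamma\rangle$; in the latter case the constraint on the generalised pair $\langle\gamma\rangle\langle\mu\rangle$ from Definition \ref{definition:generalised-words-for-complexes}(4) gives $\mathrm{f}(\mu)\mathrm{l}(\gamma)=0$, and writing $\gamma=\mathrm{l}(\gamma)\gamma'$ for a path $\gamma'$ (Definition \ref{definition:firstlastarrows}) yields $\mathrm{f}(\mu)\gamma=0$, so that $\mathrm{f}(\mu)g_{\mathcal{E},i-1,\tau}\in N$ by Lemma \ref{lemma:technical-comp-gen-props}(2); the term $\mathrm{f}(\eta)g_{\mathcal{E},i+1,\tau}$ is handled dually. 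Since $t'\leq d-1<d$ we have $N/L=H^{t'}(P(\mathcal{E},V))=0$, whence $M\subseteq N=L=\sum_jL_j$, contradicting the non-containment asserted in the appropriate case of Lemma \ref{lemma:band-complex-technical}. Hence $t\geq d-1$, and combined with $t<d$ we conclude $t=d-1$.

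The only genuine extra work over the string case is the bookkeeping at the cyclic boundary: one must select the correct one of the three displays in Lemma \ref{lemma:band-complex-technical} according to whether $i$ is interior or equal to $0$ or $p-1$, and confirm that the degree-$(t+1)$ generators are indeed $g_{\mathcal{E},p-1,\tau}$ or $g_{\mathcal{E},0,\tau}$ as dictated by the $T$-action encoded in the matrices $a^{\pm}$. I expect this wraparound matching, rather than any new algebra, to be the main potential pitfall; but since Lemma \ref{lemma:band-complex-technical} was set up precisely to absorb it, the argument collapses to the same shape as that of Lemma \ref{lemma:string-complex-resolution-technical}.
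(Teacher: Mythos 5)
Your proposal is correct and follows essentially the same route as the paper: first force $t<d$ from the vanishing of $P^{l}(\mathcal{E},V)$ in degrees $l>d$, then rule out $t\leq d-2$ by showing the module $M$ of Lemma \ref{lemma:band-complex-technical} lies in $\ker(d^{t+1})=\mathrm{im}(d^{t})=\sum_{j}L_{j}$ via $H^{t+1}(P(\mathcal{E},V))=0$, contradicting that lemma. The only (harmless) differences are that you fix a single $\tau$ where the paper sums $M=\sum_{\tau}M_{\tau}$, and that you spell out the inclusion $M\subseteq N$ which the paper leaves as ``by construction''.
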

\begin{proof}Let $\mathscr{H}_{\mathcal{C}}(i)=t$. Suppose $t\geq d$. Then by construction we have $\mathscr{H}_{\mathcal{C}}(i\pm 1)\geq d+1$ which contradicts the fact that $P^{l}(\mathcal{E},V)=0$ when $l>d$. Hence we must have $t< d$. For a contradiction assume $t\leq  d-2$.

For all $\tau\in\Omega$ let
\[
M_{\tau}
\begin{cases}
\Lambda \mathrm{f}(\mu)g_{\mathcal{E},i-1,\tau}\oplus\Lambda \mathrm{f}(\eta)g_{\mathcal{E},i+1,\tau} & (\text{if }0<i<p-1) \\
\Lambda \mathrm{f}(\mu)g_{\mathcal{E},p-1,\tau}\oplus\Lambda \mathrm{f}(\eta)g_{\mathcal{E},1,\tau} & (\text{if }i=0)\\
\Lambda \mathrm{f}(\mu)g_{\mathcal{E},p-2,\tau}\oplus\Lambda \mathrm{f}(\eta)g_{\mathcal{E},0,\tau} & (\text{if }i=p-1)
\end{cases}
\]
Now let
\[
\begin{array}{ccc}
L=\mathrm{im}(d^{t}_{P(\mathcal{E},V)}), & M=\sum_{\tau}M_{\tau}, & N=\mathrm{ker}(d^{t+1}_{P(\mathcal{E},V)}).
\end{array}
\]
By construction $M\subseteq N$, and since $t+1\leq d-1$ we have $N/L=H^{t+1}(P(\mathcal{E},V))=0$ by assumption. Since $L=\sum L_{j}$ where $L_{j}=\sum_{\omega}\Lambda(g_{\mathcal{E},j,\omega}^{+}+g_{\mathcal{E},j,\omega}^{-})$ for all $j\in \mathscr{H}_{\mathcal{E}}^{-1}(t)$, that $M\subseteq L$ contradicts Lemma \ref{lemma:band-complex-technical}.  
\end{proof}
The proof of Lemma \ref{lemma:band-complex-resolution-technical-II} is similar to the proof of Lemma \ref{lemma:string-complex-resolution-technical-II}, and omitted.
\begin{lemma}\label{lemma:band-complex-resolution-technical-II}
Let $V$ be an object of \emph{$R[T,T^{-1}]\text{-\textbf{Mod}}_{R\text{-\textbf{Proj}}}$}. Let $\mathcal{E}$ be a cyclic generalised $\{0,\dots,p\}$-word, where $P(\mathcal{E},V)$ is a resolution in degree $d\in\mathbb{Z}$. If  $\mathcal{E}_{i} \mathcal{E}_{i+1}=\langle \mu\rangle^{-1} \langle \eta\rangle $ for some $\mu,\eta\in\mathbf{P}$ \emph{(}where $\mathcal{E}_{p+1}=\mathcal{E}_{1}$\emph{)} then $\mathscr{H}_{\mathcal{C}}(i)=d$.
\end{lemma}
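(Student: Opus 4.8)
The plan is to transplant the short argument that proves Lemma \ref{lemma:string-complex-resolution-technical-II} to the band complex $P(\mathcal{E},V)$. Write $P=P(\mathcal{E},V)$, let $\mathcal{C}$ denote the periodic generalised word with cycle $\mathcal{E}$, and set $t=\mathscr{H}_{\mathcal{C}}(i)$. As in the string case, the goal is to exhibit a \emph{free generator} of $P^{t}$ that lies in $\ker(d^{t}_{P})$ but not in $\mathrm{im}(d^{t-1}_{P})$; this forces $H^{t}(P)\neq 0$, and since $P$ is a resolution in degree $d$ we must then have $t=d$. First I would pin down the correct generator, keeping careful track of the index shift between the $p$ letters $\mathcal{E}_{1},\dots,\mathcal{E}_{p}$ and the generators $g_{\mathcal{E},j,\omega}$ indexed by $j\in\{0,\dots,p-1\}$ and $\omega\in\Omega$, as in Definition \ref{definition:band-pres}.

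Concretely, each generator $g_{\mathcal{E},j}$ sits between the letter $\mathcal{E}_{j}$ on its left and $\mathcal{E}_{j+1}$ on its right, reading $\mathcal{E}_{0}=\mathcal{E}_{p}$ when $j=0$ and $\mathcal{E}_{p+1}=\mathcal{E}_{1}$ when $j=p-1$ by the cyclic convention. Hence the relevant generator for the hypothesis $\mathcal{E}_{i}\mathcal{E}_{i+1}=\langle\mu\rangle^{-1}\langle\eta\rangle$ is $g_{\mathcal{E},j}$ with $j\equiv i\pmod{p}$; since $\mathcal{C}$ is $p$-periodic with $\mathscr{H}_{\mathcal{C}}(p)=0$, this generator lives in degree $\mathscr{H}_{\mathcal{C}}(j)=\mathscr{H}_{\mathcal{C}}(i)=t$. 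I would then compute $d^{t}_{P}(g_{\mathcal{E},j,\omega})=g^{+}_{\mathcal{E},j,\omega}+g^{-}_{\mathcal{E},j,\omega}$ directly from Definition \ref{definition:band-pres}. The summand $g^{-}_{\mathcal{E},j,\omega}$ is nonzero only when the left letter is direct, and $g^{+}_{\mathcal{E},j,\omega}$ only when the right letter is inverse; since $\mathcal{E}_{i}=\langle\mu\rangle^{-1}$ is inverse and $\mathcal{E}_{i+1}=\langle\eta\rangle$ is direct, both vanish, so $g_{\mathcal{E},j,\omega}$ is a cocycle and $\bigoplus_{\omega}\Lambda g_{\mathcal{E},j,\omega}\subseteq\ker(d^{t}_{P})$.

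It remains to see that $g_{\mathcal{E},j,\omega}$ is not a coboundary. As in Lemma \ref{lemma:string-complex-resolution-technical-II}, every matrix entry of the differential is a nontrivial path $\gamma\in\mathbf{P}$ and hence lies in $\mathrm{rad}(\Lambda)$, so $\mathrm{im}(d^{t-1}_{P})\subseteq\mathrm{rad}(\Lambda)P^{t}$; since $g_{\mathcal{E},j,\omega}$ belongs to a free $\Lambda$-basis of $P^{t}$, it does not lie in $\mathrm{rad}(\Lambda)P^{t}$, whence $H^{t}(P)\neq 0$ and $t=d$ as required. The main obstacle will not be conceptual but bookkeeping: one must confirm the cocycle computation is uniform across the two boundary indices $j=0$ and $j=p-1$, where Definition \ref{definition:band-pres} replaces the naive formula by the $T$-twisted expressions $\mu(\sum_{\tau}a^{-}_{\tau\omega}g_{\mathcal{E},0,\tau})$ and $\eta(\sum_{\tau}a^{+}_{\tau\omega}g_{\mathcal{E},p-1,\tau})$. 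In each such case the twisted term is still prefixed by the relevant $\mu$ (respectively $\eta$) and so is present only when $\mathcal{E}_{p}$ is inverse (respectively direct); our hypothesis on $\mathcal{E}_{i},\mathcal{E}_{i+1}$ rules this out, so the vanishing persists and no special treatment of the $T$-action on $V$ is needed.
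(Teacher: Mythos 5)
Your proposal is correct and is essentially the argument the paper intends: the paper omits this proof as "similar to Lemma \ref{lemma:string-complex-resolution-technical-II}", whose one-line content is exactly your observation that the generator between an inverse letter and a direct letter is a cocycle (both $g^{+}$ and $g^{-}$ vanish) yet cannot be a coboundary because $\mathrm{im}(d_{P})\subseteq\mathrm{rad}(P)$, forcing $H^{t}(P)\neq 0$ and hence $t=d$. Your extra care with the index shift $j\equiv i\pmod p$ and the $T$-twisted boundary cases $j=0,p-1$ of Definition \ref{definition:band-pres} is exactly the bookkeeping needed to adapt the string-complex argument, and it goes through as you describe.
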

In Lemma \ref{lemma:band-complex-resolution-technical-III} we show that if $P(\mathcal{E},V)$ is a resolution then the cyclic generalised word $\mathcal{E}$ must be an alternating sequence of direct and inverse  generalised letters.
\begin{lemma}\label{lemma:band-complex-resolution-technical-III}
Let $V$ be an object of \emph{$R[T,T^{-1}]\text{-\textbf{Mod}}_{R\text{-\textbf{Proj}}}$}. Let $\mathcal{E}$ be a cyclic generalised $\{0,\dots,p\}$-word where $P(\mathcal{E},V)$ is a resolution in degree $d\in\mathbb{Z}$. Then $\mathcal{E}_{i} \mathcal{E}_{i+1}=\langle \mu\rangle^{-1} \langle \eta\rangle $ or  $\mathcal{E}_{i} \mathcal{E}_{i+1}=\langle \mu\rangle\langle \eta\rangle^{-1} $ for some $\mu,\eta\in\mathbf{P}$ \emph{(}where $\mathcal{E}_{p+1}=\mathcal{E}_{1}$\emph{)}.
\end{lemma}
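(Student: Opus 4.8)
The plan is to show that no two consecutive generalised letters of $\mathcal{E}$ can share the same orientation; once this is established, every pair $\mathcal{E}_i\mathcal{E}_{i+1}$ is forced to be of the form $\langle\mu\rangle\langle\eta\rangle^{-1}$ (a direct letter followed by an inverse one) or $\langle\mu\rangle^{-1}\langle\eta\rangle$ (an inverse followed by a direct), which is exactly the assertion. The two inputs I would use are the height computations already in hand: Lemma \ref{lemma:band-complex-resolution-technical} pins $\mathscr{H}_{\mathcal{C}}(i)=d-1$ at every \emph{valley} (a pair $\langle\mu\rangle\langle\eta\rangle^{-1}$), while Lemma \ref{lemma:band-complex-resolution-technical-II} pins $\mathscr{H}_{\mathcal{C}}(i)=d$ at every \emph{peak} (a pair $\langle\mu\rangle^{-1}\langle\eta\rangle$).

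First I would record two structural facts. Since $\mathcal{E}$ is cyclic, Definition \ref{definition:cyclic-generalised-words} gives $\mathscr{H}_{\mathcal{E}}(p)=0$, so over one period the number of direct generalised letters equals the number of inverse ones; in particular $p\geq 2$ and $\mathcal{E}$ contains at least one letter of each orientation. It is then convenient to pass to the $p$-periodic generalised $\mathbb{Z}$-word $\mathcal{C}={}^{\infty}\mathcal{E}^{\infty}$, for which $\mathscr{H}_{\mathcal{C}}$ is genuinely $p$-periodic and agrees with $\mathscr{H}_{\mathcal{E}}$ on $\{0,\dots,p\}$; the two lemmas then apply to the cyclic pairs $\mathcal{C}_i\mathcal{C}_{i+1}$ by periodicity.

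The heart of the argument is a run-length computation. Consider a maximal block $\mathcal{C}_{a+1},\dots,\mathcal{C}_b$ of consecutive \emph{inverse} generalised letters; because both orientations occur, such a block is finite and is flanked by direct letters $\mathcal{C}_a$ and $\mathcal{C}_{b+1}$. Then $\mathcal{C}_a\mathcal{C}_{a+1}$ is a valley and $\mathcal{C}_b\mathcal{C}_{b+1}$ is a peak, so Lemmas \ref{lemma:band-complex-resolution-technical} and \ref{lemma:band-complex-resolution-technical-II} give $\mathscr{H}_{\mathcal{C}}(a)=d-1$ and $\mathscr{H}_{\mathcal{C}}(b)=d$. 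Since each inverse letter contributes $+1$ to $\mathscr{H}$, I obtain $b-a=\mathscr{H}_{\mathcal{C}}(b)-\mathscr{H}_{\mathcal{C}}(a)=1$, so the block consists of a single letter; hence $\mathcal{E}$ has no two consecutive inverse letters. The dual computation, applied to a maximal block of consecutive direct letters (now flanked by inverse letters, so its left boundary is a peak and its right boundary a valley), forces $b-a=1$ again and excludes two consecutive direct letters. Combining the two rules out the forms $\langle\mu\rangle^{-1}\langle\eta\rangle^{-1}$ and $\langle\mu\rangle\langle\eta\rangle$, leaving precisely the claimed shapes.

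I expect the only delicate point to be the bookkeeping around the cyclic boundary: one must confirm that Lemmas \ref{lemma:band-complex-resolution-technical} and \ref{lemma:band-complex-resolution-technical-II}, which are stated to include the wrap-around pair $\mathcal{E}_p\mathcal{E}_1$, genuinely cover the boundaries of every maximal run, and that the periodicity of $\mathscr{H}_{\mathcal{C}}$ is invoked legitimately so that the values $d-1$ and $d$ are available at whichever copy of a valley or peak the run endpoints land on. Beyond this indexing care there is no homological subtlety, since everything analytic is already packaged in the two preceding lemmas; once their heights are in hand the argument is purely combinatorial.
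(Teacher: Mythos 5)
Your proposal is correct and follows essentially the same route as the paper: both arguments extend $\mathcal{E}$ periodically, use $\mathscr{H}_{\mathcal{E}}(p)=0$ to guarantee both orientations occur, locate a maximal same-orientation run flanked by a peak and a valley, and then apply Lemmas \ref{lemma:band-complex-resolution-technical} and \ref{lemma:band-complex-resolution-technical-II} so that the height difference $d-(d-1)=1$ forces the run to have length one. The only cosmetic difference is that the paper phrases this as a contradiction starting from a single offending pair, whereas you bound every maximal run directly.
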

\begin{proof}
For a contradiction, and without loss of generality, suppose $\mathcal{E}_{i} \mathcal{E}_{i+1}=\langle \mu\rangle \langle \mu'\rangle $ where $1\leq i<p$. Since $\mathcal{E}$ is cyclic we have $\mathscr{H}_{\mathcal{E}}(p)=0$, and so we cannot have that $\mathcal{E}_{j}$ is direct for all $i$. Let $\mathcal{E}_{i\pm p}=\mathcal{E}_{i}$ for each $i=1,\dots,p$. In this notation there exists $n$ minimal such that $i+1<n$, $\mathcal{E}_{n}$ is inverse and $\mathcal{E}_{n-1}$ is direct. Hence $\mathcal{E}_{j}$ is direct for all $j$ with $i\leq j<n$. Likewise, we can choose $m$ with $ m <i$ such that $\mathcal{E}_{m}$ is inverse and $\mathcal{E}_{j}$ is direct for all $j$ with $m< j\leq i+1$. Hence
\[
\mathcal{E}_{m}\mathcal{E}_{m+1}\dots \mathcal{E}_{n-1}\mathcal{E}_{n}=\langle\eta_{-}\rangle^{-1}\langle\mu_{m+1}\rangle\dots\langle\mu_{n-1}\rangle\langle\eta_{+}\rangle^{-1}
\]
for some $\eta_{\pm},\mu_{j}\in\mathbf{P}$. By Lemma \ref{lemma:band-complex-resolution-technical} we have that $\mathscr{H}_{\mathcal{E}}(n)=d-1$. By Lemma \ref{lemma:band-complex-resolution-technical-II} we have that $\mathscr{H}_{\mathcal{E}}(m)=d$. This is only possible if $m+1=n-1$, but $m<i$ and $n>i+1$, so we have a contradiction.
\end{proof}
\begin{definition}\label{definition:alternating-words-band-resolutions}By a \emph{band}-\emph{resolution} we mean a periodic generalised $\mathbb{Z}$-word of the form  $\mathcal{C}={}^{\infty\hspace{-0.5ex}}\mathcal{A}^{\infty}$ where $\mathcal{A}$ is a cyclic alternating generalised word.
\end{definition}
\begin{lemma}\label{lemma:band-complex-resolution-given-by-a-certain-form}
Let $V$ be an object of \emph{$R[T,T^{-1}]\text{-\textbf{Mod}}_{R\text{-\textbf{Proj}}}$}. Let $\mathcal{C}$ be a $p$-periodic generalised $\mathbb{Z}$-word such that $P(\mathcal{C},V)$ is a resolution in degree $d\in\mathbb{Z}$. Then the following statements hold.
\begin{enumerate}
    \item Either \emph{(}$d=0$ and $\mathcal{C}$ is a band resolution\emph{)} or \emph{(}$d=1$ and $\mathcal{C}[-1]$ is a band resolution\emph{)}. 
    \item If $P(\mathcal{C},V)$ is point-wise-finite then $V$ is finitely generated as an $R$-module.
\end{enumerate}
\end{lemma}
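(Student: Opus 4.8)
The plan is to reduce everything to the cycle of $\mathcal{C}$ and then read off the structure from the alternation result already proved. Write $\mathcal{A}=\mathcal{C}_{1}\dots\mathcal{C}_{p}$ for the cycle of the $p$-periodic word $\mathcal{C}$, which is a cyclic generalised word by the remark following Definition \ref{definition:cyclic-generalised-words}. By Remark \ref{remark:isomorphism-for-band-complex} there is an isomorphism of complexes $P(\mathcal{C},V)\simeq P(\mathcal{A},V)$, so $P(\mathcal{A},V)$ is also a resolution in degree $d$, and point-wise-finite whenever $P(\mathcal{C},V)$ is. I would then apply Lemma \ref{lemma:band-complex-resolution-technical-III} with $\mathcal{E}=\mathcal{A}$: every consecutive pair $\mathcal{A}_{i}\mathcal{A}_{i+1}$ (cyclically, with $\mathcal{A}_{p+1}=\mathcal{A}_{1}$) is of the form $\langle\mu\rangle\langle\eta\rangle^{-1}$ or $\langle\mu\rangle^{-1}\langle\eta\rangle$. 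Hence $\mathcal{C}$ strictly alternates between direct and inverse generalised letters, and in particular $p=2n$ is even.

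For part (1) I would split into two cases according to the orientation of $\mathcal{C}_{1}$. If $\mathcal{C}_{1}$ is direct, then $\mathcal{A}=\langle\mu_{1}\rangle\langle\eta_{1}\rangle^{-1}\dots\langle\mu_{n}\rangle\langle\eta_{n}\rangle^{-1}$ is alternating in the sense of Definition \ref{definition:alternating-words-string-resolutions}, so $\mathcal{C}={}^{\infty\hspace{-0.5ex}}\mathcal{A}^{\infty}$ is a band resolution (Definition \ref{definition:alternating-words-band-resolutions}). Using $\mathscr{H}(\langle\gamma\rangle)=-1$ and $\mathscr{H}(\langle\gamma\rangle^{-1})=1$, the function $\mathscr{H}_{\mathcal{C}}$ takes the value $0$ at even indices and $-1$ at odd indices; the pairs of the form $\langle\mu\rangle^{-1}\langle\eta\rangle$ occur exactly at the even indices, so Lemma \ref{lemma:band-complex-resolution-technical-II} forces $d=0$. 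If instead $\mathcal{C}_{1}$ is inverse, then by periodicity and alternation $\mathcal{C}_{0}=\mathcal{C}_{p}$ is direct, so the cycle $\mathcal{C}_{0}\mathcal{C}_{1}\dots\mathcal{C}_{p-1}$ of $\mathcal{C}[-1]$ begins with a direct letter and $\mathcal{C}[-1]$ is a band resolution; here $\mathscr{H}_{\mathcal{C}}$ takes the value $1$ at odd indices and $0$ at even indices, the pairs $\langle\mu\rangle^{-1}\langle\eta\rangle$ now occur at the odd indices, and Lemma \ref{lemma:band-complex-resolution-technical-II} forces $d=1$ (consistently, Lemma \ref{lemma:band-complex-resolution-technical} applied to the pairs $\langle\mu\rangle\langle\eta\rangle^{-1}$ at even indices gives $\mathscr{H}_{\mathcal{C}}=d-1=0$).

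For part (2) I would use the explicit description of the homogeneous components from Definition \ref{definition:band-pres}. Fix a free $R$-basis $(v_{\omega}\colon\omega\in\Omega)$ of $V$ (recall $V$ is free over the local ring $R$ by Notation \ref{notation:R[T,T^-1]-modules}), so that $V$ is finitely generated over $R$ precisely when $\Omega$ is finite. Since the sets $\mathscr{H}_{\mathcal{C}}^{-1}(n)\cap[0,p-1]$ partition $[0,p-1]\neq\emptyset$, there is some $n$ with $\langle n,p\rangle\neq\emptyset$; pick $i_{0}\in\langle n,p\rangle$. Then $P^{n}(\mathcal{A},V)=\bigoplus_{\omega\in\Omega}\bigoplus_{i\in\langle n,p\rangle}\Lambda g_{\mathcal{A},i}$ contains $\bigoplus_{\omega\in\Omega}\Lambda g_{\mathcal{A},i_{0}}$ as a direct summand, where $\Lambda g_{\mathcal{A},i_{0}}=\Lambda e_{v_{\mathcal{A}}(i_{0})}\neq0$. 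As $P(\mathcal{C},V)\simeq P(\mathcal{A},V)$ is point-wise-finite, $P^{n}(\mathcal{A},V)$ is finitely generated over $\Lambda$; a fixed nonzero projective summand repeated $|\Omega|$ times is finitely generated only when $\Omega$ is finite, which gives the claim.

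The only genuinely structural input is Lemma \ref{lemma:band-complex-resolution-technical-III} (strict alternation), after which both parts are essentially combinatorial. The routine-but-delicate step I would treat most carefully is the degree bookkeeping in part (1): correctly matching the parity of the index to the value of $\mathscr{H}_{\mathcal{C}}$ and to the orientation of the consecutive pair, so that the single hypothesis of Lemma \ref{lemma:band-complex-resolution-technical-II} pins $d$ down to exactly $0$ or $1$ in the two cases, with $\mathcal{C}$ (respectively $\mathcal{C}[-1]$) being the band resolution.
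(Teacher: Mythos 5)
Your proposal is correct and follows essentially the same route as the paper: reduce to the cycle $\mathcal{A}$ via Remark \ref{remark:isomorphism-for-band-complex}, invoke Lemma \ref{lemma:band-complex-resolution-technical-III} to get strict alternation of direct and inverse generalised letters, and then read off the two cases. The paper simply asserts that this "shows (1) holds" and that (2) is "straightforward", whereas you supply the parity/degree bookkeeping via Lemma \ref{lemma:band-complex-resolution-technical-II} and the infinite-direct-sum argument for (2) explicitly; both of these added details are accurate.
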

\begin{proof}By assumption there is a cyclic generalised $\{0,\dots, p\}$-word $\mathcal{A}=\mathcal{C}_{1}\dots \mathcal{C}_{p}$ such that $\mathcal{C}={}^{\infty\hspace{-0.5ex}}\mathcal{A}^{\infty}$. By Remark \ref{remark:isomorphism-for-band-complex} there is an isomorphism of complexes $P(\mathcal{C},V)\simeq P(\mathcal{A},V)$, and so $P(\mathcal{A},V)$ is a resolution in degree $d$. Since $\mathcal{A}$ is cyclic we must have that $p$ is even, and so by Lemma \ref{lemma:band-complex-resolution-technical-III} we have that $\mathcal{E}=\langle\mu_{1}\rangle\langle\eta_{1}\rangle^{-1}\dots \langle\mu_{n}\rangle\langle\eta_{n}\rangle^{-1}$ or $\mathcal{E}=\langle\mu_{1}\rangle^{-1}\langle\eta_{1}\rangle\dots \langle\mu_{n}\rangle^{-1}\langle\eta_{n}\rangle$ for some $\mu_{i},\eta_{i}\in\mathbf{P}$ where $n=p/2$. This shows (1) holds. For (2), it is straightforward to check that if $P^{n}(\mathcal{E},V)$ is finitely generated as a $\Lambda$-module then $V$ has a finite $R$-basis.
\end{proof}
Lemma \ref{lemma:band-complex-given-by-band-resolution-is-a-resolution} is the analogue of Lemma \ref{lemma:string-complex-given-by-string-resolution-is-a-resolution}, but concerns band complexes, instead of string complexes. In the proof of Lemma \ref{lemma:string-complex-given-by-string-resolution-is-a-resolution} we applied \cite[Corollary 2.7.8]{Ben2018} (see Lemma \ref{lemma:kernel-of-string-complexes}), which was more general than needed for our purposes. Instead of using an analogue of \cite[Corollary 2.7.8]{Ben2018} for band complexes (see for example \cite[Theorem 2.8]{CanPauSch2021}), we prove Lemma \ref{lemma:band-complex-given-by-band-resolution-is-a-resolution} directly.
\begin{lemma}\label{lemma:band-complex-given-by-band-resolution-is-a-resolution}
Let $V$ be an object of \emph{$R[T,T^{-1}]\text{-\textbf{Mod}}_{R\text{-\textbf{Proj}}}$} and let $\mathcal{C}$ be a generalised word which is a band-resolution. Then $P(\mathcal{C},V)$ is a resolution in degree $0$. Furthermore if $V$ is finitely generated as an $R$-module then $P(\mathcal{C},V)$ is point-wise-finite.
\end{lemma}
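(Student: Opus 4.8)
The plan is to reduce the whole statement to the injectivity of a single differential. Write $\mathcal{C}={}^{\infty\hspace{-0.5ex}}\mathcal{A}^{\infty}$ with cyclic alternating cycle $\mathcal{A}=\langle\mu_{1}\rangle\langle\eta_{1}\rangle^{-1}\dots\langle\mu_{n}\rangle\langle\eta_{n}\rangle^{-1}$, so $p=2n$. Since $\mathscr{H}(\langle\gamma\rangle)=-1$ and $\mathscr{H}(\langle\gamma\rangle^{-1})=1$, and $\mathcal{A}$ is cyclic (hence $\mathscr{H}_{\mathcal{A}}(p)=0$), the values $\mathscr{H}_{\mathcal{C}}(i)$ alternate between $0$ (for even $i$) and $-1$ (for odd $i$); thus $P(\mathcal{C},V)$ is concentrated in homological degrees $0$ and $-1$, with $P^{l}(\mathcal{C},V)=0$ for $l\neq 0,-1$. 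Replacing $P(\mathcal{C},V)$ by the isomorphic complex $P(\mathcal{A},V)$ of Remark \ref{remark:isomorphism-for-band-complex} (so $\mathcal{A}$ plays the role of $\mathcal{E}$ in Definition \ref{definition:band-pres}), being a resolution in degree $0$ amounts exactly to $\mathrm{ker}(d^{-1}_{P(\mathcal{A},V)})=0$, since $P^{-2}=0$; here $H^{0}$ is $M(\mathcal{C},V)$ but need not vanish. The point-wise-finiteness claim is immediate: when $V$ is finitely generated (hence finite free) over $R$ the index set $\Omega$ of Definition \ref{definition:band-pres} is finite, so each $P^{n}(\mathcal{A},V)=\bigoplus_{\omega\in\Omega}\bigoplus_{i}\Lambda g_{\mathcal{A},i,\omega}$ is a finite sum of projectives.

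Next I would unwind the kernel. Index the generators of $P^{-1}$ by the odd positions $2k-1$ $(1\leq k\leq n)$ and those of $P^{0}$ by the even positions $2m$ $(0\leq m\leq n-1)$; by Definition \ref{definition:band-pres} the differential sends $g_{\mathcal{A},2k-1,\omega}\mapsto \mu_{k}g_{\mathcal{A},2k-2,\omega}+\eta_{k}g_{\mathcal{A},2k,\omega}$ for $k<n$, while for $k=n$ the wrap-around contributes $\eta_{n}\sum_{\tau}a^{-}_{\tau\omega}g_{\mathcal{A},0,\tau}$ in place of $\eta_{n}g_{\mathcal{A},2n,\omega}$, where $(a^{-}_{\tau\omega})$ is the matrix of $T^{-1}$. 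Taking $z=\sum_{k,\omega}\lambda_{k,\omega}g_{\mathcal{A},2k-1,\omega}\in\mathrm{ker}(d^{-1})$ and reading off the coefficient of each $g_{\mathcal{A},2m,\tau}$, at an interior even position $2m$ the pair $\langle\eta_{m}\rangle^{-1}\langle\mu_{m+1}\rangle$ is of type (3) in Definition \ref{definition:generalised-words-for-complexes}, so $\mathrm{f}(\eta_{m})\neq\mathrm{f}(\mu_{m+1})$; by Lemma \ref{lemma:qbsb-uniserial-cyclics-given-by-arrows} one has $\Lambda\eta_{m}\subseteq\Lambda\mathrm{f}(\eta_{m})$ and $\Lambda\mu_{m+1}\subseteq\Lambda\mathrm{f}(\mu_{m+1})$, whence $\Lambda\eta_{m}\cap\Lambda\mu_{m+1}=0$ by condition (6) of Definition \ref{definition:complete-gentle-algebra}, so the equation $\lambda_{m,\tau}\eta_{m}+\lambda_{m+1,\tau}\mu_{m+1}=0$ splits into $\lambda_{m,\tau}\eta_{m}=0$ and $\lambda_{m+1,\tau}\mu_{m+1}=0$. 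The wrap-around position $0$ gives likewise $\lambda_{1,\tau}\mu_{1}=0$ and $\big(\sum_{\omega}a^{-}_{\tau\omega}\lambda_{n,\omega}\big)\eta_{n}=0$; inverting $(a^{-}_{\tau\omega})$ (its determinant is a unit by Notation \ref{notation:R[T,T^-1]-modules}) and using that $R$ is central then yields $\lambda_{n,\omega}\eta_{n}=0$ for every $\omega$. Collecting, each $\lambda_{k,\omega}$ is annihilated on the right by both $\mu_{k}$ and $\eta_{k}$.

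The key step, and the main obstacle, is then the following local statement: if $v$ is a vertex with two distinct arrows into it, realised as $\mathrm{l}(\mu)\neq\mathrm{l}(\eta)$ for $\mu,\eta\in\mathbf{P}$ with $h(\mu)=h(\eta)=v$ (exactly the type (1) condition at the odd position $2k-1$), then any $\lambda\in\Lambda e_{v}$ with $\lambda\mu=\lambda\eta=0$ vanishes. I would prove this as follows. By Lemma \ref{lemma:technical-comp-gen-props}(1), $\lambda\in\mathrm{rad}(\Lambda e_{v})=\sum_{x\in\mathbf{A}(v\to)}\Lambda x$; if there is no arrow out of $v$ this already forces $\lambda=0$. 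Otherwise, writing $y=\mathrm{l}(\mu)$ and $y'=\mathrm{l}(\eta)$, conditions (2) and (3) of Definition \ref{definition:complete-gentle-algebra} applied to each out-arrow force exactly one of $xy,xy'$ to lie in $\mathbf{P}$, while condition (2) applied to the in-arrows $y,y'$ forbids two out-arrows from pairing with the same in-arrow; hence (after relabelling) the out-arrows $x_{1},x_{2}$ match the in-arrows so that $x_{1}y\in\mathbf{P}$, $x_{2}y\notin\mathbf{P}$, $x_{1}y'\notin\mathbf{P}$, $x_{2}y'\in\mathbf{P}$ (the one-out-arrow case being the obvious degeneration). Writing $\lambda=\lambda_{1}x_{1}+\lambda_{2}x_{2}$ and $\mu=y\mu''$, the relation $\lambda\mu=0$ collapses to $\lambda_{1}x_{1}\mu=0$ because $x_{2}\mu=x_{2}y\mu''=0$; since $x_{1}\mu\in\mathbf{P}$, Lemma \ref{lemma:technical-comp-gen-props}(2) gives $\lambda_{1}x_{1}=0$, and symmetrically $\lambda\eta=0$ gives $\lambda_{2}x_{2}=0$, so $\lambda=0$. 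Applying this to each $\lambda_{k,\omega}$ shows $z=0$, so $d^{-1}$ is injective and $P(\mathcal{C},V)$ is a resolution in degree $0$. The delicate points throughout are keeping the direction conventions straight (right multiplication by paths, heads versus tails of generalised letters) and isolating precisely where the distinctness $\mathrm{l}(\mu)\neq\mathrm{l}(\eta)$ and the invertibility of $T$ enter.
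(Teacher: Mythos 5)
Your proposal is correct and follows essentially the same route as the paper's proof: reduce via the isomorphism $P(\mathcal{C},V)\simeq P(\mathcal{A},V)$ to showing $d^{-1}$ is injective, split the kernel equations at each even position using condition (6) of Definition \ref{definition:complete-gentle-algebra}, handle the wrap-around term by inverting the matrix of $T^{-1}$, and then kill each coefficient $\lambda_{k,\omega}$ from the two relations $\lambda_{k,\omega}\mu_{k}=0=\lambda_{k,\omega}\eta_{k}$ via Lemma \ref{lemma:technical-comp-gen-props} and the gentle matching of in- and out-arrows at $h(\mu_{k})=h(\eta_{k})$. The only (welcome) cosmetic difference is that you isolate this last step as a standalone local lemma with an explicit case analysis of the out-arrows, where the paper performs the same computation inline.
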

\begin{proof}
By definition $\mathcal{C}={}^{\infty\hspace{-0.5ex}}\mathcal{A}^{\infty}$ where $\mathcal{A}=\langle \mu_{1}\rangle \langle \eta_{1}\rangle ^{-1}\dots \langle \mu_{n}\rangle \langle \eta_{n}\rangle ^{-1}$ for some $\mu_{i},\eta_{i}\in\mathbf{P}$. Let $(v_{\omega}\colon\omega\in\Omega)$ be an $R$-basis of $V$, and let $P=P(\mathcal{A},V)$ as in Definition \ref{definition:band-pres}. By Remark \ref{remark:isomorphism-for-band-complex} it suffices to prove $P$ is a resolution in degree $0$ and that $P$ is point-wise-finite when $V$ is module finite over $R$. Suppose $\mathcal{A}$ is a $\{0,\dots,p\}$-word. Hence $p$ is even and $n=p/2$. Note that 
\[
\begin{array}{cc}
P^{-1}=\bigoplus_{i=0}^{n-1}\bigoplus_{\omega}\Lambda g_{\mathcal{E},2i+1,\omega} & P^{0}=\bigoplus_{i=0}^{n}\bigoplus_{\omega}\Lambda g_{\mathcal{E},2i,\omega}
\end{array}
\]
and $P^{l}=0$ for $l\neq -1,0$. Thus $P$ is point-wise-finite when $V$ is module finite over $R$. To show $P$ is a resolution in degree $0$ it suffices to show $d_{P}^{-1}$ is injective, since $P$ is concentrated in degrees $-1$ and $0$. For each $h=1,\dots,n$ let $v(h)=h(\mu_{h})$ (and so also $v(h)=h(\eta_{h})$). Any element $m\in P^{-1}$ has the form $m=\sum_{i,\omega}\lambda_{2i+1,\omega}g_{\mathcal{E},2i+1,\omega}$ where $i$ runs through $\{0,\dots,n-1\}$, $\omega$ runs through $\Omega$ and each $\lambda_{2i+1,\omega}\in \Lambda e_{v(i+1)}$. 

Assuming $m$ lies in the kernel of $d^{-1}_{P}$, it now suffices to show $m=0$. The assumption gives
\[
\begin{array}{c}
0
%= \sum_{\omega}\left(\lambda_{2n-1,\omega}(\mu_{n}g_{\mathcal{E},2n-2,\omega}+\eta_{n}(\sum_{\tau}a_{\tau\omega}^{-}g_{\mathcal{E},0,\tau}))+\sum_{i<n-1}\lambda_{2i+1,\omega}(\mu_{i+1}g_{\mathcal{E},2i,\omega}+\eta_{i+1}g_{\mathcal{E},2(i+1),\omega})\right)\\
=\sum_{\omega}\left(\lambda_{1,\omega}\mu_{1}g_{\mathcal{E},0,\omega}+\lambda_{2n-1,\omega}\eta_{n}(\sum_{\tau} a_{\tau\omega}^{-}g_{\mathcal{E},0,\tau})\right)+\sum_{i>0}\sum_{\omega}\left(
(\lambda_{2i-1,\omega}\eta_{i}+\lambda_{2i+1,\omega}\mu_{i+1})g_{\mathcal{E},i,\omega}
\right)
\end{array}
\]
Since $P^{0}$ is the direct sum of $\bigoplus_{\omega}\Lambda g_{\mathcal{E},2i,\omega}$ as $i$ runs through $\{0,\dots,n\}$ the above gives
\[
\begin{array}{cc}
0=\sum_{\omega}\left(\lambda_{1,\omega}\mu_{1}g_{\mathcal{E},0,\omega}+\lambda_{2n-1,\omega}\eta_{n}(\sum_{\tau} a_{\tau\omega}^{-}g_{\mathcal{E},0,\tau})\right),& 0=\sum_{\omega}
(\lambda_{2i-1,\omega}\eta_{i}+\lambda_{2i+1,\omega}\mu_{i+1})g_{\mathcal{E},i,\omega}
\end{array}
\]
for each $i=1,\dots,n-1$. Since the sum $\bigoplus_{\omega}\Lambda g_{\mathcal{E},2i,\omega}$ itself is direct, the second equation gives $\lambda_{2i-1,\omega}\eta_{i}+\lambda_{2i+1,\omega}\mu_{i+1}=0$ for each ($i$, and each) $\omega$. Since $\mathcal{A}$ is a generalised word we must have $\mathrm{f}(\eta_{i})\neq\mathrm{f}(\mu_{i+1})$ and hence $\lambda_{2i-1,\omega}\eta_{i}=0$ and $\lambda_{2i+1,\omega}\mu_{i+1}=0$ by condition (6) of Definition \ref{definition:complete-gentle-algebra}. After rearranging the sums over $\tau$ and $\omega$, the first equation shows that the sum over $\omega$ of the terms $\left(\lambda_{1,\omega}\mu_{1}+\sum_{\tau}\lambda_{2n-1,\tau}a_{\omega\tau}^{-}\eta_{n}
\right)g_{\mathcal{E},0,\omega}$ is $0$. As above, since $\bigoplus_{\omega}\Lambda g_{\mathcal{E},0,\omega}$ is a direct sum each of these terms is $0$; and by condition (6) of Definition \ref{definition:complete-gentle-algebra}, this gives $\lambda_{1,\omega}\mu_{1}=0$ and $\sum_{\tau}\lambda_{2n-1,\tau}a_{\omega\tau}^{-}\eta_{n}=0$ for each $\omega$.

Choose $i$ with $0\leq i<n-1$. We have already shown that $\lambda_{2i+1,\omega}\mu_{i+1}=0$, which means $\lambda_{2i+1,\omega}\in\mathrm{rad}(\Lambda e_{v(i)})$ by Lemma \ref{lemma:technical-comp-gen-props}(1). Without loss of generality (by condition (2) of Definition \ref{definition:complete-gentle-algebra}) suppose $\mathbf{A}(v(i)\rightarrow)=\{x,y\}$ where $x\neq y$, and so $\lambda_{2i+1,\omega}=\lambda_{x}x+\lambda_{y}y$ for some $\lambda_{x},\lambda_{y}$. By symmetry we can assume $x\eta_{i+1}=0=y\mu_{i+1}$ by condition (2) of Definition \ref{definition:complete-gentle-algebra}, and so $x\mu_{i+1},y\eta_{i+1}\in\mathbf{P}$ by condition (3) of Definition \ref{definition:complete-gentle-algebra}. Since $i<n-1$ we also have that $\lambda_{2i+1,\omega}\eta_{i+1}=0$, which gives $\lambda_{x}x\mu_{i+1}=0$. By Lemma \ref{lemma:technical-comp-gen-props}(2) this gives $\lambda_{x}x=0$. Likewise, since $\lambda_{2i+1,\omega}\mu_{i+1}=0$ we have $\lambda_{y}yx\eta_{i+1}=0$ and hence $\lambda_{y}y=0$. 

Altogether, so far we have that $\lambda_{2i+1,\omega}=0$ for all $\omega$ and all $i$ with  $0\leq i<n-1$. Recall also $\lambda_{2n-1,\omega}\mu_{n}=0$ and  $\sum_{\tau}\lambda_{2n-1,\tau}a_{\omega\tau}^{-}\eta_{n}=0$ for each $\omega$. Written another way, the latter of these equations gives $\sum_{\omega}\lambda_{2n-1,\omega}a_{\tau\omega}^{-}\eta_{n}=0$ for each $\tau$, and so for each $\nu\in\Omega$ we have
\[
\begin{array}{c}
0=\left(\sum_{\tau}a_{\nu\tau}^{+}\sum_{\omega}\lambda_{2n-1,\omega}a_{\tau\omega}^{-}\right)\eta_{n}=\left(\sum_{\omega}\lambda_{2n-1,\omega}(\sum_{\tau}a_{\nu\tau}^{+}a_{\tau\omega}^{-})\right)\eta_{n}=\left(\sum_{\omega}\lambda_{2n-1,\omega}\delta_{\nu\omega}\right)\eta_{n}
\end{array}
\]
where $\delta_{\nu\nu}=1_{\Lambda}$ and $\delta_{\nu\omega}=0$ when $\omega\neq\nu$. Hence $\lambda_{2n-1,\omega}\eta_{n}=0$. Using Lemma \ref{lemma:technical-comp-gen-props}(1), Lemma \ref{lemma:technical-comp-gen-props}(2) and a similar argument to that above; since $\lambda_{2n-1,\omega}\eta_{n}=0$ and $\lambda_{2n-1,\omega}\mu_{n}=0$ we have that $\lambda_{2n-1,\omega}=0$ for all $\omega$. We finally now have $\lambda_{2i+1,\omega}=0$ for all $i$ and $\omega$, which gives $m=0$ as required. 
\end{proof}
\section{Words and corresponding generalised words}\label{section:words-and-corresponding-generalised-words}
If $M$ is a $\Lambda$-module, recall a chain complex $P$ of projective $\Lambda$-modules is a  \emph{resolution of} $M$ if $P$ is a resolution in degree $0$ where $H^{0}(P)\simeq M$.
\subsection{The resolution and homology of a string}
\begin{definition}\label{definition:left-right-upwards-extensions-of-generalised-words}
Let $\mathcal{E}$ be a finite generalised word. We define three new generalised words $\mathcal{E}(\nwarrow)$, $\mathcal{E}(\nearrow)$ and $\mathcal{E}(\nwarrow\nearrow)$ as follows. Let $\mathcal{GW}_{\mathcal{E}}(\nwarrow)$ be the set of generalised $I$-words of the form
\[
\mathcal{C}=\begin{cases}
\langle 1_{v,\delta}\rangle  & (\mbox{if }I=\{0\})\\
\langle  a_{1}\rangle \,\dots \,\langle  a_{m}\rangle  & (\mbox{if }I=\{0,\dots,m\}\mbox{ for some }m>0)\\
\langle  a_{1}\rangle \,\langle  a_{2}\rangle \,\langle  a_{3}\rangle \,\dots & (\mbox{if }I=\mathbb{N})
\end{cases}
\]
where each $ a_{i}\in\mathbf{A}$ and $\mathcal{C}^{-1}\mathcal{E}$ is a generalised word. Choose $\mathcal{L}\in \mathcal{GW}_{\mathcal{E}}(\nwarrow)$ unique such that there does not exist any $ b\in\mathbf{A}$ such that $\mathcal{L}\langle  b\rangle$ is a generalised word. 

Dually let $\mathcal{GW}_{\mathcal{E}}(\nearrow)$ be the set of generalised words $\mathcal{C}$ of the above form where $\mathcal{E}\mathcal{C}$ is a generalised word, and choose $\mathcal{N}\in \mathcal{GW}_{\mathcal{E}}(\nearrow)$ unique such that there cannot exist $ d\in\mathbf{A}$ where $\mathcal{N}\langle  d\rangle $ is a generalised word. Now let
\[\begin{array}{ccc}
\mathcal{E}(\nwarrow)=\mathcal{L}^{-1}\mathcal{E}, & 
\mathcal{E}(\nearrow)=\mathcal{E}\mathcal{N}, &
\mathcal{E}(\nwarrow\nearrow)=\mathcal{L}^{-1} (\mathcal{E}\mathcal{N}).
\end{array}
\]
Note the placement of $\mid$ when $\mathcal{L}$ and $\mathcal{N}$ are $\mathbb{N}$-words: $\mathcal{E}(\nwarrow\nearrow)=\mathcal{L}^{-1}\mid \mathcal{E}\mathcal{N}$.
\end{definition}
For Definition \ref{definition:string-res-of-a-word} it is useful to recall definitions from \S\ref{section:string-and-band-modules}.
\begin{definition}\label{definition:string-res-of-a-word}
Let $C$ be an $I$-word which is a string word with decomposition $(B,A,D)$. The \emph{resolution of} $C$ is a generalised word denoted $\mathcal{R}_{C}$ and defined as follows. Hence each of $B$ and $D$ are (trivial or inverse), and $A$ is either trivial or
\[
A=\gamma_{1}^{-1}\sigma_{1}\dots\gamma_{n}^{-1}\sigma_{n},\, (\gamma_{i},\sigma_{i}\in\mathbf{P}).
\]
If $A=1_{v,\epsilon}$ let $\mathcal{A}=\langle1_{v,\epsilon}\rangle$, and otherwise (in the above notation) let
\[
\mathcal{A}=\langle\gamma_{1}\rangle\langle\sigma_{1}\rangle^{-1}\dots\langle\gamma_{n}\rangle\langle\sigma_{n}\rangle^{-1}.
\]
If it exists, write $ b$ for the (unique) element of $\mathbf{A}$ such that $B b^{-1}$ is a word. Likewise, if it exists, write $ d$ for the (unique) element of $\mathbf{A}$ such that $D d^{-1}$ is a word. In this notation now define $\mathcal{R}_{C}$ as follows:
\[
\mathcal{R}_{C}=
\begin{cases}
\left(\left(\langle b B^{-1}\rangle^{-1}\mathcal{A}\langle  d D^{-1}\rangle \right)(\nwarrow\nearrow)\right)[1] & (\text{if } b\text{ and } d\text{ both exist})\\
\left(\mathcal{A}\langle  d D^{-1}\rangle \right)(\nearrow) & (\text{if only } d\text{ exists})\\
\left(\langle b B^{-1}\rangle^{-1}\mathcal{A} \right)(\nwarrow) & (\text{if only } b\text{ exists}))\\
\mathcal{A}  & (\text{if neither } b\text{ nor } d\text{ exist})
\end{cases}
\]
\end{definition}
\begin{example}\label{example:left-right-upwards-extensions-for-k[[x,y]]/(xy)} We continue Example \ref{example:kernel-parts-example-k[[x,y]]/(xy)}. So $\Lambda=k[[x,y]]/(xy)$, $\mathcal{A}=\langle y^{3}\rangle \langle x^{2}\rangle ^{-1}\langle y\rangle \langle x^{2}\rangle ^{-1}\langle y^{3}\rangle \langle x^{4}\rangle ^{-1}$ and  $\mathcal{C}={}^{\infty\hspace{-0.1ex}}(\langle y\rangle ^{-1}\langle x\rangle ^{-1}) \langle x^{4}\rangle ^{-1}\mathcal{A}$.  
By construction we have that $\mathcal{C}=\mathcal{E}(\nwarrow)$ where $\mathcal{E}=\langle x^{4}\rangle ^{-1}\mathcal{A}$. Now recall Example \ref{example:string-module-for-k[[x,y]]/(xy)}, in which we have $C=B^{-1}(AD)$ where  $B=x^{-3}$, $D=(y^{-1})^{\infty}$ and $A=y^{-3}x^{2}y^{-1}x^{2}y^{-3}x^{4}$, and so $D$ is an $\mathbb{N}$-word and $xB^{-1}=x^{4}$ is a word. In the third case of Definition \ref{definition:string-res-of-a-word}, this means altogether that $\mathcal{R}_{C}=\mathcal{A}$. According to Lemma \ref{lemma:homology-of-resolution-gives-iso-from-string} below, since the interval of $\mathcal{C}$ is $[-6,0]$, we have $H^{0}(P(\mathcal{C}))\simeq M(C)$.
\end{example}
\begin{lemma}\label{lemma:homology-of-resolution-gives-iso-from-string}
Let $C$ be a string word. Then $\mathcal{C}=\mathcal{R}_{C}$ is a string resolution, and if $t=\mathscr{H}_{\mathcal{C}}(l)=\mathscr{H}_{\mathcal{C}}(m)$ where $[l,m]$ is the interval of $\mathcal{C}$ then there is a $\Lambda$-module isomorphism $H^{t}(P(\mathcal{C}))\simeq M(C)$.
\end{lemma}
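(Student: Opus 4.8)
The plan is to prove the two assertions in turn: first that $\mathcal{C}=\mathcal{R}_{C}$ is a string resolution in the sense of Definition \ref{definition:alternating-words-string-resolutions}, and then to compute the unique nonzero cohomology of $P(\mathcal{C})$ by hand. Throughout I write $(B,A,D)$ for the decomposition of $C$ and, when they exist, $b,d\in\mathbf{A}$ for the arrows with $Bb^{-1}$ and $Dd^{-1}$ words, as in Definition \ref{definition:string-res-of-a-word}.

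For the first assertion I would read the decomposition of $\mathcal{R}_{C}$ directly off Definition \ref{definition:string-res-of-a-word}: the construction places $\langle bB^{-1}\rangle^{-1}$ (respectively $\langle dD^{-1}\rangle$) immediately to the left (respectively right) of the alternating generalised word $\mathcal{A}$, and then applies the maximal upward extensions $\mathcal{L}^{-1}$, $\mathcal{N}$ of Definition \ref{definition:left-right-upwards-extensions-of-generalised-words}. Hence $\mathcal{R}_{C}=\mathcal{B}^{-1}(\mathcal{A}\mathcal{D})$ with $\mathcal{B}=\langle bB^{-1}\rangle\mathcal{L}$ and $\mathcal{D}=\langle dD^{-1}\rangle\mathcal{N}$, each trivial on the side where the relevant arrow is absent. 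Both $\mathcal{B}$ and $\mathcal{D}$ are direct because $\mathcal{L},\mathcal{N}$ consist of direct generalised letters $\langle a\rangle$ with $a\in\mathbf{A}$; these letters also give conditions (2a) and (3a) of Definition \ref{definition:alternating-words-string-resolutions} for $n>1$, while the maximality built into the choice of $\mathcal{L}$ and $\mathcal{N}$ gives (2b) and (3b) (appending to $\mathcal{B}$, resp. $\mathcal{D}$, is appending to $\mathcal{L}$, resp. $\mathcal{N}$). That $\mathcal{A}$ is trivial or alternating is immediate. The only point needing care is that the junctions $\langle bB^{-1}\rangle^{-1}\langle\gamma_{1}\rangle$ and $\langle\sigma_{n}\rangle^{-1}\langle dD^{-1}\rangle$ (or $\langle bB^{-1}\rangle^{-1}\langle dD^{-1}\rangle$ when $\mathcal{A}$ is trivial) satisfy the consecutive-pair conditions of Definition \ref{definition:generalised-words-for-complexes}; I would verify these using that $C$ is a word together with the defining properties of $b,d$ (e.g. $bB^{-1}\in\mathbf{P}$ and $\mathrm{f}(bB^{-1})\neq\mathrm{f}(\gamma_{1})$). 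Uniqueness of the decomposition, and hence of the interval $[\iota(-),\iota(+)]$, follows from Lemma \ref{lemma:generalised-decompositions-under-shifts-and-inverses}.

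For the homology computation, Lemma \ref{lemma:string-complex-given-by-string-resolution-is-a-resolution} already gives that $P(\mathcal{C})$ is a point-wise-finite resolution in degree $t=\mathscr{H}_{\mathcal{C}}(\iota(\pm))$, so $P^{l}(\mathcal{C})=0$ for $l>t$ and $H^{t}(P(\mathcal{C}))=\mathrm{coker}(d^{t-1}_{P(\mathcal{C})})=P^{t}(\mathcal{C})/\mathrm{im}(d^{t-1}_{P(\mathcal{C})})$. The key observation is that the degree-$t$ generators $g_{\mathcal{C},i}$ (those $i$ with $\mathscr{H}_{\mathcal{C}}(i)=t$) are in natural bijection with the $C$-peaks, and so with the free generators $g_{C,0},\dots,g_{C,d}$ of $N(C)$: inside $\mathcal{A}$ the function $\mathscr{H}_{\mathcal{C}}$ takes the value $t$ exactly at the even offsets from $\iota(-)$, namely the vertices $v_{C}(\pi(i))$ lying between the blocks $\langle\gamma_{i+1}\rangle\langle\sigma_{i+1}\rangle^{-1}$, while both tails $\mathcal{B}^{-1}$ and $\mathcal{D}$ strictly descend in degree away from $\mathcal{A}$. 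I would therefore define a $\Lambda$-module isomorphism $\varphi\colon N(C)\to P^{t}(\mathcal{C})$ by $g_{C,i}\mapsto(-1)^{i}g_{\mathcal{C},i'}$, where $i'$ is the degree-$t$ position matched to the $i$-th peak and the alternating sign is chosen to absorb the sign discrepancy noted below.

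The heart of the argument is then to show $\mathrm{im}(d^{t-1}_{P(\mathcal{C})})=\varphi(L(C))$. I would enumerate the degree-$(t-1)$ generators: by the degree bookkeeping above these are exactly (i) the middle vertices of the blocks $\langle\gamma_{i+1}\rangle\langle\sigma_{i+1}\rangle^{-1}$, (ii) the single vertex $\iota(-)-1$ when $\mathcal{B}$ is non-trivial, and (iii) the single vertex $\iota(+)+1$ when $\mathcal{D}$ is non-trivial. Applying the differential of Definition \ref{definition:string-complexes}: type (i) gives $\gamma_{i+1}g_{\mathcal{C},\cdot}+\sigma_{i+1}g_{\mathcal{C},\cdot}$, which under $\varphi$ matches the generator $\gamma_{i+1}g_{C,i}-\sigma_{i+1}g_{C,i+1}$ of $L_{0}(C)$ (this is where the alternating signs are used); type (ii) gives $(bB^{-1})g_{\mathcal{C},\iota(-)}$, since $\mathcal{C}_{\iota(-)}=\langle bB^{-1}\rangle^{-1}$ contributes the long arrow through its $+$ term, matching $wB^{-1}g_{C,0}$ in $L_{-}(C)$ with $w=b$; and type (iii) gives symmetrically $(dD^{-1})g_{\mathcal{C},\iota(+)}$, matching $zD^{-1}g_{C,d}$ in $L_{+}(C)$. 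As these exhaust generating sets of both $\mathrm{im}(d^{t-1}_{P(\mathcal{C})})$ and $\varphi(L(C))$, the two submodules agree, and $\varphi$ descends to an isomorphism $M(C)=N(C)/L(C)\simeq P^{t}(\mathcal{C})/\mathrm{im}(d^{t-1}_{P(\mathcal{C})})=H^{t}(P(\mathcal{C}))$. The main obstacle is exactly this bookkeeping: matching the long generalised letters $\langle bB^{-1}\rangle$ and $\langle dD^{-1}\rangle$ with the boundary relations $L_{\pm}(C)$ and confirming that no spurious degree-$(t-1)$ generators arise. I would organise it by running in parallel the four cases of Definition \ref{definition:string-res-of-a-word} (both, one, or neither of $b,d$ existing), together with the degenerate case where $\mathcal{A}$ is trivial and $l=m$.
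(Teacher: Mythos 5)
Your proposal is correct and follows essentially the same route as the paper: read the decomposition $(\mathcal{B},\mathcal{A},\mathcal{D})$ of $\mathcal{R}_{C}$ off Definition \ref{definition:string-res-of-a-word}, identify $P^{t}(\mathcal{C})$ with $N(C)$ via the correspondence between degree-$t$ generators and $C$-peaks, and match the images of the degree-$(t-1)$ generators under the differential with the generators of $L(C)$. Your explicit alternating sign $(-1)^{i}$ is a welcome refinement: the paper's proof of this lemma silently passes from $\gamma_{i+1}g_{C,i}+\sigma_{i+1}g_{C,i+1}$ (what the differential actually produces) to $\gamma_{i+1}g_{C,i}-\sigma_{i+1}g_{C,i+1}$ (the generator of $L_{0}(C)$), and the sign twist you build into $\varphi$ -- exactly as in the paper's own proof of Lemma \ref{lemma:homology-of-resolution-gives-iso-from-band} -- is what makes that identification legitimate.
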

\begin{proof}
In the notation of Definition \ref{definition:string-res-of-a-word} let $\delta=-s(\mathcal{A})$, $u=h(\mathcal{A})$, $\rho=-s(\mathcal{A}^{-1})$ and $w=h(\mathcal{A}^{-1})$. Hence $\langle 1_{u,\delta}\rangle^{-1}\mathcal{A}$ and $\mathcal{A}\langle 1_{w,\rho}\rangle$ are generalised words. Likewise let $\mathcal{B}=\langle b B^{-1}\rangle(\nearrow)$ if $ b$ exists, and otherwise let $\mathcal{B}=\langle 1_{u,\delta}\rangle$. Similarly let $\mathcal{D}=\langle d D^{-1}\rangle(\nearrow)$ if $ d$ exists, and otherwise let $\mathcal{D}=\langle 1_{w,\rho}\rangle$. Note that when $ b$ and $ d$ both exist, and when $\mathcal{C}$ is a generalised $\mathbb{Z}$-word, then
\[
\mathcal{C}=\left(\left(\langle b B^{-1}\rangle^{-1}\mathcal{A}\langle  d D^{-1}\rangle \right)(\nwarrow\nearrow)\right)[1]=\left(\mathcal{L}^{-1}(\langle b B^{-1}\rangle^{-1}\mathcal{A}\langle  d D^{-1}\rangle \mathcal{R})\right)[1]
\]
where $\mathcal{L}=\langle  b_{1}\rangle\langle  b_{2}\rangle\dots$ and $\mathcal{N}=\langle d_{1}\rangle\langle d_{2}\rangle\dots$ for some $  b_{i}, d_{i}\in\mathbf{A}$. Hence in this case we have 
\[
\mathcal{C}=\dots\langle  b_{2}\rangle^{-1}\langle  b_{1}\rangle^{-1}\langle b B^{-1}\rangle^{-1}\mid\mathcal{A}\langle  d D^{-1}\rangle\langle d_{1}\rangle\langle d_{2}\rangle\dots 
\]
In any case we have $\mathcal{C}=\mathcal{B}^{-1}(\mathcal{A}\mathcal{D})$, and so $\mathcal{C}$ is a string resolution with decomposition $(\mathcal{B},\mathcal{A},\mathcal{D})$. By definition, if $\pi(0),\dots,\pi(n)$ denotes the $C$-peaks in $I$, then $v_{C}(\pi(i))=t(\gamma_{i})$ for each $i$. Let $P=P(\mathcal{C})$. Hence for each such $i$ we have $v_{\mathcal{C}}(l+2i)=v_{C}(\pi(i))$, and so by construction $P^{t}=\bigoplus_{i=0}^{n}\Lambda  g_{\mathcal{C},l+2i}$ and the assignment $ g_{\mathcal{C},i}\mapsto g_{C,(i-l)/2}$ defines an isomorphism of projective $\Lambda$-modules $P^{t}\simeq N(C)$ as in Definition \ref{definition:string-modules-by-string-words}. The image of $d^{t-1}_{P}$ under this isomorphism is generated by the elements $\gamma_{j+1}g_{C,j}-\sigma_{j+1}g_{C,j+1}$ where $0\leq j<n$, together with the elements ($ b B^{-1} g_{C,0}$ if and only if $ b$ exists) and ($ d D^{-1} g_{C,n}$ if and only if $ d$ exists). 

Note $L_{-}(C)$ is non-trivial if and only if $ b$ exists, in which case it is equal to $\Lambda b B^{-1} g_{C,0}$. Together with a dual argument, we have that the submodule of $N(C)$ generated by the above list of elements is $L(C)$. 
\end{proof}
\begin{definition}\label{definition:left-right-downwards-extensions-of-generalised-words}
Let $A$ be a finite word. Below we define three new words $A(\swarrow)$, $A(\searrow)$ and $A(\swarrow\searrow)$. Let $W_{A}(\swarrow)$ be the set of (trivial or inverse) $I$-words such that $C^{-1}A$ is a word: that is, words of the form
\[
C=\begin{cases}
 1_{v,\delta} & (\mbox{if }I=\{0\})\\
 x_{1}^{-1}\dots x_{m}^{-1} & (\mbox{if }I=\{0,\dots,m\}\mbox{ for some }m>0)\\
 x_{1}^{-1}  x_{2} ^{-1} x_{3}^{-1}\dots  & (\mbox{if }I=-\mathbb{N})
\end{cases}
\]
where $s(C)=s(A^{-1})$ and each $x_{i}\in\mathbf{A}$. Choose $B\in W_{A}(\swarrow)$ unique such that $Bx^{-1}$ is not a word for all $x\in\mathbf{A}$. Hence $B$ is maximal in the sense that $B^{-1}A$ is a word but $xB^{-1}A$ is not. Dually let $W_{A}(\searrow)$ be the set of (trivial or inverse) $I$-words $C$ such that $AC$ is a word, and choose $D\in W_{A}(\searrow)$ unique such that ($AD$ is a word, and) $Dy^{-1}$ is a not a word for all $y\in\mathbf{A}$. Now let
\[\begin{array}{ccc}
A(\swarrow)=B^{-1}A, & %, the \emph{maximal left extension of} $A$; 
A(\searrow)=AD, &%the \emph{maximal right extension of} $A$;
A(\swarrow\searrow)=B^{-1}(AD).%, the \emph{maximal extension of}
\end{array}
\]
\end{definition}
\begin{lemma}\label{lemma:technical-inverting-sarrow-extension-words}
If $A$ is a \emph{(}trivial or alternating\emph{)} $\{0,\dots,m\}$-word then $(A(\swarrow\searrow))^{-1}=A^{-1}(\swarrow\searrow)[-m]$. If additionally $C$ and $E$ are each either trivial or \emph{(}finite and inverse\emph{)} words such that $(C^{-1}A)E$ is a word, then
\[
\begin{array}{cc}
    (AE(\swarrow))^{-1}=E^{-1}A^{-1}(\searrow), & (C^{-1}A(\searrow))^{-1}=A^{-1}C(\swarrow).
\end{array}
\]
\end{lemma}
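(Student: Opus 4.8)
The plan is to reduce all three identities to two elementary ingredients: the rule that inverting a concatenation reverses the order of the letters and inverts each one, so that $(XAY)^{-1}=Y^{-1}A^{-1}X^{-1}$; and a correspondence, forced by the uniqueness built into Definition \ref{definition:left-right-downwards-extensions-of-generalised-words}, between the maximal down-extension words of $A$ and those of $A^{-1}$. I expect this second ingredient to carry essentially all the content, the rest being origin bookkeeping.

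First I would record that, since $A$ is trivial or alternating, so is $A^{-1}$, and write $A(\swarrow)=B^{-1}A$ and $A(\searrow)=AD$ with $B,D$ the unique maximal (trivial or inverse) words supplied by Definition \ref{definition:left-right-downwards-extensions-of-generalised-words}. Applying $(\cdot)^{-1}$ to the defining word $B^{-1}A$ shows that $A^{-1}B$ is a word with $B$ inverse, and the maximality of $B$ --- that $xB^{-1}A$ is never a word for a direct letter $x$ --- translates, after inversion, into the assertion that $A^{-1}Bx^{-1}$ is never a word; that is, $B$ is the unique maximal inverse word appendable to $A^{-1}$. By the uniqueness clause of Definition \ref{definition:left-right-downwards-extensions-of-generalised-words} this forces $A^{-1}(\searrow)=A^{-1}B$. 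Symmetrically, inverting $AD$ identifies $D^{-1}$ with the maximal direct word prependable to $A^{-1}$, so that $A^{-1}(\swarrow)=D^{-1}A^{-1}$, and hence $A^{-1}(\swarrow\searrow)=D^{-1}A^{-1}B$. The alternating hypothesis on $A$ is used precisely here: it guarantees that $A$ (and therefore $A^{-1}$) begins with an inverse and ends with a direct letter, so that the words being glued are genuinely inverse and the sign conditions $s(A)=s(A^{-1})$ required for composability hold. The identical argument applies with $A$ replaced by the finite words $AE$ and $C^{-1}A$ in the last two identities, using that $E$ and $C$ are trivial or finite inverse.

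With this correspondence in hand, the three identities become equalities of underlying letter-sequences. For the last two, $(AE(\swarrow))^{-1}=(B''^{-1}AE)^{-1}=E^{-1}A^{-1}B''$ agrees on the nose with $E^{-1}A^{-1}(\searrow)=E^{-1}A^{-1}B''$ (where $B''$ is the maximal inverse word appendable to $E^{-1}A^{-1}$), and likewise for $(C^{-1}A(\searrow))^{-1}=A^{-1}C(\swarrow)$. Here no shift can appear, because each of the words occurring retains a finite (hence bounded) side and so is never a $\mathbb{Z}$-word; all shifts therefore act trivially. For the first identity both $(A(\swarrow\searrow))^{-1}=(B^{-1}AD)^{-1}=D^{-1}A^{-1}B$ and $A^{-1}(\swarrow\searrow)=D^{-1}A^{-1}B$ carry the same (possibly bi-infinite) letter sequence, so only the placement of the distinguished origin marker $\mid$ remains to be checked; if $A(\swarrow\searrow)$ is not a $\mathbb{Z}$-word the shift $[-m]$ is trivial and we are already done.

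The one genuinely delicate step, and the main obstacle, is this origin bookkeeping in the $\mathbb{Z}$-word case. Using the inversion rule of Definition \ref{definition:words-for-modules} one computes that in $(A(\swarrow\searrow))^{-1}$ the block $A^{-1}$ ends immediately to the left of the marker, whereas in $A^{-1}(\swarrow\searrow)$ it begins immediately to its right; since $A^{-1}$ has length $m$, the two markers are displaced by $m$ positions, which is exactly the shift $[-m]$ recorded in the statement. This is the down-extension analogue of the shift in Lemma \ref{lemma:decompositions-under-shifts-and-inverses}(2), and I would model the index-tracking computation directly on that proof, taking care to keep the conventions for the placement of $\mid$ under composition and inversion consistent throughout.
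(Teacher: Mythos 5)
Your argument is correct and is essentially the paper's own proof: both invert $B^{-1}(AD)$ letterwise, transfer the membership and maximality conditions of Definition \ref{definition:left-right-downwards-extensions-of-generalised-words} under inversion to conclude $D\in W_{A^{-1}}(\swarrow)$ and $B\in W_{A^{-1}}(\searrow)$ so that $D^{-1}(A^{-1}B)=A^{-1}(\swarrow\searrow)$, and account for the $[-m]$ by tracking the marker $\mid$ in the $\mathbb{Z}$-word case while noting the last two identities involve no $\mathbb{Z}$-words. No gaps.
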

\begin{proof}
Without loss of generality we assume $A$ is non-trivial, and hence $A=\mu_{1}^{-1}\eta_{1}\dots \mu_{n}^{-1} \eta_{n}$ for some $\mu_{i},\eta_{i}\in\mathbf{P}$. Choose words $B$ and $D$ as in Definition \ref{definition:left-right-downwards-extensions-of-generalised-words}, such that $A(\swarrow\searrow)=B^{-1}(AD)$. This means 
\[
\begin{array}{c}
(A(\swarrow\searrow))^{-1}=(B^{-1}(AD))^{-1}=(AD)^{-1}B=D^{-1}A^{-1}\mid B=(D^{-1}\mid A^{-1}B)[-m]
\end{array}
\]
Hence we have $D\in W_{A^{-1}}(\swarrow)$ and $B\in W_{A^{-1}}(\searrow)$. Since $Dx^{-1}$ and $Bz^{-1}$ are not words for all $z\in\mathbf{A}$ we have that $D^{-1}\mid A^{-1}B=A^{-1}(\swarrow\searrow)$ and thus $(A(\swarrow\searrow))^{-1}=A^{-1}(\swarrow\searrow)[-m]$. We similarly have $(A(\swarrow))^{-1}=D^{-1}A^{-1}=A^{-1}(\searrow)$, noting they are both $I$-words where $I\neq\mathbb{Z}$. This gives $(AE(\swarrow))^{-1}=E^{-1}A^{-1}(\searrow)$. The proof that $(C^{-1}A(\searrow))^{-1}=A^{-1}C(\swarrow)$ is similar, and omitted.
\end{proof}
For Definition \ref{definition:homology-word-for-strings} it is convenient to recall Definition \ref{definition:alternating-words-string-resolutions}.
\begin{definition}\label{definition:homology-word-for-strings}
Let $\mathcal{C}$ be a generalised word which is a string resolution with decomposition $(\mathcal{B},\mathcal{A},\mathcal{D})$. If $\mathcal{A}$ is non-trivial let
\[
A=\mu_{1}^{-1}\eta_{1}\dots \mu_{n}^{-1} \eta_{n}\text{ where }\mathcal{A}=\langle \mu_{1}\rangle \langle \eta_{1}\rangle ^{-1}\dots \langle \mu_{n}\rangle \langle \eta_{n}\rangle ^{-1}.
\] 
Recall that $\mathcal{C}=\mathcal{B}^{-1}(\mathcal{A}\mathcal{D})$ and that each $\mathcal{B}$ and $\mathcal{D}$ are direct when they are non-trivial. In this setting, the \emph{homology word associated to} $\mathcal{C}$ is the word
\[
H(\mathcal{C})=
\begin{cases}
A(\swarrow\searrow) &(\text{if both }\mathcal{B}\text{ and }\mathcal{D}\text{ are trivial})\\
(A\mu^{-1})(\swarrow) & (\text{if }\mathcal{B}\text{ is trivial and } \mathcal{D}_{1}=\langle d\mu\rangle\text{ where } d\in\mathbf{A})\\
(\sigma A)(\searrow) & (\text{if }\mathcal{B}_{1}=\langle b\sigma\rangle \text{ where } b\in\mathbf{A}\text{ and }\mathcal{D}\text{ is trivial})\\
\sigma A \mu^{-1}& (\text{if }\mathcal{B}_{1}=\langle b\sigma\rangle \text{ and }\mathcal{D}_{1}=\langle d\mu\rangle\text{ where } b, d\in\mathbf{A})
\end{cases}
\]
Note, for example, that if $\mathcal{D}_{1}=\langle d\mu\rangle$ then the path $\mu$ is possibly trivial (but must satisfy $ d\mu\in\mathbf{P}$). It is straightforward to check, in each base of $\mathcal{B}$ and $\mathcal{D}$, that there are (trivial or inverse) words $B$ and $D$ such that $H(\mathcal{C})=B^{-1}(AD)$ and hence $H(\mathcal{C})$ is a string word with decomposition $(B,A,D)$.  
\end{definition}
In light of Lemma \ref{lemma:generalised-decompositions-under-shifts-and-inverses}, Lemma \ref{lemma:homology-word-of-a-shift-or-inverse-strings} describes how taking the homology word associated to a string resolution behaves under shifts and inverses.
\begin{lemma}\label{lemma:homology-word-of-a-shift-or-inverse-strings}
Let $\mathcal{C}$ be a generalised $I$-word which is a string resolution with interval $[l,m]$ and decomposition $(\mathcal{B},\mathcal{A},\mathcal{D})$ where $\mathcal{A}$ is a generalised $\{0,\dots,d\}$-word. Then $\mathcal{C}^{-1}[-d]$ is a string resolution with decomposition  $(\mathcal{D},\mathcal{A}^{-1},\mathcal{B})$ and there is some $m\in\mathbb{Z}$ such that $H(\mathcal{C}^{-1}[-d])=(H(\mathcal{C})^{-1})[m]$.
\end{lemma}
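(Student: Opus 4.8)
The plan is to establish the two assertions separately: first that $\mathcal{C}^{-1}[-d]$ is a string resolution with the claimed decomposition, and then the shifted-inverse identity for the homology word, both by reducing to the inversion identities already recorded in Lemma \ref{lemma:technical-inverting-sarrow-extension-words} together with the symmetry of the relevant definitions.

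For the first claim I would argue exactly as in the proof of Lemma \ref{lemma:decompositions-under-shifts-and-inverses}(2). Writing $\mathcal{C}=\mathcal{B}^{-1}(\mathcal{A}\mathcal{D})$ and inverting gives $\mathcal{C}^{-1}=\mathcal{D}^{-1}\mathcal{A}^{-1}\mathcal{B}$; shifting by $-d$ renormalises the indexing so that the copy of $\mathcal{A}^{-1}$ occupies the positions $\{0,\dots,d\}$, whence $\mathcal{C}^{-1}[-d]=\mathcal{D}^{-1}(\mathcal{A}^{-1}\mathcal{B})$. It then suffices to verify the three conditions of Definition \ref{definition:alternating-words-string-resolutions} for the triple $(\mathcal{D},\mathcal{A}^{-1},\mathcal{B})$. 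Condition (1) holds since inverting $\langle\mu_{1}\rangle\langle\eta_{1}\rangle^{-1}\dots\langle\mu_{n}\rangle\langle\eta_{n}\rangle^{-1}$ produces $\langle\eta_{n}\rangle\langle\mu_{n}\rangle^{-1}\dots\langle\eta_{1}\rangle\langle\mu_{1}\rangle^{-1}$, so $\mathcal{A}^{-1}$ is trivial or alternating if and only if $\mathcal{A}$ is; conditions (2) and (3) are identical in form for the two slots, so the fact that $\mathcal{D}$ satisfies (3) for $\mathcal{C}$ gives that it satisfies (2) for $\mathcal{C}^{-1}[-d]$, and symmetrically for $\mathcal{B}$. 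Uniqueness of decompositions (Definition \ref{definition:interval-of-a-string-resolution}), or a direct appeal to Lemma \ref{lemma:generalised-decompositions-under-shifts-and-inverses}(2), then confirms this is the decomposition.

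For the second claim I would run through the four cases of Definition \ref{definition:homology-word-for-strings}, tracking how they transform under inversion. The decisive observations are that the ordinary word associated to $\mathcal{A}^{-1}$ is exactly $A^{-1}$, and that passing to $\mathcal{C}^{-1}[-d]$ swaps the $\mathcal{B}$- and $\mathcal{D}$-slots: if $\mathcal{B}_{1}=\langle b\sigma\rangle$ and $\mathcal{D}_{1}=\langle d\mu\rangle$, then these become the first generalised letters of the new $\mathcal{D}$- and $\mathcal{B}$-slots. Thus, for example, the case ($\mathcal{B}$ trivial, $\mathcal{D}_{1}=\langle d\mu\rangle$) for $\mathcal{C}$ becomes the case (new $\mathcal{B}_{1}=\langle d\mu\rangle$, new $\mathcal{D}$ trivial) for $\mathcal{C}^{-1}[-d]$. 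In each case I would compute $(H(\mathcal{C}))^{-1}$ from Lemma \ref{lemma:technical-inverting-sarrow-extension-words} and compare with the formula for $H(\mathcal{C}^{-1}[-d])$ read off from the definition. Continuing the example, here $H(\mathcal{C})=(A\mu^{-1})(\swarrow)$, so by the identity $(AE(\swarrow))^{-1}=E^{-1}A^{-1}(\searrow)$ with $E=\mu^{-1}$ one gets $(H(\mathcal{C}))^{-1}=\mu A^{-1}(\searrow)$, which matches $H(\mathcal{C}^{-1}[-d])=(\mu A^{-1})(\searrow)$; the remaining cases, using $(C^{-1}A(\searrow))^{-1}=A^{-1}C(\swarrow)$ and $(A(\swarrow\searrow))^{-1}=A^{-1}(\swarrow\searrow)[-m]$, are handled symmetrically.

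The single point requiring genuine care is the shift. In the three cases where at least one of $\mathcal{B}$, $\mathcal{D}$ is non-trivial, the homology word is a $(\swarrow)$- or $(\searrow)$-extension and hence not a $\mathbb{Z}$-word, so shifts act trivially and $H(\mathcal{C}^{-1}[-d])=(H(\mathcal{C}))^{-1}$ outright. Only in the remaining case, where $\mathcal{B}$ and $\mathcal{D}$ are both trivial and $H(\mathcal{C})=A(\swarrow\searrow)$ may be a $\mathbb{Z}$-word, does a nonzero shift appear, and there the identity $(A(\swarrow\searrow))^{-1}=A^{-1}(\swarrow\searrow)[-m]$ supplies precisely the required value, namely the length $m$ of the ordinary $\{0,\dots,m\}$-word $A$ (which must be distinguished from the generalised length $d$). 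This is the only obstacle; the rest is routine bookkeeping once the slot-swapping is set up.
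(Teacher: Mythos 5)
Your proposal is correct and follows essentially the same route as the paper's proof: invert $\mathcal{C}=\mathcal{B}^{-1}(\mathcal{A}\mathcal{D})$ and shift by $-d$ to get the swapped decomposition $(\mathcal{D},\mathcal{A}^{-1},\mathcal{B})$, then compare the four cases of Definition \ref{definition:homology-word-for-strings} with the inversion identities of Lemma \ref{lemma:technical-inverting-sarrow-extension-words}, observing that a non-trivial shift can only occur in the case where both $\mathcal{B}$ and $\mathcal{D}$ are trivial. Your explicit verification of the three conditions of Definition \ref{definition:alternating-words-string-resolutions} for the new triple is slightly more detailed than the paper, which simply asserts the decomposition, but the argument is the same.
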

\begin{proof}
In what follows we use the notation from Definition \ref{definition:homology-word-for-strings}. By assumption we have $\mathcal{C}^{-1}=(\mathcal{A}\mathcal{D})^{-1}\mathcal{B}$, which has the form $\mathcal{D}^{-1}\mathcal{A}^{-1
}\mid \mathcal{B}$ when $I=\mathbb{Z}$. 

In any case this gives $\mathcal{C}^{-1}[-d]=\mathcal{D}^{-1}(\mathcal{A}^{-1}\mathcal{B})$
, which is a string resolution with decomposition $(\mathcal{D},\mathcal{A}^{-1},\mathcal{B})$. Assuming $\mathcal{A}=\langle \mu_{1}\rangle \langle \eta_{1}\rangle ^{-1}\dots \langle \mu_{n}\rangle \langle \eta_{n}\rangle ^{-1}$ gives $A=\mu_{1}^{-1}\eta_{1}\dots \mu_{n}^{-1} \eta_{n}$ and hence $A^{-1}=\eta_{n}^{-1}\mu_{n}\dots\eta_{1}^{-1} \mu_{1}$. By definition we now have that
\[
H(\mathcal{C}^{-1}[d])=
\begin{cases}
A^{-1}(\swarrow\searrow) &(\text{if both }\mathcal{D}\text{ and }\mathcal{B}\text{ are trivial})\\
(A^{-1}\sigma^{-1})(\swarrow) & (\text{if }\mathcal{D}\text{ is trivial and } \mathcal{B}_{1}=\langle b\sigma\rangle\text{ where } b\in\mathbf{A})\\
(\mu A^{-1})(\searrow) & (\text{if }\mathcal{D}_{1}=\langle d\mu\rangle \text{ where } d\in\mathbf{A}\text{ and }\mathcal{B}\text{ is trivial})\\
\mu A^{-1}
\sigma^{-1}& (\text{if }\mathcal{D}_{1}=\langle d\mu\rangle \text{ and }\mathcal{B}_{1}=\langle b\sigma\rangle\text{ where } b, d\in\mathbf{A})
\end{cases}
\]
Choose $m\geq 1$ where $A$ is a $\{0,\dots,m\}$-word; see for example Definition \ref{definition:finitely-generated-words-II}. Note that unless $\mathcal{B}$ and $\mathcal{D}$ are both trivial, we have $H(\mathcal{C}^{-1}[d])=H(\mathcal{C})^{-1}$, which is not a $\mathbb{Z}$-word. Otherwise $H(\mathcal{C})=A(\swarrow\searrow)$  which could be a $\mathbb{Z}$-word, and hence $H(\mathcal{C}^{-1}[-d])=(H(\mathcal{C})^{-1})[m]$ by Lemma \ref{lemma:technical-inverting-sarrow-extension-words}.
\end{proof}
\subsection{A one-to-one correspondence for strings.}
\begin{lemma}\label{lemma:resolution-of-homology-is-the-identity}
Let $\mathcal{C}$ be a string resolution with interval $[\iota(-),\iota(+)]$. Then $\mathcal{R}_{H(\mathcal{C})}=\mathcal{C}$ and consequently $H^{t}(P(\mathcal{C}))\simeq M(H(\mathcal{C}))$ where $t=\mathscr{H}_{\mathcal{C}}(\iota(\pm))$.
\end{lemma}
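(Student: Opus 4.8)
The plan is to prove the combinatorial identity $\mathcal{R}_{H(\mathcal{C})}=\mathcal{C}$ first; the homological statement then follows immediately. Indeed, $H(\mathcal{C})$ is a string word (Definition \ref{definition:homology-word-for-strings}), so Lemma \ref{lemma:homology-of-resolution-gives-iso-from-string} applies to it and yields a $\Lambda$-module isomorphism $H^{t'}(P(\mathcal{R}_{H(\mathcal{C})}))\simeq M(H(\mathcal{C}))$, where $t'$ is $\mathscr{H}$ evaluated at the interval of $\mathcal{R}_{H(\mathcal{C})}$. Granting $\mathcal{R}_{H(\mathcal{C})}=\mathcal{C}$, this interval is $[\iota(-),\iota(+)]$, so $t'=\mathscr{H}_{\mathcal{C}}(\iota(\pm))=t$ and $P(\mathcal{R}_{H(\mathcal{C})})=P(\mathcal{C})$, giving $H^{t}(P(\mathcal{C}))\simeq M(H(\mathcal{C}))$. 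Thus everything reduces to the identity $\mathcal{R}_{H(\mathcal{C})}=\mathcal{C}$.

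To prove that identity I would exploit uniqueness of decompositions. Both $\mathcal{C}$ and $\mathcal{R}_{H(\mathcal{C})}$ are string resolutions (the latter by Lemma \ref{lemma:homology-of-resolution-gives-iso-from-string}), and by Definition \ref{definition:interval-of-a-string-resolution} a string resolution is determined by its decomposition $(\mathcal{B},\mathcal{A},\mathcal{D})$. Hence it suffices to check that the two resolutions have the same three components. The middle component is immediate: if $\mathcal{C}$ has decomposition $(\mathcal{B},\mathcal{A},\mathcal{D})$ with $\mathcal{A}=\langle\mu_{1}\rangle\langle\eta_{1}\rangle^{-1}\dots\langle\mu_{n}\rangle\langle\eta_{n}\rangle^{-1}$, then $H(\mathcal{C})$ has alternating middle part $A=\mu_{1}^{-1}\eta_{1}\dots\mu_{n}^{-1}\eta_{n}$, and feeding $A$ back through Definition \ref{definition:string-res-of-a-word} (with the relabelling $\gamma_{i}=\mu_{i}$, $\sigma_{i}=\eta_{i}$) returns exactly $\mathcal{A}$. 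The remaining work is to match the left and right components. Since the four cases of Definition \ref{definition:homology-word-for-strings} factor as independent choices on the two sides, and since the left-right symmetry is formalised by Lemma \ref{lemma:generalised-decompositions-under-shifts-and-inverses} and Lemma \ref{lemma:technical-inverting-sarrow-extension-words}, it is enough to describe the two mechanisms that recover $\mathcal{B}$ (the right side being dual).

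When $\mathcal{B}$ is trivial, $H(\mathcal{C})$ is built using the downward extension $(\swarrow)$, so its left component $B$ satisfies that $Bx^{-1}$ is not a word for every $x\in\mathbf{A}$, by the maximality in Definition \ref{definition:left-right-downwards-extensions-of-generalised-words}; hence in Definition \ref{definition:string-res-of-a-word} the arrow $b$ does not exist and $\mathcal{R}_{H(\mathcal{C})}$ takes the branch with no left extension, matching the trivial $\mathcal{B}$. When $\mathcal{B}$ is non-trivial, write $\mathcal{B}_{1}=\langle b\sigma\rangle$ with $b\in\mathbf{A}$ and $\mathcal{B}=\langle b\sigma\rangle\langle s_{2}\rangle\langle s_{3}\rangle\cdots$ with $s_{i}\in\mathbf{A}$ (this shape is exactly condition (2) of Definition \ref{definition:alternating-words-string-resolutions}). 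Then $H(\mathcal{C})$ records only the inverse word $\sigma^{-1}$ on the left, so in Definition \ref{definition:string-res-of-a-word} one must recover the arrow $b$ as the unique arrow with $\sigma^{-1}b^{-1}$ a word; this arrow is precisely the original $b$, because $b\sigma\in\mathbf{P}$ forces $\sigma^{-1}b^{-1}$ to be a word, and gentleness (Lemma \ref{lemma:qbsb-uniserial-cyclics-given-by-arrows} together with condition (6) of Definition \ref{definition:complete-gentle-algebra}) guarantees uniqueness. Thus the seed $\langle bB^{-1}\rangle^{-1}=\langle b\sigma\rangle^{-1}$ reproduces $\mathcal{B}_{1}^{-1}$.

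The crux, and the step I expect to be the main obstacle, is showing that the upward extension $(\nwarrow)$ of this seed regenerates exactly the tail $\langle s_{2}\rangle\langle s_{3}\rangle\cdots$ of $\mathcal{B}$. Here I would invoke the defining maximality of $\mathcal{L}$ in Definition \ref{definition:left-right-upwards-extensions-of-generalised-words}: the tail of $\mathcal{B}$ consists of single direct arrow generalised letters and is maximal, in the sense that $\mathcal{B}\langle b\rangle$ is never a generalised word, which is precisely the property characterising $\mathcal{L}$ uniquely. Since $\mathcal{L}$ is the unique maximal direct arrow generalised word making $\mathcal{L}^{-1}(\langle b\sigma\rangle^{-1}\mathcal{A}\langle dD^{-1}\rangle)$ a word, it must coincide with $\langle s_{2}\rangle\langle s_{3}\rangle\cdots$; the dual argument on the right recovers $\mathcal{D}$. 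The verifications that all concatenations involved are genuinely words reduce to the sign and composability bookkeeping of Remark \ref{remark:generalised-sign-and-composability}. Finally, when $\mathcal{C}$ is a generalised $\mathbb{Z}$-word, the extra $[1]$-shift in the first branch of Definition \ref{definition:string-res-of-a-word} and the placement of the separator $\mid$ (exactly as displayed in the proof of Lemma \ref{lemma:homology-of-resolution-gives-iso-from-string}) must be tracked so that index $0$ sits immediately before $\mathcal{A}$; this is the one place where careful attention to the indexing convention, rather than any new idea, is required.
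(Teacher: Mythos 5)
Your proposal is correct and follows essentially the same route as the paper's proof: both reduce everything to the combinatorial identity $\mathcal{R}_{H(\mathcal{C})}=\mathcal{C}$, recover the seed $\langle b B^{-1}\rangle^{-1}=\langle b\sigma\rangle^{-1}$ from the fact that $b\sigma\in\mathbf{P}$ forces $Bb^{-1}$ to be a word, and then identify the tail of $\mathcal{B}$ with the unique maximal word $\mathcal{L}$ of Definition \ref{definition:left-right-upwards-extensions-of-generalised-words} using conditions (2a) and (2b) of Definition \ref{definition:alternating-words-string-resolutions}, before citing Lemma \ref{lemma:homology-of-resolution-gives-iso-from-string} for the homological conclusion. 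The only cosmetic difference is that the paper argues one representative case ($\mathcal{B}$ non-trivial, $\mathcal{D}$ trivial) without loss of generality, whereas you treat the trivial and non-trivial branches for $\mathcal{B}$ explicitly and invoke duality for $\mathcal{D}$.
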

\begin{proof}
Let $(\mathcal{B},\mathcal{A},\mathcal{D})$ be a decomposition of $\mathcal{C}$. Without loss of generality assume $\mathcal{A}$ and $\mathcal{B}$ are non-trivial, but that $\mathcal{D}$ is trivial. Let $\mathcal{A}=\langle \mu_{1}\rangle \langle \eta_{1}\rangle ^{-1}\dots \langle \mu_{n}\rangle \langle \eta_{n}\rangle ^{-1}$ and $\mathcal{B}_{1}=\langle b\sigma\rangle$ where $ b\in\mathbf{A}$ and $\sigma$ is a (possibly trivial) path. In this scenario we have by definition that if $C$ is the homology word associated to $\mathcal{C}$ then $C=H(\mathcal{C})=(\sigma A)(\searrow)=B^{-1} A D$ where $A=\mu_{1}^{-1}\eta_{1}\dots \mu_{n}^{-1}\eta_{n}$, $B=\sigma^{-1}$ is trivial or inverse and $D$ is a (trivial or inverse) word where $\eta_{n}D$ is a word, and such that $D d^{-1}$ is not a word for all $ d\in\mathbf{A}$. Suppose that $B$ is a $\{0,\dots,d\}$-word and $A$ is a $\{0,\dots,m\}$-word. By construction each of  $B=(C_{\leq d})^{-1}$ and $D=C_{>m+d}$ are (trivial or inverse), and $A=(C_{>d})_{\leq m}$ is alternating.

Furthermore, since $ b\sigma\in\mathbf{P}$ we have that $B b^{-1}$ is a word. Let $\mathcal{L}=\mathcal{B}_{>1}$. By Definition \ref{definition:left-right-upwards-extensions-of-generalised-words}, since $\mathcal{C}=\mathcal{L}^{-1}\mathcal{E}$ is a generalised word where $\mathcal{E}=\langle b\sigma\rangle^{-1}\mathcal{A}$, we have that $\mathcal{L}^{-1}\in\mathcal{GW}_{\mathcal{E}}(\nwarrow)$. By Definition \ref{definition:alternating-words-string-resolutions} (2a) we have that, for all appropriate $i$, $(\mathcal{L}^{-1})_{i}=\langle b_{i}\rangle$ for some $ b_{i}\in\mathbf{A}$. By Definition \ref{definition:alternating-words-string-resolutions} (2b) we have that $\mathcal{L}^{-1}\langle a\rangle$ is not a generalised word for all $ a\in\mathbf{A}$. By Definition \ref{definition:left-right-upwards-extensions-of-generalised-words} this means $\mathcal{C}=\mathcal{E}(\nwarrow)$. Recalling that $D d^{-1}$ is never a word, by Definition \ref{definition:string-res-of-a-word} this together gives $\mathcal{C}=\mathcal{R}_{C}$. The required isomorphism follows by Lemma \ref{lemma:homology-of-resolution-gives-iso-from-string}.
\end{proof}
\begin{lemma}\label{lemma:homology-of-resolution-is-the-identity}
If $C$ is a string word then $H(\mathcal{R}_{C})=C$. 
\end{lemma}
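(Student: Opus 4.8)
The plan is to unpack both constructions explicitly and check that they agree on each piece of the decomposition of $C$. Fix a string word $C$ with its (unique) decomposition $(B,A,D)$ as in Definition \ref{definition:string-words}, and set $\mathcal{C}=\mathcal{R}_{C}$. By Lemma \ref{lemma:homology-of-resolution-gives-iso-from-string} the generalised word $\mathcal{C}$ is a string resolution, and tracing through Definition \ref{definition:string-res-of-a-word} together with the decomposition of $\mathcal{R}_{C}$ identified in the proof of Lemma \ref{lemma:homology-of-resolution-gives-iso-from-string} gives its decomposition $(\mathcal{B},\mathcal{A},\mathcal{D})$ explicitly: one has $\mathcal{A}=\langle\gamma_{1}\rangle\langle\sigma_{1}\rangle^{-1}\cdots\langle\gamma_{n}\rangle\langle\sigma_{n}\rangle^{-1}$ whenever $A=\gamma_{1}^{-1}\sigma_{1}\cdots\gamma_{n}^{-1}\sigma_{n}$; the piece $\mathcal{B}$ equals $\langle bB^{-1}\rangle(\nearrow)$ when the arrow $b$ of Definition \ref{definition:string-res-of-a-word} exists and is trivial otherwise; and dually for $\mathcal{D}$ and $d$. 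I would then compute $H(\mathcal{C})$ from Definition \ref{definition:homology-word-for-strings} in the four cases according to which of $b,d$ exist, matching these against the four cases of $H$ (indexed by the triviality of $\mathcal{B}$ and $\mathcal{D}$), using that $\mathcal{B}$ is trivial precisely when $b$ fails to exist, and similarly for $\mathcal{D}$ and $d$.

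The alternating core is immediate: the assignment $\langle\gamma\rangle\langle\sigma\rangle^{-1}\mapsto\gamma^{-1}\sigma$ used in Definition \ref{definition:homology-word-for-strings} inverts the assignment $\gamma^{-1}\sigma\mapsto\langle\gamma\rangle\langle\sigma\rangle^{-1}$ used in Definition \ref{definition:string-res-of-a-word}, so the word recovered by $H$ from $\mathcal{A}$ is exactly the alternating part $A$ of $C$. For the ends I would argue as follows. If $b$ exists then $\mathcal{B}_{1}=\langle bB^{-1}\rangle$, and since $b=\mathrm{l}(bB^{-1})$ the relevant case of $H$ reads off $\sigma=B^{-1}$, contributing $B^{-1}$ on the left exactly as in $C$. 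If instead $b$ does not exist, then $\mathcal{B}$ is trivial and $H$ produces its left end by applying $(\swarrow)$ of Definition \ref{definition:left-right-downwards-extensions-of-generalised-words}; the point is that ``$b$ does not exist'', i.e. $Bx^{-1}$ is a word for no $x\in\mathbf{A}$, is precisely the maximality condition defining $B$ as the $(\swarrow)$-extension, so $(\swarrow)$ returns exactly $B^{-1}$. The right end is handled symmetrically with $d$, $D$ and $(\searrow)$; alternatively the cases involving $d$ can be deduced from those involving $b$ by inverting, via Lemma \ref{lemma:homology-word-of-a-shift-or-inverse-strings} and Lemma \ref{lemma:technical-inverting-sarrow-extension-words}.

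The main obstacle is exactly the reconciliation in the previous paragraph between the two notions of maximality, namely the one built into the decomposition $(B,A,D)$ of the string word $C$ and the one defining the downward extensions $(\swarrow)$ and $(\searrow)$, together with checking that the upward extension tails $\mathcal{L},\mathcal{N}$ appearing in $\mathcal{R}_{C}$ (which $H$ discards, reading only $\mathcal{B}_{1}$ and $\mathcal{D}_{1}$) are regenerated exactly by these downward extensions. Once this is verified, each of the four cases yields $H(\mathcal{R}_{C})=B^{-1}(AD)=C$, completing the proof; the degenerate sub-cases in which $A$, $B$ or $D$ is trivial are checked directly and cause no difficulty. I would also remark that the identity can be obtained more abstractly by combining Lemma \ref{lemma:resolution-of-homology-is-the-identity} with the injectivity of the map $C\mapsto\mathcal{R}_{C}$ (which follows from the uniqueness of the decomposition of a string word and the observation that non-existence of $b$ forces $B$ to be maximal), but the direct verification above is more transparent.
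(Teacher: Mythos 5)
Your proposal is correct and follows essentially the same route as the paper: a direct case analysis on which of the arrows $b$, $d$ exist, observing that the alternating cores correspond under $\gamma^{-1}\sigma\leftrightarrow\langle\gamma\rangle\langle\sigma\rangle^{-1}$ and that the maximality condition defining the ends $B$, $D$ of the decomposition of $C$ is exactly the maximality condition defining the downward extensions $(\swarrow)$, $(\searrow)$ used by $H$. The closing remark deducing the identity from Lemma \ref{lemma:resolution-of-homology-is-the-identity} plus injectivity of $C\mapsto\mathcal{R}_{C}$ is a legitimate alternative, but the injectivity itself needs the same end-reconciliation, so nothing is saved.
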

\begin{proof}
Let $(B,A,D)$ be the decomposition of $C$. Let $\mathcal{C}=\mathcal{R}_{C}$. By Lemma \ref{lemma:homology-of-resolution-gives-iso-from-string} $\mathcal{C}$ is a string resolution, and we write $[\iota(-),\iota(+)]$ for the interval of $\mathcal{C}$. If we have both that ($B b^{-1}$ is not a word for all $ b\in\mathbf{A}$) and that ($D d^{-1}$ is not a word for all $ d\in\mathbf{A}$), then by definition $C=A(\swarrow\searrow)=H(\mathcal{C})$. Hence, by symmetry, we can assume $D d^{-1}$ is a word for some $ d\in\mathbf{A}$. Without loss of generality assume that $A=\gamma_{1}^{-1}\sigma_{1}\dots\gamma_{n}^{-1}\sigma_{n}$ for some $\gamma_{i},\sigma_{i}\in\mathbf{P}$.  Let $\mathcal{A}=\langle\gamma_{1}\rangle\langle\sigma_{1}\rangle^{-1}\dots\langle\gamma_{n}\rangle\langle\sigma_{n}\rangle^{-1}$. 

Consider the case where $B b^{-1}$ is not a word for all $ b\in\mathbf{A}$. By construction, in this setting we have that $\mathcal{C}=(\mathcal{A}\langle  d D^{-1}\rangle )(\nearrow)$. Clearly $[0,2n]$ is the interval of $\mathcal{C}$ since $\mathcal{A}=\mathcal{C}_{1}\dots\mathcal{C}_{2n}$. Let $\mathcal{B}=(\mathcal{C}_{\leq0})^{-1}$, which is trivial, and let $\mathcal{D}=\mathcal{C}_{>2n}$. In particular $\mathcal{D}_{1}= d\mu$ for some (possibly trivial) path $\mu$ such that $ d\mu\in\mathbf{P}$ and $D=\mu^{-1}$. By Definition \ref{definition:homology-word-for-strings} this means $H(\mathcal{C})=(A\mu^{-1})(\swarrow)$. Since $B^{-1}A$ is a word we have $B\in W_{A\mu^{-1}}(\swarrow)$, and since $B b^{-1}$ is not word for all $ b\in\mathbf{A}$ we have $(A\mu^{-1})(\swarrow)=B^{-1}A\mu^{-1}=C$.

Consider instead the case where $B b^{-1}$ is a word for $ b\in\mathbf{A}$. Then $\mathcal{C}=\left(\mathcal{L}^{-1}\langle b B^{-1}\rangle^{-1}\mathcal{A}\langle  d D^{-1}\rangle \mathcal{N}\right)[-1]$ where $\mathcal{L}_{i}=\langle a_{i}\rangle$ (respectively, $\mathcal{N}_{i}=\langle a_{i}\rangle$) with $ a_{i}\in\mathbf{A}$ for all appropriate $i>1$. Letting $\mathcal{B}=\langle b B^{-1}\rangle\mathcal{L}$ and $\mathcal{D}=\langle d D^{-1}\rangle\mathcal{N}$ we have that $(\mathcal{B},\mathcal{A},\mathcal{D})$ is the decomposition of $\mathcal{C}$, that $\mathcal{B}\langle b\rangle$ is not a generalised word for all $ b\in\mathbf{A}$ and that $\mathcal{D}\langle d\rangle$ is not a generalised word for all $ d\in\mathbf{A}$. Thus in this setting we have $H(\mathcal{C})=B^{-1}(AD)$ where $B=\mu$ and $D=\sigma^{-1}$. Hence in any case $H(\mathcal{C})=C$.
\end{proof}
\begin{corollary}\label{corollary:bijection-for-strings}There is a bijection between the set of string words $C$ and the set of string resolutions $\mathcal{C}$\emph{:} given explicitly by $C\mapsto \mathcal{R}_{C}$ and  $\mathcal{C}\to H(\mathcal{C})$. Furthermore if $C\leftrightarrow \mathcal{C}$ then $P(\mathcal{C})[\mathscr{H}_{\mathcal{C}}(\iota(\pm))]$ is a resolution of $M(C)$ where $[\iota(-),\iota(+)]$ is the interval of $\mathcal{C}$. 
\end{corollary}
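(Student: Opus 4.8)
The plan is to assemble the statement directly from the three preceding lemmas and the resolution criterion of Lemma \ref{lemma:string-complex-given-by-string-resolution-is-a-resolution}, since all the substantive work has already been discharged there; what remains is to package it as a bijection and to track a single degree shift. First I would check that both assignments are well-defined on the stated domains. For a string word $C$, Lemma \ref{lemma:homology-of-resolution-gives-iso-from-string} asserts that $\mathcal{R}_{C}$ is a string resolution, so $C\mapsto\mathcal{R}_{C}$ does map string words to string resolutions; conversely, the closing sentence of Definition \ref{definition:homology-word-for-strings} records that $H(\mathcal{C})$ is a string word whenever $\mathcal{C}$ is a string resolution, so $\mathcal{C}\mapsto H(\mathcal{C})$ is a map in the opposite direction.

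Next I would verify that these two maps are mutually inverse, which is precisely the content of the two round-trip identities. Lemma \ref{lemma:homology-of-resolution-is-the-identity} gives $H(\mathcal{R}_{C})=C$ for every string word $C$, so $\mathcal{C}\mapsto H(\mathcal{C})$ is a left inverse of $C\mapsto\mathcal{R}_{C}$; and Lemma \ref{lemma:resolution-of-homology-is-the-identity} gives $\mathcal{R}_{H(\mathcal{C})}=\mathcal{C}$ for every string resolution $\mathcal{C}$, so it is also a right inverse. A map possessing a two-sided inverse is a bijection, and the two maps are inverse bijections, which settles the first assertion of the corollary.

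For the \textbf{furthermore}, fix $C\leftrightarrow\mathcal{C}$, so that $\mathcal{C}=\mathcal{R}_{C}$ and equivalently $C=H(\mathcal{C})$, and set $t=\mathscr{H}_{\mathcal{C}}(\iota(\pm))$ for the interval $[\iota(-),\iota(+)]$ of $\mathcal{C}$. By Lemma \ref{lemma:string-complex-given-by-string-resolution-is-a-resolution}, $P(\mathcal{C})$ is point-wise-finite and is a resolution in degree $t$, meaning $P^{l}(\mathcal{C})=0$ for $l>t$ and $H^{m}(P(\mathcal{C}))=0$ for $m\neq t$. With the shift convention of Lemma \ref{lemma:isos-between-string-complexes}, under which $[t]$ sends homogeneous degree $t$ to degree $0$ and relates cohomology by $H^{m}(P(\mathcal{C})[t])=H^{m+t}(P(\mathcal{C}))$, the shifted complex $P(\mathcal{C})[t]$ is therefore concentrated in non-positive degrees and has vanishing cohomology away from degree $0$; that is, it is a resolution in degree $0$. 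Finally I would identify its degree-$0$ cohomology with $M(C)$ using Lemma \ref{lemma:homology-of-resolution-gives-iso-from-string} (equivalently Lemma \ref{lemma:resolution-of-homology-is-the-identity} together with $H(\mathcal{C})=C$), which yields $H^{0}(P(\mathcal{C})[t])=H^{t}(P(\mathcal{C}))\simeq M(C)$. By the definition of a resolution of a module recalled at the start of \S\ref{section:words-and-corresponding-generalised-words}, this shows $P(\mathcal{C})[t]$ is a resolution of $M(C)$, as required. I do not expect a genuine obstacle: the only point demanding care is the purely bookkeeping one of fixing the shift convention consistently so that $[t]$ carries degree $t$ to $0$ and shifts cohomology accordingly, after which the conclusion follows with no further computation.
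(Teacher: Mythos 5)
Your proposal is correct and follows essentially the same route as the paper: the bijection is obtained from the two round-trip identities $H(\mathcal{R}_{C})=C$ (Lemma \ref{lemma:homology-of-resolution-is-the-identity}) and $\mathcal{R}_{H(\mathcal{C})}=\mathcal{C}$ (Lemma \ref{lemma:resolution-of-homology-is-the-identity}), and the furthermore clause from Lemma \ref{lemma:string-complex-given-by-string-resolution-is-a-resolution} together with the identification $H^{t}(P(\mathcal{C}))\simeq M(C)$. Your extra care about the shift convention is harmless bookkeeping that the paper leaves implicit.
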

\begin{proof}
By Lemma \ref{lemma:homology-of-resolution-is-the-identity} we have $H(\mathcal{R}_{C})=C$, and by Lemma \ref{lemma:resolution-of-homology-is-the-identity} we have $\mathcal{R}_{H(\mathcal{C})}=\mathcal{C}$. Hence these assignments are mutually inverse. Now suppose $C\leftrightarrow\mathcal{C}$. By Lemma \ref{lemma:resolution-of-homology-is-the-identity} we have $H^{t}(P(\mathcal{C}))\simeq M(C)$ where $t=\mathscr{H}_{\mathcal{C}}(\iota(\pm))$. By Lemma \ref{lemma:string-complex-given-by-string-resolution-is-a-resolution} we have that $P(\mathcal{C})$ is a point-wise-finite resolution in degree $t$. 
\end{proof}
\subsection{The resolution, homology and a one-to-one correspondence for bands.}
\begin{definition}\label{definition:band-res-of-a-word}
Let $C={}^{\infty\hspace{-0.5ex}}A^{\infty}$ for some cyclic alternating word $A$. Now let
\[\mathcal{A}=\langle\gamma_{1}\rangle\langle\sigma_{1}\rangle^{-1}\dots\langle\gamma_{n}\rangle\langle\sigma_{n}\rangle^{-1}\text{ and }\mathcal{R}_{C}={}^{\infty\hspace{-0.5ex}}\mathcal{A}^{\infty}\text{ where }A=\gamma_{1}^{-1}\sigma_{1}\dots\gamma_{n}^{-1}\sigma_{n},\, (\gamma_{i},\sigma_{i}\in\mathbf{P}).
\]
Clearly $\mathcal{R}_{C}$ is a band-resolution in the sense of Definition \ref{definition:alternating-words-band-resolutions}
\end{definition}
\begin{lemma}\label{lemma:homology-of-resolution-gives-iso-from-band} Let $V$ be an object of \emph{$R[T,T^{-1}]\text{-\textbf{Mod}}_{R\text{-\textbf{Proj}}}$} and $C$ be a $p$-periodic non-primitive $\mathbb{Z}$-word. Then there is a $\Lambda$-module isomorphism $H^{0}(P(\mathcal{R}_{C},V))\simeq M(C,V)$.
\end{lemma}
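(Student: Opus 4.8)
The plan is to follow the strategy of Lemma \ref{lemma:homology-of-resolution-gives-iso-from-string}, but to carry out the entire computation at the level of $\Lambda\text{-}R[T,T^{-1}]$-bimodules, so that passing to coefficients in $V$ becomes a formal consequence of right exactness. Since $M(C,V)$ and the band-resolution $\mathcal{R}_{C}$ are only defined once a band-word representative has been fixed (Proposition \ref{proposition:periodic-non-primitive-given-by-alternating} and Definition \ref{definition:band-res-of-a-word}), I may assume $C={}^{\infty\hspace{-0.5ex}}A^{\infty}$ with $A=\gamma_{1}^{-1}\sigma_{1}\dots\gamma_{n}^{-1}\sigma_{n}$ cyclic and alternating, so that $\mathcal{R}_{C}={}^{\infty\hspace{-0.5ex}}\mathcal{A}^{\infty}$ with $\mathcal{A}=\langle\gamma_{1}\rangle\langle\sigma_{1}\rangle^{-1}\dots\langle\gamma_{n}\rangle\langle\sigma_{n}\rangle^{-1}$.

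As $\mathcal{R}_{C}$ is a $p$-periodic generalised word, $P(\mathcal{R}_{C})$ is a complex of $\Lambda\text{-}R[T,T^{-1}]$-bimodules with $Tg_{\mathcal{R}_{C},i}=g_{\mathcal{R}_{C},i-p}$, concentrated in degrees $-1$ and $0$, and $P(\mathcal{R}_{C},V)=P(\mathcal{R}_{C})\otimes_{R[T,T^{-1}]}V$ by definition. The argument of Lemma \ref{lemma:band-complex-given-by-band-resolution-is-a-resolution}, read at the level of bimodules (i.e.\ with $V=R[T,T^{-1}]$), shows that $d^{-1}_{P(\mathcal{R}_{C})}$ is injective; hence $0\to P^{-1}(\mathcal{R}_{C})\to P^{0}(\mathcal{R}_{C})\to H^{0}(P(\mathcal{R}_{C}))\to0$ is a short exact sequence of bimodules with $H^{0}(P(\mathcal{R}_{C}))=\mathrm{coker}(d^{-1}_{P(\mathcal{R}_{C})})$. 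Because $-\otimes_{R[T,T^{-1}]}V$ is right exact and $P^{1}(\mathcal{R}_{C})=0$, it carries this cokernel to $H^{0}(P(\mathcal{R}_{C},V))$, so that $H^{0}(P(\mathcal{R}_{C},V))\cong H^{0}(P(\mathcal{R}_{C}))\otimes_{R[T,T^{-1}]}V$. It therefore suffices to construct a bimodule isomorphism $H^{0}(P(\mathcal{R}_{C}))\cong M(C)$, since tensoring with $V$ and Definition \ref{definition:band-module} then give the result.

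For this isomorphism, the degree-$0$ term $P^{0}(\mathcal{R}_{C})=\bigoplus_{i\in\mathbb{Z}}\Lambda g_{\mathcal{R}_{C},2i}$ is indexed by the even positions, which correspond bijectively to the $C$-peaks, so the assignment $g_{\mathcal{R}_{C},2i}\mapsto g_{C,i}$ is a $\Lambda$-isomorphism onto $N(C)$ (Definition \ref{definition:periodicstring-modules}); it intertwines the two $T$-actions because $Tg_{\mathcal{R}_{C},2i}=g_{\mathcal{R}_{C},2(i-n)}\mapsto g_{C,i-n}=Tg_{C,i}$ by Lemma \ref{lemma:periodic-string-module-has-T-automorphism}. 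Unwinding Definition \ref{definition:string-complexes} on the degree-$(-1)$ generators gives $d^{-1}(g_{\mathcal{R}_{C},2i+1})=\gamma_{i+1}g_{\mathcal{R}_{C},2i}+\sigma_{i+1}g_{\mathcal{R}_{C},2i+2}$, so $\mathrm{im}(d^{-1})$ is carried to the sub-bimodule generated by the elements $\gamma_{i+1}g_{C,i}+\sigma_{i+1}g_{C,i+1}$. Comparing with the defining relations $\gamma_{i+1}g_{C,i}-\sigma_{i+1}g_{C,i+1}$ of $L(C)$, these two sub-bimodules agree after a sign-twisting automorphism of $N(C)$, exactly the device used tacitly in the string case.

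The step I expect to be the main obstacle is precisely this final identification of $\mathrm{im}(d^{-1})$ with $L(C)$. In the string case (Lemma \ref{lemma:homology-of-resolution-gives-iso-from-string}) the sign-twist is an unconstrained $\Lambda$-isomorphism between finitely many generators; here it must additionally respect the $R[T,T^{-1}]$-action, so that the identification survives $-\otimes_{R[T,T^{-1}]}V$. I would therefore concentrate on checking that the sign convention is compatible with the period-$n$ shift carried by $T$, tracking how the wrap-around coefficients $a^{\pm}_{\tau\omega}$ of Definition \ref{definition:band-pres} interact with the signs through the isomorphism $P(\mathcal{R}_{C},V)\cong P(\mathcal{A},V)$ of Remark \ref{remark:isomorphism-for-band-complex}. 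Once this bimodule-level matching is secured, the remaining verifications are the routine bookkeeping already performed for strings.
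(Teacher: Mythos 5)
Your proposal follows essentially the same route as the paper's proof. The paper likewise identifies $N(C)$ with $P^{0}(\mathcal{R}_{C})=\bigoplus_{i\in\mathbb{Z}}\Lambda g_{\mathcal{C},2i}$ via the sign-twisted map $g_{C,i}\mapsto(-1)^{i}g_{\mathcal{C},2i}$, observes that this carries the relation $\gamma_{i+1}g_{C,i}-\sigma_{i+1}g_{C,i+1}$ to $\pm d^{-1}(g_{\mathcal{C},2i+1})$, hence $L(C)$ onto $\mathrm{im}(d^{-1})$, and then passes to coefficients in $V$. The only cosmetic difference is in that last step: where you apply right exactness of $-\otimes_{R[T,T^{-1}]}V$ to the cokernel presentation of $H^{0}$, the paper constructs an $R[T,T^{-1}]$-balanced map $M(C)\times V\to H^{0}(P(\mathcal{R}_{C},V))$ and invokes the universal property of the tensor product; these are the same argument in different clothing. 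The ``main obstacle'' you single out is indeed the only non-routine point, and your instinct is well placed: the twist $(-1)^{i}$ intertwines the two $T$-actions only up to the factor $(-1)^{n}$ coming from $g_{C,i}\mapsto g_{C,i-n}$, and the paper disposes of this in one line (``since $n$ is even'') when checking balancedness. Note also that the paper does not need to route this check through the finite model $P(\mathcal{A},V)$ of Remark \ref{remark:isomorphism-for-band-complex} or the wrap-around coefficients $a^{\pm}_{\tau\omega}$ of Definition \ref{definition:band-pres}; it works directly with the $\mathbb{Z}$-indexed presentation of Definition \ref{definition:periodicstring-modules}, where the $T$-compatibility is a single parity computation. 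Completing your write-up therefore requires only that one computation, not the bookkeeping with $a^{\pm}_{\tau\omega}$ you anticipate.
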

\begin{proof}
We use the notation of Definition \ref{definition:band-res-of-a-word}. Let $P=P(\mathcal{R}_{C},V)$ and $S=P(\mathcal{R}_{C})$. In this setting we have $p=2n$, and that $P^{0}=S^{0}\otimes_{R[T,T^{-1}]}V$ where $S^{0}=\bigoplus_{i\in\mathbb{Z}}\Lambda  g_{\mathcal{C},2i}$. Furthermore, since $\mathcal{A}$ is alternating, we have $\mathrm{ker}(d^{0}_{P})=P^{0}$. Recall $M(C,V)=M(C)\otimes_{R[T,T^{-1}]}v$ where $M(C)=N(C)/L(C)$. Here $N(C)=\bigoplus_{i\in\mathbb{Z}}\Lambda  g_{C,i}$ and $L(C)$ is generated by elements of the form $\gamma_{i+1} g_{C,i}-\sigma_{i+1} g_{C,i+1}$ where $i$ runs through $\mathbb{Z}$. 

Since $v_{C}(i)=v_{\mathcal{C}}(2i)$ for all $i$, the assignment $g_{C,i}\mapsto (-1)^{i} g_{\mathcal{C},2i}$ defines an isomorphism $\varphi\colon N(C)\to S^{0}$ of $\Lambda$-modules. Let $H=H^{0}(P)$ and $G=\mathrm{im}(d^{-1}_{P})$, and so $H=P^{0}/G$. By construction we have $\varphi(\gamma_{i+1} g_{C,i}-\sigma_{i+1} g_{C,i+1})= g_{\mathcal{C},2i}^{-}+ g_{\mathcal{C},2i+1}^{+}$, and so $\varphi$ induces a $\Lambda$-module homomorphism $\psi\colon M(C)\times V\to H$ given by $\psi( g_{C,i}+L(C),v)=(-1)^{i} g_{\mathcal{C},2i}\otimes v+G$. Since $n$ is even we have $\psi( g_{C,i-n}+L(C),v)=(-1)^{i} g_{\mathcal{C},2i}\otimes Tv+G$ and so $\psi$ is $R[T,T^{-1}]$-balanced. The universal property of the tensor product (of modules over $R[T,T^{-1}]$) now induces a $\Lambda$-module homomorphism $\xi\colon M(C,V)\to H$ defined by $\psi$ on pure tensors. Dually, one can define a homomorphism $H\to M(C,V)$ by $ g_{\mathcal{C},2i}\otimes v+G\mapsto((-1)^{i} g_{C,i}+G)\otimes v$, which is the inverse of $\xi$.
\end{proof}
\begin{lemma}\label{lemma:R-C-under-shits-and-inverses} Let $C$ and $E$ be band words. If  $E=C^{\pm1}[n]$ then $\mathcal{R}_{E}=\mathcal{C}^{\pm1}[2m]$ for some $m\in\mathbb{Z}$ where $\mathcal{C}=\mathcal{R}_{C}$ and $\mathscr{H}_{\mathcal{C}}(2m)=0$.
\end{lemma}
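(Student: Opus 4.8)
The plan is to treat the two cases $E=C[n]$ and $E=C^{-1}[n]$ separately, reducing the second to the first, and to observe throughout that the required condition $\mathscr{H}_{\mathcal{C}}(2m)=0$ is automatic. Indeed, writing $\mathcal{A}=\langle\gamma_{1}\rangle\langle\sigma_{1}\rangle^{-1}\dots\langle\gamma_{n}\rangle\langle\sigma_{n}\rangle^{-1}$ for the cycle of $\mathcal{C}=\mathcal{R}_{C}$ as in Definition \ref{definition:band-res-of-a-word}, each consecutive pair $\langle\gamma_{i}\rangle\langle\sigma_{i}\rangle^{-1}$ contributes $\mathscr{H}(\langle\gamma_{i}\rangle)+\mathscr{H}(\langle\sigma_{i}\rangle^{-1})=-1+1=0$ to the homological index. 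Hence $\mathscr{H}_{\mathcal{C}}(2j)=0$ for every $j\in\mathbb{Z}$, so it will suffice in each case to exhibit the shift as an \emph{even} shift.

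First I would handle $E=C[t]$. By Lemma \ref{lemma:shifting-alternating-periods-is-alternating} the cycle $A'$ of $E$ is either $A$ or a cyclic rotation $A'=\gamma_{m}^{-1}\sigma_{m}\dots\gamma_{n}^{-1}\sigma_{n}\gamma_{1}^{-1}\sigma_{1}\dots\gamma_{m-1}^{-1}\sigma_{m-1}$ for some $1<m\leq n$. Since the passage $A\mapsto\mathcal{R}_{C}$ of Definition \ref{definition:band-res-of-a-word} is carried out pair by pair, replacing each $\gamma_{i}^{-1}\sigma_{i}$ by $\langle\gamma_{i}\rangle\langle\sigma_{i}\rangle^{-1}$, the cycle of $\mathcal{R}_{E}$ is exactly the corresponding rotation of $\mathcal{A}$ by $m-1$ pairs. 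As rotating the cycle of a periodic generalised word by $j$ pairs shifts the bi-infinite word by $2j$ positions, this gives $\mathcal{R}_{E}=\mathcal{C}[2(m-1)]$ (or $\mathcal{R}_{E}=\mathcal{C}=\mathcal{C}[0]$ when $A'=A$), an even shift as wanted.

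Next I would treat the pure inverse $E=C^{-1}$. Here the cycle of $C^{-1}$ is $A^{-1}=\sigma_{n}^{-1}\gamma_{n}\dots\sigma_{1}^{-1}\gamma_{1}$, so by Definition \ref{definition:band-res-of-a-word} the cycle of $\mathcal{R}_{C^{-1}}$ is $\langle\sigma_{n}\rangle\langle\gamma_{n}\rangle^{-1}\dots\langle\sigma_{1}\rangle\langle\gamma_{1}\rangle^{-1}=\mathcal{A}^{-1}$. On the other hand, computing $(\mathcal{R}_{C})^{-1}$ directly from the inverse convention of Definition \ref{definition:generalised-words-for-complexes} shows that it is again periodic with cycle $\mathcal{A}^{-1}$ beginning in the same position, whence $\mathcal{R}_{C^{-1}}=(\mathcal{R}_{C})^{-1}=\mathcal{C}^{-1}[0]$. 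The general inverse case $E=C^{-1}[n]$ then follows by applying the shift case above to the band word $C^{-1}$: this yields $\mathcal{R}_{E}=\mathcal{R}_{C^{-1}}[2m]=\mathcal{C}^{-1}[2m]$ for some $m$, completing the $\mathcal{C}^{-1}$ branch.

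The main obstacle I anticipate is the bookkeeping of shifts: one must match the word-level shift (measured in letters of $C$) with the generalised-word-level shift $2m$ (measured in generalised letters of $\mathcal{C}$), confirm it is always even, and keep it compatible with the placement of the marker $\mid$ in the bi-infinite $\mathbb{Z}$-words. This is precisely where one should invoke Lemma \ref{lemma:shifting-alternating-periods-is-alternating} to guarantee that the rotation occurs only by whole $\gamma^{-1}\sigma$ pairs, rather than attempting a raw index computation, and track the inversion rule of Definition \ref{definition:generalised-words-for-complexes} carefully enough to see that the cycles of $\mathcal{R}_{C^{-1}}$ and $(\mathcal{R}_{C})^{-1}$ start at the same position.
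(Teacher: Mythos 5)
Your proposal is correct and follows essentially the same route as the paper, whose entire proof is that the claim "follows from a straightforward application of Lemma \ref{lemma:shifting-alternating-periods-is-alternating}"; you have simply made explicit the details the paper leaves to the reader (the rotation by whole $\gamma^{-1}\sigma$ pairs translating into an even shift of $\mathcal{C}$, the identity $\mathcal{R}_{C^{-1}}=(\mathcal{R}_{C})^{-1}$, and the automatic vanishing of $\mathscr{H}_{\mathcal{C}}$ at even indices).
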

\begin{proof}
The proof follows from a straightforward application of Lemma \ref{lemma:shifting-alternating-periods-is-alternating}. 
\end{proof}
\begin{definition}\label{definition:homology-word-for-bands}
Let $\mathcal{C}$ be a generalised word which is a band-resolution, hence $\mathcal{C}={}^{\infty\hspace{-0.5ex}}\mathcal{A}^{\infty}$ for some cyclic alternating generalised word $\mathcal{A}$. Now let 
\[A=\gamma_{1}^{-1}\sigma_{1}\dots\gamma_{n}^{-1}\sigma_{n}\text{ and }H(\mathcal{C})={}^{\infty\hspace{-0.5ex}}A^{\infty}\text{ where }\mathcal{A}=\langle\gamma_{1}\rangle\langle\sigma_{1}\rangle^{-1}\dots\langle\gamma_{n}\rangle\langle\sigma_{n}\rangle^{-1},\, (\gamma_{i},\sigma_{i}\in\mathbf{P}).
\]
\end{definition}
\begin{lemma}\label{lemma:homology-word-of-a-shift-or-inverse-bands}
Let $t\in\mathbb{Z}$ and $\mathcal{C}$ be a band-resolution. Then the following statements hold.
\begin{enumerate}
    \item The generalised word $\mathcal{C}[2t]$ is a band-resolution and $H(\mathcal{C}[2t])=H(\mathcal{C})[d]$ for some $d\in\mathbb{Z}$.
    \item The generalised word $\mathcal{C}^{-1}$ is a band-resolution and $H(\mathcal{C}^{-1})=H(\mathcal{C})^{-1}$.
\end{enumerate}
\end{lemma}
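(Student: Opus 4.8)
The plan is to argue directly from Definition \ref{definition:homology-word-for-bands}, exploiting the fact that both operations $\mathcal{C}\mapsto\mathcal{C}[2t]$ and $\mathcal{C}\mapsto\mathcal{C}^{-1}$ act compatibly with the blockwise correspondence $\langle\gamma_i\rangle\langle\sigma_i\rangle^{-1}\leftrightarrow\gamma_i^{-1}\sigma_i$ that defines the homology word. To set notation I would write $\mathcal{C}={}^{\infty\hspace{-0.5ex}}\mathcal{A}^{\infty}$ with cyclic alternating cycle $\mathcal{A}=\langle\gamma_1\rangle\langle\sigma_1\rangle^{-1}\dots\langle\gamma_n\rangle\langle\sigma_n\rangle^{-1}$, a generalised $\{0,\dots,2n\}$-word, so that $\mathcal{C}$ is $2n$-periodic with $\mathcal{C}_{2i-1}=\langle\gamma_i\rangle$ and $\mathcal{C}_{2i}=\langle\sigma_i\rangle^{-1}$, where the paths $\gamma_i,\sigma_i$ are extended periodically over $i\in\mathbb{Z}$ as in Definition \ref{definition:periodicstring-modules}. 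By construction $H(\mathcal{C})={}^{\infty\hspace{-0.5ex}}A^{\infty}$ with $A=\gamma_1^{-1}\sigma_1\dots\gamma_n^{-1}\sigma_n$.

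For part (1), $\mathcal{C}[2t]$ is again a generalised $\mathbb{Z}$-word since shifting preserves this property. Reading off its entries, its cycle is the rotation
\[
\mathcal{A}'=\langle\gamma_{t+1}\rangle\langle\sigma_{t+1}\rangle^{-1}\dots\langle\gamma_{t+n}\rangle\langle\sigma_{t+n}\rangle^{-1},
\]
which is alternating in the sense of Definition \ref{definition:alternating-words-string-resolutions}; here the hypothesis that the shift is by an \emph{even} number $2t$ is precisely what ensures the rotated cycle still begins with a direct generalised letter. That $\mathcal{A}'$ is cyclic in the sense of Definition \ref{definition:cyclic-generalised-words} follows from checking the three clauses: $(\mathcal{A}')^2$ is a generalised word because $\mathcal{C}[2t]$ is one; $\mathscr{H}_{\mathcal{A}'}(2n)=0$ since $\mathcal{A}'$ contains $n$ direct and $n$ inverse generalised letters; and $\mathcal{A}'$ is not a proper power because $\mathcal{C}[2t]$ retains minimal period $2n$. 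Hence $\mathcal{C}[2t]$ is a band-resolution, and applying Definition \ref{definition:homology-word-for-bands} to $\mathcal{A}'$ gives $H(\mathcal{C}[2t])={}^{\infty\hspace{-0.5ex}}(A')^{\infty}$ with $A'=\gamma_{t+1}^{-1}\sigma_{t+1}\dots\gamma_{t+n}^{-1}\sigma_{t+n}$. This $A'$ is exactly the rotation of $A$ moving the initial block $\gamma_1^{-1}\sigma_1\dots\gamma_t^{-1}\sigma_t$ to the end, so ${}^{\infty\hspace{-0.5ex}}(A')^{\infty}=H(\mathcal{C})[d]$ where $d$ is the combined length of the paths $\gamma_1,\sigma_1,\dots,\gamma_t,\sigma_t$. (Since $H(\mathcal{C})$ and $H(\mathcal{C}[2t])$ are themselves band words, this last identification is also an instance of Lemma \ref{lemma:shifting-alternating-periods-is-alternating}.)

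For part (2), I would apply the inversion convention for generalised $\mathbb{Z}$-words from Definition \ref{definition:generalised-words-for-complexes}, under which $(\mathcal{C}^{-1})_j=\mathcal{C}_{1-j}^{-1}$ for $j\geq1$. Reading off the first period gives
\[
(\mathcal{C}^{-1})_1\dots(\mathcal{C}^{-1})_{2n}=\mathcal{C}_0^{-1}\mathcal{C}_{-1}^{-1}\dots\mathcal{C}_{-(2n-1)}^{-1}=\langle\sigma_n\rangle\langle\gamma_n\rangle^{-1}\dots\langle\sigma_1\rangle\langle\gamma_1\rangle^{-1}=\mathcal{A}^{-1}.
\]
Inversion interchanges direct and inverse generalised letters while preserving their counts, so $\mathcal{A}^{-1}$ is again cyclic and alternating, whence $\mathcal{C}^{-1}={}^{\infty\hspace{-0.5ex}}(\mathcal{A}^{-1})^{\infty}$ is a band-resolution. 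Feeding $\mathcal{A}^{-1}$ into Definition \ref{definition:homology-word-for-bands} yields $H(\mathcal{C}^{-1})={}^{\infty\hspace{-0.5ex}}(\sigma_n^{-1}\gamma_n\dots\sigma_1^{-1}\gamma_1)^{\infty}$; since $A^{-1}=\sigma_n^{-1}\gamma_n\dots\sigma_1^{-1}\gamma_1$ and inversion commutes with the operation ${}^{\infty\hspace{-0.5ex}}(\cdot)^{\infty}$, this equals $({}^{\infty\hspace{-0.5ex}}A^{\infty})^{-1}=H(\mathcal{C})^{-1}$.

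The main obstacle is purely bookkeeping: pinning down the indexing of ${}^{\infty\hspace{-0.5ex}}\mathcal{A}^{\infty}$, of the shift, and of the inversion (in particular the placement of the central bar $\mid$), and then re-verifying all three clauses of cyclicity (Definition \ref{definition:cyclic-generalised-words}) for the rotated and inverted cycles, most delicately the minimality of the period. None of these steps is conceptually hard, but the verification of the alternating and cyclic conditions under the two operations is where the real content of the proof lies.
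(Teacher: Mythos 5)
Your proposal is correct and follows essentially the same route as the paper's proof: both arguments compute the rotated cycle $\mathcal{A}'$ of $\mathcal{C}[2t]$ (the paper first reduces $t$ modulo $n$, which is equivalent to your periodic extension of indices) and the inverted cycle $\mathcal{A}^{-1}$ of $\mathcal{C}^{-1}$, then read off $H$ blockwise from Definition \ref{definition:homology-word-for-bands}. Your additional explicit checks of the cyclicity clauses and of the indexing of the inverse are details the paper leaves implicit, but they do not change the argument.
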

\begin{proof}
We use the notation from Definition \ref{definition:homology-word-for-bands}, and so $\mathcal{C}={}^{\infty\hspace{-0.5ex}}\mathcal{A}^{\infty}$ where $\mathcal{A}=\langle\gamma_{1}\rangle\langle\sigma_{1}\rangle^{-1}\dots\langle\gamma_{n}\rangle\langle\sigma_{n}\rangle^{-1}$, which means $H(\mathcal{C})={}^{\infty\hspace{-0.5ex}}A^{\infty}$ where $A=\gamma_{1}^{-1}\sigma_{1}\dots\gamma_{n}^{-1}\sigma_{n}$. 

(1) Let $t=l+mn$ for some $m\in\mathbb{Z}$ and $l\in\{0,\dots,n-1\}$. Since $\mathcal{A}$ is a generalised $\{0,\dots,2n\}$-word, $\mathcal{C}$ is $2n$-periodic, and so $\mathcal{C}[2t]=\mathcal{C}[2l]$. It suffices to assume $l\neq 0$. By definition $\mathcal{C}[2l]={}^{\infty\hspace{-0.5ex}}\mathcal{A}'^{\infty}$ where
\[
\mathcal{A}'=\langle\gamma_{l+1}\rangle\langle\sigma_{l+1}\rangle^{-1}\dots \langle\gamma_{n}\rangle\langle\sigma_{n}\rangle^{-1}\langle\gamma_{1}\rangle\langle\sigma_{1}\rangle^{-1}\dots\langle\gamma_{l}\rangle\langle\sigma_{l}\rangle^{-1}.
\]
By definition this means $H(\mathcal{C}[2l])={}^{\infty\hspace{-0.5ex}}E^{\infty}$ where $E=\gamma_{l+1}^{-1}\sigma_{l+1}\dots \gamma_{n}^{-1}\sigma_{n}\gamma_{1}^{-1}\sigma_{1}\dots\gamma_{l}^{-1}\sigma_{l}$. It follows that $H(\mathcal{C}[2t])=H(\mathcal{C})[d]$ where $d$ is the sum of the lengths of $\gamma_{i}$ and $\sigma_{i}$ as $i$ runs from $1$ to $l$.

(2) The proof here is similar to the proof of (1), but where $E$ is replaced with $A'=\sigma_{n}^{-1}\gamma_{n}\dots\sigma_{1}^{-1}\gamma_{1}$. 
\end{proof}
\begin{corollary}\label{corollary:bijection-for-bands}There is a bijection between the set of \emph{(}words $C={}^{\infty\hspace{-0.5ex}}A^{\infty}$ where $A$ is cyclic and alternating\emph{)} and the set of \emph{(}generalised words $\mathcal{C}$ which are band-resolutions\emph{):} given explicitly by $C\mapsto \mathcal{R}_{C}$ and  $\mathcal{C}\to H(\mathcal{C})$. Furthermore if $C\leftrightarrow \mathcal{C}$ and $V$ lies in \emph{$R[T,T^{-1}]\text{-\textbf{Mod}}_{R\text{-\textbf{Proj}}}$} then $P(\mathcal{C},V)$ is a resolution of $M(C,V)$. 
\end{corollary}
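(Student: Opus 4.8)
The plan is to follow the proof of Corollary \ref{corollary:bijection-for-strings} closely, since all the substantive analytic work has already been carried out in the band analogues of the auxiliary lemmas; what remains is essentially to assemble them. The first observation is that the two assignments are genuinely mutually inverse substitution rules. By Definition \ref{definition:band-res-of-a-word}, $\mathcal{R}_{C}$ is obtained from $C={}^{\infty\hspace{-0.5ex}}A^{\infty}$ by replacing each syllable $\gamma_{i}^{-1}\sigma_{i}$ of the cyclic alternating word $A$ by the pair of generalised letters $\langle\gamma_{i}\rangle\langle\sigma_{i}\rangle^{-1}$ and repeating periodically, whereas Definition \ref{definition:homology-word-for-bands} performs exactly the reverse replacement $\langle\gamma_{i}\rangle\langle\sigma_{i}\rangle^{-1}\mapsto\gamma_{i}^{-1}\sigma_{i}$. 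Hence $H(\mathcal{R}_{C})=C$ and $\mathcal{R}_{H(\mathcal{C})}=\mathcal{C}$ hold on the nose, and the only genuine content of the bijection claim is that each assignment lands in the stated set.

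First I would confirm well-definedness. That $\mathcal{R}_{C}$ is a band-resolution for every band word $C$ is already recorded in Definition \ref{definition:band-res-of-a-word}. Conversely, given a band-resolution $\mathcal{C}={}^{\infty\hspace{-0.5ex}}\mathcal{A}^{\infty}$ with $\mathcal{A}$ cyclic and alternating, I must verify that $H(\mathcal{C})={}^{\infty\hspace{-0.5ex}}A^{\infty}$, where $A=\gamma_{1}^{-1}\sigma_{1}\dots\gamma_{n}^{-1}\sigma_{n}$, is again a band word; that is, that $A$ is cyclic in the sense of Definition \ref{definition:cyclic-words} and alternating in the sense of Definition \ref{definition:finitely-generated-words-II}. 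Alternation is immediate from the syllable form of $A$, and once $A$ is known to be cyclic and alternating, ${}^{\infty\hspace{-0.5ex}}A^{\infty}$ is automatically periodic and non-primitive (being neither direct nor inverse), so $H(\mathcal{C})$ is a band word.

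For the final clause, suppose $C\leftrightarrow\mathcal{C}$, so that $\mathcal{C}=\mathcal{R}_{C}$ is a band-resolution. By Lemma \ref{lemma:band-complex-given-by-band-resolution-is-a-resolution}, $P(\mathcal{C},V)$ is a resolution in degree $0$ for every object $V$ of $R[T,T^{-1}]\text{-\textbf{Mod}}_{R\text{-\textbf{Proj}}}$. Since a band word $C$ is in particular a periodic non-primitive $\mathbb{Z}$-word, Lemma \ref{lemma:homology-of-resolution-gives-iso-from-band} supplies a $\Lambda$-module isomorphism $H^{0}(P(\mathcal{C},V))\simeq M(C,V)$. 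Recalling that a complex is a resolution of $M$ precisely when it is a resolution in degree $0$ whose zeroth cohomology is isomorphic to $M$, these two facts together yield that $P(\mathcal{C},V)$ is a resolution of $M(C,V)$, as required.

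The hard part here is bookkeeping rather than new mathematics: the one step needing genuine care is the transfer of the \emph{not a proper power} condition across the substitution rule, so that cyclicity of the generalised cycle $\mathcal{A}$ forces cyclicity of the word cycle $A$ and conversely. For this I would argue that any factorisation $A=B^{k}$ with $k>1$ must respect the peak--valley structure of the alternating word $A$, whence $B$ is itself alternating and corresponds under the substitution to a generalised word $\mathcal{B}$ with $\mathcal{A}=\mathcal{B}^{k}$, contradicting the cyclicity of $\mathcal{A}$; the reverse implication is symmetric. Everything else reduces to the already-established Lemmas \ref{lemma:band-complex-given-by-band-resolution-is-a-resolution} and \ref{lemma:homology-of-resolution-gives-iso-from-band}.
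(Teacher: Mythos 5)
Your proposal is correct and follows essentially the same route as the paper: the paper likewise notes that $H(\mathcal{R}_{C})=C$ and $\mathcal{R}_{H(\mathcal{C})}=\mathcal{C}$ follow directly from Definitions \ref{definition:band-res-of-a-word} and \ref{definition:homology-word-for-bands}, and then concludes by citing Lemma \ref{lemma:homology-of-resolution-gives-iso-from-band} for the isomorphism $H^{0}(P(\mathcal{C},V))\simeq M(C,V)$ and Lemma \ref{lemma:band-complex-given-by-band-resolution-is-a-resolution} for $P(\mathcal{C},V)$ being a resolution in degree $0$. The extra care you take over transferring cyclicity (the ``not a proper power'' condition) across the substitution rule is exactly the detail the paper dismisses as straightforward, and your sketch of that step is sound.
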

\begin{proof}
It is straightforward to check that, by Definitions \ref{definition:band-res-of-a-word} and \ref{definition:homology-word-for-bands}, we have $H(\mathcal{R}_{C})=C$ for any such $C$ (as in the statement of the lemma); and similarly we have $\mathcal{R}_{H(\mathcal{C})}=\mathcal{C}$ for any such $\mathcal{C}$. Hence these assignments are mutually inverse. Now suppose $C\leftrightarrow\mathcal{C}$. By Lemma \ref{lemma:homology-of-resolution-gives-iso-from-band} we have $H^{0}(P(\mathcal{C},V))\simeq M(C,V)$. By Lemma \ref{lemma:band-complex-given-by-band-resolution-is-a-resolution} we have that $P(\mathcal{C},V)$ is a point-wise-finite resolution in degree $0$. 
\end{proof}
\begin{example}
We continue with Example \ref{example:band-complexes-part-1-for-2-by-2-p-adics}, where $\Lambda \simeq \widehat{\mathbb{Z}}_{p}Q/\langle a^{2}, b^{2}, a b+ b a-p\rangle$ and $\mathcal{C}={}^{\infty\hspace{-0.5ex}}\mathcal{A}^{\infty}$ where $\mathcal{A}=\langle a b a\rangle\langle b a\rangle^{-1}\langle b a b\rangle\langle a\rangle^{-1}\langle b\rangle\langle a b\rangle^{-1}$. Here $H(\mathcal{C})$ is the band word $C$ from Example \ref{example:intro-band-module}, and so $P(\mathcal{C},V)$ is a resolution of $M(C,V)$ by Corollary \ref{corollary:bijection-for-bands}.
\end{example}
\section{Proofs of the main theorems}\label{section:proofs-of-main}
Theorems \ref{theorem:main-fin-gen-modules-are-sums-of-strings-and-bands}, \ref{theorem:main-isomorphisms-between-string-and-band-modules} and \ref{theorem:main-krull-remak-schmidt-property} together describe the objects in $\Lambda\text{-}\boldsymbol{\mathrm{mod}}$. For the proof we use a description of the objects in $\mathcal{K}(\Lambda\text{-}\boldsymbol{\mathrm{proj}})$ from \cite{Ben2016}; see Theorems \ref{theorem:from-thesis-I}, \ref{theorem:from-thesis-II} and \ref{theorem:from-thesis-III} below. 
\begin{definition}
\cite[Definition 3.12]{Ben2016} A \emph{string complex} has the form $P(\mathcal{C})$ where $\mathcal{C}$ is an aperiodic generalised word. If $V$ is an $R[T,T^{-1}]$-module we call $P(\mathcal{C},V)$ a \emph{band complex} provided $\mathcal{C}$ is a periodic generalised $\mathbb{Z}$-word and $V$ is an indecomposable
$R[T,T^{-1}]$-module.
\end{definition}
\begin{theorem}\emph{\cite[Theorem 1.1]{Ben2016}}\label{theorem:from-thesis-I} The following statements hold.
\begin{enumerate}
\item Every object in $\mathcal{K}(\Lambda\text{-}\boldsymbol{\mathrm{proj}})$ is isomorphic to a (possibly infinite) coproduct of shifts of string complexes $P(\mathcal{C})$ and shifts of band complexes $P(\mathcal{C},V)$.
\item Each shift of a string or band complex is indecomposable in $\mathcal{K}(\Lambda\text{-}\boldsymbol{\mathrm{Proj}})$.
\end{enumerate}
\end{theorem}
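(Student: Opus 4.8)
The plan is to reduce the homotopy classification to a matrix problem of \emph{bunch of semichains} type and then invoke its known solution, as Bekkert and Merklen did in the finite-dimensional case. First I would exploit semiperfectness (Remark \ref{remark:complete-gentle-semiperfect}): every object $P$ of $\mathcal{K}(\Lambda\text{-}\boldsymbol{\mathrm{proj}})$ has each component $P^{n}\cong\bigoplus_{v}(\Lambda e_{v})^{m_{n,v}}$ a finite sum of indecomposable projectives, and after removing the contractible direct summands $\Lambda e_{v}\xrightarrow{\mathrm{id}}\Lambda e_{v}$ one records only the \emph{minimal} part, where $d(P)\subseteq\mathrm{rad}(\Lambda)P$. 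The differential is then a matrix whose $(w,v)$-block lies in $e_{w}(\mathrm{rad}\,\Lambda)e_{v}$, and by Corollary \ref{corollary:span-power-of-radical} together with condition (6) of Definition \ref{definition:complete-gentle-algebra} these hom-spaces carry a combinatorial basis indexed by paths in $\mathbf{P}$. Isomorphisms act by graded base change and homotopies by the usual $dh+hd$ correction, so classifying $P$ up to homotopy becomes classifying these matrices up to the induced equivalence.

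Next I would translate this into a representation problem for a bunch of semichains. The gentle conditions (1)--(3) of Definition \ref{definition:complete-gentle-algebra} --- at most two arrows in or out of each vertex, with exactly one composable and one non-composable continuation --- are precisely what force the admissible row and column operations to organize into linearly ordered chains of peaks and valleys. Here the first/last-arrow structure (Definition \ref{definition:firstlastarrows}, Remark \ref{remark:props-of-complete-gen-algs}(1)) and the uniseriality of $\Lambda\gamma$ (Lemma \ref{lemma:qbsb-uniserial-cyclics-given-by-arrows}) guarantee that each elementary reduction is governed by a single arrow, so the resulting matrix problem is \emph{clannish} rather than wild.

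I would then apply the classification of indecomposables for such problems (Bondarenko, Crawley-Boevey): they are exactly strings and bands. Reading the string data back through Definition \ref{definition:string-complexes} yields the complexes $P(\mathcal{C})$ for aperiodic generalised words $\mathcal{C}$, while the band data --- a cyclic generalised word together with an indecomposable automorphism datum --- yields $P(\mathcal{C},V)$ with $V$ an indecomposable $R[T,T^{-1}]$-module that is free over $R$. Part (2) would follow by showing the endomorphism ring is local: for strings the reduction exhibits $\mathrm{End}$ modulo homotopy as local, and for bands $\mathrm{End}(P(\mathcal{C},V))$ is controlled by $\mathrm{End}_{R[T,T^{-1}]}(V)$, which is local precisely because $V$ is indecomposable over the (completed, local) parameter ring.

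The main obstacle is the passage from a field to the complete local noetherian ring $R$. Over a field the band parameters are governed by the module theory of $k[T,T^{-1}]$ and the reductions terminate after finitely many steps; over $R$ one must instead control possibly infinite reduction sequences and classify band data through $R[T,T^{-1}]$-modules that are free over $R$. This is exactly where $\mathfrak{m}$-adic completeness is essential: by Remark \ref{remark:props-of-complete-gen-algs}(3) and Krull's intersection theorem the radical filtration is separated, so infinite sequences of elementary reductions converge and the normal form is well defined. This convergence is what simultaneously delivers the (possibly infinite) coproduct decomposition in (1) and the local endomorphism rings needed for (2).
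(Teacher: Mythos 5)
First, a point of order: the paper does not prove Theorem \ref{theorem:from-thesis-I} at all --- it is imported verbatim from \cite[Theorem 1.1]{Ben2016}, where (following Crawley-Boevey's and Ricke's treatment of string algebras) the classification is obtained by the functorial filtration method rather than by a matrix reduction to a bunch of semichains. So there is no in-paper argument to compare yours against; what can be assessed is whether your outline would actually constitute a proof, and as written it would not. The decisive gap is your third step: the Bondarenko/Crawley-Boevey classification of representations of bunches of semichains (and of clannish problems) is a theorem about vector spaces over a field. Over a complete local noetherian ring $R$ no such off-the-shelf classification exists, and the sentence claiming that $\mathfrak{m}$-adic completeness makes ``infinite sequences of elementary reductions converge'' so that ``the normal form is well defined'' is precisely the content that would have to be proved --- it is not a consequence of Remark \ref{remark:props-of-complete-gen-algs}(3) or Krull's intersection theorem. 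Indeed, Theorem \ref{theorem:from-thesis-II}(2) shows that band complexes are classified only by $k\otimes_{R}V$, not by $V$ itself: non-isomorphic automorphism data over $R$ give homotopy-equivalent band complexes. A reduction procedure that terminated in a genuine normal form over $R$ parametrised by the isomorphism class of $V$ would contradict this, so whatever replaces the field-case reduction must be built to account for this collapse; your outline does not anticipate it.

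There are two further problems. For part (2), your argument that $\mathrm{End}(P(\mathcal{C},V))$ is local ``because $V$ is indecomposable over the (completed, local) parameter ring'' is not correct as stated: $R[T,T^{-1}]$ is neither complete nor local, and while for $V$ finitely generated over $R$ the endomorphism ring is module-finite over $R$ (hence semiperfect, so indecomposable does imply local), the theorem is asserted in $\mathcal{K}(\Lambda\text{-}\boldsymbol{\mathrm{Proj}})$ for arbitrary indecomposable $V$ in $R[T,T^{-1}]\text{-}\boldsymbol{\mathrm{Mod}}_{R\text{-}\boldsymbol{\mathrm{Proj}}}$, where indecomposability need not give a local endomorphism ring and a different argument is required. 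Finally, even the first reduction step is incomplete: after passing to minimal complexes the blocks of the differential live in $R$-modules such as $\Lambda x$ that are free of finite rank over $R$ but are spanned by infinitely many paths, the admissible base changes involve units of $\Lambda$ rather than of a field, and the complexes are unbounded; verifying that the resulting bimodule problem is of the claimed combinatorial type over $R$ is a substantial piece of work that your sketch assumes rather than supplies.
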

\begin{theorem}
\emph{\cite[Theorem 1.2]{Ben2016}}\label{theorem:from-thesis-II} Let $\mathcal{C}$ be a generalised $I$-word, let $\mathcal{E}$ be a generalised $J$-word, let $V,W$ be objects of  $R[T,T^{-1}]\text{-}\boldsymbol{\mathrm{Mod}}_{R\text{-}\boldsymbol{\mathrm{Proj}}}$ and let $n\in\mathbb{Z}$. The following statements hold.
\begin{enumerate}
\item If $\mathcal{C},\mathcal{E}$ are aperiodic then $P(\mathcal{C})[n]\simeq P(\mathcal{E})$ \emph{(}in $\mathcal{K}(\Lambda\text{-}\boldsymbol{\mathrm{Proj}})$\emph{)} if and only if:
\begin{enumerate}
\item $I=\{0,\dots,m\}=J$ and \emph{((}$\mathcal{E}=\mathcal{C}$ and $n=0$\emph{)} or \emph{(}$\mathcal{E}=\mathcal{C}^{-1}$ and $n=\mathscr{H}_{\mathcal{C}}(m)$\emph{));} or
\item $I=\pm\mathbb{N}$, $J=\mp\mathbb{N}$, $n=0$ and $\mathcal{E}=\mathcal{C}^{\pm1}$\emph{;} or
\item $I=\mathbb{Z}=J$, $\mathcal{E}=\mathcal{C}^{\pm1}[t]$ and $n=\mathscr{H}_{\mathcal{C}}(\pm t)$ for some $t\in\mathbb{Z}$.
\end{enumerate}
\item If $\mathcal{C},\mathcal{E}$ are periodic then $P(\mathcal{C},V)[n]\simeq P(\mathcal{E},W)$ if and only if:
\begin{enumerate}
\item $\mathcal{E}=\mathcal{C}[t]$, $k\otimes_{R}V\simeq k\otimes_{R}W$ and $n=\mathscr{H}_{\mathcal{C}}(t)$ for some $t\in\mathbb{Z}$; or
\item $\mathcal{E}=\mathcal{C}^{-1}[t]$, $k\otimes_{R}V\simeq k\otimes_{R}\mathrm{res} \,W$ and $n=\mathscr{H}_{\mathcal{C}}(-t)$ for some $t\in\mathbb{Z}$.
\end{enumerate}
\item If $\mathcal{C}$ is aperiodic and $\mathcal{E}$ is periodic, then $P(\mathcal{C})[n]\not\simeq P(\mathcal{E},V)$.
\end{enumerate}
\end{theorem}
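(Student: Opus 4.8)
The plan is to prove all three parts of Theorem~\ref{theorem:from-thesis-II} uniformly, by first reducing every statement about isomorphism in $\mathcal{K}(\Lambda\text{-}\boldsymbol{\mathrm{Proj}})$ to isomorphism of chain complexes. The key observation is that the differentials of $P(\mathcal{C})$ and of $P(\mathcal{C},V)$ have all their matrix entries in $\mathbf{P}$, hence in $\mathrm{rad}(\Lambda)$ (in the band case the extra scalar factors $a^{\pm}_{\tau\omega}\in R$ are multiplied by a path $\mu\in\mathbf{P}$, so this is unaffected). Thus these are \emph{minimal} (radical) complexes of projectives. Since $\Lambda$ is semiperfect by Remark~\ref{remark:complete-gentle-semiperfect}, a homotopy equivalence between radical complexes is an honest isomorphism of complexes, and a minimal complex is determined up to such isomorphism by its homotopy type. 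So throughout I would replace $\simeq$ in $\mathcal{K}(\Lambda\text{-}\boldsymbol{\mathrm{Proj}})$ by $\cong$ as chain complexes, which is far more rigid.

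For the \emph{if} directions of (1) and (2) I would exhibit the isomorphisms directly. Parts (1)(a)--(c) are precisely the explicit maps of Lemma~\ref{lemma:isos-between-string-complexes}: inverting the word contributes the sign $\pm$ in the exponent, and the degree shift is forced to be $\mathscr{H}_{\mathcal{C}}$ evaluated at the relevant endpoint, giving $n=\mathscr{H}_{\mathcal{C}}(m)$ in the finite case and $n=\mathscr{H}_{\mathcal{C}}(\pm t)$ in the $\mathbb{Z}$ case. For (2) one needs the periodic analogue of Lemma~\ref{lemma:isos-between-string-complexes}, together with the fact that inverting a periodic word interchanges $T$ and $T^{-1}$, which is exactly the involution $\mathrm{res}$; this is why $W$ becomes $\mathrm{res}\,W$ in (2)(b). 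The subtler ingredient in (2) is the criterion $k\otimes_{R}V\simeq k\otimes_{R}W$: here I would show that tensoring a fixed periodic $P(\mathcal{C})$ with $V$ and with $W$ produces isomorphic complexes precisely when $V$ and $W$ agree after reduction modulo $\mathfrak{m}$, using that $\Lambda$ is module-finite over the complete local ring $R$ so that an isomorphism defined modulo the radical lifts by a Nakayama/successive-approximation argument. Completeness of $R$ is essential and is what forces the invariant to live over $k$ rather than over $R$.

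The heart of the proof, and what I expect to be the main obstacle, is the \emph{only if} directions, which rest on a workable description of $\mathrm{Hom}_{\mathcal{K}(\Lambda\text{-}\boldsymbol{\mathrm{Proj}})}$ between string and band complexes. The plan is to generalise the graph-map (overlap) basis of Arnesen--Laking--Pauksztello~\cite{ArnLakPau2016} from finite-dimensional gentle algebras to complete gentle $R$-algebras: every homotopy class of chain maps $P(\mathcal{C})\to P(\mathcal{E})$ should be a combination of \emph{graph maps} indexed by maximal common subwalks of $\mathcal{C}$ and $\mathcal{E}$, scaled by elements of $R$ in the periodic case. Granting this, an isomorphism corresponds to a pair of graph maps composing to the identity on each side, and analysing when a graph map can occur in such a pair shows that the overlap must be \emph{total}, i.e. $\mathcal{E}$ is an entire shift or inverse of $\mathcal{C}$. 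Tracking the homological degree of the overlap then pins down both the combinatorial data $\mathcal{E}=\mathcal{C}^{\pm1}[t]$ and the exact shift $n=\mathscr{H}_{\mathcal{C}}(\pm t)$; in the periodic case the scalars promote the total overlap to the condition $k\otimes_{R}V\simeq k\otimes_{R}(\mathrm{res}^{\varepsilon})\,W$.

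Finally, for part (3) it suffices, by the first paragraph, to rule out an isomorphism of minimal complexes, and the decisive invariant is \emph{periodicity} of the underlying reduced walk. A band complex $P(\mathcal{E},V)$ admits a degree-zero chain automorphism $T$ translating its bi-infinite walk by one period (the period shift has $\mathscr{H}_{\mathcal{E}}$-value $0$, so $T$ genuinely preserves homological degree), and $T$ permutes the indecomposable summands freely with infinite orbits. A string complex $P(\mathcal{C})$ with $\mathcal{C}$ aperiodic admits no such translation, for transporting $T$ across a hypothetical isomorphism $P(\mathcal{C})\cong P(\mathcal{E},V)$ would yield a free, infinite-orbit translation of the summands of $P(\mathcal{C})$, forcing its defining walk, and hence $\mathcal{C}$, to be periodic --- a contradiction. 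The genuine difficulty is bookkeeping the multiplicity $\mathrm{rank}_{R}V$ introduced by the tensor factor, so that the comparison of a periodic walk carrying this multiplicity against a single aperiodic walk is carried out honestly; together with the Hom-space computation of the previous paragraph, this is where I expect the argument to demand the most care.
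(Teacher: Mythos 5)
The first thing to note is that the paper does not prove this theorem at all: it is quoted verbatim from \cite[Theorem 1.2]{Ben2016} (the author's earlier work and Ph.D thesis) and used as a black box in \S\ref{section:proofs-of-main}. So there is no in-paper proof to compare your proposal against; what follows is an assessment of the proposal on its own terms.

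Your overall architecture is the right one and matches the known strategy for results of this type: reduce homotopy equivalence to genuine isomorphism of complexes via minimality over a semiperfect ring, exhibit the explicit isomorphisms of Lemma~\ref{lemma:isos-between-string-complexes} (and its periodic analogue, with $\mathrm{res}$ accounting for inversion) for the ``if'' directions, and control the ``only if'' directions through a description of $\mathrm{Hom}_{\mathcal{K}(\Lambda\text{-}\boldsymbol{\mathrm{Proj}})}$ between string and band complexes. The genuine gap is that this last item --- the graph-map/overlap basis for homotopy classes of chain maps, generalised from \cite{ArnLakPau2016} to complete gentle $R$-algebras --- is precisely where all of the content of the theorem lives, and your proposal asserts it rather than proves it. Establishing that every homotopy class of maps $P(\mathcal{C})\to P(\mathcal{E})$ is spanned by overlap data, and that an isomorphism forces a total overlap, is the entire difficulty; without it, parts (1), (2) and (3) all remain open (your translation-automorphism argument for (3) also secretly relies on knowing that automorphisms of $P(\mathcal{C})$ are ``diagonal modulo radical'', which is again the Hom computation). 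Two further points need care: (i) for infinite $I$ the homogeneous components $P^{n}(\mathcal{C})$ need not be finitely generated, so the standard ``minimal complex over a semiperfect ring'' rigidity argument does not apply off the shelf and must be justified for these possibly infinitely generated radical complexes; and (ii) the ``if'' direction of the criterion $k\otimes_{R}V\simeq k\otimes_{R}W$ in (2) is itself a nontrivial lifting statement (an isomorphism defined modulo $\mathfrak{m}$ must be promoted to a homotopy equivalence of the tensored complexes), for which your appeal to completeness and successive approximation is the right idea but would need to be carried out explicitly.
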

\begin{theorem}
\label{theorem:from-thesis-III}\emph{\cite[Theorem 1.3]{Ben2016}} If two direct sums of shifts of string
and band complexes are isomorphic in $\mathcal{K}(\Lambda\text{-}\boldsymbol{\mathrm{proj}})$, then there is an isoclass
preserving bijection between the summands.
\end{theorem}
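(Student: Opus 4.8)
The plan is to reduce the statement to the Krull--Remak--Schmidt--Azumaya theorem, whose decisive hypothesis is that the objects being compared have \emph{local} endomorphism rings. Theorem \ref{theorem:from-thesis-I}(2) already gives that each shift of a string or band complex is indecomposable in $\mathcal{K}(\Lambda\text{-}\boldsymbol{\mathrm{proj}})$, but indecomposability alone does not force uniqueness of (possibly infinite) coproduct decompositions; so the heart of the argument is to upgrade ``indecomposable'' to ``local endomorphism ring'', after which Azumaya's theorem delivers an isoclass-preserving bijection between the two families of summands at once, with no boundedness or finiteness restriction on the index sets.

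The key lemma I would isolate is: for $X$ a shift of a string complex $P(\mathcal{C})$, or of a band complex $P(\mathcal{C},V)$ with $V$ indecomposable over $R[T,T^{-1}]$, the ring $E=\mathrm{End}_{\mathcal{K}(\Lambda\text{-}\boldsymbol{\mathrm{proj}})}(X)$ is local. Here $\mathfrak{m}$-adic completeness of $R$ is what does the work. In the string case the morphism description underlying Theorem \ref{theorem:from-thesis-II}(1) shows that the only self-isomorphisms arise from the trivial identification of $\mathcal{C}$ with itself (the case $\mathcal{E}=\mathcal{C}$, $n=0$), so that $E$ is generated over its radical by the identity and $E/\mathrm{rad}(E)\cong k$, a field; hence $E$ is local. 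In the band case Theorem \ref{theorem:from-thesis-II}(2) identifies the non-radical part of $E$ with $\mathrm{End}_{R[T,T^{-1}]}(V)$. Since $V$ is free of finite rank over the complete local noetherian ring $R$, this endomorphism ring is module-finite over $R$, hence semiperfect by the same fact used in Remark \ref{remark:complete-gentle-semiperfect}; having no nontrivial idempotents because $V$ is indecomposable, it is local, and transporting this through the isomorphism criterion shows $E$ is local as well.

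With the lemma in hand I would invoke the Krull--Remak--Schmidt--Azumaya theorem: in an additive category, if an object is written in two ways as a coproduct of objects with local endomorphism rings, then there is a bijection between the two index sets matching isomorphic summands. Applied to the two given direct sums of shifts of string and band complexes, this is exactly the asserted isoclass-preserving bijection. I expect the main obstacle to be the band case of the local-endomorphism-ring lemma: one must control $E$ finely enough to see that its radical is precisely the kernel of the reduction to $\mathrm{End}_{R[T,T^{-1}]}(V)$, which calls for the explicit basis of morphisms between band complexes together with a genuine use of completeness of $R$ (via a Fitting-type argument over the complete local base) to rule out idempotents splitting off across the infinitely many homogeneous degrees of an unbounded complex. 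A secondary point of care is verifying that Azumaya's hypotheses genuinely apply to the possibly infinite coproducts occurring in $\mathcal{K}(\Lambda\text{-}\boldsymbol{\mathrm{proj}})$, rather than only to finite ones.
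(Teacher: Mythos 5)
This statement is not proved in the paper at all: it is imported verbatim from \cite[Theorem 1.3]{Ben2016} (the author's thesis), so there is no in-paper argument to compare yours against. Judged on its own terms, your outline -- upgrade Theorem \ref{theorem:from-thesis-I}(2) from ``indecomposable'' to ``local endomorphism ring'' and then invoke the Krull--Remak--Schmidt--Azumaya theorem for (possibly infinite) coproducts -- is the standard route and is almost certainly the strategy of the cited source. Your treatment of the band case is sound in outline: for $P(\mathcal{C},V)$ to lie in $\mathcal{K}(\Lambda\text{-}\boldsymbol{\mathrm{proj}})$ the module $V$ must be $R$-finite, so $\mathrm{End}_{R[T,T^{-1}]}(V)$ is module-finite over the complete local noetherian $R$, hence semiperfect, hence local once $V$ is indecomposable.

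The genuine gap is in how you certify locality from the results quoted in this paper. Theorem \ref{theorem:from-thesis-II} classifies \emph{when two string or band complexes are isomorphic}; it says nothing about the ring structure of $\mathrm{End}_{\mathcal{K}(\Lambda\text{-}\boldsymbol{\mathrm{proj}})}(P(\mathcal{C}))$. Knowing that the only word-theoretic identification of $\mathcal{C}$ with itself is the trivial one does not yield $E/\mathrm{rad}(E)\simeq k$: an endomorphism need not arise from an identification of words at all, and you must show that every non-automorphism lies in a two-sided ideal, i.e.\ that the non-invertible elements are closed under addition. That requires an explicit description of the whole morphism space (graph maps and singleton maps in the style of Arnesen--Laking--Pauksztello, adapted to the complete local base), or a functorial-filtration argument, together with a Fitting/Harada--Sai-type use of $\mathfrak{m}$-adic completeness to push non-isomorphisms into the radical across the unbounded degrees of the complex. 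You flag this difficulty for bands but assert the string case follows directly from Theorem \ref{theorem:from-thesis-II}(1), which it does not; both cases need the same missing morphism-level input. Once that lemma is genuinely established, the categorical form of Azumaya's theorem (both decompositions into objects with local endomorphism rings, valid in any additive category) does finish the proof as you describe.
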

By Remark \ref{remark:complete-gentle-semiperfect} any two projective resolutions of an object in $\Lambda\text{-}\boldsymbol{\mathrm{mod}}$ are homotopy equivalent. 
\begin{proof}[Proof of Theorem \ref{theorem:main-fin-gen-modules-are-sums-of-strings-and-bands}.]
(1) Let $M$ be a finitely generated $\Lambda$-module. Let $P_{M}$ be a minimal resolution of $M$. Consider $P_{M}$ as an object in the homotopy category $\mathcal{K}(\Lambda\text{-}\boldsymbol{\mathrm{proj}})$. By Theorem \ref{theorem:from-thesis-I} we have 
\[
P_{M}\simeq\Bigl( \bigoplus_{\mathtt{s}\in\mathtt{S}}P(\mathcal{C}(\mathtt{s}))[n(\mathtt{s})]\Bigr)\oplus\Bigl( \bigoplus_{\mathtt{b}\in\mathtt{B}}P(\mathcal{C}(\mathtt{b}),V^{\mathtt{b}})[m(\mathtt{b})]\Bigr),
\]
where $\mathtt{S}$ and $\mathtt{B}$ are index sets, and for all $\mathtt{s}\in\mathtt{S}$ and $\mathtt{b}\in \mathtt{B}$: $n(\mathtt{s})$ and $m(\mathtt{b})$ are integers; $V^{\mathtt{b}}$ is an object in $R[T,T^{-1}]\text{-\textbf{Mod}}_{R\text{-\textbf{Proj}}}$; $\mathcal{C}(\mathtt{s})$ is an aperiodic generalised word; and $\mathcal{C}(\mathtt{s})$ is a $p_{\mathtt{b}}$-periodic generalised word.

Note that $P_{M}^{l}=0$ for all $l>0$, $H^{m}(P_{M})=0$ for all $m\neq 0$. Since $M$ is finitely generated each $P^{h}_{M}$ is finitely generated, since $P_{M}$ is minimal. Hence $P_{M}$ is a point-wise-finite resolution in degree $0$. This means that, for all $\mathtt{s}\in\mathtt{S}$ and $\mathtt{b}\in \mathtt{B}$: the complex $P(\mathcal{C}(\mathtt{s}))$ is a point-wise-finite resolution in degree $n(\mathtt{s})$; and the complex $P(\mathcal{C}(\mathtt{b}),V^{\mathtt{b}})$ is a point-wise-finite resolution in degree $m(\mathtt{b})$. 

Let $\mathtt{s}\in\mathtt{S}$ and suppose $\mathcal{C}(\mathtt{s})$ is a generalised $I$-word. By Lemma \ref{lemma:string-complex-resolution-given-by-a-certain-form-II} this means either that ($I\neq\mathbb{Z}$ and) $\mathcal{C}(\mathtt{s})$ is a string resolution with $n(\mathtt{s})=\mathscr{H}_{\mathcal{C}(\mathtt{s})}(\iota_{\mathtt{s}}(\pm))$ where $[\iota_{\mathtt{s}}(-),\iota_{\mathtt{s}}(+)]$ is the interval of $\mathcal{C}(\mathtt{s})$; or that ($I=\mathbb{Z}$ and) there exists some $l(\mathtt{s})\in\mathbb{Z}$ such that $\mathcal{C}(\mathtt{s})[l(\mathtt{s})]$ is a string resolution and $n(\mathtt{s})=\mathscr{H}_{\mathcal{C}(\mathtt{s})}(l(\mathtt{s}))$. If $I=\mathbb{Z}$ then $P(C(\mathtt{s}))[n(\mathtt{s})]\simeq P(\mathcal{C}(\mathtt{s})[l(\mathtt{s})])$ by Lemma \ref{lemma:isos-between-string-complexes}(3), and $[0,2h]$ is the interval of $\mathcal{C}(\mathtt{s})[l(\mathtt{s})]$ for some integer $h\geq 0$. In any case this gives $H^{0}(P(\mathcal{C}(\mathtt{s}))[n(\mathtt{s})])\simeq M(H(\mathcal{C}(\mathtt{s})))$ by Lemma \ref{lemma:resolution-of-homology-is-the-identity}. 

By Lemma \ref{lemma:band-complex-resolution-given-by-a-certain-form}, for each $\mathtt{b}$ we have that $V^{\mathtt{b}}$ is $R$-module finite, and that either ($\mathcal{C}(\mathtt{b})$ is a band-resolution and $m(\mathtt{b})=0$) or that ($\mathcal{C}(\mathtt{b})[-1]$ is a band-resolution and $m(\mathtt{b})=-1$). By Lemmas \ref{lemma:isos-between-string-complexes}(3) and \ref{lemma:homology-of-resolution-gives-iso-from-band} this means $H^{0}(P(\mathcal{C}(\mathtt{b})[m(\mathtt{b})],V^{\mathtt{b}}))\simeq M(H(\mathcal{C}(\mathtt{b})),V^{\mathtt{b}})$. Putting this all together gives
\[
M\simeq\Bigl( \bigoplus_{\mathtt{s}\in\mathtt{S}}M(H(\mathcal{C}(\mathtt{s})))\Bigr)\oplus\Bigl( \bigoplus_{\mathtt{b}\in\mathtt{B}}M(H(\mathcal{C}(\mathtt{b})),V^{\mathtt{b}})\Bigr)
\]
which is a direct sum of string modules and band modules.

(2) Suppose $M$ is either a string module or a band module, and assume that $M\simeq L\oplus N$. Let $P_{L}$, $P_{M}$ and $P_{N}$ be resolutions of $L$, $M$ and $N$ respectively. The isomorphism $M\simeq L\oplus N$ of $\Lambda$-modules gives an isomorphism $P_{M}\simeq P_{L}\oplus P_{N}$ in the homotopy category $\mathcal{K}(\Lambda\text{-}\boldsymbol{\mathrm{proj}})$. 

By Corollaries \ref{corollary:bijection-for-strings} and \ref{corollary:bijection-for-bands} the object $P_{M}$ of $\mathcal{K}(\Lambda\text{-}\boldsymbol{\mathrm{Proj}})$ is isomorphic to a shift of a string or band complex, which means $P_{M}$ is indecomposable by Theorem \ref{theorem:from-thesis-I}(2). Hence, without loss of generality, we have $P_{L}\simeq 0$ in $\mathcal{K}(\Lambda\text{-}\boldsymbol{\mathrm{Proj}})$. Since any homotopy equivalence is a quasi-isomorphism, $L\simeq H^{0}(P_{M})=0$.
\end{proof}
\begin{proof}[Proof of Theorem \ref{theorem:main-isomorphisms-between-string-and-band-modules}.]
(1) Let $C$ and $E$ be string words. Suppose that $M(C)\simeq M(E)$. By Corollary \ref{corollary:bijection-for-strings} we have that $P(\mathcal{C})[\mathscr{H}_{\mathcal{C}}(\iota_{C}(\pm))]$ is the resolution of $M(C)$ where $\mathcal{C}=\mathcal{R}_{C}$ and $[\iota_{C}(-),\iota_{C}(+)]$ is the interval of $\mathcal{C}$. Likewise $P(\mathcal{E})[\mathscr{H}_{\mathcal{E}}(\iota_{E}(\pm))]$ is the resolution of $M(E)$ where $\mathcal{E}=\mathcal{R}_{E}$ and $[\iota_{E}(-),\iota_{E}(+)]$ is the interval of $\mathcal{C}$. The isomorphism $M(C)\simeq M(E)$ in $\Lambda\text{-}\boldsymbol{\mathrm{mod}}$ defines an isomorphism $P(\mathcal{C})[n]\simeq P(\mathcal{E})$ in $\mathcal{K}(\Lambda\text{-}\boldsymbol{\mathrm{proj}})$ where $n=\mathscr{H}_{\mathcal{C}}(\iota_{C}(\pm))-\mathscr{H}_{\mathcal{E}}(\iota_{E}(\pm))$, by Remark \ref{remark:complete-gentle-semiperfect}. By Theorem \ref{theorem:from-thesis-II}(1) this means there exists $t\in\mathbb{Z}$ such that $\mathcal{E}=\mathcal{C}^{\pm1}[t]$. By Lemmas \ref{lemma:generalised-decompositions-under-shifts-and-inverses} and \ref{lemma:homology-word-of-a-shift-or-inverse-strings} this means $H(\mathcal{C})$ and $H(\mathcal{E})$ are equivalent in the sense of Definition \ref{definition:equivalence-relation-on-words}. By Lemma \ref{lemma:homology-of-resolution-is-the-identity} this shows that $C$ and $E$ are equivalent. Conversely, supposing $C$ and $E$ are equivalent we have $M(C)\simeq M(E)$ by Lemma \ref{lemma:equivalence-relation-on-words-strings}.

(2) Let $C$ and $E$ be band words. By Corollary \ref{corollary:bijection-for-bands} we have, for any $U$ in $R[T,T^{-1}]\text{-\textbf{Mod}}_{R\text{-\textbf{Proj}}}$, that  $P(\mathcal{C},U)$ is the resolution of $M(C,U)$ where $\mathcal{C}=\mathcal{R}_{C}$; and $P(\mathcal{E},U)$ is the resolution of $M(E,U)$ where $\mathcal{E}=\mathcal{R}_{E}$. 

Suppose $M(C,V)\simeq M(E,W)$. The isomorphism $M(C,V)\simeq M(E,W)$ in $\Lambda\text{-}\boldsymbol{\mathrm{mod}}$ defines an isomorphism $P(\mathcal{C},V)\simeq P(\mathcal{E},W)$ in $\mathcal{K}(\Lambda\text{-}\boldsymbol{\mathrm{proj}})$, by Remark \ref{remark:complete-gentle-semiperfect}. By Theorem \ref{theorem:from-thesis-II}(2) this means there exists $s\in\mathbb{Z}$ such that $\mathcal{E}=\mathcal{C}[s]$, $\mathscr{H}_{\mathcal{C}}(s)=0$ and $k\otimes_{R}V\simeq k\otimes_{R}W$ (respectively, $\mathcal{E}=\mathcal{C}^{-1}[s]$, $\mathscr{H}_{\mathcal{C}}(-s)=0$ and $k\otimes_{R}V\simeq k\otimes_{R}\mathrm{res} \,W$). Note that since $\mathscr{H}_{\mathcal{C}}(s)=0$ (respectively, $\mathscr{H}_{\mathcal{C}}(-s)=0$) we must have that $s$ is even, and so $s=2t$ for some $t\in\mathbb{Z}$. By Lemma \ref{lemma:homology-word-of-a-shift-or-inverse-bands} this means $H(\mathcal{E})=H(\mathcal{C})[-d]$ (respectively, $H(\mathcal{E})=H(\mathcal{C})^{-1}[-d]$) for some $d\in\mathbb{Z}$, and hence $E=C[-d]$ (respectively, $E=C^{-1}[-d]$) by Corollary \ref{corollary:bijection-for-bands}. 

Conversely, suppose now $E=C[n]$ and  $k\otimes_{R}V\simeq k\otimes_{R}W$. By Lemma \ref{lemma:R-C-under-shits-and-inverses} we have that $\mathcal{E}=\mathcal{C}[2m]$ for some $m\in\mathbb{Z}$ where $\mathscr{H}_{\mathcal{C}}(2m)=0$. By Theorem \ref{theorem:from-thesis-II}(2a) this means $P(\mathcal{C},V)\simeq P(\mathcal{E},W)$ and hence $M(C,V)\simeq M(E,W)$. The proof that ($E=C^{-1
}[n]$ and  $k\otimes_{R}V\simeq k\otimes_{R}\mathrm{res}\,W$) implies $M(C,V)\simeq M(E,\mathrm{res}\,W)$ is similar, but uses Theorem \ref{theorem:from-thesis-II}(2b).

(3) As above, if $M(C)\simeq M(E,V)$ where $C$ is a string word and $E$ is a band word then  $P(\mathcal{C})[n]\simeq P(\mathcal{E},V)$ in $\mathcal{K}(\Lambda\text{-}\boldsymbol{\mathrm{proj}})$ for some $n\in\mathbb{Z}$ where $\mathcal{C}=\mathcal{R}_{C}$ and $\mathcal{E}=\mathcal{R}_{E}$, which contradicts Theorem \ref{theorem:from-thesis-II}(3). 
\end{proof}
\begin{proof}[Proof of Theorem \ref{theorem:main-krull-remak-schmidt-property}.]
Let $M$ be a finitely generated $\Lambda$-module such that
\[
\Bigl( \bigoplus_{\mathtt{s}\in\mathtt{S}}M(C(\mathtt{s}))\Bigr)\oplus\Bigl( \bigoplus_{\mathtt{b}\in\mathtt{B}}M(C(\mathtt{b}),V^{\mathtt{b}})\Bigr)\simeq M\simeq\Bigl( \bigoplus_{\mathtt{t}\in\mathtt{T}}M(E(\mathtt{t}))\Bigr)\oplus\Bigl( \bigoplus_{\mathtt{c}\in\mathtt{C}}M(E(\mathtt{c}),W^{\mathtt{c}})\Bigr)
\]
where: $\mathtt{S}$, $\mathtt{B}$, $\mathtt{T}$ and $\mathtt{C}$ are index sets; for each $\mathtt{s}$ and $\mathtt{t}$ both $C(\mathtt{s})$ and $E(\mathtt{t})$ are peak-finite words which are not eventually upward; and for each $\mathtt{b}$ and $\mathtt{c}$, both  $C(\mathtt{b})$ and  $E(\mathtt{c})$ are periodic-non-primitive words, and both $V^{\mathtt{b}}$ and $W^{\mathtt{c}}$ are objects in  $R[T,T^{-1}]\text{-}\boldsymbol{\mathrm{Mod}}_{R\text{-}\boldsymbol{\mathrm{proj}}}$. Let $P_{M}$ be a projective resolution of $M$. Let $\mathcal{C}(\mathtt{s})=\mathcal{R}_{C(\mathtt{s})}$ for each $\mathtt{s}$, and likewise let $\mathcal{C}(\mathtt{b})=\mathcal{R}_{C(\mathtt{b})}$, $\mathcal{E}(\mathtt{t})=\mathcal{R}_{E(\mathtt{t})}$ and $\mathcal{E}(\mathtt{c})=\mathcal{R}_{E(\mathtt{c})}$. By Corollaries \ref{corollary:bijection-for-strings} and \ref{corollary:bijection-for-bands} there exist $n(\mathtt{s}),m(\mathtt{b}),p(\mathtt{t}),q(\mathtt{c})\in\mathbb{Z}$ such that: each $P(\mathcal{C}(\mathtt{s}))[n(\mathtt{s})]$ is a resolution of $M(C(\mathtt{s}))$; $P(\mathcal{E}(\mathtt{t}))[p(\mathtt{t})]$ is a resolution of $M(E(\mathtt{t}))$; $P(\mathcal{C}(\mathtt{b}),V^{\mathtt{b}})[m(\mathtt{b})]$ is a resolution of $M(C(\mathtt{s}),V^{\mathtt{b}})$; and $P(\mathcal{E}(\mathtt{c}),W^{\mathtt{c}})[q(\mathtt{c})]$ is a resolution of $M(E(\mathtt{c}),W^{\mathtt{c}})$. Since any isomorphism of finitely generated $\Lambda$-modules induces a homotopy equivalence between their resolutions: collecting what we have so far gives 
\[
\Bigl( \bigoplus P(\mathcal{C}(\mathtt{s}))[n(\mathtt{s})]\Bigr)\oplus\Bigl( \bigoplus P(\mathcal{C}(\mathtt{b}),V^{\mathtt{b}})[m(\mathtt{b})]\Bigr)\simeq P_{M}\simeq\Bigl( \bigoplus P(\mathcal{E}(\mathtt{t}))[p(\mathtt{t})]\Bigr)\oplus\Bigl( \bigoplus P(\mathcal{E}(\mathtt{c}),W^{\mathtt{c}})[q(\mathtt{c})]\Bigr)
\]
in  $R[T,T^{-1}]\text{-}\boldsymbol{\mathrm{Mod}}_{R\text{-}\boldsymbol{\mathrm{proj}}}$. By Theorem \ref{theorem:from-thesis-III} and Theorem \ref{theorem:from-thesis-II}(3) there are bijections $\mathfrak{u}\colon \mathtt{S}\to\mathtt{T}$ and $\mathfrak{d}\colon \mathtt{B}\to\mathtt{C}$ where $P(\mathcal{C}(\mathtt{s})[n(\mathtt{s})]\simeq P(\mathcal{E}(\mathfrak{u}(\mathtt{s})))[p(\mathfrak{u}(\mathtt{s}))]$ for each $\mathtt{s}$ and $P(\mathcal{C}(\mathtt{b}),V^{\mathtt{b}})[m(\mathtt{b})]\simeq P(\mathcal{E}(\mathfrak{d}(\mathtt{b})),W^{\mathfrak{d}(\mathtt{b})})[q(\mathfrak{d}(\mathtt{b}))]$ for each $\mathtt{b}$. Since any homotopy equivalence is a quasi-isomorphism, this means  $M(C(\mathtt{s}))\simeq M(E(\mathfrak{u}(\mathtt{s})))$ for each $\mathtt{s}$ and $M(C(\mathtt{b}),V^{\mathtt{b}})\simeq M(E(\mathfrak{d}(\mathtt{b})),W^{\mathfrak{d}(\mathtt{b})})$ for each $\mathtt{b}$. 
\end{proof}
 \begin{proof}[Proof of Theorem \ref{theorem:resolutions}.] Definitions \ref{definition:string-words}, \ref{definition:string-res-of-a-word}, \ref{definition:string-words} and \ref{definition:string-res-of-a-word}, together with Corollaries \ref{corollary:bijection-for-strings} and \ref{corollary:bijection-for-bands}, give the algorithm. The final statement of the theorem follows from observing that any band complex defined by a band resolution is concentrated in degree $-1$ and degree $0$.
 \end{proof}
 \begin{acknowledgements}
The author is grateful to be supported by the Alexander von Humboldt Foundation 
in the framework of an Alexander von Humboldt Professorship 
endowed by the German Federal Ministry of Education and Research.
\end{acknowledgements}
\bibliography{biblio}
\bibliographystyle{amsplain}
\end{document}